\documentclass[12pt]{amsart}
\usepackage{amsmath}
\usepackage{amssymb}
\usepackage{amscd}
\usepackage{graphicx}
\usepackage{calrsfs}
\usepackage{enumitem}

\newtheorem*{TA}{\sc theorem A}
\newtheorem*{TAP}{\sc theorem A'}

\newtheorem{thm}{\sc theorem}[section]
\newtheorem{pro}[thm]{\sc proposition}
\newtheorem{lem}[thm]{\sc lemma}
\newtheorem{cor}[thm]{\sc corollary}

\newtheorem{clm}[thm]{\sc claim}

\theoremstyle{definition}

\newtheorem{dfn}[thm]{\sc definition}
\newtheorem{ntt}[thm]{\sc notation}
\newtheorem{voc}[thm]{\sc vocabulary}

\newtheorem{ccl}[thm]{\sc conclusion}

\theoremstyle{remark}

\newtheorem{rmk}[thm]{\sc remark}
\newtheorem{note}[thm]{\sc note}

\newtheorem{exm}[thm]{\sc example}

\newtheorem{ppt}[thm]{\sc property}


\newcommand{\MM}{\mbox{$\overline{M}$}}

\newcommand{\supp}{\mbox{$\mbox{\rm spt}$}}

\newcommand{\pr}{\mbox{$\mbox{\rm pr}$}}

\def\codim{{\rm codim}}

\def\G{{\Gamma}}

\def\nb{{\mathcal Op}}

\def\Z{{\bf Z}}

\def\R{{\bf R}}
\def\C{{\bf C}}

\def\S{{\bf S}}

\def\D{{\bf D}}
\def\I{{\bf I}}

\def\AA{{\mathcal A}}
\def\BB{{\mathcal B}}
\def\CC{{\mathcal C}}

\def\FF{{\mathcal F}}
\def\GG{{\mathcal G}}
\def\HH{{\mathcal H}}

\def\MM{{\mathcal M}}
\def\NN{{\mathcal N}}
\def\PP{{\mathcal P}}

\def\SS{{\mathcal S}}

\def\XX{{\mathcal X}}

\def\id{{\rm id}}
\def\mun{{^{-1}}}

\def\diff{{\rm Diff}}

\begin{document}

\title[Quasi-complementary foliations]{
Quasi-complementary foliations and the Mather-Thurston
theorem}

\begin{abstract}
We establish a form of the h-principle for the existence of foliations of codimension at least $2$
which are quasi-complementary to a given one.
Roughly, ``quasi-complementary''
means
 that they are complementary but on the boundaries of some kind of Reeb components. The construction involves an adaptation of
 W. Thurston's ``inflation'' process.
 The same methods
also provide a proof of the classical Mather-Thurston theorem.

\end{abstract}
\maketitle

\begin{center}
Ga\"el
Meigniez

\end{center}
\date{\today}
\section{introduction}\label{introduction_sec}

 \subsection{Quasi-complementary foliations}\label{qc_ssec}
 Given, on a manifold $M$, a dimension-$q$ foliation $\FF$, the existence of a
foliation $\GG$ complementary to $\FF$ (that is, $\GG$ is of
 cod\-im\-ension $q$ and transverse to $\FF$) is
 of course in general an intractable problem.
  In this paper, we weaken
  the transversality condition, prescribing a simple (and classical)
   model for the tangentialities between $\FF$ and $\GG$ which implies that $\GG$
  is a limit of plane fields complementary to $\FF$ but themselves not necessarily integrable.
  We establish, when $q\ge 2$, a form of Gromov's h-principle
 for such ``quasi-complementary'' foliations.
 \medbreak
 Here is a very elementary example similar to what we call quasi-compl\-ement\-ar\-ity,
  although $q=1$. Consider
  the Hopf foliation $\FF$ of the $3$-sphere $\S^3$ by circles. The classical geometric
theory of foliations  shows that $\FF$ admits
   no complementary foliation $\GG$: indeed, by the Novikov closed leaf theorem
(see for example \cite{camacho_neto_85}
or \cite{candel_conlon_00}),
  $\GG$ would have a compact leaf
which would separate $\S^3$, in contradiction to the transversality to $\FF$;
alternatively, one can argue that the Hopf fibration would then be a foliated bundle
(\cite{camacho_neto_85} pp. 99-100, or \cite{candel_conlon_00} example 2.1.5) over a simply-connected base $\S^2$;
hence all leaves of $\GG$
 would be diffeomorphic to the base; and by the Reeb global stability theorem
(\cite{camacho_neto_85} ch. IV theorem 4, or \cite{candel_conlon_00} theorem 6.1.5),
the total space would be $\S^2\times\S^1$, not $\S^3$.

However, it is easily verified that
 the sphere has
  a Reeb foliation $\GG$ which is complementary to the circles but on its unique compact leaf,
   which is tangential to them. Moreover, $\GG$ is a limit of $2$-plane fields
  complementary to the circles, provided that one makes its two Reeb components ``turbulize''
 in 
  appropriate directions: precisely, the holonomy of
$\GG$ along any circle fibre in the compact leaf must be contracting on one side of the leaf,
 and expanding
on the other. 
 \medbreak

The models for the tangentialities are classical, being nothing but
W. Thurston's construction to fill holes in codimensions $2$ and more (\cite{thurston_74},
section 4).
To fix ideas, the smooth ($C^\infty$) differentiability class is understood everywhere,
unless otherwise specified. 
 On the interval $\I:=[0,1]$, fix 
a smooth real function $r\mapsto u(r)$ such that $u'(r)>0$ for $0<r<1$, and
 $u(r)$ and all its successive derivates vanish at $r=0$, and $u(r)+u(1-r)=1$.
Write $\D^n$ (resp. $\S^{n-1}$) for the compact unit ball (resp. sphere) in $\R^n$;
endow $\D^2$ with the polar coordinates $\rho, \theta$;
endow $\S^1$ with the coordinate $s$;
 on  $\D^2\times\S^1$, one has
 the smooth $1$-parameter family $(\omega_r)_{r\in\I}$ of
 smooth nonsingular $1$-forms defined for $0\le r\le 1/2$ by
 $$\omega_{r}:=u(1-2\rho)ds+u(2\rho)d\rho$$
 (on $\{\rho\le 1/2\}$) and
 $$\omega_{r}:=u(2-2\rho)d\rho+u(2\rho-1)(ds-u(1-2r)d\theta)$$
 (on $\{\rho\ge 1/2\}$);
 while for $1/2\le r\le 1$:
  $$\omega_r:=u(2-2r)\omega_{1/2}+u(2r-1)ds$$
  For every $r$, the form $\omega_r$ is integrable (it is
   immediately verified that
  $\omega_r\wedge d\omega_r=0$); hence one gets on  $\D^2\times\S^1$
   a $1$-parameter family
  of codimension-$1$ foliations,
seen on Figure \ref{movie_fig}.
\begin{figure}
\includegraphics*[scale=0.45, angle=-90]{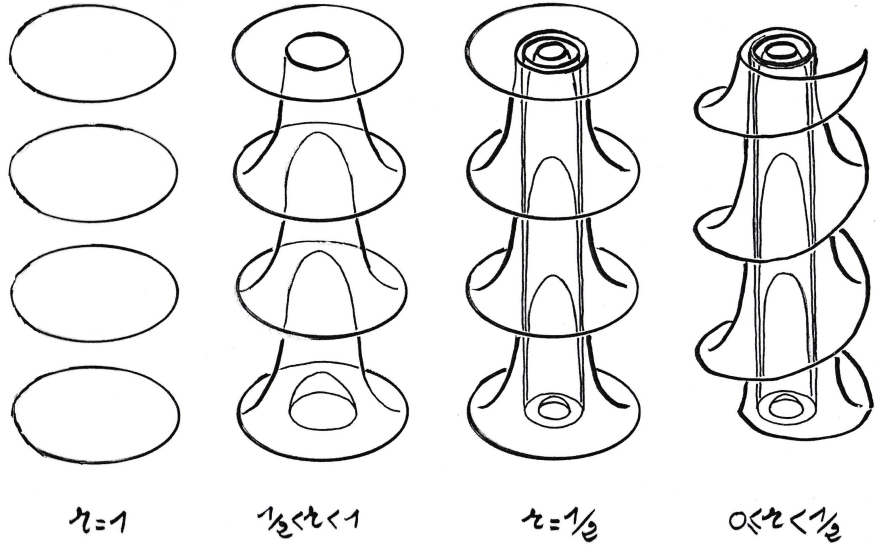}
\caption{W. Thurston's construction to fill holes in codimensions $2$ and more.}
\label{movie_fig}
\end{figure}

Fix $p, q\ge 2$. For every closed $(p-2)$-fold $\Sigma$,
we define the \emph{multifold Reeb component of core} $\Sigma$ as the $(p+q)$-fold
 $$C_\Sigma:=\Sigma\times\D^2\times\D^{q-1}\times\S^1$$
(whose projection to the $i$-th factor,  $1\le i\le 4$, will be denoted by $\pr_i$)
endowed with two foliations:
\begin{itemize}
\item  The dimension-$q$ foliation $\FF_\Sigma$ parallel to the factor
$\D^{q-1}\times\S^1$;
\item  The codimension-$q$ foliation $\GG_\Sigma$ obtained
by endowing, for every $a\in\D^{q-1}$, the fibre $\pr_3\mun(a)$ with
its codimension-$1$ foliation pullback of the $1$-form $\omega_{\vert a\vert}$
under $\pr_2\times\pr_4$.
\end{itemize}

\begin{dfn}\label{multifold_Reeb_dfn} On a $(p+q)$-fold $M$,
the codimension-$q$ foliation $\GG$ is \emph{quasi-complementary}
 to the dimension-$q$ foliation $\FF$ if they are transverse but maybe for finitely many disjoint
multifold Reeb components $C_\Sigma\hookrightarrow M$, in which
 $\FF$ (resp. $\GG$) coincides with $\FF_\Sigma$ (resp. $\GG_\Sigma$).

\end{dfn}

The components may have different cores; one can of course also consider the
union of the components as one component
whose core may be not connected.
Note that $\GG$ is almost everywhere complementary to $\FF$:
precisely, everywhere on $M$, but on the subset
defined in each component $C_\Sigma$ by $\rho\circ\pr_2=1/2$ and
 $\Vert\pr_3\Vert\le 1/2$, which is a hypersurface tangential to $\FF$.
Also, it is easily verified
 that $\GG$ is a limit of codimension-$q$ plane fields complementary to $\FF$
  on $M$.

 For the elements on
 Haefliger structures: normal bundle, \emph{differential,} concordances,
 regularity
 and relations to foliations, see Section
\ref{haefliger_sec} below. Recall the
 Foliation theorem in codimensions $2$ and more,
 also known as ``h-principle for foliations", on closed manifolds
(\cite{thurston_74},
see also \cite{eliashberg_mishachev_98} and \cite{mitsumatsu_vogt_16}).
We use the notation $\nb_X(Y)$ for ``some open neighborhood of $Y$ in $X$''.
  \begin{thm}[Thurston]\label{h_thm}
    On a compact manifold $M$, let $\nu$ be a real vector bundle of dimension $q\ge 2$,
 let $\gamma$ be
 a $\Gamma_q$-structure whose normal bundle is
 $\nu$, and let $\omega$ be a $1$-form valued in $\nu$ of constant rank $q$;
 assume that $d\gamma=\omega$
on $\nb_M(\partial M)$.
 
  Then,
  $M$ admits a \emph{regular} $\Gamma_q$-structure $\gamma'$ of normal bundle $\nu$
   such that:
    \begin{itemize} 
      \item
       $\gamma'=\gamma$ on $\nb_M(\partial M)$, 
  and  $\gamma'$ is concordant to $\gamma$ on $M$
(rel. $\partial M$);
     \item $d\gamma'$ is homotopic to $\omega$ on $M$ (rel. $\partial M$)
     among the  $1$-forms valued in $\nu$ of constant rank $q$.

\end{itemize}

\end{thm}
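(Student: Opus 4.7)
The plan is to follow Thurston's inflation strategy, adapted to accommodate the boundary condition $d\gamma=\omega$. First I would fix a smooth triangulation of $M$ and apply a jiggling procedure to put $\gamma$ in general position with respect to the triangulation: after jiggling, one may assume that $\gamma$ is regular on a neighborhood of the $(\dim M - q)$-skeleton and that its singular locus meets each top-dimensional simplex in a small ball. Near $\partial M$ the hypothesis $d\gamma=\omega$ already forces $\gamma$ to be regular, providing the base of the induction.

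I then proceed inductively from the lower- to the higher-dimensional simplices. Each inductive step reduces to the following local problem: on a ball $B\subset M$, given a $\Gamma_q$-structure $\gamma$ which is regular on $\nb_B(\partial B)$ and a constant-rank $q$-form $\omega$ on $B$ agreeing with $d\gamma$ on $\nb_B(\partial B)$, produce a regular $\Gamma_q$-structure $\gamma'$ on $B$ agreeing with $\gamma$ near $\partial B$, concordant to $\gamma$ (rel. $\partial B$), and whose differential is homotopic to $\omega$ through constant rank-$q$ forms. This is addressed by the inflation process: one inflates $\gamma$ along a $q$-plane field representing $\omega$, creating transverse room into which the non-integrability of $\gamma$ can be absorbed. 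The family $(\omega_r)_{r\in\I}$ exhibited in the introduction serves as the local model, and a multifold Reeb component of Definition \ref{multifold_Reeb_dfn} is inserted to replace the singular locus. The hypothesis $q\ge 2$ is essential here: it provides the extra $\D^{q-1}$-factor that supplies the transverse freedom needed for the filling.

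Piecing these local modifications together yields a regular $\Gamma_q$-structure $\gamma'$ on $M$, possibly exhibiting finitely many disjoint multifold Reeb components where the singular locus of $\gamma$ used to live. The concordance from $\gamma$ to $\gamma'$ is the concatenation of the local inflation homotopies (rel. $\partial M$), and the homotopy from $d\gamma'$ to $\omega$ through constant rank-$q$ forms is obtained by tracking the tangent plane to the leaves of the intermediate structures throughout the inflation.

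The main technical obstacle I anticipate is the simultaneous control of the Haefliger structure and of its differential during the inflation. A priori the inflation only produces a concordance of $\Gamma_q$-structures, and one must additionally arrange that the accompanying family of differentials never drops rank. This requires a preparatory homotopy of $\omega$ through constant rank-$q$ forms, chosen so that near the singularities of $\gamma$ the form $\omega$ is already aligned with the direction along which inflation will occur; the assumption $q\ge 2$ enters here through an obstruction-theoretic argument on the Grassmannian bundle of $q$-planes in $\nu$, which fails in codimension $1$ where integrability imposes an additional constraint.
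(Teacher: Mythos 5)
The paper does not prove this statement; it recalls it as Thurston's theorem, citing \cite{thurston_74}, \cite{eliashberg_mishachev_98}, \cite{mitsumatsu_vogt_16}, and then develops a refinement (Theorems \emph{A} and \emph{A'}) using the same circle of ideas. So the benchmark is Thurston's original argument and the paper's own proof of Theorem \emph{A'} in Section 4. Against that benchmark, your sketch has the right silhouette --- triangulate, regularize near a skeleton, inflate across top cells, fill the resulting holes --- but there are several substantive gaps.

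First, you conflate two distinct steps. Jiggling (Thurston's Jiggling Lemma, \cite{thurston_74} section 5) moves the \emph{triangulation} into general position with respect to a plane field; it does nothing to $\gamma$. The actual step that makes $\gamma$ regular on a neighborhood of the codimension-$1$ skeleton is an application of the Gromov--Phillips--Haefliger Foliation theorem on open manifolds (the parametric Theorem \ref{parametric_thm} here), applied inductively cell by cell exactly as in Lemma \ref{skeleton_lem}. That theorem is nowhere invoked in your sketch, and without it the skeleton regularization is simply unsupported. (Also, the skeleton one regularizes over is the $(\dim M - 1)$-skeleton, as in $K^{(p+q-1)}$ of Lemma \ref{skeleton_lem}, not the $(\dim M - q)$-skeleton; your subsequent ``singular locus in small balls in the top simplices'' is what the correct skeleton statement would give, so this is likely a slip, but the missing open-manifold theorem is not.)

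Second, the stated reason that $q\geq 2$ is needed is not the right one. There is no ``obstruction-theoretic argument on the Grassmannian bundle'' here. The hypothesis $q\geq 2$ enters because, after inflation across a top simplex, one is left with a hole whose monodromy lies in the commutator subgroup of $\diff(\D^q)^{\I}$. By Epstein's perfectness theorem (used in Claim \ref{filling_clm}), such a monodromy is normally generated by an arbitrary fixed element, which one chooses to be a \emph{multirotation} (Definition \ref{multirotation_dfn}); filling a hole with multirotation monodromy is Thurston's Reeb-type construction described in \S\ref{qc_ssec} and \S\ref{filling_sssec}. A multirotation requires an equidimensional embedding $\D^{q-1}\times\S^1\hookrightarrow \Int(\D^q)$, which exists iff $q\geq 2$ (the note at the end of \S\ref{AAP_ssec} makes exactly this point). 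Finally, your description of inflation as ``inflating along a plane field to absorb non-integrability'' is too vague to support a proof: the actual mechanism is to decompose the top cell into prisms (Lemma \ref{prism_lem}), cross each prism by a flow transporting the already-defined foliation from the back wall, and pay for the mismatch at the front-wall corona with a controlled hole --- it is this hole, not some singular locus of $\gamma$, that the multifold Reeb component subsequently fills. As written, the proposal names the right tools but does not supply the two load-bearing pieces (open-manifold theorem on the skeleton, perfectness/multirotation filling of the hole) with enough precision to carry the argument.
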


Our main result is a refinement of this classical one
for a manifold already foliated.

 \begin{TA}
 On a compact manifold $M$, let
 $\FF$ be a  foliation
 of dimension $q\ge 2$,
 let $\gamma$ be
 a $\Gamma_q$-structure whose normal bundle is
 ${\tau\FF}$, and let $\omega$ be a $1$-form valued in $\tau\FF$ such that $\omega\vert\tau\FF$
 is of constant rank $q$;
 assume that $d\gamma=\omega$
on $\nb_M(\partial M)$.

   Then,
  $M$ admits a \emph{regular} $\Gamma_q$-structure $\gamma'$ of normal bundle ${\tau\FF}$
   such that:
    \begin{itemize} 
      \item
       $\gamma'=\gamma$ on $\nb_M(\partial M)$, 
  and  $\gamma'$ is concordant to $\gamma$ on $M$
(rel. $\partial M$);
     \item $d\gamma'$ is homotopic to $\omega$ on $M$ (rel. $\partial M$)
     among the  $1$-forms valued in $\tau\FF$ of constant rank $q$;
     \item The foliation induced by $\gamma'$ is quasi-complementary to $\FF$ on $M$.

\end{itemize}
 \end{TA}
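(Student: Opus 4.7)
The plan is to adapt Thurston's inductive proof of the foliation h-principle (Theorem~\ref{h_thm}) to the relative setting where $\FF$ is already present, rather than applying Theorem~\ref{h_thm} as a black box and then modifying the resulting foliation. A black-box reduction would produce a foliation $\GG_0$ whose tangent distribution is only homotopic to the $\FF$-complement $\xi := \ker \omega$, and recovering pointwise transversality afterwards seems at least as hard as the whole problem, since the required local modifications must simultaneously respect the concordance class of $\gamma$ and the homotopy class of $\omega$.

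First I would fix a triangulation $T$ of $M$ adapted to $\FF$, so that on each simplex $\FF$ is a product and $\xi$ is close to a horizontal plane field. Setting $\gamma' = \gamma$ on $\nb_M(\partial M)$, I would extend inductively over the skeleta $T^{(k)}$. The inductive step is local: given a regular $\Gamma_q$-structure on $\nb_\sigma(\partial\sigma)$ that is quasi-complementary to $\FF$ there, with compatible concordance and differential data, extend over $\sigma$ preserving all of this.

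The local extension is carried out by Thurston's inflation, adapted to the foliated setting. Along $\FF$-leaves one inflates the boundary data toward the interior of $\sigma$; where the inflation cannot stay pointwise transverse to $\FF$ --- due to monodromy obstructions along the $\FF$-leaves through $\sigma$ --- one inserts a multifold Reeb component $C_\Sigma$ whose core is the relevant obstruction stratum. The explicit family of integrable forms $(\omega_r)_{r\in\I}$ on $\D^2\times\S^1$ constructed before Definition~\ref{multifold_Reeb_dfn} is the key tool here: it provides a concordance of $\Gamma_q$-structures between the trivial horizontal foliation and the Reeb model $\GG_\Sigma$, through forms of constant rank valued in $\tau\FF_\Sigma$, so each insertion is realized as a concordance preserving the prescribed differential homotopy class.

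The main obstacle will be the top-dimensional extension step: at that stage one must upgrade the hole-filling lemma of Section~4 of \cite{thurston_74} to produce a foliation quasi-complementary to $\FF$, while tracking both the concordance class and the differential homotopy class. This is precisely the \emph{adaptation of the inflation process} advertised in the abstract: Thurston's hole-filling is turned into an $\FF$-inflation using the family $(\omega_r)$ in the $\D^2\times\S^1$ directions to produce the required multifold Reeb components. Once this top-cell lemma is in place, the skeleton-by-skeleton induction should go through by essentially the same arguments as in the classical case.
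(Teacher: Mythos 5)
Your high-level intuition is right: the paper does not use Theorem~\ref{h_thm} as a black box, it does adapt Thurston's inflation, and it does use the family $(\omega_r)$ from the introduction to produce the multifold Reeb components at the very end. But the concrete mechanism you describe --- ``extend inductively over the skeleta of a triangulation of $M$ adapted to $\FF$, inserting a multifold Reeb component whenever the inflation cannot stay transverse'' --- does not match the paper's proof and, as stated, has a genuine gap that the paper's machinery exists precisely to bridge.

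The gap: multifold Reeb components are rigid $(p+q)$-dimensional blocks $\Sigma\times\D^2\times\D^{q-1}\times\S^1$; inserting one inside a single simplex $\sigma$ does not produce boundary data that can be matched on adjacent simplices, and there is no sense in which the ``obstruction stratum'' is a priori a closed $(p-2)$-manifold $\Sigma$ fitting inside $\sigma$. The paper resolves this by introducing the intermediate notion of a \emph{cleft} $\Gamma_q$-structure (Section~\ref{holes_sec}) and proving the fissured version \emph{A'} first. Fissures are codimension-two defects that are \emph{functorial under pullback} (Tool~\ref{pullback_sssec}) and can be \emph{propagated} from one prism to the next during the induction, rather than being filled on the spot (Figure~\ref{propagation_fig}). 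Only once the induction over all prisms is complete, producing a single global fissure on $M\times 1$ whose core has a controlled topology (a union of sphere products, via the collapsing-cobordism appendix), does the paper enlarge the fissure to a hole and fill it with the $(\omega_r)$-family to obtain the multifold Reeb components --- that is the content of Subsection~\ref{AAP_ssec}. Without some analogue of this deferral mechanism, a cell-by-cell insertion of Reeb components cannot be made compatible.

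A second structural point you are missing: the concordance and differential requirements are not tracked ``on the side'' during an extension over $M$ itself; the paper works in the wall $\bar M=M\times\I$ (after reducing, via the jiggling lemma and two applications of the parametric Foliation theorem~\ref{parametric_thm}, to $M=\I^p\times\D^q$ with $\FF$ the $\D^q$-slice foliation) and builds a cleft $\Gamma_q$-structure on $\bar M$ that restricts to $\pr_1^*\gamma$ near $\hat M$ and to the desired quasi-complementary object on $M\times 1$. The $\I$-direction \emph{is} the concordance. The inflation decomposes $\bar M$ into prisms $\alpha\times\D^q$ over a linear triangulation of $\I^{p+1}$ that collapses onto $\hat M$'s base, and the key step (Lemma~\ref{extension_lem}, Claims~\ref{crossing_clm}--\ref{filling_clm}) crosses each prism along a carefully constructed pseudogradient, diverting the mismatch on the corona into a hole of prescribed monodromy $\psi$; Epstein's perfectness theorem is used to split $\psi$ into conjugates of the chosen multirotation $\varphi$, and Thurston's \emph{civilizations} (Subsection~\ref{civilization_sssec}) are what make the resulting microextensions glue across adjacent prisms. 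None of this appears in your sketch, and the plan cannot close without it.
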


\begin{rmk}
 
  We put no restriction on the position of $\FF$ with respect to $\partial M$.
\end{rmk}
 
\begin{rmk}\label{differentiability_rmk} The construction does not use sophisticated results
on groups of diffeomorphisms. For every integer $r\ge 1$, the
 theorem \emph{A} holds as well in the differentiability class $C^r$,
 with the same proof.
 \end{rmk}

\begin{rmk}
  This result and our proof are also valid for $p=1$, in which case there exist no
 multifold Reeb components
 at all,
 hence ``quasi-complementary'' means ``complementary''. Of course,
 it is not a great deal to produce a foliation complementary to a given codimension-$1$
 foliation; however, the point is that there is one
  \emph{in every concordance class} of $\Gamma_q$-structures whose normal bundle is $\tau\FF$;
  and this goes in every differentiability class
  (Remark \ref{differentiability_rmk}), including $C^{q+1}$.
\end{rmk}

\begin{rmk} 
 By construction, for $p:=\codim(\FF)\ge 2$,
  the core $\Sigma$ of each multifold Reeb component resulting from our construction will be a
 product of two spheres $\S^{i-1}\times\S^{p-i-1}$, with $1\le i\le p-1$: see
 Proposition \ref{core_pro} below. One can if one likes better,
 by a trick due to Thurston, arrange that $\Sigma$ is the $(p-2)$-torus (\cite{thurston_76},
 beginning of section 5);
 or,  for $p\ge 3$, that $\Sigma=\S^1\times\S^{p-3}$  (\cite{meigniez_17},
 sections 3.3.2 and 3.3.3).

 \end{rmk}
 
 Denote,
 as usual, by $B\Gamma_q^r$ (resp. $B\bar\Gamma_q^r$)
  the Haefliger space  classifying the   $\Gamma$-structures
  (resp. parallelized  $\Gamma$-structures)
 of codimension $q$ and differentiability class $C^r$.
 One has in particular the following corollaries of Theorem \emph{A}, 
 since
  $B\bar\Gamma_q^1$ is contractible
   \cite{tsuboi_89} and since
  $B\bar\Gamma_q^\infty$ is $(q+1)$-connected \cite{thurston_74_2}.

\begin{cor}  Let $\FF$ be a $C^\infty$
 foliation of dimension at least $2$ on a closed manifold.

 Then,  $\FF$
  admits a quasi-complem\-ent\-ary foliation of class $C^1$.
  
        If moreover
         $\codim(\FF)=2$, or if the bundle $\tau\FF$ is trivializable,
   then $\FF$  admits a quasi-complem\-ent\-ary foliation of class $C^\infty$.
\end{cor}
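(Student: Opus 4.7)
The plan is to reduce to Theorem \emph{A}. Since $M$ is closed, the boundary condition $d\gamma=\omega$ on $\nb_M(\partial M)$ is vacuous; it therefore suffices to produce a $\Gamma_q$-structure $\gamma$ with normal bundle $\tau\FF$ (in the relevant differentiability class) together with a $1$-form $\omega$ valued in $\tau\FF$ such that $\omega\vert\tau\FF$ has constant rank $q$. For $\omega$, I would pick any Riemannian metric on $M$ and let $\omega\colon TM\to\tau\FF$ be the orthogonal projection; its restriction to $\tau\FF$ is the identity, hence of rank $q$ everywhere. Feeding $(\gamma,\omega)$ into Theorem \emph{A}, in class $C^1$ or $C^\infty$ as allowed by Remark \ref{differentiability_rmk}, then yields a regular $\Gamma_q$-structure $\gamma'$ whose induced foliation is quasi-complementary to $\FF$, in the same differentiability class as $\gamma$.

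Hence everything reduces to constructing $\gamma$, i.e.\ to lifting the classifying map $M\to BO(q)$ of $\tau\FF$ through the natural projection $B\Gamma_q^r\to BO(q)$, whose homotopy fibre is $B\bar\Gamma_q^r$. In the $C^1$ case, Tsuboi's contractibility of $B\bar\Gamma_q^1$ makes this projection a weak homotopy equivalence, so a lift always exists; pulling back the tautological $\Gamma_q^1$-structure gives the required $\gamma$.

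In the $C^\infty$ case, Thurston's theorem gives that $B\bar\Gamma_q^\infty$ is $(q+1)$-connected. Standard obstruction theory along the fibration $B\bar\Gamma_q^\infty\to B\Gamma_q^\infty\to BO(q)$ shows that the obstructions to lifting a map out of $M$ lie in $H^i(M;\pi_{i-1}(B\bar\Gamma_q^\infty))$, and therefore vanish automatically for $i\le q+2$. If $\codim(\FF)=2$, then $\dim M=q+2$, so $H^i(M)=0$ in the remaining degrees $i\ge q+3$ and the lift exists. If instead $\tau\FF$ is trivializable, the classifying map is null-homotopic and lifts trivially: a trivialization $\tau\FF\cong M\times\R^q$ exhibits $\tau\FF$ as the (trivialized) normal bundle of any parallelized $\Gamma_q^\infty$-structure, for instance the constant Haefliger cocycle. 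The only non-routine ingredient is this obstruction-theoretic lifting, which is immediate from the stated connectivity of $B\bar\Gamma_q^\infty$; the substantive content of the corollary is packaged in Theorem \emph{A}.
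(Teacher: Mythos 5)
Your proof is correct and follows essentially the same route as the paper, which simply cites the contractibility of $B\bar\Gamma_q^1$ (Tsuboi) and the $(q+1)$-connectivity of $B\bar\Gamma_q^\infty$ (Thurston) and leaves the lifting argument implicit. You have supplied the obstruction-theoretic details of lifting the classifying map of $\tau\FF$ through $B\Gamma_q\to BO(q)$ — exactly the intended reduction to Theorem~\emph{A}.
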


\subsubsection{About the proof}
Theorem \emph{A} is better established under a version A' producing
 a \emph{cleft} foliation $\GG$ with fissures (see Section \ref{holes_sec})
instead of multifold Reeb components; the fissures are some kind of discontinuities
in the foliation that
 have a product
structure $\Sigma\times\D^q$ where $\Sigma$ is a compact $(p-2)$-fold;
in that frame, quasi-complementarity means that outside the fissures, $\GG$
 is complementary to $\FF$, and that on the fissures,
  every slice parallel to $\D^q$ is a plaque of $\FF$.
 For $q\ge 2$, the versions A and A' are straightforwardly equivalent
to each other
through  Thurston's method to
  fill the holes; but A'  also holds
 in codimension $q=1$.

 To prove A', the problem  is translated, using the Gromov-Phillips-Haefliger
parametric Foliation theorem on open manifolds, into an extension problem
whose proof falls to
 an adaptation of the original ``inflation'' process that Thurston introduced to prove
 Theorem \ref{h_thm}.
 
 We feel that the present work
 illustrates the power and the accuracy,  in the framework of Gromov's h-principle,
   of the tools that W. Thurston left to us after his early works on foliations.

 \subsection{A proof of the Mather-Thurston theorem}\label{MT_ssec}
 
 A second application of our method deals with the construction of foliated products,
 and more generally of foliated bundles. We begin with discussing foliated products.
 
 In the case where the given foliation $\FF$ is a product foliation, we can
  get full complementarity
 at the price of modifying the base factor of the product by some surgeries; in other words
 we give a proof of the classical Mather-Thurston theorem
 \cite{thurston_74_2}, by means of a geometric
 construction pertaining to the h-principle.

 See  J. Mather's proof in codimension $1$  in
\cite{mather_73}; for the general codimensions,
Mather (\cite{mather_76} pp. 79--80)
mentions that Thurston had three different
 proofs; the first seems to be lost;
see \cite{mather_75}\cite{mather_76}\cite{mcduff_79}\cite{mcduff_80}\cite{segal_78}
for the two other proofs; see also \cite{kupers_19} for a modern one. 

 \medbreak
 Precisely, fix a manifold $X$  of dimension $q\ge 1$, without boundary,
 not necessarily compact. 
   ``At infinity'' means as usual
    ``except maybe on some compact subset''.
For a compact oriented manifold $V$ of dimension $p\ge 0$ maybe with smooth boundary,
on $V\times X$,
consider the slice
foliation $\FF_V$ parallel to $X$; and
 the
 \emph{horizontal}  codimension-$q$  foliation parallel to $V$,
 or equivalently the \emph{horizontal} $\Gamma_q$-structure
 whose differential is
the projection $$\tau(V\times X)\to\tau\FF_V$$ parallely to $\tau V$.

Recall that a \emph{foliated $X$-product} over $V$ means a
codim\-ension-$q$ foliation on
$V\times X$  complementary to $\FF_V$ on $V\times X$ and horizontal at infinity.
Equivalently, this
 amounts to  a $\Gamma_q$-structure on $V\times X$
whose normal bundle is $\tau\FF_V$, whose differential
induces the identity on $\tau\FF_V$, and horizontal at infinity.

We use the notation $\hat V\subset V\times\I$ for the union of
$V\times 0$ with $\partial V\times\I$.
An \emph{oriented cobordism $(V,W,V^*)$ rel. $\partial V$}
 means as usual a compact oriented $p$-fold
$V^*$ together with an oriented diffeomorphism $\partial V^*\cong\partial V$, and
a compact oriented $(p+1)$-fold
$W$ bounded by
$-\hat V\cup_{\partial V} V^*$.
We write 
 $\hat\pr$ for the restriction to $\hat V\times X\subset V\times\I\times X$
 of the projection
$V\times\I\times X\to V\times X$.
 We shall prove:

 \begin{thm}[Mather-Thurston, version ``for geometrically minded topologists'']\label{MT_thm}
  Let $V$ be a compact oriented manifold; let $\gamma$ be
  a $\Gamma_q$-structure on $V\times X$, of normal bundle $\tau\FF_V$,
   horizontal at infinity, and
 restricting to
 a foliated $X$-product over $\partial V$.
  
  Then,
 there are an oriented cobordism $(V,W,V^*)$ rel. $\partial V$ and
a $\Gamma_q$-structure
 on $W\times X$, of normal bundle $\tau\FF_W$, horizontal at infinity, coinciding with $\hat\pr^*(\gamma)$
on $\hat V\times X$, and restricting to a foliated $X$-product over $V^*$.
 
 \end{thm}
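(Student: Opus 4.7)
\emph{Plan.} The plan is to apply Theorem A' (the cleft version of Theorem A, which holds in all codimensions $q\ge 1$; see Section \ref{holes_sec}) to the given $\gamma$ and the foliation $\FF_V$ on $V\times X$; the resulting cleft complementary foliation then differs from a true foliated $X$-product only along finitely many fissures, and these will be dissolved by attaching handles to the trivial cobordism $V\times\I$ equipped with the concordance. Along each new handle, the $\Gamma_q$-structure is extended by Thurston's hole-filling family $(\omega_r)_{r\in\I}$ run so as to interpolate between the cleft Reeb-like local model of the fissure and a horizontal foliated product.

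First I would invoke Theorem A' with a $\tau\FF_V$-valued $1$-form $\omega$ on $V\times X$, of constant rank $q$ on $\tau\FF_V$, chosen to agree with $d\gamma$ on $\hat V\times X$ and at infinity. Such an $\omega$ exists because the horizontality condition is fiberwise convex; truncating outside the compact region where $\gamma$ is non-horizontal reduces to the compact case of Theorem A'. The output is a cleft $\Gamma_q$-structure $\gamma'$ of normal bundle $\tau\FF_V$, equal to $\gamma$ on $\hat V\times X$ and at infinity, concordant there to $\gamma$ through a $\Gamma_q$-structure $\tilde\gamma$ on $V\times\I\times X$ which agrees with $\hat\pr^*\gamma$ on $\hat V\times X$, and whose induced foliation $\GG'$ is complementary to $\FF_V$ outside finitely many disjoint fissures $F_i=\Sigma_i\times\D^q$. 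Each $\Sigma_i\subset V$ is a closed $(p-2)$-manifold with a trivialized normal $\D^2$-bundle in $V$, and each slice $\{\sigma\}\times\D^q$ of $F_i$ is a plaque of $\FF_V$.

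Next I would build $W$ from $V\times\I$ by attaching to its top face $V\times\{1\}$, for each fissure, a generalized handle $H_i:=\Sigma_i\times\D^2\times\I$ glued along $\Sigma_i\times\partial\D^2\times\I$ to a collar of $\Sigma_i\times\partial\D^2$ inside $V\times\{1\}$, in such a way that in the new top boundary $V^*$ the tubular neighborhood $\Sigma_i\times\D^2\subset V\times\{1\}$ is capped off by the outer face $\Sigma_i\times\D^2\times\{1\}$ of $H_i$. This produces an oriented cobordism $(V,W,V^*)$ rel.\ $\partial V$. On the cylindrical part $V\times\I\times X\subset W\times X$, I keep $\tilde\gamma$; on each piece $H_i\times X=\Sigma_i\times\D^2\times\I\times X$, I define the $\Gamma_q$-structure by the hole-filling family $(\omega_r)_{r\in\I}$ of Figure \ref{movie_fig}, arranged so that the $r=1/2$ face matches the cleft Reeb-like local model of $\gamma'$ on the fissure side and the $r=1$ face is the horizontal foliated $X$-product on the new portion of $V^*\times X$.

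The main obstacle is this last step: checking that the Thurston family $(\omega_r)$ can be installed on each $H_i\times X$ so as to deliver a $\Gamma_q$-structure of normal bundle $\tau\FF_W$, matching the cleft structure of $\gamma'$ at the fissure end and the horizontal product at the new top end, with all framings coherent across the glueing collar. This is essentially the computation used to build the multifold Reeb/fissure models of Definition \ref{multifold_Reeb_dfn} read in reverse: the $\omega_r$ of the fissure model at $r=1/2$ is continued back to $\omega_1=ds$, which is exactly the horizontal product on $V^*\times X$. Once this local hole-filling is verified, gluing produces the required $\Gamma_q$-structure on $W\times X$ satisfying all the conditions of the theorem.
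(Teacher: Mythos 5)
The core of your plan is right as far as it goes: apply Theorem \emph{A'} to get a cleft $\Gamma_q$-structure on $V\times\I\times X$ concordant to $\hat\pr^*\gamma$, and then kill the fissures at the cost of modifying the cobordism. That much does match the paper. But the way you propose to kill the fissures is wrong, for two related reasons.

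First, filling the resulting holes by the family $(\omega_r)$ of Subsection \ref{qc_ssec} produces a \emph{quasi-complementary} foliation, i.e.\ one that is tangential to the $\D^q$-slices along a hypersurface (the compact Reeb leaf at $\rho\circ\pr_2=1/2$). Theorem \ref{MT_thm} requires a genuine foliated $X$-product over $V^*$, with no tangencies at all. More concretely, your handle $H_i\times X$ has bottom boundary where the $\Gamma_q$-structure must match the suspension $\SS_\varphi$ around $\Sigma_i\times\partial\D^2$, and a foliated $\D^q$-bundle over $\D^2$ has trivial boundary monodromy; since Theorem \emph{A'} imposes $\varphi_1\neq\id$, no such complementary filling exists over a disk. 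The $(\omega_r)$ family gets around this only by allowing the tangencies, which is exactly what one cannot afford here.

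Second, your cobordism is essentially trivial: attaching $\Sigma_i\times\D^2\times\I$ to $V\times\{1\}$ along $\Sigma_i\times\partial\D^2\times\I$ and capping $\Sigma_i\times\D^2$ by $\Sigma_i\times\D^2\times\{1\}$ leaves $V^*$ diffeomorphic to $V$ and $W$ diffeomorphic to $V\times\I$. But the entire content of Mather--Thurston is that one must in general change the oriented bordism class of $V$. The paper does this by choosing $\varphi$ in Theorem \emph{A'} to be a \emph{product of commutators} $[\alpha_1,\beta_1]\cdots[\alpha_g,\beta_g]$ with $\varphi_1\neq\id$; then the suspension $\SS_\rho$ of the corresponding representation $\rho:\pi_1(S_g)\to\diff(\D^q)^\I$ over the genus-$g$ surface $S_g$ with one boundary circle is a genuine foliated $\D^q$-bundle bounded by $\SS_\varphi$. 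The surgery is then to cut $j(\Sigma_\ell\times\D^2)$ out of $W$ and paste in $\Sigma_\ell\times S_g$ — this really does change $W$ and $V^*$, and it is what makes the complementary filling possible. For this surgery to even make sense, one needs the core components $\Sigma_\ell\subset W\times X$ to embed into the base $W$ under $\pi_W$; that is the role of Theorem \emph{A'} (ii), which your proof never invokes. Finally, the paper performs these surgeries one component $\Sigma_\ell$ at a time, pulling back the cleft structure through the collapsing map $a:W'\to W$ after a generic small horizontal perturbation (Lemma \ref{generic_lem}) to ensure pullability; your plan omits this inductive step and the need for the perturbation.
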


 This interpretation of the Mather-Thurston theorem
  was introduced by D.B. Fuchs \cite{fuchs_74}.
Let us now recall how the classical version of the theorem can be deduced.

Consider the group $\diff_c(X)$ of the compactly supported diffeomorphisms of $X$,
endowed with the smooth topology; the same group $\diff_c(X)_\delta$ with the discrete topology;  the identity map
$$\diff_c(X)_\delta\to\diff_c(X)$$
and  the topological group $\overline{\diff_c(X)}$ that is the
 homotopy-theoretic fibre  of this map.
Recall that its classifying space $B\overline{\diff_c(X)}$
also classifies the foliated $X$-products.

 Realize
 $B\Gamma_q$ as a fibred space over
$BO_q$ with fibre $B\bar\Gamma_q$.  Denote by $\XX$
the foliation of $X$ by points (regarded as a $\Gamma_q$-structure
on $X$ whose normal bundle is $\tau X$ and whose differential is the identity
of $\tau X$).
Consider the space $\G_c(\tau X)$  of the maps $$f:X\to B\Gamma_q$$ such that $f$
 lifts the map
$X\to BO_q$ classifying $\tau X$, and such that $f$ classifies $\XX$ 
at infinity.
One can view $\G_c(\tau X)$ as the space of the
$\Gamma_q$-structures on $X$ whose normal bundle is $\tau X$
and coinciding with $\XX$ at infinity.
For $X=\R^q$, the space $\G_c(\tau\R^q)$ coincides with the
$q$-th loop space $\Omega^q(B\bar\Gamma_q)$.

Consider the map 
$$B\overline{\diff_c(X)}\overset{\alpha}{\longrightarrow}\G_c(\tau X)$$
 adjoint to the map $$X\times B\overline{\diff_c(X)}\to B\Gamma_q$$
 that classifies the $\Gamma_q$-structure of the total space of the universal
foliated $X$-product
 \cite{mather_73}\cite{thurston_74_2}.

Theorem \ref{MT_thm} amounts to say that $\alpha$ induces an isomorphism in
{oriented bordism.}
Equivalently, by
 the ``Hurewicz theorem for bordism groups''
(\cite{atiyah_hirzebruch_61}, see also \cite{eliashberg_galatius_mishachev_11},
appendix B), \emph{$\alpha$ induces
 an isomorphism in integral homology.}
 That last wording is the classical one. The map
  $\alpha$ is actually a homology equivalence:
   indeed, $B\bar\Gamma_q$ being $(q+1)$-connected
 in class $C^\infty$ \cite{thurston_74_2}, the space
 $\Gamma_c(\tau X)$ is simply connected.
 
 \medbreak There is also a well-known generalization
 of the Mather-Thurston theorem for foliated bundles
 rather than foliated products.
 
 Still consider a manifold $X$  of dimension $q\ge 1$; to simplify, assume that $X$ is closed.
As usual, an \emph{$X$-bundle} $(E,\pi)$
over a compact manifold $V$
is a smooth locally trivial bundle map  $\pi:E\to V$
 whose fibres are diffeomorphic with $X$; 
a \emph{foliated $X$-bundle} means moreover a
codim\-ension-$q$ foliation on
$E$  complementary to the fibres of $\pi$.
Equivalently, this
 amounts to  a $\Gamma_q$-structure on $E$
whose normal bundle is $\ker(d\pi)$ and whose differential
induces the identity on $\ker(d\pi)$.

  The generalized Mather-Thurston theorem can be stated as 
 an isomorphism in integral homology between
 the space $B\diff^\infty(X)_\delta$ that classifies the foliated $X$-bundles,
 and the classifying space for $\Gamma_q$-struc\-tures on $X$-bundles
 whose normal bundle is tangential to the fibres. For a construction of this
 last classifying space and a proof of the generalized theorem
 as a corollary of the usual Mather-Thurston theorem, see
 \cite{nariman_17}, section 1.2.2.

  Here is the version ``for geometrically minded topologists'' of the generalized
  Mather-Thurston theorem.
The notation $\hat V$ has the same meaning as above.
 \begin{thm}\label{GMT_thm}
  Let $(E,\pi)$ be an $X$-bundle over a compact oriented manifold
  $V$; let $\gamma$ be
  a $\Gamma_q$-structure on $E$, of normal bundle $\ker(d\pi)$,
 and
 restricting to
 a foliated $X$-bundle over $\partial V$.
  
  Then,
 there are
 \begin{itemize}
 \item An oriented cobordism $(V,W,V^*)$ rel. $\partial V$;
 \item An $X$-bundle $(E_W,\pi_W)$ over $W$, coinciding over $\hat V$
 with the pullback of $(E,\pi)$ through the projection $V\times\I\to V$;
 \item A $\Gamma_q$-structure
 on $E_W$, of normal bundle $\ker(d\pi_W)$, coinciding over $\hat V$ with
 the pullback of $\gamma$ through the projection $E\times\I\to E$,
  and restricting to a foliated $X$-bundle over $V^*$.
 \end{itemize}
 \end{thm}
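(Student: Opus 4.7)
The plan is to reduce Theorem \ref{GMT_thm} to Theorem \ref{MT_thm} by an induction over a handle decomposition of the base $V$. The underlying idea is that an $X$-bundle is locally a product, so MT can be applied piecewise on each handle and the results glued.

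First, since $\pi:E\to V$ is locally trivial and $V$ is compact, I would choose a smooth handle decomposition of $V$ relative to $\partial V$ with handles $H_1,\dots,H_N$ (ordered by attachment), such that over a neighborhood of each $H_k$ the $X$-bundle $(E,\pi)$ trivializes as a product. Write $V_k := \partial V\cup H_1\cup\cdots\cup H_k$ and $E_k := \pi\mun(V_k)$.

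Second, I would build inductively a cobordism $(V_k, W_k, V_k^*)$ rel. $V_k\cap\partial V$, together with an $X$-bundle $(E_{W_k}, \pi_{W_k})$ over $W_k$ extending $E_k$, and a $\Gamma_q$-structure on $E_{W_k}$ of normal bundle $\ker(d\pi_{W_k})$ that restricts to a foliated $X$-bundle over $V_k^*$ and matches $\gamma$ on the $V_k\times\{0\}$ part of $\hat V_k$. The base of the induction is supplied by the hypothesis on $\partial V$. At stage $k$ I attach $H_k$: via the chosen trivialization, the restriction of $\gamma$ to $\pi\mun(H_k)\cong H_k\times X$ is a $\Gamma_q$-structure on a product $X$-bundle, and on the part of $\partial H_k$ already sitting inside $V_{k-1}^*$ it is already a foliated $X$-product (by the inductive hypothesis, extended through a collar). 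Applying Theorem \ref{MT_thm} to $H_k\times X$ yields a cobordism $(H_k, W_{H_k}, H_k^*)$ rel. $\partial H_k$ together with a foliated $X$-product over $H_k^*$ matching the existing data on the attaching region. I then glue $W_{H_k}$ onto $W_{k-1}$ along that region to form $W_k$, and set $V_k^* := V_{k-1}^*\cup_{\partial H_k} H_k^*$.

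Third, after $k=N$ I obtain the desired $(W, V^*)$, bundle $E_W$, and $\Gamma_q$-structure. The global $X$-bundle structure $\pi_W:E_W\to W$ is well-defined because each piece $W_{H_k}$ carries a product $X$-bundle (MT modifies only the base direction, leaving the $X$-factor untouched), while the transition data across adjacent handles is inherited unchanged from $\pi:E\to V$; similarly, the orientations on the various $W_{H_k}$ agree on overlaps, so $W$ is coherently oriented.

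The main obstacle is guaranteeing that the relative-boundary hypothesis of Theorem \ref{MT_thm} holds at each inductive step, namely that after stage $k-1$ the $\Gamma_q$-structure is already a genuine foliated $X$-product in a collar neighborhood of the attaching sphere of $H_k$. I would handle this by strengthening the inductive statement to include such a collar, which is achievable by slightly shrinking the region over which MT is applied at each stage and by exploiting the relative-boundary nature of Theorem \ref{MT_thm} itself.
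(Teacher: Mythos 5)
Your approach is genuinely different from the paper's, which proves Theorem \ref{GMT_thm} not by reducing it to Theorem \ref{MT_thm}, but by re-running the proof of \ref{MT_thm} verbatim with $E$ and $\FF:=\ker(d\pi)$ in place of $V\times X$ and $\FF_V$: Theorem \emph{A'} is stated for an arbitrary foliated compact manifold, so one applies it to $(E,\ker(d\pi),\gamma)$ directly, then fills the resulting fissure by surgering the base $W$ exactly as in Section \ref{MT_sec}. The ``very few straightforward changes'' the paper alludes to concern Theorem \emph{A'}(ii), whose proof uses only that the foliated manifold fibres over a base, which $E\to V$ does.

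Your reduction, however, has a gap. To invoke Theorem \ref{MT_thm} on a handle $H_k$ (with base $V:=H_k$), the hypothesis requires $\gamma$ to restrict to a foliated $X$-product over \emph{all} of $\partial H_k$. The inductive hypothesis gives you this only over the attaching region $\partial_- H_k\subset V_{k-1}^*$; over the free boundary $\partial_+ H_k$ the structure is just the unmodified $\gamma$, which is a general $\Gamma_q$-structure with no integrability whatsoever. The obstacle you flag at the end --- propagating the foliated-product property across a collar of the attaching sphere --- is real but secondary; the primary failure is on $\partial_+ H_k$, about which the inductive hypothesis says nothing. The breakdown is already total at the first step when $\partial V=\emptyset$: then $H_1$ is a $0$-handle, $\partial H_1=\S^{p-1}$ is entirely free boundary, and there is no region where the hypothesis of \ref{MT_thm} holds, so the theorem cannot be applied at all.

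To make a reduction along these lines work you would need an additional, independent step: first homotope $\gamma$ (by the parametric Gromov--Phillips--Haefliger theorem on open manifolds, as in Lemma \ref{skeleton_lem}) so that it is already a foliated $X$-bundle over a neighborhood of the $(p-1)$-skeleton of a triangulation of $V$ trivializing $\pi$, and only then apply \ref{MT_thm} over the top cells, where the hypothesis would finally be met on the whole boundary. That preliminary normalization is not a consequence of \ref{MT_thm}; it is precisely the part of the machinery (jiggling plus the open h-principle) that the paper re-runs inside the proof of \ref{MT_thm} by going through Theorem \emph{A'}, and which your outline currently omits.
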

 Our method to prove
  Theorem \ref{MT_thm} actually also gives \ref{GMT_thm}
 after a very few straightforward changes which are left to the reader.
 
   \begin{note}[Other differentiability classes]
   The Mather-Thurston theorem and its generalization for foliated bundles are well-known to
   hold for every differentiability class $C^r$, $r$ integer $\ge 1$. Our proof
   goes without change in these classes. Three other interesting classes are $C^0$,
   Lipschitz and $PL$. The validity of the
    Mather-Thurston theorem is classical in the classes $C^0$ and
   Lipschitz, and also in the class $PL$ in codimension $1$;
    but it remains open in class $PL$ for $q\ge 2$. Our methods cannot afford the $C^0$ class,
   but do afford the Lipschitz and $PL$ classes after a little precisions and adaptations; this material 
   will be covered in a forthcoming paper.
   \end{note}

Let us illustrate Theorem
\ref{GMT_thm} by two known
 corollaries.
Since
  $B\bar\Gamma_q^1$ is contractible
   \cite{tsuboi_89} and since
  $B\bar\Gamma_q^\infty$ is $(q+1)$-connected \cite{thurston_74_2}:
  \begin{cor}\
  
 i)  Every $X$-bundle over a closed oriented manifold is cobordant to
  a $C^1$-foliated $X$-bundle.
  
ii)   Every $X$-bundle over a closed oriented \emph{surface}
   is cobordant to
  a $C^\infty$-foliated $X$-bundle.
  \end{cor}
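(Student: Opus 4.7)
The plan is to reduce directly to Theorem \ref{GMT_thm} in the special case $\partial V=\emptyset$. First, I would construct some $\Gamma_q^r$-structure $\gamma$ on $E$ whose normal bundle is $\ker(d\pi)$, with no constraint whatsoever on its differential; here $r=1$ in case (i) and $r=\infty$ in case (ii). Since $\partial V$ is empty, the hypothesis of Theorem \ref{GMT_thm} over $\partial V$ is vacuously satisfied, so that theorem---valid in both classes $C^1$ and $C^\infty$ by the differentiability remarks of the paper---would produce an oriented cobordism $(V,W,V^*)$ together with an $X$-bundle $(E_W,\pi_W)$ over $W$ that restricts to $(E,\pi)$ over $V\subset W$ and carries, over $V^*$, a $\Gamma_q^r$-structure whose differential induces the identity on $\ker(d\pi_W)$, i.e.\ a $C^r$-foliated $X$-bundle. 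The bundle $(E_W,\pi_W)$ is then the desired cobordism of $X$-bundles.

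For the preliminary step, a $\Gamma_q^r$-structure on $E$ of normal bundle $\ker(d\pi)$ is by definition a lift, through the fibration
\[
B\bar\Gamma_q^r\longrightarrow B\Gamma_q^r\longrightarrow BO_q,
\]
of the classifying map $E\to BO_q$ of $\ker(d\pi)$. The successive obstructions to such a lift live in the cohomology groups $H^{k+1}(E;\pi_k(B\bar\Gamma_q^r))$. In case (i), the fibre $B\bar\Gamma_q^1$ is contractible \cite{tsuboi_89}, so every one of these groups vanishes and the lift exists unconditionally. In case (ii), one has $\dim E=q+2$, while $B\bar\Gamma_q^\infty$ is $(q+1)$-connected \cite{thurston_74_2}, so $\pi_k(B\bar\Gamma_q^\infty)=0$ for $k\le q+1$, which precisely covers the range of degrees $k+1\le\dim E$ in which obstructions could a priori appear.

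I do not expect any genuine obstacle. The whole substance of the corollary is packaged inside Theorem \ref{GMT_thm}; the preliminary lift is a standard obstruction-theoretic argument whose success is a direct consequence of the recalled connectivity properties of the Haefliger classifying space. The only point to verify with care is the numerical match between $\dim E$ and the fibre connectivity in case (ii), and this match is immediate.
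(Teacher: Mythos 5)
Your argument is correct and is exactly the one the paper intends: the corollary is stated immediately after the reminder that $B\bar\Gamma_q^1$ is contractible and $B\bar\Gamma_q^\infty$ is $(q+1)$-connected, and those facts are used precisely as you do, to produce the preliminary $\Gamma_q^r$-structure on $E$ by obstruction theory before invoking Theorem \ref{GMT_thm} with $\partial V=\emptyset$. The dimension count in case (ii) ($\dim E=q+2$, obstructions in $H^{k+1}(E;\pi_k(B\bar\Gamma_q^\infty))$ for $k\le q+1$, all vanishing by $(q+1)$-connectivity) is the right one.
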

  For (ii), when $X$ itself is a surface, see also
   \cite{nariman_17}, remark 3.16.
 
\medbreak
\emph{Jenseits des Homotopieprinzips ---}
One could maybe speak of a ``c-principle'', with a
\emph{c} for ``cobordism". Recall that in situations where
 Gromov's famous \emph{h-principle}
 holds,
every ``formal'' object is \emph{homotopic} to a genuine object through the formal objects.
In the same way, let us say that the \emph{c-principle} holds when
every formal object is \emph{cobordant}
 to a genuine object through the formal objects.
Examples of results pertaining to the c-principle are
the Mather-Thurston theorem for foliated products or bundles,
 the Madsen-Weiss theorem
for fibrations whose fibres are surfaces
(see \cite{eliashberg_galatius_mishachev_11}), and the realization of taut compactly generated pseudogroups
by foliations of dimension $2$ and codimension $1$ \cite{meigniez_16}.
In the same spirit, see Fuchs' early paper \cite{fuchs_74} and Kupers' recent one
\cite{kupers_19}.

\medbreak
It is a pleasure to thank Mike Freedman,
 Fran\c cois Laudenbach, Nikolai Mish\-achev, Sam Nariman, Larry Siebenmann,
 and the anonymous referees,
for constructive exchanges which have benefited this work.

 \section{Haefliger structures}\label{haefliger_sec}
 In this section, we recall A. Haefliger's notion of $\Gamma_q$-\emph{structure}
   \cite{haefliger_58}\cite{haefliger_69} 
\cite{haefliger_70}\cite{haefliger_71}, 
   under the form of microfoliated bundle (this form was introduced in \cite{haefliger_69};
   see also \cite{milnor_70}). We fix some vocabulary, point out a few elementary facts,
 and prove (again) the two parametric forms
of the classical Foliation theorem on open manifolds.

   We write every real vector bundle $\nu$ over a manifold $M$
    under the form $\nu=(E,\pi,Z)$ where $E$ is the total space,
$\pi:E\to M$ is the projection, and $Z:M\to E$ is the zero section.
\subsubsection{Definition}\label{definition_sssec}

A $\Gamma_q$-{structure} $\gamma$ on $M$ is given by
 \begin{itemize}
 \item A real vector bundle $\nu=(E,\pi,Z)$ of dimension $q$ over $M$;
 
 \item An open neighborhood $U$ of $Z(M)$ in $E$;
 
\item On $U$, a cod\-imens\-ion-$q$ foliation $\MM$
   transverse to every fibre.
\end{itemize}

One calls $\nu$ the \emph{normal bundle,} and $\MM$ the  \emph{microfoliation}
(Figure \ref{gamma_fig}).
\begin{figure}
\includegraphics*[scale=0.45, angle=-90]{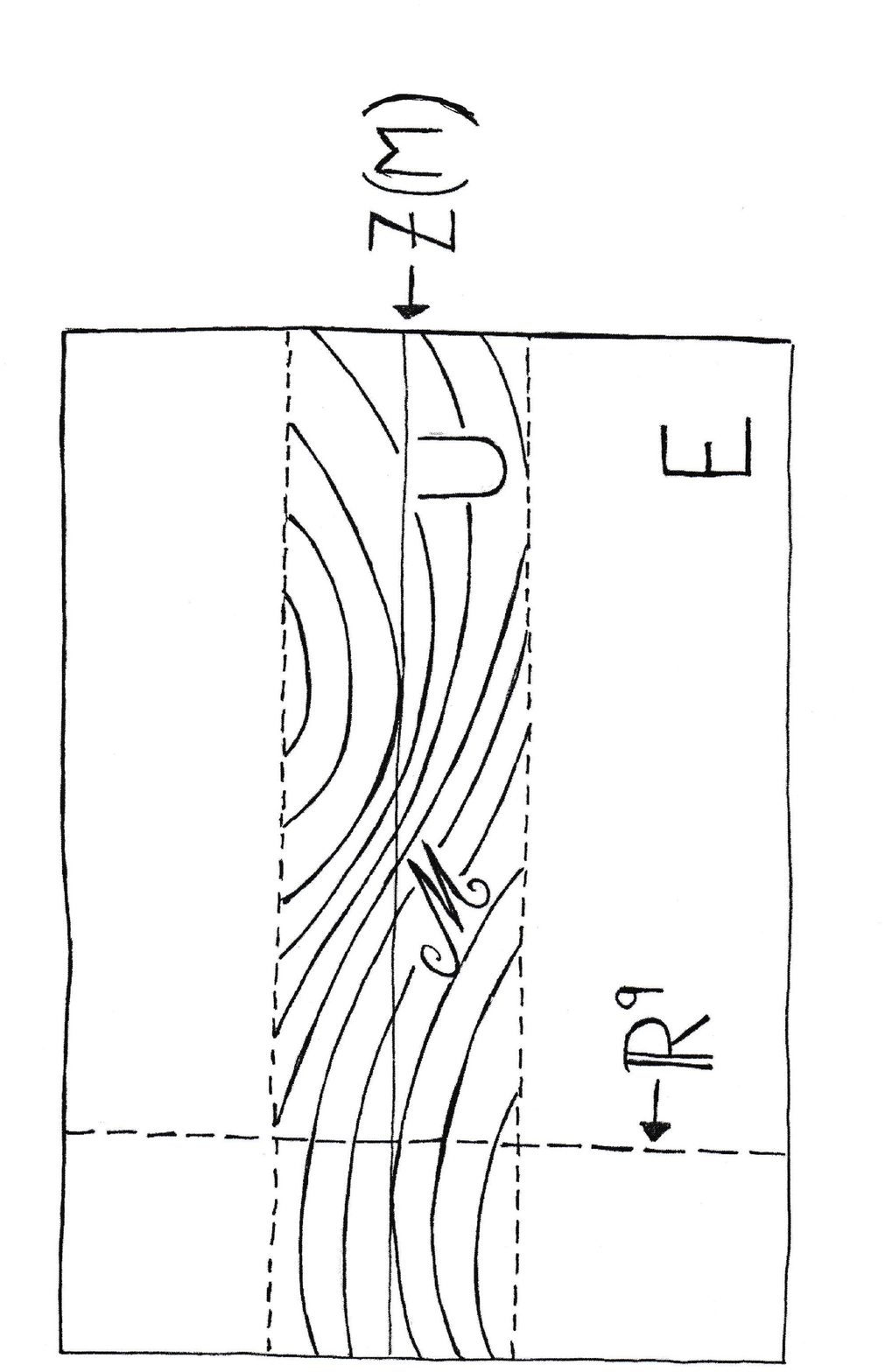}
\caption{A $\Gamma_q$-structure.}
\label{gamma_fig}
\end{figure}
 One regards two $\Gamma_q$-structures 
 as equal if they have the same normal bundle and if their microfoliations
coincide on some open
 neighborhood of the zero section; in other words, $\gamma$ is
 actually the germ of
 $\MM$ along $Z(M)$; we shall also denote $\gamma$ by $[\nu,U,\MM]$.

 A \emph{$\bar\Gamma_q$-structure} means
a $\Gamma_q$-structure whose normal bundle is $M\times\R^q$.

 \subsubsection{Canonical form and differential}\label{differential_sssec}
 Consider a $\Gamma_q$-structure $\gamma=[\nu,U,\MM]$ on a manifold $M$,
where $\nu=(E,\pi,Z)$.

On the manifold $U$, let $\Omega$ be the
differential $1$-form  valued in $\pi^*(\nu)$, defined at every point $v\in U$  as
 the projection of $\tau_vE$ onto $\ker(d_v\pi)=\nu_{\pi(v)}$ parallelly to $\tau_v\MM$.
If one likes better, $\Omega$ is the unique $1$-form defining the
foliation $\MM$ (in the sense that $\tau\MM=\ker\Omega$)
 and whose restriction to every fibre of $\pi$ is the identity.
We call $\Omega$ the \emph{canonical} form of the $\Gamma_q$-structure $\gamma$.
Let us define the \emph{differential}
 $d\gamma$ of the $\Gamma_q$-structure $\gamma$ as $Z^*(\Omega)$: a differential
 $1$-form on $M$ valued in $\nu$.

\begin{note} The notion of differential for a Haefliger structure does not
seem to appear in the literature.
In the case where $d\gamma$ is of rank $q$, of course $d\gamma$ admits
a convex set of left inverse vector
 bundle monomorphisms $\nu\hookrightarrow\tau M$, which are the objects
 that the authors have
considered instead. The differential exists for every Haefliger structure at every point,
 not only at
the regular ones (see subsection \ref{regular_sssec} below). It is functorial with respect to pullbacks (see subsection \ref{pull_sssec} below).
 We feel that speaking in terms of $d\gamma$,
 the analogy between the h-principle for foliations
and several other \emph{avatars} of Gromov's h-principle becomes more transparent.
From our viewpoint, the Foliation theorems \ref{h_thm} above and
\ref{parametric_thm} below deal with homotoping a given differential form of maximal rank
to one which is somehow integrable.
\end{note}

\subsubsection{Pullback}\label{pull_sssec}
 Given a $\Gamma_q$-structure $\gamma=[\nu,U,\MM]$ on $M$, and given
  a smooth mapping $f:N\to M$, one defines a pullback $\Gamma_q$-structure
  $f^*(\gamma)$ on $N$ whose normal bundle is the usual pullback vector bundle
   $f^*(\nu)$, and
  whose microfoliation is the pullback of the foliation $\MM$ under
  the canonical bundle morphism $f^*(\nu)\to\nu$.
  \begin{exm}\label{tautological_exm}
We give, as an example, a useful construction allowing one
 to regard the microfoliation of any Haefliger structure as another Haefliger structure.

 Given $\gamma=[\nu,U,\MM]$ as above, where $\nu=(E,\pi,Z)$,
 consider the vector bundle $\hat\nu:=\pi^*(\nu)$ of rank $q$ over the base $E$; consider
  its total space $$\hat E=E\times_ME=\{(v,w)\in E\times E/\pi(v)=\pi(w)\}$$
 and the exponential map $$\exp:\hat E\to E:(v,w)\mapsto v+w$$
  One has \emph{on the manifold $U$}
 the $\Gamma_q$-structure $\hat\gamma$ whose normal bundle is $\hat\nu\vert U$
  and whose microfoliation is the pullback of the foliation $\MM$ under $\exp$.
  Clearly,
  \begin{equation}\label{tautological_eqn}
   Z^*(\hat\gamma)=\gamma
   \end{equation}
 \end{exm}
  
  \subsubsection{Isomorphisms}\label{isomorphism_sssec}
   J. Milnor's notion of \emph{microbundle} (here smooth)
  is the natural one for the bundle normal to a Haefliger
  structure. For simplicity, one rather speaks in terms of vector bundle; but this underlying fact
  is reflected in the morphisms that one admits. It is enough for us to
  consider isomorphisms.
   \begin{dfn}
 A
 \emph{microisomorphism} between two real vector bundles $\nu=(E,\pi,Z)$, $\nu'=(E',\pi',Z')$
 over a same manifold $M$ is 
  a germ of diffeomorphism $\varphi:E\to E'$ along
  $Z(M)$ such that $Z'=\varphi\circ Z$
  and $\pi=\pi'\circ\varphi$.

   If moreover $\nu'=\nu$ and if the differential
 of $\varphi$ at every point of $Z(M)$ is the identity, we call $\varphi$ a
 \emph{special microautorphism} of $\nu$.
  \end{dfn}

Clearly, the group $Maut(\nu)$ of the microautomorphisms of $\nu$
splits as the semidirect product of the group $Smaut(\nu)$ of the special
microautomorphisms by the group $Aut(\nu)$ of the linear automorphisms.

Note that $Smaut(\nu)$ is convex, hence contractible.

 \begin{dfn}
Given two $\Gamma_q$-structures $\gamma:=[\nu,U,\MM]$
 and  $\gamma':=[\nu',U',\MM']$ on a same manifold $M$,
  an
 \emph{isomorphism} between them
  is a microisomorphism $\varphi:\nu\to\nu'$ such that $\MM=\varphi^*(\MM')$
  (as germs of foliations along $Z(M)$).

  \end{dfn}

  \subsubsection{Concordance}\label{concordance_sssec}
  
  Let $\gamma_0$, $\gamma_1$ be   two $\Gamma_q$-structures  on a same manifold $M$
  which coincide in restriction to some submanifold $N\subset M$ (maybe empty). 
  
 \begin{dfn} A \emph{concordance,} also known as a \emph{homotopy,}
  between $\gamma_0$ and $\gamma_1$ (rel. $N$), is a
   $\Gamma_q$-structure $\gamma$ on $M\times{\I}$
  such that
  \begin{itemize}
  \item $\gamma\vert(M\times i)=\gamma_i$,
  for $i=0,1$;
  \item $\gamma$ coincides with $\pr_1^*(\gamma_0)$ in restriction to $N\times\I$.
  \end{itemize}
  \end{dfn}
  
    \begin{lem}\label{concordance_lem}
  
  If $\gamma_0$ and $\gamma_1$ are
   isomorphic (rel. $N$), then they
  are concordant  (rel. $N$).

     \end{lem}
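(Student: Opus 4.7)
Let $\varphi : \nu_0 \to \nu_1$ be the microisomorphism witnessing $\gamma_0 \cong \gamma_1$. It splits canonically as $\varphi = A \circ \psi$, where $A := d\varphi\vert_{Z(M)}$ is its linear part, a genuine vector bundle isomorphism $\nu_0 \to \nu_1$, and $\psi := A^{-1}\circ \varphi \in Smaut(\nu_0)$ is the corresponding special microautomorphism --- this is the semidirect product decomposition $Maut \cong Smaut \rtimes Aut$ noted in \ref{isomorphism_sssec}. Since ``rel.\ $N$'' just means that $\varphi$ is the germ of the identity over $Z(N)$, both $A$ and $\psi$ also equal the identity on $N$. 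My strategy is to produce a concordance rel.\ $N$ for each factor separately and concatenate them, exploiting the fact that concordance is straightforwardly transitive.

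\emph{The special factor.} Take $\tilde\nu := \pr_1^*\nu_0$ on $M \times \I$, with total space $E_0 \times \I$. By convexity of $Smaut(\nu_0)$, the formula $\psi_t := (1-t)\psi + t \cdot \id$ defines a smooth path in $Smaut(\nu_0)$ from $\psi_0 = \psi$ to $\psi_1 = \id$, constantly equal to $\id$ over $N$. The map $\Psi : (v, t) \mapsto (\psi_t(v), t)$ is, as a germ along $Z(M) \times \I$, a fiber-preserving diffeomorphism of $E_0 \times \I$; set the microfoliation to be $\Psi^*(\pr_1^*(A^*\MM_1))$. Its restriction at $t = 0$ is $\psi^*(A^*\MM_1) = \varphi^*\MM_1 = \MM_0$, giving $\gamma_0$; at $t = 1$ it is $A^*\MM_1$, giving the intermediate $\Gamma_q$-structure $\gamma_1' := [\nu_0, A^{-1}(U_1), A^*\MM_1]$.

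\emph{The linear factor.} The vector bundle isomorphism $A$ provides a ``mapping cylinder'' bundle $\tilde\nu$ on $M \times \I$ with $\tilde\nu\vert_{t=0} = \nu_0$ and $\tilde\nu\vert_{t=1} = \nu_1$: clutch $\pr_1^*\nu_0$ on $M \times [0, 2/3]$ to $\pr_1^*\nu_1$ on $M \times [1/3, 1]$ via the identification $(v, t) \sim (A(v), t)$ on the overlap. On the total space, put the microfoliation that is $\pr_1^*(A^*\MM_1)$ on the $\nu_0$-piece and $\pr_1^*(\MM_1)$ on the $\nu_1$-piece; by definition of pullback these agree through the clutching. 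The result is a concordance from $\gamma_1'$ to $\gamma_1$, constant over $N \times \I$ because $A$ is the identity there.

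Concatenating the two concordances, after a smooth reparameterization arranging each to be constant near the common seam at $t = 1/2$, yields the required concordance from $\gamma_0$ to $\gamma_1$ rel.\ $N$. The only genuine verification is that $\psi_t$ really lies in $Smaut(\nu_0)$ for every $t$: fiber-preservation and the fixing of $Z(M)$ are immediate from the convex-combination formula; the differential of $\psi_t$ along $Z(M)$ is the convex combination $(1-t)\id + t \cdot \id = \id$; and $\psi_t$ is therefore a local diffeomorphism near $Z(M)$ by the inverse function theorem, which is all that is required at the germ level. No other technical obstacle arises.
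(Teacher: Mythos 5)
Your proof is correct, but it takes a genuinely different route from the paper's. The paper's proof is a one-step mapping-cylinder construction: it glues $U_0\times\I$ to $U_1$ by identifying $(v,1)$ with $\varphi(v)$, then observes that the quotient is (germwise) an open neighborhood of the zero section in a vector bundle over $M\times\I$, and that the image of $\pr_1^*(\MM_0)$ furnishes the microfoliation of the required concordance. You instead exploit the semidirect-product splitting $Maut(\nu_0)\cong Smaut(\nu_0)\rtimes Aut(\nu_0)$, factor $\varphi = A\circ\psi$, and then concatenate two concordances: the special factor $\psi$ is absorbed by the straight-line path in the convex group $Smaut(\nu_0)$ over the constant bundle $\pr_1^*\nu_0$, and the linear factor $A$ is absorbed by the mapping cylinder of a genuine vector bundle isomorphism. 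Your version is a bit longer, but it has the merit of making completely explicit the claim the paper passes over quickly, namely that the glued-up total space really is an open neighborhood of the zero section in a vector bundle over $M\times\I$: in your decomposition, the bundle is manifestly either the pullback $\pr_1^*\nu_0$ or the linear clutching of $\nu_0$ with $\nu_1$ along $A$, with no appeal to the germ-level diffeomorphism being bundle-like. The paper's construction buys brevity; yours buys transparency about the bundle structure of the concordance.
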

     
     \begin{proof} Put $\gamma_i=[\nu_i,U_i,\MM_i]$ 
      ($i=0, 1$).
     Let $\varphi$ be a microisomorphism between $\gamma_0$ and $\gamma_1$
     such that $\varphi\vert N=\id$.
     Without loss of generality, shrinking $U_0$ and $U_1$, arrange that $U_1=\varphi(U_0)$.
     
     Endow $\bar U_0:=U_0\times\I$
     with the foliation $\bar\MM_0:=\pr_1^*(\MM_0)$;
     in the disjoint union $\bar U_0\sqcup U_1$, identify each point $(v,1)\in\bar U_0$
     with $\phi(v)\in U_1$. The quotient space is diffeomorphic to an open neighborhood
     of the zero section in a vector bundle over $M\times\I$; and on this quotient space, the image
     of $\bar\MM_0$ is
      the microfoliation of
      a concordance (rel. $N$) between $\gamma_0$ and $\gamma_1$.
 \end{proof}

\begin{pro}[Concordance extension property]\label{cep_pro}
 Given
 a $\Gamma_q$-struc\-ture $\gamma$ on $M$ and a submanifold
$N\subset M$, every concordance
of $\gamma\vert N$ extends to a concordance of $\gamma$.
\end{pro}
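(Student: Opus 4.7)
The plan is to construct $\Gamma$ by a tubular-neighborhood pullback, reconciled with the trivial extension $\pr_M^*(\gamma)$ on the complement. Write $\gamma = [\nu, U, \MM]$ and let $\delta$ be the given concordance of $\gamma|N$ on $N \times \I$. First I would choose a smooth tubular neighborhood $T$ of $N$ in $M$ with projection $p : T \to N$; identifying $T$ with the open unit disk bundle of the normal bundle of $N$ in $M$, the radial scaling $\rho_s(y, v) := (y, (1-s)v)$ provides a smooth isotopy from $\rho_0 = \id_T$ to $\rho_1 = p$ that fixes $N$ pointwise.

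Form the pullback $\tilde\delta := (p \times \id_\I)^*(\delta)$ on $T \times \I$: it restricts to $\delta$ over $N \times \I$ (since $p|N = \id$) but to $p^*(\gamma|N)$ over $T \times 0$, which differs from $\gamma|T$ away from $N$. The isotopy $\rho_s$ induces a microisomorphism between $\gamma|T$ and $p^*(\gamma|N)$ equal to the identity on $N$, so by Lemma \ref{concordance_lem} there is a concordance $K$ on $T \times \I$ between $\gamma|T$ and $p^*(\gamma|N)$, rel $N$, and by tapering $\rho_s$ with a radial cutoff I can arrange $K$ to equal the trivial concordance of $\gamma|T$ in a neighborhood of $\partial T$. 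Now let $\epsilon : T \to [0, \eta]$ be a smooth function with $\epsilon \equiv 0$ on $N$ and $\epsilon \equiv \eta$ outside a smaller tube $T'' \Subset T$, and define $\Gamma$ on $T \times \I$ by using $K$ (reparametrized in $t$ from $[0,1]$ to $[0, \epsilon(x)]$) on the region $\{(x, t) : t \le \epsilon(x)\}$, and $\tilde\delta$ (reparametrized from $[0,1]$ to $[\epsilon(x), 1]$) on $\{(x, t) : t \ge \epsilon(x)\}$; outside $T \times \I$, set $\Gamma := \pr_M^*(\gamma)$.

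This construction gives the desired restrictions: since $\epsilon$ vanishes on $N$, the $K$-region collapses to $N \times 0$ on $N$ and the $\tilde\delta$-region covers $N \times \I$ with trivial reparametrization, so $\Gamma|_{N \times \I} = \delta$ verbatim; on $T \times 0$ the structure is $K|_{T \times 0} = \gamma|T$, hence $\Gamma|_{M \times 0} = \gamma$; near $\partial T$ the tapered $K$ reduces to $\gamma|T$ at all times, matching $\pr_M^*(\gamma)$ outside. The main obstacle will be ensuring smoothness at the seam $\{t = \epsilon(x)\}$, where the $K$-end meets the $\tilde\delta$-start: both sides coincide pointwise with $p^*(\gamma|N)$ there, and by arranging both $K$ and the reparametrization to plateau (to be locally constant in $t$ equal to $p^*(\gamma|N)$) in a neighborhood of the seam, one obtains $C^\infty$ compatibility --- this is a standard smoothing argument using cutoff profiles, with no further conceptual content.
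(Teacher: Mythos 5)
The paper itself gives no detailed proof of Proposition \ref{cep_pro} --- it merely remarks that the statement is evident from Haefliger's cocycle viewpoint, from the classifying-map viewpoint, or "almost" so from the geometric viewpoint, and points to \cite{mather_73} p.\ 199. So you are supplying what the paper omits; the question is whether your argument closes up, and I don't think it does as written. There are two distinct problems.

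First, the appeal to Lemma \ref{concordance_lem} is unjustified. The isotopy $\rho_s$ starts at $\rho_0 = \id_T$ but ends at $\rho_1 = p$, which is \emph{not} a diffeomorphism of $T$; an isotopy whose time-one map collapses the fibres of $p$ does not produce a microisomorphism (in the sense of Definition 2.4, a germ of diffeomorphism of total spaces over $\id_T$). Indeed $\gamma\vert T$ and $p^*(\gamma\vert N)$ are generally \emph{not} isomorphic, only concordant: for the two microfoliations to be carried onto each other by a map fixing the zero section, the zero section would have to lie along corresponding leaves, which fails already when $T$ is an interval and $\gamma$ is a generic structure. The concordance $K$ you want nevertheless exists, but the right justification is a direct pullback: take $K := R^*(\gamma)$ where $R : T\times\I\to T$, $R(x,s):=\rho_s(x)$; this restricts to $\gamma\vert T$ at $s=0$, to $p^*(\gamma\vert N)$ at $s=1$, and to $\pr_1^*(\gamma\vert N)$ over $N\times\I$, with no invocation of Lemma \ref{concordance_lem}.

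Second, and more seriously, the tapering does not close up the collar. You taper $\rho_s$ so that $K$ becomes the \emph{trivial} concordance of $\gamma\vert T$ near $\partial T$, yet you leave $\tilde\delta = (p\times\id_\I)^*(\delta)$ untapered. The two requirements on $K$ are contradictory: at the seam $\{t = \epsilon(x)\}$ the $K$-end must agree with the $\tilde\delta$-start, which is $p^*(\gamma\vert N)(x)$, hence $K$ must be \emph{untapered} wherever the $\tilde\delta$-region is nonempty; but near $\partial T$ you need $K$ trivial. The only way to reconcile this with your $\epsilon$ is to push the $\tilde\delta$-region away from $\partial T$ altogether (i.e.\ $\eta = 1$ and the $\tilde\delta$-region degenerate outside the inner tube $T''$), but then at $\partial T''\times\{1\}$ the structure jumps: from inside $T''$ it tends to $\tilde\delta(x,1)=\delta(p(x),1)$, while from outside it equals $K(x,1)=p^*(\gamma\vert N)(x)=\delta(p(x),0)$. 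These are distinct for a nontrivial $\delta$. This is not a smoothing issue that a plateau profile can cure; it is a genuine discontinuity. To repair it one must \emph{also} taper $\tilde\delta$ --- replace $\delta(p(x),s)$ by $\delta(p(x),\mu(x)s)$ with $\mu\equiv 1$ near $N$ and $\mu\equiv 0$ outside an inner tube --- and then the seam-matching forces a three-phase decomposition ($K$, then tapered $\tilde\delta$, then a return phase) with the supports of the various cutoffs nested compatibly. The idea is sound but the bookkeeping is substantially more delicate than the two-phase picture you describe; as presented, the $\Gamma_q$-structure $\Gamma$ is not well defined on a neighborhood of $\partial T\times\I$.
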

 This is obvious
from Haefliger's original viewpoint on the $\Gamma$-structures \cite{haefliger_58},
obvious if one sees $\Gamma_q$-structures as maps to $B\Gamma_q$,
and almost as obvious from the geometric viewpoint adopted here. See also
\cite{mather_73} page 199.

 \subsubsection{Regular Haefliger structures and foliations}\label{regular_sssec}

 The following remarks are intended
  to make clear the relations between two notions: foliations on the one hand,
  regular Haefliger structures on the other hand.
  
\begin{voc}\label{nd_voc}
  Given a codimension-$q$ foliation $\GG$ on a manifold $M$,
  we define as usual the bundle  \emph{normal} to $\GG$ as $\nu\GG:=\tau M/\tau\GG$;
  and we define the \emph{differential} of $\GG$ as the canonical projection
  $d\GG:\tau M\to\nu\GG$.
  \end{voc}
  
  If $\GG$ is moreover complementary to a dimension-$q$ foliation $\FF$ on $M$,
  we often identify $\nu\GG$ with $\tau\FF$; then, $d\GG$ becomes
  a
\emph{linear retraction}
 $\tau M\to\tau\FF$.
 
A $\Gamma_q$-structure $\gamma=[\nu,U,\MM]$ on $M$, where $\nu=(E,\pi,Z)$,
 is called \emph{regular} at a point $x\in M$ if $d\gamma$ is of maximal rank $q$
at $x$; in other words, $Z$ is transverse to $\MM$ at $x$.

\begin{dfn}\label{special_dfn}
 If $\gamma$ is  regular on $M$, one says that $\gamma$ \emph{induces} the
 codimension-$q$ foliation
  $\GG:=Z^*(\MM)$ on $M$.
   If moreover
   \begin{itemize}
   \item The bundle
   $\nu$ normal to $\gamma$ \emph{equals} the bundle $\nu\GG$
   normal to $\GG$;
 \item The differential
  $d\gamma$ \emph{equals}
   the {differential} $d\GG$ of the foliation $\GG$;
  \end{itemize} then
  we say that $\gamma$ \emph{specially} induces $\GG$, and that $\gamma$
  is a \emph{special} regular Haefliger structure.
  \end{dfn}

  \begin{lem}\label{isomorphism_lem}\
  
  a) Every codimension-$q$
   foliation is specially induced by some regular $\Gamma_q$-structure.
 
   b) Two {regular} $\Gamma_q$-structures
    are isomorphic if and only if they induce the same
   foliation.
   
   c) The identity
   is the unique automorphism of a regular $\Gamma_q$-structure.
 
   \end{lem}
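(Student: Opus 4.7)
The strategy divides along the three parts, with a common theme: the microfoliation of a regular $\Gamma_q$-structure, viewed as a germ along the zero section, is rigidly determined by the foliation it induces on $M$.

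For (a), I would build $\gamma$ explicitly. Take $\nu:=\nu\GG$, choose any subbundle $\HH\subset\tau M$ complementary to $\tau\GG$ (so that $d\GG\vert\HH$ is an isomorphism $\HH\cong\nu\GG$ with inverse $\sigma$), and fix any Riemannian metric on $M$. On a sufficiently small neighborhood $U$ of $Z(M)$ in $E$, put $\Phi(v):=\exp_{\pi(v)}(\sigma(v))$ and $\MM:=\Phi^*\GG$. Since $\Phi\circ Z=\id_M$ and $\Phi$ is transverse to $\GG$ along each fiber of $\pi$, $\MM$ is a codimension-$q$ foliation transverse to $\pi$, and $Z^*\MM=\GG$. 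A short computation of the canonical form $\Omega$ at a point $Z(x)$, using the splitting $\tau_{Z(x)}E=\tau_xM\oplus\nu\GG_x$ and the formula $d\Phi_{Z(x)}(a,b)=a+\sigma(b)$, gives $Z^*\Omega=d\GG$, so $\gamma:=[\nu,U,\MM]$ specially induces $\GG$.

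For (b), the ``only if'' direction follows from functoriality:
\[
(Z')^*\MM'=(\varphi\circ Z)^*\MM'=Z^*(\varphi^*\MM')=Z^*\MM.
\]
Conversely, let $\gamma=[\nu,U,\MM]$ and $\gamma'=[\nu',U',\MM']$ both induce $\GG$. Regularity makes $d\gamma$ and $d\gamma'$, which vanish on $\tau\GG$ and are of maximal rank $q$, descend to canonical bundle isomorphisms $\nu\GG\cong\nu$ and $\nu\GG\cong\nu'$; after identifying all three bundles I may assume $\nu=\nu'=\nu\GG$. Near any $x\in M$, pick submersions $F,F'$ defining $\MM,\MM'$ on a neighborhood of $Z(x)$ in $E$, adjusted by a local diffeomorphism of $\R^q$ so that $F\vert Z(M)=F'\vert Z(M)$ is a common local submersion defining $\GG$. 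Then
\[
\varphi(v):=\bigl(F'\vert\pi\mun(\pi(v))\bigr)\mun\bigl(F(v)\bigr)
\]
is a fiber-preserving germ of diffeomorphism along $Z(M)$, fixing $Z(M)$ and satisfying $\varphi^*\MM'=\MM$. On overlapping charts the value $\varphi(v)$ is forced by the conditions ``fiber-preserving, fixes $Z(M)$, sends the $\MM$-leaf of $v$ to the $\MM'$-leaf of $\varphi(v)$'', so the local formulas patch into a global microisomorphism.

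For (c), given an automorphism $\varphi$ of a regular $\gamma=[\nu,U,\MM]$, pick a local submersion $F:U_x\to\R^q$ defining $\MM$. The equality $\varphi^*\MM=\MM$ yields $F\circ\varphi=k\circ F$ for some local diffeomorphism $k$ of $\R^q$; since $\varphi\vert Z(M)=\id$ and $F\vert Z(M)$ is a submersion onto an open subset of $\R^q$, $k$ fixes an open set, hence $k=\id$ as a germ. So $F\circ\varphi=F$, meaning $\varphi$ preserves each $\MM$-leaf individually; restricted to a fiber $\pi\mun(x)$, $\varphi$ then commutes with the local diffeomorphism $F\vert\pi\mun(x)$ and fixes the origin, so it is the identity on the fiber, and $\varphi=\id$ as a germ. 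The main obstacle is notational rather than substantive: one must handle with care the canonical identifications of normal bundles in (b) so that the locally defined $\varphi$ is seen to glue globally---once that is done, (b) and (c) are parallel consequences of the same local rigidity.
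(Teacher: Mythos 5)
Your proof is correct and takes essentially the same route as the paper: (a) by pulling $\GG$ back along a fibered exponential-type retraction $U\to M$ that is a right inverse of $\pi$ and transverse to $\GG$ (the paper simply specializes $\HH:=\tau\GG^\bot$), and (b)--(c) by the same local-rigidity principle that a regular microfoliation is completely determined by where its leaves meet the zero section, combined with transversality to the fibers. The only place you gloss slightly is the patching in (b) (and, implicitly, that the locally defined $\varphi$ is a diffeomorphism): the uniqueness you invoke — ``forced by fiber-preserving, fixes $Z(M)$, sends $\MM$-leaves to $\MM'$-leaves'' — is exactly the content of (c) and does the work, but it uses regularity (so that every $\MM$-leaf near the zero section actually meets $Z(M)$, pinning down the target $\MM'$-leaf) in a way worth stating explicitly; the paper handles this by the explicit formula $\varphi(v)=\sigma'_{\pr(v)}(\pi(v))$ and by exhibiting the inverse $\varphi'$.
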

   \begin{proof}
      Fix an auxiliary Riemannian metric on $M$;
      denote by $D(x,r)\subset M$ the open ball of center $x$ and radius $r$.

   a)
 Given a codimension-$q$ foliation
  $\GG$ on $M$,
 let  $\exp:\nu\GG\to M$ be the exponential map
$\tau M\to M$ restricted to the subbundle
   $\tau\GG^\bot\cong\nu\GG$. Clearly, on a neighborhood of the zero section,
   $\exp^*(\GG)$
 is the microfoliation of a $\Gamma_q$-structure specially inducing $\GG$.
 
 b)    One can assume that $M$ is compact.
   Let 
   $\gamma:=[E,\pi,Z,U,\MM]$ and $\gamma':=[E',\pi',Z',U',\MM']$
   be two regular $\Gamma_q$-structures on $M$
    inducing
   the same foliation $\GG=Z^*(\MM)=Z'^*(\MM')$.

    Since $\MM'$ is complementary in $U'$ to the fibres of $\pi'$, 
    there is a $r>0$ so small that for every $x\in M$, the projection $\pi'$
   admits a local section
   $\sigma'_x:D(x,r)\to U'$ tangential to $\MM'$ and such that $\sigma'_x(x)=Z'(x)$.
   Clearly, $\sigma'_x(y)$ depends smoothly on the pair $(x,y)$.
   
   On the other hand, 
   since $\gamma$ is regular, after shrinking $U$, one has for every $v\in U$
   a point $Z(\pr(v))$ of the zero section in the leaf
    of $\MM$ through $v$; and one can arrange that $\pr:U\to M$ is smooth, and
    that $\pr\circ Z=\id$. One has
   $\MM=\pr^*(\GG)$. Moreover, after shrinking $U$ again, one can arrange that
   for every $x\in M$, the diameter of 
   $\pi(\pr\mun(x))$ in $M$ is less than $r$. One defines for $v\in U$:
   $$\varphi(v):=\sigma'_{\pr(v)}(\pi(v))$$
   So, $\varphi$ is smooth, and maps
   every leaf of $\MM$ into a leaf of $\MM'$, and
    $\pi'\circ\varphi=\pi$, and $\varphi\circ Z=Z'$.
   
   It remains to verify that $\varphi$ induces a diffeomorphism between $\nb_E(Z(M))$
   and $\nb_{E'}(Z'(M))$.
   To this end, permuting the roles of $\gamma$ and $\gamma'$, one makes a similar map
   $$\varphi':\nb_{E'}(Z'(M))\to\nb_E(Z(M))$$
   and one considers $$\psi:=\varphi'\circ\varphi:\nb_{E}(Z(M))\to\nb_E(Z(M))$$ 
   So,  $\psi$ maps
   every leaf of $\MM$ into a leaf of $\MM$, and $\pi\circ\psi=\pi$, and $\psi\circ Z=Z$.
   We claim that such a map $\psi$ is
   necessarily the identity
   on some neighborhood of $Z(M)$.

    Indeed, shrinking $U$ again, one can arrange that $\psi$ is defined on $U$.
     Since $\gamma$ is regular, shrinking $U$ again, the saturation
     of $Z(M)$ with respect to the foliation $\MM$
     is $U$. Consider any leaf $L$ of $\MM$. In particular, $L$ is connected.
     Since $L$ meets $Z(M)$ and $\psi$ is the identity there, one has $\psi(L)\subset L$.
      Since $\pi\vert L$ is an etale map preserved by $\psi$, the fix point set of $\psi\vert L$
      is open and closed in $L$, hence $\psi\vert L=\id$.
   The claim
    is proved.

   c) is a particular case of the claim.
      \end{proof}

In other words: (i) A codimension-$q$ foliation on $M$ 
amounts to an \emph{isomorphism class} of regular $\Gamma_q$-structures on $M$;

(ii) For every codimension-$q$ foliation $\GG$ on $M$, 
  the  {regular}
  $\Gamma_q$-struc\-tures on $M$ \emph{specially} inducing $\GG$
form a principal space under the group $Smaut(\nu\GG)$.

\begin{ccl}\label{induction_ccl}

a)
In view of Lemma \ref{concordance_lem} and of  (i) above,
  one can speak of the concordance class
of a codimension-$q$
 foliation $\GG$, defined as
 the concordance class
of any $\Gamma_q$-structure 
 inducing $\GG$.

b)
In view of (ii) above
 and $Smaut(\nu\GG)$ being contractible,
 there is no inconvenience
  in identifying a codimension-$q$ foliation $\GG$ with any regular
 $\Gamma_q$-structure
\emph{specially} inducing $\GG$.
 \end{ccl}

  \subsubsection{The parametric Foliation theorem on open manifolds}
\label{parametric_sssec}
The classical Foliation theorem on open manifolds admits
 two parametric
versions, that we respectively call ``nonintegrable'' and ``integrable''.
The second will be used repeatedly in the proof of Theorem \emph{A'} below.
It does not seem to appear explicitly in the literature,
although all proofs of the nonintegrable version
(for example the one in \cite{laudenbach_meigniez_16}) actually prove the
integrable one as well.

 The space of parameters will be a compact manifold ${A}$; fix 
 a compact submanifold $B\subset A$, maybe empty. Consider, over
 a manifold $M$,
 a real vector bundle
$\nu=(E,\pi,Z)$ of dimension $q$; and   its pullback $\tilde\nu$ over $M\times A$.

\begin{dfn} By a \emph{family}
 of Haefliger structures $(\gamma(a))_{a\in{A}}$ whose normal bundle is $\nu$, one means,
  for every
$a\in{A}$, a $\Gamma_q$-structure $\gamma_a$ on $M$ whose normal bundle is $\nu$;
denote its microfoliation by $\MM(a)$;
such that the (germ of)
plane field $\tau\MM(a)$ on $\nb_E(Z(M))$
 depends smoothly on $a$.

Call the family $(\gamma(a))_{a\in{A}}$ \emph{integrable}
(with respect to the parameter)
if moreover,
 there is a global $\Gamma_q$-structure $\tilde\gamma$ on $M\times{A}$
whose normal bundle is $\tilde\nu$ and whose restriction
to the slice $M\times a$ is $\gamma(a)$,
for every $a\in{A}$.
\end{dfn}

Fix a compact submanifold $N\subset M$ (maybe empty)
such that the pair $(M,N)$ is \emph{open,} in the sense,
classical in the h-principle,
 that
 every connected component of the complement $M\setminus N$
which is relatively compact in $M$
meets $\partial M$.
Assume that $M$ carries
a (smooth) parametric family $(\omega(a))_{a\in{A}}$ of $\nu$-valued differential $1$-forms
 of constant rank $q$ such
 that $d\gamma(a)=\omega(a)$ holds on $\nb_M(N)$ for every $a\in{A}$,
and on $M$ for every $a\in\nb_{{A}}({B})$.
 Consider the projection $\pr_1:(x,t)\mapsto x$
and the embedding $\iota_t:x\mapsto(x,t)$ ($x\in M$, $t\in{\I}$).

\begin{thm}\label{parametric_thm} Let  $(\gamma(a))_{a\in{A}}$
be a
 family  of $\Gamma_q$-structures on $M$.
 
 Then, under the above hypotheses,

(i) There is a smooth
family
$(\bar\gamma(a))_{a\in{A}}$ of $\Gamma_q$-structures on $M\times{\I}$
such that for every $a\in{A}$:
\begin{enumerate}
\item $\iota_0^*(\bar\gamma(a))=\gamma(a)$;
\item $\bar\gamma(a)=\pr_1^*(\gamma(a))$ on $\nb_M(N)\times{\I}$;
\item If $a\in{B}$, then $\bar\gamma(a)=\pr_1^*(\gamma(a))$
 on $M\times{\I}$;
\item $\gamma'(a):=\iota_1^*(\bar\gamma(a))$ is regular.
\end{enumerate}

(ii) Moreover, the family $(d\gamma'(a))_{a\in{A}}$ is homotopic on $M$,
 relatively to $(N\times A)\cup(M\times B)$,
 to
the family $(\omega(a))_{a\in{A}}$ among the families of $\nu$-valued
$1$-forms of constant rank $q$.

(iii) If the family $(\gamma(a))_{a\in{A}}$ is moreover integrable, then one
can choose the family $(\bar\gamma(a))_{a\in{A}}$ to be also integrable.
\end{thm}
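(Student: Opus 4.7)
The plan is to adapt the classical proof of the h-principle for foliations on open manifolds (Gromov-Phillips-Haefliger, see for instance \cite{laudenbach_meigniez_16}) to the parametric and integrable settings simultaneously. In the integrable case (iii), I would encode $(\gamma(a))_{a \in A}$ as a single $\Gamma_q$-structure $\tilde\gamma$ on $M \times A$ with normal bundle $\tilde\nu$, and $(\omega(a))_{a \in A}$ as a single $\tilde\nu$-valued $1$-form $\tilde\omega$ whose restriction to each $M$-slice has rank $q$; the hypothesis becomes $d\tilde\gamma = \tilde\omega$ on $\nb_{M \times A}((N \times A) \cup (M \times B))$. I would then construct the concordance directly on $M \times A \times \I$, obtaining $\bar{\tilde\gamma}$ whose slice over $a$ is the desired $\bar\gamma(a)$. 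In the non-integrable setting, the same construction is performed slice-by-slice but smoothly in $a$.

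The geometric core is an induction over a handle decomposition of $M$ relative to a neighborhood of $N \cup \partial M$. By the openness hypothesis on $(M, N)$, this decomposition may be chosen with every handle of index strictly less than $\dim M$. Starting from $\bar\gamma(a) := \pr_1^*(\gamma(a))$ already prescribed on $(M \times [0,1/2]) \cup (\nb_M(N) \times \I)$, and on all of $M \times \I$ for $a \in \nb_A(B)$, I would extend $\bar\gamma(a)$ one handle at a time, using Proposition \ref{cep_pro} to propagate concordances from the attaching data. Across a single handle of index $k < \dim M$, the product with $\I$ affords sufficient free boundary to carry out Thurston's inflation: a parametric rotation of the tangent plane field of the microfoliation along the $\I$-direction, steered by $\omega(a)$ so that the top slice becomes regular with differential close to $\omega(a)$ restricted to the handle. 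Because the rotation is performed inside a handle chart, it lifts to a genuine integrable microfoliation (a family of local submersions) by a direct construction.

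Conclusion (i) is then the family produced by the induction, with clause (4) being precisely the regularity arranged at $t=1$. Conclusion (ii) follows from the inflation step: the path $t \mapsto d(\iota_t^*(\bar\gamma(a)))$ over a neighborhood of $t=1$ (where the structure is already regular), followed by a linear interpolation in the convex space of $\tilde\nu$-valued rank-$q$ forms which extend the prescribed boundary data on $(N \times A) \cup (M \times B)$, provides the required homotopy. Conclusion (iii) is automatic from having performed the construction globally on $M \times A \times \I$, since $(M \times A, (N \times A) \cup (M \times B))$ inherits the openness condition from $(M, N)$.

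I expect the principal obstacle to be the extension across a handle of top admissible index $k = \dim M - 1$, where the openness of $(M, N)$ is only barely sufficient to provide room for the inflation; this is the delicate case of the classical nonparametric proof. In the parametric setting the inflated structure must moreover depend smoothly on $a$ and coincide with $\pr_1^*(\gamma(a))$ for $a \in B$; this is achievable because, for fixed data on the lower skeleton, the space of admissible extensions is convex (hence contractible), so a smooth section over $A$ exists and can be arranged to be constant on $B$. Once this inductive step is established, the integrable conclusion (iii) follows at no extra cost, since the entire construction is natural with respect to the base manifold.
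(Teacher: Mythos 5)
Your handle-by-handle approach is genuinely different from the paper's, which simply invokes the Gromov--Phillips transversality theorem (equivalently, the Eliashberg--Mishachev holonomic approximation theorem) once, applied to the zero section $Z\colon M \to E$ of the normal bundle with $A\times\I$ as parameter space; the paper then defines $\bar\gamma(a)$ directly by pulling back the tautological structure $\hat\gamma(a)$ of Example~\ref{tautological_exm} through the resulting homotopy $H\colon M\times A\times\I\to V\subset E$, and part (iii) is immediate by replacing $H$ with $H\times\pr_2$ and pulling back $\hat{\tilde\gamma}$. Your induction over a handle decomposition of $M$ is in effect re-deriving that transversality theorem by hand, which is legitimate in principle but much longer; moreover the phrase ``Thurston's inflation'' is misapplied here --- in this paper's terminology (Proposition~\ref{inflation_pro}), inflation is the device for foliating a \emph{closed} manifold across prisms, not the covering-homotopy mechanism for achieving transversality on an open one, and invoking it obscures rather than supplies the key local step.

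There is also a genuine gap in your argument for (ii). You propose to finish with ``a linear interpolation in the convex space of $\tilde\nu$-valued rank-$q$ forms which extend the prescribed boundary data.'' The set of $\nu$-valued $1$-forms of constant rank $q$ is \emph{not} convex (the midpoint of $\omega$ and $-\omega$ is zero), so a straight-line path from $d\gamma'(a)$ to $\omega(a)$ may leave the rank-$q$ locus, and fixing the boundary values on $(N\times A)\cup(M\times B)$ does not by itself repair this. The same unjustified convexity claim appears again in your final paragraph (``the space of admissible extensions is convex, hence contractible''). The paper sidesteps this entirely by producing the homotopy from the transversality data: with $\Omega$ the canonical form of the microfoliation and $\eta$ the covering homotopy of bundle morphisms supplied by Gromov--Phillips, the family $\varpi(a,t)_x u := \Omega_{H(x,a,t)}\eta(u,a,t)$ has rank $q$ for every $(a,t)$ \emph{because} $\eta$ remains transverse to the microfoliation throughout --- the rank is preserved by the construction, not by any convexity of the target space of forms.
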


\begin{proof}[Proof of Theorem \ref{parametric_thm}]
Haefliger's original
proof of the nonparametric Foliation theorem on open manifolds
\cite{haefliger_69}\cite{haefliger_70} is a direct application of the Gromov-Phillips
transversality theorem in the frame of  $\Gamma$-structures:
the transversality theorem
 is applied, in the total space of
the normal bundle of the structure, to the zero section,
and provides a homotopy that puts it transverse to the microfoliation.
This argument
goes with parameters and thus
proves both parametric versions, the nonintegrable and the integrable.
Here are the details, for the sake of completeness.

Choose a smooth plateau function $\chi$ on $M\times{A}$ such that
\begin{itemize}
\item $\chi=1$ on
a neighborhood of $P:=(N\times{A})\cup(M\times{B})$;
\item
 $\omega(a)_x=d\gamma(a)_x$
 for every $(x,a)$ in the support $\supp(\chi)$.
\end{itemize}

 Let $V\subset E$ be an open neighborhood of the zero section $Z(M)$,
so small that $\MM(\gamma(a))$ is defined on $V$ for every parameter $a$,
and that the
plane field $\tau\MM(a)$ on $V$
 depends smoothly on $a$.
Let $\Omega(a)$ be on $V$ the canonical, $\pi^*(\nu)$-valued
$1$-form defining $\MM(a)$ (recall Paragraph \ref{differential_sssec} above).

For a fixed $a\in{A}$, consider, over the map $Z:M\to E$, the
 bundle morphism $$\zeta(a):\tau M\to\tau E$$
defined for every $x\in M$ and $u\in\tau_xM$ as
 $$\zeta(a)_xu:=\chi(x,a)u\oplus(1-\chi(x,a))\omega(a)_xu$$
(where the tangent space $\tau_{Z(x)}E$ is decomposed as $\tau_xM\oplus\nu_x$).
In view of the choice of $\chi$ and of the definitions of $d\gamma$ and $\zeta$,
\begin{equation}\label{zeta_eqn}
\Omega(a)_{Z(x)}\circ\zeta(a)_x=\omega(a)_x
\end{equation}
Hence, $\zeta(a)_x$ is transverse to $\tau_{Z(x)}\MM(a)$ in $\tau_{Z(x)}E$.
By the Gromov-Phillips transversality theorem 
(\cite{gromov_69}, \cite{phillips_70}, \cite{gromov_86} pp. 53, 102) (which,
if one likes, one can today obtain as an immediate application of the Eliashberg-Mishachev
Holonomic Approximation theorem \cite{eliashberg_mishachev_02}), 
  one has a map
$$H:M\times{A}\times{\I}\to V$$
and, over $H$, a homotopy of parametric families of
 bundles morphisms $$\eta:\tau M\times{A}\times{\I}\to\tau E$$ 
such that for every $x\in M$, $u\in\tau_xM$, $a\in{A}$, $t\in{\I}$:
\begin{enumerate}[label=(\Roman*)]
\item $H(x,a,t)=Z(x)$ if $(x,a)\in P$
or $t=0$;
\item $\eta(u,a,t)=\zeta(a)_xu$ if $(x,a)\in P$
or $t=0$;
\item
The map $u\mapsto \eta(u,a,t)$ maps linearly $\tau_xM$ into $\tau_{H(x,a,t)}E$
 transversely to $\tau_{H(x,a,t)}\MM(a)$;
\item $\eta(u,a,1)=(\partial H/\partial x)(x,a,1)u$.
\end{enumerate}

For every parameter $a$, define the $\Gamma_q$-structure
 $\bar\gamma(a)$ on $M\times{\I}$ as the pullback of $\hat\gamma(a)$
 (recall Example \ref{tautological_exm}) through the map
$(x,t)\mapsto H(x,a,t)$. The properties (1), (2), (3) of Theorem \ref{parametric_thm}
 follow from (I) above and from Equation (\ref{tautological_eqn}) in
 Example \ref{tautological_exm}; while (4) follows from (III) and (IV).

 (ii): Consider on $M$ the $1$-parameter
family of ${A}$-parametrized families of rank-$q$, $\nu$-valued $1$-forms:
 $$\varpi(a,t)_xu:=\Omega_{H(x,a,t)}\eta(u,a,t)$$
(where $a\in{A}$, $t\in{\I}$, $x\in M$, $u\in\tau_xM$).
By Equation (\ref{zeta_eqn}) above and by (I) and (II), the identity
 $\varpi(a,t)_x=\omega(a)_x$ is met for $(x,a)\in P$ or $t=0$.
On the other hand, by the definitions of $\Omega$ and $\hat\gamma$ and by
 (IV), one has $\varpi(a,1)=d\gamma'(a)$.

(iii): Assume moreover that
 every $\gamma(a)$ is the restriction to the slice $M\times a$
 of a global $\Gamma_q$-structure
$\tilde\gamma$ on $M\times{A}$. Then, every $\bar\gamma(a)$ is the restriction
 to the slice $M\times a\times{\I}$ of
the global $\Gamma_q$-structure
 on $M\times{A}\times{\I}$ that is the pullback of $\hat{\tilde\gamma}$
through the map $$H\times\pr_2:M\times{A}\times{\I}\to V\times{A}$$
\end{proof}

\section{Cleft Haefliger structures and cleft foliations}\label{holes_sec}
We shall actually prove first a form \emph{A'} of Theorem \emph{A} dealing
with ``fissures'' instead of multifold Reeb components; this form
goes as well for $q=1$. Our fissures are some kind of discontinuities in a foliation
which in the compact case amount to the same as the classical
{``holes'',} ubiquitous
in the theory of foliations since Thurston's works
\cite{thurston_74}\cite{thurston_76}. The advantage of fissures over holes
 is that fissures are more functorial.
 
 Consider a compact manifold $Q$ of dimension $q\ge 1$
  with smooth boundary (we are essentially interested
in the case $Q=\D^q$).

\begin{ntt}\label{diff_ntt}
One denotes by $\diff(Q)$
the group of the diffeomorphisms of $Q$ \emph{whose support is contained in the interior
of $Q$.}

One denotes  by
 $\diff(Q)^\I$ the topological group (for the smooth topology)
  of the smooth pointed paths in $\diff(Q)$: in other words, the families
 of diffeomorphisms
$\varphi:=(\varphi_t)_{t\in\I}$ such that
\begin{itemize}
\item $\varphi_t\in\diff(Q)$;
\item $\varphi_0=\id$;
\item
 $\varphi_t(y)$ depends smoothly on the pair $(t,y)\in\I\times Q$.
 \end{itemize}
It is convenient to add
 that
 \begin{itemize}
 \item $\varphi_t$ coincides with $\id$ (resp. $\varphi_1$)
for $t$ close to $0$ (resp. $1$).
\end{itemize}
\end{ntt}

Then, one considers as usual on $\S^1\times Q$
 the $1$-dimensional foliation $\SS_\varphi$, called the \emph{suspention} of $\varphi$,
 spanned by the vector field $(\partial/\partial t,\partial\varphi/\partial t)$
  (here $\S^1$ is the quotient of the interval $\I$ by $0=1$).

\medbreak

 Fix a 
$\varphi\in\diff(\D^q)^\I$. Denote by $\theta$ the angle coordinate
$z\mapsto z/\vert z\vert$ on $\C^*$.
The \emph{model cleft of monodromy $\varphi$}
 is the manifold $\C\times\R^q$
endowed on the complement of $0\times\D^q$
 with the codimension-$q$
 foliation $\CC_\varphi$ equal to
 $(\theta\times\id_{\D^q})^*(\SS_\varphi)$
 on $\C^*\times\D^q$ and horizontal (parallel to $\C$)
 on $\C\times(\R^q\setminus\D^q)$.
 
 Let $M$ be a manifold of dimension $\ge 0$.
  \begin{dfn}\label{fissure_dfn}
A ($q$-) \emph{fissure} (or cleft) in $M$
is a pair $(C,[c])$ where
\begin{enumerate}
\item $C\subset M$ is a proper (i.e. topologically closed) codimension-$2$ submanifold, which may have a boundary;
\item $[c]$ is the germ along $C$ of
a submersion $$c:\nb_M(C)\to\C\times \R^q$$
 such that $C=c\mun(0\times\D^q)$;
 \item  At every point of $C\cap\partial M$, the rank of $c\vert\partial M$
  is also $q+2$.
 
 \end{enumerate}
  \end{dfn}
 Notes:
 
 --- The image $c(C)$ is not necessarily the all of $0\times\D^q$;
 
 ---  We admit the case where $\dim(M)<2$, in which case $C$ is necessarily empty;
   
---   Sometimes,
 we regard the connected components of the fissure as several distinct fissures;
   
---   After (3), if $C$ intersects $\partial M$, then $(C,[c])$
 induces in $\partial M$
  a fissure
  $(C\cap\partial M,[c\vert\partial M])$.

 \begin{dfn}\label{holed_dfn}
A \emph{cleft $\Gamma_q$-structure of monodromy $\varphi$ and normal
bundle $\nu$} on $M$
is a triple $\Gamma:=(C,[c],\gamma)$ such that
\begin{itemize}
\item
 $(C,[c])$ is a fissure in $M$;
 \item $\nu=(\pr_2\circ c)^*(\tau\R^q)$ on $\nb_M(C)$;
\item $\gamma$ is   on
$M\setminus C$ a $\Gamma_q$-structure of normal bundle $\nu\vert(M\setminus C)$;
\item  $\gamma$ specially induces $c^*(\CC_\varphi)$ on $\nb_M(C)\setminus C$.
\end{itemize}
\end{dfn}

(recall Definition \ref{special_dfn}). The fissure may be empty; and must be empty if
$\dim(M)<2$.

 In case $M$ has a boundary, $\Gamma$
 induces on $\partial M$
  a cleft $\Gamma_q$-structure
  $$\Gamma\vert\partial M:=(C\cap\partial M,[c\vert\partial M],\gamma\vert\partial M)$$

\begin{dfn}\label{holed_foliation_dfn} 
Let $\Gamma:=(C,[c],\gamma)$ be a cleft $\Gamma_q$-structure on $M$.
If moreover $\gamma$ is regular on $M\setminus C$, and \emph{specially} induces a foliation
$\GG$ there, then one calls
 $\Gamma$, or  (recalling Conclusion \ref{induction_ccl} (b))
 the triple $G:=(C,[c],\GG)$,
 a \emph{cleft foliation}.
 \end{dfn}
 
 The two following notions
 will be crucial in the proof of Theorem \emph{A'}. Let $G$ be a cleft foliation
 on a manifold $M$.

\begin{dfn}\label{tangent1_dfn}
One calls a vector field $\nabla$ on $M$ \emph{tangential} to 
 $G$ if
 \begin{itemize}
 \item $\nabla(x)\in\tau_x\GG$
  at every point $x\in M\setminus C$;
 \item $\nabla(x)\in\ker(d_xc)$
  at every $x\in\nb_M(C)$.
 \end{itemize}
 \end{dfn}
 
 \begin{dfn}\label{tangent2_dfn}
One calls a foliation $\XX$ on $M$ \emph{tangential} to 
 $G$ if
 \begin{itemize}
 \item $\tau_x\XX\subset\tau_x\GG$
  at every point $x\in M\setminus C$;
 \item  $\tau_x\XX\subset\ker(d_xc)$
  at every $x\in\nb_M(C)$.
 \end{itemize}
 \end{dfn}

Now, let $\FF$ be a dimension-$q$ foliation on $M$.

\begin{dfn}\label{position_dfn} The $q$-fissure $(C,[c])$ in $M$
 is
 \emph{in standard position} with respect to $\FF$ if for
 every leaf $L$ of $\FF\vert\nb_M(C)$ there is a $z\in\C$ such that
   $c$ induces a rank-$q$ map from $L$
   into the slice $z\times\R^q$.
 
 In particular, $C$ is then tangential to $\FF$, in the sense that $\tau_x\FF\subset\tau_x C$
  at every $x\in C$. One thus gets a dimension-$q$ foliation $\FF_C$ on $C$.

\end{dfn}

\begin{dfn}\label{qc_dfn}
The cleft foliation $G:=(C,[c],\GG)$ on $M$ is
 \emph{quasi-compl\-em\-ent\-ary} to $\FF$ if
   $(C,[c])$ is in standard position with respect to $\FF$, and if
  $\GG$ is complementary to $\FF$ on $M\setminus
C$.
\end{dfn}

Recall that $\bar M$ denotes $M\times\I$ and that $\hat M\subset\bar M$
is the union of $M\times 0$ with $\partial M\times\I$; also, $\pr_1$
denotes the first projection $\bar M\to M$.

 \begin{TAP} Fix a $\varphi\in\diff(\D^q)^\I$
such that $\varphi_1\neq\id$.
 On a compact manifold $M$, let
 $\FF$ be a  foliation
 of dimension $q\ge 1$,
 and $\gamma$ be
 a $\Gamma_q$-structure whose normal bundle is
 ${\tau\FF}$.  Let $\bar\FF$ 
 denote the dimension-$q$ foliation on $\bar M$
parallel to $\FF$.
 
 Assume that over  $\nb_M(\partial M)$, 
 the differential  $d\gamma$ is a linear retraction $\tau M\to\tau\FF$.
  Then,

  (i)
  $\bar M$ admits a  cleft
 $\Gamma_q$-structure $\Gamma:=(C,[c],\bar\gamma)$
  of  normal bundle ${\tau\bar\FF}$ and of monodromy $\varphi$
  such that

  \begin{itemize}
 
  \item $(C,[c])$ is disjoint from $\hat M$ and
   in standard position w. r. t. $\bar\FF$;
   
     \item $\bar\gamma=\pr_1^*(\gamma)$ on $\nb_{\bar M}(\hat M)$;

  \item $\Gamma\vert(M\times 1)$ is a cleft foliation quasi-complementary to $\FF$.
 
\end{itemize}

(ii) Moreover, in the case where $M$ is a product $V\times X$ and where
 $\FF$ is its slice foliation parallel to $X$, one can arrange that the projection
 $V\times X\times\I\to V\times\I$
   is one-to-one in restriction to each connected component of $c\mun(0,0)$.

 \end{TAP}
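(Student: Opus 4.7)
My plan follows the two-phase strategy announced in the introduction: first build the fissure $C$ together with an explicit cleft germ carrying the monodromy $\varphi$ near it and with $\pr_1^*(\gamma)$ near $\hat M$, arranged so that the complement is open in the h-principle sense; then invoke the integrable parametric Foliation theorem \ref{parametric_thm} to extend across the complement and obtain regularity at $t=1$. Near $\partial M$, the hypothesis that $d\gamma$ is a linear retraction $\tau M \to \tau\FF$ already means $\gamma$ specially induces a foliation complementary to $\FF$ there, so I set $\bar\gamma := \pr_1^*(\gamma)$ on $\nb_{\bar M}(\hat M)$.

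Phase 1 is the construction of $C$. I would produce a codimension-$2$ submanifold $C \subset \bar M$, disjoint from $\hat M$, tangential to $\bar\FF$, together with a submersion $c : \nb_{\bar M}(C) \to \C \times \R^q$ placing $C$ in standard position with respect to $\bar\FF$ in the sense of Definition~\ref{position_dfn}, and set $\bar\gamma := c^*(\CC_\varphi)$ near $C$. The arrangement must satisfy two further properties: (a) $(\pr_2 \circ c)^*(\tau\R^q)$ agrees with $\tau\bar\FF$ along $C$ so that the two germs of $\bar\gamma$ are compatible; (b) the pair $(\bar M \setminus C,\ \nb_{\bar M}(\hat M) \cup \nb(C))$ is open in the h-principle sense, i.e., every relatively compact connected component of $\bar M \setminus (C \cup \hat M)$ meets $\partial \bar M$. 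Concretely, $C$ is to be assembled from $\bar\FF$-saturated pieces extending vertically from some intermediate level up to $t=1$, in sufficient supply to cut every closed leafwise trajectory of $\bar\FF$ in $\bar M \setminus \hat M$; this is the foliation-respecting adaptation of Thurston's inflation to which the paper is devoted.

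Phase 2 is extension by Theorem~\ref{parametric_thm}. Choose a global linear retraction $\omega : \tau\bar M \to \tau\bar\FF$ extending $d\bar\gamma$ from $N := \nb_{\bar M}(\hat M) \cup \nb(C)$; such an extension exists because the space of linear retractions into a fixed subbundle is affine and $d\bar\gamma|_N$ is already such a retraction. Apply Theorem~\ref{parametric_thm} with singleton parameter space to the open pair $(\bar M \setminus C,\ N)$ to obtain a concordance of $\bar\gamma|_{\bar M \setminus C}$ to a regular $\Gamma_q$-structure $\bar\gamma'$ with $d\bar\gamma'$ homotopic rel.~$N$ to $\omega$ through rank-$q$ $\tau\bar\FF$-valued forms. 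At $t=1$ the form $d\bar\gamma'$ is of rank $q$ with image $\tau\FF$ and kernel everywhere complementary to $\tau\FF$; a final adjustment by a special microautomorphism of $\tau\FF$ (whose group is contractible, see Subsection~\ref{isomorphism_sssec}) normalizes $d\bar\gamma'$ into the linear retraction dual to this kernel, so that $\bar\gamma'|_{M \times 1}$ specially induces a codimension-$q$ foliation $\GG'$ complementary to $\FF$ away from $C$. The triple $\Gamma := (C, [c], \bar\gamma')$ is then the desired cleft $\Gamma_q$-structure of (i), with $\Gamma|(M \times 1)$ the required quasi-complementary cleft foliation. For (ii), in the product case $M = V \times X$, I would choose the $\bar\FF$-saturated pieces constituting $C$ inside a product slice $V \times \{x_0\} \times \I$ for some $x_0 \in X$; the core $c^{-1}(0,0)$ then projects injectively to $V \times \I$.

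The main obstacle is Phase~1: producing $C$ that is simultaneously tangential to $\bar\FF$, realizes the prescribed monodromy $\varphi$ through $c$, and is large enough to render the complement h-principle open. Tangentiality to $\bar\FF$ pulls $C$ toward being $\bar\FF$-saturated, while cutting every closed leafwise trajectory demands substantial transversal content in the $p$-direction; balancing these tensions and organizing the combinatorics over all leaves of $\bar\FF$ is the substance of the inflation adaptation. Phase~2, by contrast, is the clean application of Theorem~\ref{parametric_thm} to the open complement once the geometric setup is in place.
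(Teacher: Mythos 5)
Your two-phase strategy (first place the fissure $C$, then apply Theorem~\ref{parametric_thm} on the open complement) is genuinely different from the paper's, and unfortunately it has two gaps, one concrete and one structural.

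The concrete gap is in Phase~2. Theorem~\ref{parametric_thm} produces a regular $\Gamma_q$-structure $\bar\gamma'$ whose differential is homotopic to $\omega$ \emph{among $\tau\bar\FF$-valued $1$-forms of constant rank $q$}. Being of rank $q$ and homotopic to a linear retraction does not make $d\bar\gamma'$ a linear retraction: nothing in the theorem guarantees that $d\bar\gamma'\vert\tau\bar\FF$ remains injective, i.e.~that $\ker d\bar\gamma'$ stays complementary to $\tau\bar\FF$. Your sentence ``At $t=1$ the form $d\bar\gamma'$ is of rank $q$ with image $\tau\FF$ and kernel everywhere complementary to $\tau\FF$'' is asserted but not justified, and is false in general. (The subsequent ``adjustment by a special microautomorphism'' also cannot fix this: special microautomorphisms leave the differential unchanged, as one sees from the definitions in Subsection~\ref{isomorphism_sssec} and Paragraph~\ref{differential_sssec}.) What the paper does instead is apply Theorem~\ref{parametric_thm} \emph{leafwise}: in Lemmas~\ref{skeleton_lem} and~\ref{niche_lem}, the underlying manifold in the theorem is a $q$-disk (a leaf of $\FF$ or a collar in one), and the ambient transverse directions serve as the parameter space. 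A regular $\Gamma_q$-structure on a $q$-manifold automatically has differential of maximal rank $q$ on its entire tangent space, so the resulting family is automatically complementary to $\FF$. Applied globally on $\bar M\setminus C$ as you propose, no such conclusion holds.

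The structural gap is in Phase~1. Even granting the leafwise application of the h-principle, it only controls the structure over a collar of $\hat M$ (and over the lower-dimensional skeleton). The hard part of the theorem is extending a complementary foliation inward from $\hat M$ across the top-dimensional cells, and this is where the nontrivial holonomy appears that must be absorbed by fissures. In the paper this is the inflation process (Subsection~\ref{inflation_ssec}): the fissures are \emph{created} during the cell-by-cell extension, each time a prism's front wall carries a foliated product that cannot be matched to the horizontal foliation on the corona. One cannot prescribe $C$ in advance as you propose, because where and how much fissure is needed is discovered only while propagating the foliation; your description of Phase~1 (``cut every closed leafwise trajectory of $\bar\FF$'') is a placeholder for that unsolved problem, and it also mischaracterizes the obstruction --- the leaves of $\bar\FF$ are $\D^q$-disks, so there are no closed leafwise trajectories to cut; the obstruction is the holonomy of the partially-built transverse foliation around loops in the base $\I^{p+1}$, not the dynamics of $\bar\FF$ itself.
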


\begin{note}\label{retraction_note} The hypothesis that 
  over  $\nb_M(\partial M)$, 
   the differential  $d\gamma$ is a linear retraction $\tau M\to\tau\FF$,
   amounts to say that the $\Gamma_q$-structure $\gamma$
 is
  regular there, complementary to $\FF$, and special
    (recall  the note after Vocabulary \ref{nd_voc},
  and Definition
  \ref{special_dfn}).
 \end{note}
      
 The proof of Theorem \emph{A'} will be given in Section \ref{proof_sec} below. 
 The proof of Theorem \emph{A} as a corollary of
 \emph{A'} will be given in Subsection \ref{AAP_ssec} below.
 The proof of the Mather-Thurston Theorem \ref{MT_thm} as a corollary of \emph{A'}
 will be given in Section
\ref{MT_sec} below.

\subsection{Toolbox} We shall use the following tools, among which
 (\ref{filling_sssec}) is classical;
the others are more or less obvious.

  \subsubsection{Structure of compact fissures}
   
 Let $(C,[c])$ be a $q$-fissure
in a manifold $M$, and $\Sigma:=c\mun(0,0)$.
Note that if $M$ is compact, then $C$ is compact.

\begin{lem}\label{complete_lem} Assume
that $C$ is compact. Then:

(a) There are a small $\epsilon>0$ and an equidimensional embedding
 $$h:\Sigma\times\D^2\times\D^q\hookrightarrow M$$
whose image $H$ is a thin compact neighborhood of $C$ in $M$,
and such that $$c\circ h:(\sigma,z,y)\mapsto(\epsilon z,
(1+\epsilon)y)$$

(b) If  $(C,[c])$ is moreover in standard position (Definition \ref{position_dfn})
with respect
to a dimension-$q$ foliation $\FF$, then one can moreover arrange that
 $h^*(\FF)$ is the slice foliation parallel
  to $\D^q$.
\end{lem}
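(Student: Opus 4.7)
For part (a) I would invoke compactness of $C$ together with Ehresmann's fibration theorem. Since $\Sigma=c^{-1}(0,0)\subset C$ is compact and $c$ is a submersion, a standard implicit-function argument applied fiberwise along each compact fiber $c^{-1}(0,y)$ ($y\in\D^q$) produces, after a finite cover, an open neighborhood of $\{0\}\times\D^q$ in $\C\times\R^q$ over which the representative $c$ can be restricted and becomes a \emph{proper} submersion. Choose $\epsilon>0$ small enough that $\epsilon\D^2\times(1+\epsilon)\D^q$ lies in this neighborhood, and set $H:=c^{-1}(\epsilon\D^2\times(1+\epsilon)\D^q)$. Then $c\vert H$ is a proper submersion onto the contractible base $\epsilon\D^2\times(1+\epsilon)\D^q$, and by Ehresmann it is a trivial $\Sigma$-bundle. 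Any trivialization sending the fiber over $(0,0)$ identically onto $\Sigma$, composed with the rescaling $(z,y)\mapsto(\epsilon z,(1+\epsilon)y)$, yields the desired equidimensional embedding $h:\Sigma\times\D^2\times\D^q\hookrightarrow M$ with image the thin compact neighborhood $H$ and $c\circ h(\sigma,z,y)=(\epsilon z,(1+\epsilon)y)$.

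For part (b) I refine the trivialization so as to align $\D^q$-slices with leaves of $\FF$. Write $c=(a,b)$ with $a=\pr_1\circ c:\nb_M(C)\to\C$ and $b=\pr_2\circ c:\nb_M(C)\to\R^q$. The standard-position hypothesis (Definition \ref{position_dfn}) says that $a$ is constant on leaves of $\FF$ and that $db$ restricts to a fiberwise isomorphism $\tau\FF\to\R^q$. Hence there is a unique $q$-tuple $V_1,\dots,V_q$ of $\FF$-tangent vector fields on $\nb_M(C)$ with $db(V_i)=\partial/\partial y_i$. From $d(db_k)=0$ and the Lie-bracket formula one reads $db([V_i,V_j])=0$, and since $[V_i,V_j]\in\tau\FF$ while $db\vert\tau\FF$ is injective, the $V_i$ pairwise commute. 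Let $\Phi_t$ be their joint flow for $t\in\R^q$. Fix any trivialization $\hat h_0:\Sigma\times\D^2\to c^{-1}(\epsilon\D^2\times\{0\})$ of the restricted bundle, normalized by $\hat h_0(\sigma,0)=\sigma$ and $c\circ\hat h_0(\sigma,z)=(\epsilon z,0)$, and define
\[
h(\sigma,z,y):=\Phi_{(1+\epsilon)y}\bigl(\hat h_0(\sigma,z)\bigr).
\]
Because flowing along the $V_i$ preserves $a$ (since $a$ is $\FF$-basic) and preserves the leaf of $\FF$ through each point, while shifting $b$ by the flow parameter, one checks that $c\circ h(\sigma,z,y)=(\epsilon z,(1+\epsilon)y)$ and that each slice $\{(\sigma,z)\}\times\D^q$ is embedded in a single leaf of $\FF$; by dimension count $h^*(\FF)$ coincides with the slice foliation parallel to $\D^q$. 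The flow is defined for all $y\in\D^q$ since the straight-line trajectory in $\R^q$ from $0$ to $(1+\epsilon)y$ keeps the image inside the compact set $H$. Injectivity of $h$ follows from the injectivity of $c\circ h$ in $(z,y)$ together with the fact that for fixed $(z,y)$, $\sigma\mapsto h(\sigma,z,y)$ is the embedding $\hat h_0(\cdot,z)$ postcomposed with the diffeomorphism $\Phi_{(1+\epsilon)y}$.

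The delicate point is the very first step: the germ $[c]$ is only specified along $C$, so one must use compactness of $C$ to shrink to a representative over which $c$ is proper on an open neighborhood of the \emph{whole} disk $\{0\}\times\D^q$, not just near the distinguished point $(0,0)$. This is what produces a uniform $\epsilon>0$ and makes Ehresmann applicable globally in $y$. The remainder is a parametric variant of standard tubular-neighborhood constructions and presents no real obstruction.
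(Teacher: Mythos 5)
Your proof is correct. Part (a) follows the same route as the paper's: compactness of $C$ forces $c$ to restrict to a proper submersion over a thin neighborhood of the bidisk, and Ehresmann plus contractibility of the base gives the trivialization. For part (b), however, your argument is genuinely different from the paper's. The paper observes that, by standard position, the leaves of $\FF_C$ map diffeomorphically onto the simply connected base $\D^q$ and then invokes the \emph{Reeb local stability theorem} to conclude that (after shrinking $H$) the leaves of $\FF\vert H$ are compact $q$-disks forming a fibration $p:H\to\Sigma\times\D^2$, whence $h:=(p\times(\pr_2\circ c))^{-1}$. You instead build the trivialization by hand: writing $c=(a,b)$, you read off from standard position that $a$ is $\FF$-basic and $db\vert\tau\FF$ is a fiberwise isomorphism, lift $\partial/\partial y_1,\dots,\partial/\partial y_q$ to $\FF$-tangent fields $V_i$, show they commute via the exterior-derivative/Lie-bracket identity (together with involutivity of $\FF$ and injectivity of $db\vert\tau\FF$), and define $h$ by flowing a trivialization $\hat h_0$ of $c^{-1}(\epsilon\D^2\times\{0\})$ along the joint flow. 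The completeness of the flow is the only delicate point; your justification should be phrased as the standard escape argument (the flowline, while defined, has $c$-image in the compact bidisk, hence stays in the compact set $H\subset\nb_M(C)$, so it cannot escape in finite time), rather than ``the trajectory keeps the image inside $H$.'' Your route avoids citing Reeb stability at the cost of a short Lie-bracket computation and the completeness check; the paper's is shorter in exposition but relies on the theorem. Both are valid and yield the same normal form.
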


 After the lemma, the compact fissure $C$ is diffeomorphic with $\Sigma\times\D^q$.
 We call $\Sigma$ the \emph{core} of the fissure.

\begin{proof} (a) Since $C$ is compact and $\D^q$ is compact connected,
 the submersion $c$ induces a proper map from $\nb_M(C)$
onto 
 $\nb_{\C\times\R^q}(0\times\D^q)$.
For $\epsilon>0$ small enough,
after Ehresmann's lemma, 
 $c$ induces a locally trivial fibration of
some neighborhood $H$ of $C$ in $M$ over the bidisk
 $$\{(z,y)\in\C\times\R^q/\vert z\vert\le\epsilon, \vert y\vert\le 1+\epsilon\}$$
Of course, the bidisk being contractible, the fibration is actually trivial.

(b) If 
$(C,[c])$ is moreover in standard position
with respect
to a dimension-$q$ foliation $\FF$,  consider the (trivial) fibration
$c\vert C$ of $C$ over $\D^q$.
The leaves of $\FF_C$ (Definition \ref{position_dfn}), being complementary in $C$ to the
fibres of this fibration, are mapped diffeomorphically onto the simply connected base.
 By the Reeb local stability theorem (\cite{camacho_neto_85} ch. IV parag. 4),
shrinking $H$ if necessary, all leaves of $\FF\vert H$ are compact $q$-disks, and
they are the fibres of
a fibration $p:H\to\Sigma\times\D^2$. The wanted embedding $h$ is
the inverse of the diffeomorphism
 $$p\times(\pr_2\circ c):H\to\Sigma\times
\D^2\times\D^q$$
\end{proof}

\subsubsection{Horizontal perturbation
 of a compact fissure}\label{reparametrizing_sssec}
 Let $(C,[c])$ be a compact
$q$-fissure in a manifold $M$.

Choose a neighborhood $H=h(\Sigma\times\D^2\times\D^q)$ of $C$ in $M$ as in Lemma \ref{complete_lem} (a).
 For
 any self-diffeomorphism $f\in\diff(\Sigma\times\D^2)$ (Notation \ref{diff_ntt} above),
consider the self-diffeomorphism $\bar f$ of $H$ defined by
$$\bar f(h(\sigma,z,y)):=h(f(\sigma,z),y)$$
Then, $(\bar f\mun(C),[c\circ \bar f])$ is a new $q$-fissure in $M$.

a) If moreover $(C,[c])$ is in standard position with respect to a foliation
$\FF$ on $M$, then after  Lemma \ref{complete_lem} (b), for a proper choice of
the embedding $h$, for every $f\in\diff(\Sigma\times\D^2)$,
 the perturbed fissure $(\bar f\mun(C),[c\circ\bar f])$  is also
 in standard position with respect  to
$\FF$.

b) Given  a
cleft $\Gamma_q$-structure $\Gamma:=(C,[c],\gamma)$ on $M$,
provided that $H$ is thin enough, $\bar f$
preserves the germ of $c^*(\gamma)$ along $\partial H$.
We thus get on $M$ a \emph{perturbed} cleft $\Gamma_q$-structure
$$\Gamma_{h,f}:=(\bar f\mun(C),[c\circ\bar f],\gamma_{h,f})$$
 where
  $\gamma_{h,f}$ is the $\Gamma_q$-structure 
  on $M\setminus\bar f\mun(C)$ defined on
   $H\setminus\bar f\mun(C)$ by
    $\gamma':=\bar f^*(\gamma)$,
 and on $M\setminus Int(H)$ by $\gamma':=\gamma$.

c) If moreover $\Gamma$ is a cleft foliation quasi-complementary to
$\FF$, then after  Lemma \ref{complete_lem} (b), for a proper choice of
 $h$,  for every $f\in\diff(\Sigma\times\D^2)$,
  the perturbed cleft foliation $\Gamma_{h,f}$ is also
 quasi-complementary to
$\FF$.

 \subsubsection{Pulling back a fissure, a cleft Haefliger structure or a cleft foliation}
 \label{pullback_sssec}
  Let  $u:M'\to M$ be a smooth map between two manifolds, and 
  $(C,[c])$ be a $q$-fissure in $M$.

\begin{dfn}\label{pullable_dfn} The fissure $(C,[c])$ is \emph{pullable} through $u$
if at every point of $u\mun(C)$, the map $c\circ u$ is of maximal rank $q+2$.
\end{dfn}

Then,  
one gets in $M'$ a \emph{pullback}
 fissure $(u\mun(C),[c\circ u])$.
  In case $u$ is an inclusion $M'\subset M$, we say ``restrictable'' instead of ``pullable'' and
 ``restriction'' instead of ``pullback''. The following remarks (a) -- (f) will be useful.
 
 \medbreak
 
a) If moreover
\begin{enumerate}
\item One has on $M$ a dimension-$q$ foliation $\FF$ and on $M'$ a
dimension-$q$  foliation $\FF'$;
\item $u$ is \emph{leafwise etale} with respect to
$\FF'$ and $\FF$, in the sense that at every $x\in M'$, the differential $d_xu$ maps isomorphically $\tau_x\FF'$
 onto $\tau_{u(x)}
\FF$;
\item The fissure $(C,[c])$ is in standard position with
respect
to $\FF$ (Definition \ref{position_dfn});
\end{enumerate}
 then obviously the pullback fissure $(u\mun(C),[c\circ u])$ is in standard position with respect
to $\FF'$.

b) The pullability condition is actually generic in this frame, in the following sense.
  
Under the hypotheses of Definition \ref{pullable_dfn} and (1), (2), (3) above,
assume moreover that $C$ is compact.
Choose a neighborhood $H=h(\Sigma\times\D^2\times\D^q)$ of $C$
in $M$ as in Lemma \ref{complete_lem} (a) and (b).
\begin{lem}\label{generic_lem}
 Then, for $f\in\diff(\Sigma\times\D^2)$ generic,
  the perturbed fissure
  $(\bar f\mun(C),[c\circ\bar f])$
 (Tool \ref{reparametrizing_sssec} above) is pullable through $u$.

\end{lem}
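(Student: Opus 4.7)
The plan is to re-express pullability as a two-dimensional transversality condition in $\Sigma\times\D^2$, and then to realize it for generic $f$ by parametric Thom transversality.

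In the $h$-coordinates $(\sigma,z,y)\in\Sigma\times\D^2\times\D^q$ on $H$ (provided by Lemma~\ref{complete_lem}(a) and (b), so that $h^*\FF$ is the $\D^q$-slice foliation), the submersion reads
\[
c\circ\bar f\circ h:(\sigma,z,y)\mapsto(\epsilon\,\pi_{\D^2}f(\sigma,z),\,(1+\epsilon)y),
\]
and $\bar f\mun(C)=h(f\mun(\Sigma\times 0)\times\D^q)$. Put $\tilde v:=\pi_{\Sigma\times\D^2}\circ h\mun\circ u:u\mun(H)\to\Sigma\times\D^2$. Because the leaves of $h^*\FF$ are the $\D^q$-slices and $u$ is leafwise etale, $du$ maps $\tau\FF'$ isomorphically onto the $\D^q$-tangent directions, so $\tilde v$ is $\FF'$-leafwise constant and the $y$-component of $c\circ\bar f\circ u$ already contributes rank $q$ along $\tau\FF'$. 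Consequently, at $x'\in u\mun(\bar f\mun(C))$, the condition that $c\circ\bar f\circ u$ have full rank $q+2$ is equivalent to saying that $f\circ\tilde v:u\mun(H)\to\Sigma\times\D^2$ is transverse at $x'$ to the codimension-$2$ submanifold $\Sigma\times\{0\}$.

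To arrange this transversality for generic $f$, I invoke parametric transversality on the evaluation
\[
E:\diff(\Sigma\times\D^2)\times u\mun(H)\to\Sigma\times\D^2,\qquad (f,x')\mapsto f(\tilde v(x')).
\]
At any $(f_0,x_0)$ with $E(f_0,x_0)\in\Sigma\times\{0\}$, the image point lies in the interior $\Sigma\times\Int(\D^2)$ (since $\Sigma$ is closed and $0\in\Int(\D^2)$). Given any tangent vector $\xi$ there, one picks a compactly supported vector field on $\Sigma\times\D^2$ realizing $\xi$ at that point; its flow, post-composed with $f_0$, yields a smooth path in $\diff(\Sigma\times\D^2)$ through $f_0$ whose $E$-derivative is $\xi$. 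Hence $E$ is a submersion at $(f_0,x_0)$, and in particular transverse to $\Sigma\times\{0\}$ there. The Abraham--Sard--Smale parametric transversality theorem then yields a residual, hence dense, set of $f\in\diff(\Sigma\times\D^2)$ for which the slice $E(f,\cdot)=f\circ\tilde v$ is transverse to $\Sigma\times\{0\}$ throughout the (relatively compact) set $u\mun(H)$; by the preceding reduction this is exactly the pullability of the perturbed fissure through $u$.

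The main obstacle is really only the first reduction: one has to be careful that the standard position assumption together with $u$ being leafwise etale genuinely absorbs the $\FF$-tangential directions into the $\R^q$-factor of $c$, so that attaining transversality of $f\circ\tilde v$ to the codimension-$2$ locus $\Sigma\times\{0\}$ upgrades $c\circ\bar f\circ u$ to a submersion. Once this is in place, the genericity step is routine Thom transversality; the compact-support restriction on $\diff(\Sigma\times\D^2)$ is harmless because $\Sigma\times\{0\}$ sits in the interior of $\Sigma\times\D^2$, leaving ample room to move the relevant image points.
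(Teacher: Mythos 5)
Your proposal is correct and takes essentially the same route as the paper, whose one-line proof simply invokes the Thom transversality theorem (or Po\'enaru's constrained version): the explicit reduction of the rank-$(q+2)$ condition on $c\circ\bar f\circ u$ to transversality of $f\circ\tilde v$ to the codimension-$2$ locus $\Sigma\times\{0\}$ is precisely what makes the constrained transversality theorem apply, and your reduction is correct. The only technical shortcut is the direct invocation of the Abraham--Sard--Smale theorem on the Fr\'echet group $\diff(\Sigma\times\D^2)$; the standard rigorous substitute is to generate the needed perturbation directions by a finite-dimensional family of compactly supported isotopies and apply finite-dimensional parametric transversality, which is harmless here exactly because $\Sigma\times\{0\}$ lies in the interior of $\Sigma\times\D^2$.
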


\begin{proof} Application of the Thom transversality theorem
under, if one likes, Poenaru's version ``with constraints''
 (see \cite{laudenbach_11}
paragraph 5.4).

\end{proof}

c) Under the hypothesis \ref{pullable_dfn}, any cleft $\Gamma_q$-structure
 $\Gamma:=(C,[c], \gamma)$
 on $M$ admits a \emph{pullback}
 cleft $\Gamma_q$-structure
  of the same monodromy $$u^*(\Gamma):=(u\mun(C),[c\circ u],u^*(\gamma))$$
  In case $u$ is an inclusion $M'\subset M$, 
we rewrite $u^*(\Gamma)$ as $\Gamma\vert M'$.

d)
  Let $\nabla$ be on $M$ a vector field tangential to a cleft foliation $G$
  (Definition \ref{tangent1_dfn}).

 Then, the (maybe
 only partially defined) flow of $\nabla$ preserves $G$, in the following sense:
  for every $t\in\R$, $x\in Int(M)$, if $\nabla^t(x)$ is defined and also lies in $Int(M)$, then
 $G=(\nabla^t)^*(G)$ on $\nb_M(x)$.

e) Under the hypothesis \ref{pullable_dfn} and (1), (2) above,
the pullback through $u$ of any cleft
foliation quasi-complementary to $\FF$ on $M$ is a 
 cleft
foliation quasi-complementary to $\FF'$ on $M'$.

\subsubsection{Holes}\label{fh_sssec}

\begin{dfn}\label{hole_dfn} For a compact
manifold $\Sigma$, a compact manifold $Q$ of dimension $q\ge 1$
and an element 
$\varphi\in\diff(Q)^\I$,
 the \emph{hole} of core $\Sigma$, fibre $Q$ and monodromy
$\varphi$ consists of
 the manifold $H:=\Sigma\times\D^2\times Q$ together
with, along $\partial H$, the germ of a codimension-$q$ foliation $\HH_\varphi$
 such that
\begin{itemize}
\item $\HH_\varphi$
 is the slice foliation parallel to $\D^2$ 
   on some neighborhood of
 $\Sigma\times\D^2\times\partial Q$;
\item $\HH_\varphi$ restricts, on $\Sigma\times\partial\D^2\times Q$,
 to the preimage of the suspension $\SS_\varphi$ under the projection
$(\sigma,z,y)\mapsto(z,y)$.
 \end{itemize}

 \end{dfn}
 Let $(C,[c],\gamma)$ be a
cleft $\Gamma_q$-structure  of monodromy $\varphi$ on a manifold $M$,
 whose fissure is compact.
Choose
 a neighborhood $H$ as in Lemma \ref{complete_lem}. One can then
 forget
 $\gamma$ in the interior of $H$, remembering
 only its germ along $\partial H$. One has thus \emph{enlarged} the fissure into a hole
 of the same core, of fibre $\D^q$ and of the same monodromy.

 Conversely, 
  any hole $(\Sigma\times\D^2\times\D^q,\HH_\varphi)$ of fibre $\D^q$
  can be \emph{horizontally shrunk}
 into a fissure, by extending radially the germ of foliation $\HH_\varphi$ from the vertical
 boundary $\Sigma\times\partial\D^2\times\D^q$  and thus
 foliating the subset
  $\Sigma\times(\D^2\setminus 0)\times\D^q$.

\subsubsection{Filling a hole whose monodromy is a multirotation}\label{filling_sssec}
 (After \cite{thurston_74}, section 4).

\begin{dfn}\label{multirotation_dfn} Let $Q$ be
a compact manifold of dimension $q\ge 2$. One calls
$\varphi:=(\varphi_t)_{t\in\I}\in\diff(Q)^\I$
 a \emph{multirotation} if
 there are
 a function $u$ on $\I$ as in the subsection \ref{qc_ssec} and
 an equidimensional
 embedding $$F:\D^{q-1}\times\S^1\hookrightarrow Int(Q)$$ such that
for every $t\in\I$:
\begin{itemize}   \item $\varphi_t(F(a,s))=F(a,s+tu(1-2\vert a\vert))$
      for every $a\in\D^{q-1}$ such that $\vert a\vert\le 1/2$ and every $s\in\S^1\cong\R/\Z$;
\item
   $\varphi_t$ is the identity on the rest of $Q$.
\end{itemize}
\end{dfn}
  
     For any (compact)
      manifold $\Sigma$, on $H:=\Sigma\times\D^2\times Q$, denote by $\FF$
     (resp. $\HH_0$)  the slice foliation parallel to $Q$ (resp. to $\Sigma\times\D^2$).
    Assuming that $\varphi$ is the above multirotation,
 consider the foliation $\HH$ of
  $\Sigma\times\D^2\times Q$ equal to the foliation
     $\GG_\Sigma$ of the subsection \ref{qc_ssec}
      in $\Sigma\times(\D^2\times Im(F))\cong C_\Sigma$ (see Figure \ref{movie_fig}),
     and equal to $\HH_0$ in the complement.
     The germ of $\HH$ along $\partial H$
      matches Definition \ref{hole_dfn}. One has thus  \emph{filled,}
       by means of the foliation $\HH$,
 the  hole of  core $\Sigma$, fibre $Q$ and monodromy $\varphi$.
       
\begin{rmk}\label{differential_rmk} (on the differential of the filling foliation)
Recall the definitions (Vocabulary \ref{nd_voc})
 of the normal bundle $\nu\HH$
and of the differential $d\HH$ for the foliation
 $\HH$.
The $1$-parameter family $(\omega_r)$ of the  subsection \ref{qc_ssec}
 provides a homotopy between
 the codimension-$q$
  plane fields
   $\tau\HH_0$ and $\tau\HH$  on $H$, rel. $\partial H$.
   Hence, the family $(\omega_r)$ also provides, uniquely up to a homotopy,
   a linear isomorphism of  vector bundles
    $$h:\nu\HH\to\nu\HH_0=\tau\FF$$
    which is the identity
  on $\partial H$, and such that
    the image $h\circ d\HH$ of  the differential $d\HH$
 is homotopic to $d\HH_0$ among the $(\tau\FF)$-valued
  $1$-forms of rank $q$ on
 $H$, rel. $\partial H$.
\end{rmk}

\subsubsection{Vertical reparametrization of a hole}\label{reparametrization_sssec}

\begin{dfn}\label{vi_dfn}
 Given a locally trivial fibration between some manifolds $\pi:E\to B$ and
  a $1$-parameter family $(f_s)_{s\in\I}$ of self-diffeomorphisms
  of $E$ such that $f_0=\id$ and that $\pi\circ f_s=\pi$ for every $s$,
  one calls the family $(f_s)_{s\in\I}$ (or $f_1$ alone)  an isotopy \emph{vertical}
  with respect to $\pi$.
  \end{dfn}
  The vertical isotopies enjoy an obvious extension property.

  Now, let $Q$ be a compact $q$-manifold; consider the identity component
  $\diff(Q)_0$ and its universal cover $\widetilde\diff(Q)_0$,
   which can be realized as
the quotient of $\diff(Q)^\I$ by the
 relation of homotopy rel. $\partial\I$. We denote by $[\varphi]$
 the class of the $1$-parameter family of diffeomorphisms $\varphi$.
 The projection $\widetilde\diff(Q)_0\to\diff(Q)_0$ is 
 $[\varphi]\mapsto\varphi_1$.
 
  The following is classical; we recall the short proof for completeness.
  
  \begin{lem}\label{suspension_lem}
  The suspension foliation $\SS_\varphi$ only depends, up to
  an isotopy in $\S^1\times Q$ vertical with respect to the projection to $\S^1$
  and which is the identity close to $\S^1\times\partial Q$,
   on the conjugation class of
  $[\varphi]$ in the group $\widetilde\diff(Q)_0$.
  \end{lem}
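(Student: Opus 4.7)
The plan is to prove this classical fact by combining two elementary constructions of vertical isotopies: \emph{path-homotopy invariance} of $[\varphi]$, and \emph{uniform conjugation} by paths in $\diff(Q)$. For the first, suppose $(\varphi^s)_{s\in\I}$ is a smooth homotopy rel endpoints in $\diff(Q)^\I$ with $\varphi^0=\varphi$ and $\varphi^1=\varphi'$. Define
\[ \Phi_s(t,y):=\bigl(t,\,\varphi^s_t\bigl(\varphi_t^{-1}(y)\bigr)\bigr). \]
Since $\varphi^s_t=\id$ for $t$ near $0$ and $\varphi^s_t=\varphi_1$ for $t$ near $1$, independently of $s$ (the conditions built into Notation \ref{diff_ntt}), the map $\Phi_s$ descends to a diffeomorphism of $\S^1\times Q$, identity near $\S^1\times\partial Q$. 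A leaf $\{(t,\varphi_t(z))\}$ of $\SS_\varphi$ is sent by $\Phi_1$ onto $\{(t,\varphi'_t(z))\}$, a leaf of $\SS_{\varphi'}$.

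For the second construction, given $\chi\in\diff(Q)^\I$ with $\chi_1=h$, the family $\Xi_s(t,y):=(t,\chi_s(y))$ is constant in $t$, hence a vertical isotopy from $\id$ to $\Xi_1$, identity near $\S^1\times\partial Q$. A direct reparametrization in the fibre shows that $\Xi_1$ sends $\SS_\varphi$ onto $\SS_{h\varphi h^{-1}}$, where $h\varphi h^{-1}$ denotes the \emph{uniformly conjugated} path $t\mapsto h\varphi_t h^{-1}$.

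Now assume $[\varphi']=[\chi][\varphi][\chi]^{-1}$ in $\widetilde\diff(Q)_0$, so that $\varphi'$ is path-homotopic rel endpoints to the pointwise conjugate $t\mapsto\chi_t\varphi_t\chi_t^{-1}$. Applying the second construction with $h:=\chi_1$ yields a vertical isotopy $\SS_\varphi\leadsto\SS_{h\varphi h^{-1}}$, while the uniform and pointwise conjugates are linked by the path-homotopy
\[ \Psi^u_t:=\chi_{(1-u)+ut}\,\varphi_t\,\chi_{(1-u)+ut}^{-1},\qquad u\in\I, \]
whose values at $t=0$ and $t=1$ are $\id$ and $h\varphi_1 h^{-1}$ respectively, hence independent of $u$. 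Two further applications of the first construction then chain the isotopies $\SS_{h\varphi h^{-1}}\leadsto\SS_{\chi\varphi\chi^{-1}}\leadsto\SS_{\varphi'}$. The conceptually subtle point is that a vertical isotopy of $\S^1\times Q$ naturally acts on $\SS_\varphi$ by left multiplication of $\varphi_t$ by a loop, not by a pointwise conjugation; the indirect passage through uniform conjugation is precisely what forces the dependence on the class in $\widetilde\diff(Q)_0$ (which remembers path-homotopy data) rather than only in $\diff(Q)_0$.
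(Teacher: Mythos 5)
Your proof is correct, and it shares the paper's overall structure — reduce the statement to two invariances, one under homotopy rel endpoints and one under conjugation — but it realizes the conjugation invariance by a different decomposition. The paper dispatches a pointwise conjugation $\varphi'=\psi\varphi\psi^{-1}$ in $\diff(Q)^\I$ in a single stroke, with the one-parameter family
$f_s(t,y):=\bigl(t,\,\varphi'_{st}(\psi_s(\varphi_{st}^{-1}(y)))\bigr)$,
which equals $\id$ at $s=0$, descends to $\S^1\times Q$ precisely because $\varphi'_s\psi_s=\psi_s\varphi_s$, and at $s=1$ carries $\SS_\varphi$ onto $\SS_{\varphi'}$. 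You instead factor the pointwise conjugate $\chi\varphi\chi^{-1}$ through the \emph{uniform} conjugate $t\mapsto\chi_1\varphi_t\chi_1^{-1}$: the latter is the image of $\SS_\varphi$ under the fibrewise isotopy $\Xi_s(t,y)=(t,\chi_s(y))$, and you bridge the two conjugates by the path-homotopy $\Psi^u_t=\chi_{(1-u)+ut}\varphi_t\chi_{(1-u)+ut}^{-1}$, which you then feed (together with the given homotopy from $\chi\varphi\chi^{-1}$ to $\varphi'$) into your first construction $\Phi_s$. All the formulas check out, including descent to $\S^1\times Q$ and vanishing near $\S^1\times\partial Q$. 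Your route is slightly longer (three moves instead of two) but makes explicit a structural point that the paper's compact formula conceals — namely that the only ``genuine'' vertical isotopy needed is a fibrewise twist by a path in $\diff(Q)$, and everything else is absorbed into homotopy rel endpoints; the paper's single interpolating formula is terser and avoids introducing the uniform conjugate as an intermediary.
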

  \begin{proof} (Here $\S^1$ is the quotient of the interval $\I$ by the relation $0=1$.)
  
  One the one hand, a homotopy of the family $(\varphi_t)_{t\in\I}$
  rel. $\partial\I$ obviously amounts to a vertical  isotopy of $\SS_\varphi$ rel.
  $0\times Q$.

On the other hand, let $\varphi'=\psi\varphi\psi\mun$ be a conjugation 
 in the group $\diff(Q)^\I$. Put
 for $s\in\I$:
$$ 
 f_s:\S^1\times Q\to\S^1\times Q:
 (t,y)\mapsto(t,\varphi'_{st}(\psi_s(\varphi_{st}\mun(y))))
$$
  This family $(f_s)$
   is a vertical isotopy of $\S^1\times Q$,
  and $f_1^*(\SS_{\varphi'})=\SS_{\varphi}$.
  \end{proof}
  It follows that the notion of hole of monodromy $\varphi$
does  actually not depend on $\varphi$ itself,
but only on the conjugation class of $[\varphi]$ in
  $\widetilde\diff(Q)_0$. Indeed, any isotopy in $\Sigma\times\S^1\times Q$
  vertical with respect to the projection to $\Sigma\times\S^1$, and which
  is the identity close to  $\Sigma\times\S^1\times\partial Q$,
  extends into an isotopy in $\Sigma\times\D^2\times Q$
  vertical with respect to the projection to $\Sigma\times\D^2$, and which
  is the identity close to  $\Sigma\times\D^2\times\partial Q$. (The like holds of course for
  the monodromy of cleft Haefliger structures).

\subsubsection{Splitting a hole into two holes}\label{splitting_sssec}
 Given  a compact manifold $Q$ and a factorization $\varphi=\varphi'\varphi''$ in the group
 $\diff(Q)^\I$,
 let $S$ be the $2$-sphere minus the interiors of three disjoint compact disks.
 Hence, $\pi_1(S)$ is the free group on two generators $\gamma'$,
 $\gamma''$, which one chooses such that the three components of $\partial S$,
 oriented as such,
 are respectively conjugate to $\gamma'\mun$ and $\gamma''\mun$ and $\gamma'\gamma''$.
Suspending the representation $\pi_1(S)\to\diff(Q)^\I$
that maps $\gamma'$ to $\varphi'$ and $\gamma''$ to $\varphi''$,
one gets on $S\times Q$ a codimension-$q$ foliation $\GG$ such that
\begin{enumerate}
\item $\GG$ is
complementary to the slice foliation parallel to $Q$;
\item $\GG$ is
parallel to $S$ on a neighborhood of $S\times\partial Q$;
\item $\GG$
induces the suspensions of $\varphi\mun$, $\varphi'\mun$, $\varphi''$ on
the three components of $\partial S\times Q$.
\end{enumerate}
Given also a manifold $\Sigma$,
pulling back $\GG$ into $\Sigma\times S\times Q$,
one obtains a partial filling of the hole $H$ of  core $\Sigma$,
fibre $Q$ and monodromy $\varphi$,
leaving two holes $H'\subset H$, $H''\subset H$
 of core $\Sigma$,  fibre $Q$ and respective monodromies $\varphi'$, $\varphi''$. Note that the
 core of $H'$ (resp. $H''$) \emph{embeds}
  into the base $\Sigma\times\D^2$ of $H$
through the projection $H\mapsto\Sigma\times\D^2$.

\subsubsection{Vertical shrinking of a hole}\label{vertical_sssec}
 If $Q'\subset Q$ is a domain containing in its interior the support of every $\varphi_t$,
where $\varphi=(\varphi_t)_{t\in\I}\in\diff(Q)^\I$, then in
the hole $\Sigma\times\D^2\times Q$
 of core $\Sigma$, fibre $Q$ and monodromy $\varphi$,
  we can foliate 
the subset $\Sigma\times\D^2\times(Q\setminus Q')$
 by the horizontal slice foliation parallel to
$\Sigma\times\D^2$, leaving
a smaller hole
 $\Sigma\times\D^2\times Q'$
  of the same core, fibre $Q'$ and monodromy $\varphi\vert Q'$.

 \subsection{Proof of Theorem \emph{A} as a corollary of
 \emph{A'}}\label{AAP_ssec}
    Let $M$, $\FF$, $\gamma$ and $\omega$ be as in the hypotheses of Theorem $A$.
    After pushing the microfoliation of $\gamma$ by the linear automorphism
    $(\omega\vert\tau\FF)\mun$ of the vector bundle $\nu\gamma=\tau\FF$,
     one is reduced to the case where
    $ \omega\vert\tau\FF=\id$.
 Choose
 $\varphi$ to be a multirotation (Definition \ref{multirotation_dfn} above).
 Applying $A'$ (i), one obtains
  a cleft $\Gamma_q$-structure $(C,[c],\bar\gamma)$.
  Enlarge the fissure into a hole $H\subset\bar M$ of fibre $\D^q$ and monodromy $\varphi$ (Tool \ref{fh_sssec}). Fill up
   this hole according to Paragraph \ref{filling_sssec} above;
there results on $\bar M$
 a (non cleft) $\Gamma_q$-structure $\tilde\gamma$ whose normal bundle is
 $\tau\bar\FF$, which coincides with 
  $\bar\gamma$
 on the complement of $H$,
  and whose restriction $\gamma':=\tilde\gamma\vert(M\times 1)$ to $M\times 1\cong M$
 is a (non cleft) foliation quasi-complementary to $\FF$.
 Finally, $\gamma'$ being special (Definition \ref{special_dfn}) on $(M\times 1)\setminus(H\cap(M\times 1))$,
 and after the above remark \ref{differential_rmk} applied in $H\cap(M\times 1)$,
 the differential
$d\gamma'$ is homotopic to $\omega$, rel. $\partial(M\times 1)$,
 among the $(\tau\FF)$-valued
$1$-forms on $M$ whose restriction to $\tau\FF$ is of constant rank $q$.

\begin{note}[on the hypothesis $q\ge 2$]
 In the proof of Theorem A, the hypothesis that $q\ge 2$ is crucial, allowing one
to choose  the diffeomorphism $\varphi$ of $\D^q$ to be a multirotation,
 and consequently, after  enlarging
the fissure, to fill the resulting hole of monodromy $\varphi$ (following Thurston)
 with the foliation described in the introduction. On the contrary, in codimension $q=1$, there exist no multirotations, since $\S^1$ does not embed in the interval $\D^1$!
 Filling a hole in codimension $1$
  requires in general a complicated construction where one first has to enlarge the hole
 by a ``worm gallery'' \cite{thurston_76} (see also \cite{meigniez_17}).
 For this reason, there seems to be no general result for the existence of codimension-$1$ quasicomplementary
  foliations, like Theorem A in the higher codimensions.
\end{note}

\section{Proof of Theorem \emph{A'}.}\label{proof_sec}

\subsection{Foliating a neighborhood of a codimension-$1$ skeleton
transversely to $\FF$}
Following a classical scheme,
a first part of the proof of Theorem \emph{A'}
will solve the problem on a small
neighborhood of the codimension-$1$ skeleton of a triangulation of a large domain in $M$;
this part is somehow standard, pertaining to Gromov's h-principle for $\diff$-invariant open
differential relations on open manifolds,
with the help of Thurston's jiggling lemma.
\medbreak

Let $M$, $\FF$, $\gamma$ be as in Theorem \emph{A'}.
Put $p:=\dim(M)-q$.
By the Jiggling lemma (\cite{thurston_74} section 5), one has
\begin{itemize}
\item A compact domain $D\subset Int(M)$, large enough that
$d\gamma$ is a linear retraction $\tau M\to\tau\FF$
over some open neighborhood of $M\setminus Int(D)$;
\item A triangulation $K$ of $D$
 which is
 in \emph{general position,} in Thurston's sense, with respect
to $\FF$.
\end{itemize} From Thurston's definition of ``general position'',
 we only need to recall that every cell $S$ of $K$ is transverse to $\FF$
(when $\dim(S)<p$, ``transverse'' means that there is no tangency)
and that, when $\dim(S)\ge p$, the foliation $\FF\vert Int(S)$ is
conjugate to the standard linear codimension-$p$ foliation on $\R^{\dim(S)}$.
Consider  the  $(p+q-1)$-skeleton $K^{(p+q-1)}$ of $K$.

\begin{lem}\label{skeleton_lem}
After a first concordance of $\gamma$
on $M$
rel. $\partial M$  (no cleft is necessary
 at this stage),
 one can moreover arrange that
 $d\gamma$ is also a linear retraction $\tau M\to\tau\FF$
 over some open neighborhood $U$ of $K^{(p+q-1)}$
  in $M$.
 \end{lem}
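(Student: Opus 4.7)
The plan is to apply Theorem~\ref{parametric_thm} (the Foliation theorem on open manifolds, nonparametric case) on a compact sub-manifold of $M$ obtained by removing small balls around the barycenters of the top-dimensional simplices of $K$, then to pass from ``regular'' to ``linear retraction'', and finally to extend the resulting concordance back to $M$ by the Concordance Extension Property (Proposition~\ref{cep_pro}).

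\emph{Setup.} Let $B_j$ be a small open ball centered at the barycenter of each $(p+q)$-dimensional simplex of $K$, and put $M^*:=M\setminus\bigcup_j B_j$. Then $M^*$ is a compact manifold with boundary $\partial M\sqcup\bigsqcup_j\partial B_j$, and the pair $(M^*,\nb_{M^*}(\partial M))$ is open in the sense of the h-principle, since each connected component of the complement of $\nb_{M^*}(\partial M)$ in $M^*$ meets some $\partial B_j\subset\partial M^*$. Moreover $M^*$ contains an open neighborhood $U$ of $K^{(p+q-1)}$. Fix a Riemannian metric on $M$, adjusted near $\partial M$ so that $\tau\FF^\perp$ coincides with $\ker d\gamma$ there; let $\omega:\tau M\to\tau\FF$ be the orthogonal projection. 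Then $\omega$ is a rank-$q$, $\tau\FF$-valued $1$-form, is a linear retraction (i.e. $\omega|\tau\FF=\id$), and agrees with $d\gamma$ on $\nb_M(\partial M)$.

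\emph{Main step.} Apply Theorem~\ref{parametric_thm} on $M^*$, with empty parameter space, $N:=\nb_{M^*}(\partial M)$, and the data $(\gamma|M^*,\omega|M^*)$. One obtains a $\Gamma_q$-structure $\bar\gamma$ on $M^*\times\I$, coinciding with $\pr_1^*\gamma$ on $N\times\I$ and with $\gamma|M^*$ at $t=0$, whose endpoint $\gamma':=\iota_1^*\bar\gamma$ is regular on $M^*$; moreover $d\gamma'$ is homotopic to $\omega$ rel.\ $N$ among rank-$q$, $\tau\FF$-valued $1$-forms. Regularity means that $\gamma'$ induces a codimension-$q$ foliation $\GG'$ on $M^*$ with $\tau\GG'=\ker d\gamma'$. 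The property ``$\omega|\tau\FF$ invertible'' is an \emph{open} condition on rank-$q$, $\tau\FF$-valued $1$-forms, and the Gromov-Phillips transversality argument underlying Theorem~\ref{parametric_thm} readily accommodates this stronger open relation by asking the zero section $Z$ to be transverse to $\MM'$ \emph{and} in compatible position with the pulled-back foliation $\pi^*\FF$; this enforces on $U$ that $\GG'$ is transverse to $\FF$, i.e.\ $d\gamma'|\tau\FF$ is an isomorphism. Once this transversality is in hand, post-compose with the special microautomorphism of $\tau\FF$ equal fiberwise to $(d\gamma'|\tau\FF)\mun$: this brings $d\gamma'$ to the canonical projection $\tau M\to\tau M/\tau\GG'\cong\tau\FF$ along $\tau\GG'$, which is a linear retraction. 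By Lemma~\ref{concordance_lem}, this last adjustment is realized by a concordance (rel.\ $N$, since over $N$ the microautomorphism is already the identity).

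\emph{Completion.} The resulting concordance is defined on $M^*$ and is rel.\ $N$. By the Concordance Extension Property (Proposition~\ref{cep_pro}) applied to the submanifold $M^*\subset M$, it extends to a concordance of $\gamma$ on all of $M$, rel.\ $\partial M$; no cleft is introduced. The endpoint, still called $\gamma$, coincides on $U$ with $\gamma'$, and thus has $d\gamma$ a linear retraction $\tau M\to\tau\FF$ on an open neighborhood $U$ of $K^{(p+q-1)}$, as required.

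The main obstacle is the upgrade from ``regular'' to ``linear retraction'' — i.e.\ ensuring that the induced foliation $\GG'$ is transverse to $\FF$, and not merely that $d\gamma'$ is surjective onto $\tau\FF$. This is an open condition compatible with the Gromov-Phillips/Holonomic Approximation proof of Theorem~\ref{parametric_thm}, so no genuinely new tool is needed; the remaining passage from ``transverse to $\FF$'' to ``$d\gamma$ is a linear retraction'' is a purely formal adjustment handled by a concordance-through-isomorphism.
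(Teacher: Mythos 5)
Your route is genuinely different from the paper's, and it contains a real gap at exactly the point you flag as ``the main obstacle.''

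The gap: you apply Theorem~\ref{parametric_thm} once on $M^*:=M\setminus\bigcup_j B_j$ with empty parameter space. As stated, that theorem only produces a \emph{regular} $\Gamma_q$-structure $\gamma'$ with $d\gamma'$ homotopic to $\omega$ through rank-$q$, $\tau\FF$-valued $1$-forms. Regularity of $\gamma'$ means $d\gamma'$ has rank $q$ on $\tau M$; it says nothing about $d\gamma'|\tau\FF$ being invertible, and homotopy of $d\gamma'$ to a linear retraction through rank-$q$ forms does not imply it either (a rank-$q$ form on a $(p+q)$-manifold can vanish on $\tau\FF$). To get a linear retraction one really needs $\GG':=Z^*\MM$ to be \emph{complementary to $\FF$}, and you address this only by asserting that the Gromov--Phillips step ``readily accommodates this stronger open relation'' by also requiring the zero section to be ``in compatible position with $\pi^*\FF$.'' That is a strengthened Foliation theorem — transversality to $\MM$ \emph{along the leaves of $\FF$} — which is not what Theorem~\ref{parametric_thm} says and which you neither state precisely nor prove. (Openness of the relation is not in doubt; what you are invoking is a leafwise/foliated version of the Gromov--Phillips transversality theorem or of Holonomic Approximation, and this needs a statement and a proof, or at least a citation.)

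The paper gets around exactly this issue by never needing the stronger theorem. It builds $U$ cell by cell and, for each cell $S_n$, applies Theorem~\ref{parametric_thm} \emph{leafwise}: the ``manifold'' fed into the theorem is the leaf direction ($\D^q$ or $\D^{d-p}\times\D^{p+q-d}$), and the transverse directions serve as the \emph{parameter space}. The integrable-parametric form of the theorem then produces a global $\Gamma_q$-structure whose restriction to each leaf of $e^*(\FF)$ is a regular $\Gamma_q$-structure on a $q$-manifold; on a $q$-manifold, regularity forces $d\gamma'$ to be a linear \emph{isomorphism} onto $\nu$, i.e.\ $d\gamma'|\tau\FF$ is invertible. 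Thus complementarity to $\FF$ comes for free from the statement of Theorem~\ref{parametric_thm}, at the cost of the cell-by-cell induction. This is also why Thurston's general position of $K$ with respect to $\FF$ (obtained by jiggling) is material here: it supplies, for each cell $S_n$, an embedding $e$ under which $\FF$ becomes the standard slice foliation, which is precisely what lets one separate leaf directions from parameter directions. Your proposal never uses general position at all — a red flag, since the paper takes care to establish it before this lemma. So: if you want to keep your one-shot argument on $M^*$, you must first state and prove the foliated transversality theorem you are implicitly using; otherwise, the cell-by-cell leafwise application is the way to stay within the theorems the paper has actually established.
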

(About the differential being a linear retraction, recall Note \ref{retraction_note}).

\begin{proof} Choose a filtration $(K_n)$ ($0\le n\le N$) of  $K^{(p+q-1)}$
 by subcomplexes,
such that $K_0=K\vert\partial D$, that $K_N=K^{(p+q-1)}$,
and that $K_n$ is the union of $K_{n-1}$ with a single cell $S_n$.
By induction on $n$, assume that
 $d\gamma$ is already a linear retraction $\tau M\to\tau\FF$ over
  some open neighborhood  $U_{n-1}$ of $K_{n-1}$ in $M$. For convexity reasons,
  $d\gamma\vert U_{n-1}$ extends over $M$ to a global linear retraction $
  \omega:\tau M\to\tau\FF$.
There are two cases, depending on $d:=\dim(S_n)$.

In case $0\le d\le p$, one has an equidimensional
 embedding $$e:S_n\times\D^{p-d}\times\D^q
\hookrightarrow M$$
whose image is contained in a small neighborhood of $S_n$, and such that
\begin{itemize}
\item $e(x,0,0)=x$ for every $x\in S_n$;
\item $e$ maps $\partial S_n\times\D^{p-d}\times\D^q$
into $U_{n-1}$;
\item
 $e^*(\FF)$ is the slice foliation on
$S_n\times\D^{p-d}\times\D^q$ parallel to the $\D^q$ factor.
\end{itemize} One applies the integrable parametric
Foliation theorem on open manifolds (Theorem \ref{parametric_thm} of Paragraph
\ref{parametric_sssec})
 to the manifold $\D^q$, with $N:=\emptyset$,
the space of parameters being
$A:=S_n\times\D^{p-d}$, while $B:=\partial S_n\times\D^{p-d}$.

In case $p\le d\le p+q-1$, one has an equidimensional embedding
$$e:\D^p\times\D^{d-p}\times\D^{p+q-d}\hookrightarrow M$$
whose image is contained in a small neighborhood of $S_n$, and
 such that
\begin{itemize}
\item
 $e\mun(S_n)=\D^p\times\D^{d-p}\times 0$;
\item
 $e$ maps $\partial(\D^p\times\D^{d-p})\times\D^{p+q-d}$ into $U_{n-1}$;
 \item
 $U_{n-1}\cup e(\D^p\times\D^{d-p}\times\D^{p+q-d})$ is a neighborhood of $S_n$;
\item
 $e^*(\FF)$ is the slice foliation on $\D^p\times\D^{d-p}\times\D^{p+q-d}$
 parallel to the $\D^{d-p}\times\D^{p+q-d}$ factor.
\end{itemize}
One applies Theorem \ref{parametric_thm} to the manifold $\D^{d-p}\times\D^{p+q-d}$,
with $N:=\partial\D^{d-p}\times\D^{p+q-d}$,
the space of parameters being
$A:=\D^{p}$, while $B:=\partial\D^p$.

In both cases, one gets on some open neighborhood $U_n$
of $K_n$ in $M$ a regular $\Gamma_q$-structure $\gamma'$ such that
\begin{enumerate}
\item $\nu\gamma'=\tau\FF\vert U_n$;
\item $\gamma'$ is concordant with $\gamma$ rel. $\nb_M(K_{n-1})$;
\item The restriction $d\gamma'\vert\tau\FF$ is of constant rank $q$ over $U_n$;
\item $d\gamma'\vert\tau\FF$ is homotopic with $\id_{\tau\FF}$ among the
linear automorphisms of the vector bundle $\tau\FF\vert U_n$.
\end{enumerate}

Then, over $U_n$,
 push the microfoliation of $\gamma'$ through the linear automorphism
 $(d\gamma'\vert\tau\FF)\mun$ of the vector bundle $\tau\FF$, and thus
 get an isomorphic $\Gamma_q$-structure $\gamma''$ whose differential
  is a linear retraction of $\tau U_n$
 onto $\tau\FF\vert U_n$.  To verify that $\gamma''$ is concordant with $\gamma'$,
 and hence with $\gamma$, rel. $\nb_M(K_{n-1})$, one can refer to the general
 Lemma \ref{concordance_lem};
  but here it is even more
 obvious, because of (4). Finally, one calls
 to the Concordance Extension Property (Proposition \ref{cep_pro} above)
  to complete the induction.
\end{proof}

\subsection{Preparing the niche for inflation}\label{niche_ssec}
The next step in the proof of Theorem \emph{A'} is apparented to Po\'enaru's
flexibility for folded equidimensional maps
\cite{poenaru_66},
to Gromov's proof of the microcompressibility lemma
 (\cite{gromov_86} p. 81), to Lemma 2.3A in \cite{eliashberg_mishachev_97}, etc.
 
According to Lemma \ref{skeleton_lem},
one can define $\bar\gamma$ on $U\times\I$ as $\pr_1^*(\gamma)$.
 Then, vaguely, there remains, in the wall $M\times\I$, to fill the niches $Int(S)\times(0,1]$,
 where $S$ runs over the $(p+q)$-cells of $K$.  The present step will
  somehow normalize the Haefliger structure along the boundary of the niche,
   in order that  further down, we can
 solve the new extension problem by the ``inflation'' method.
 
  Precisely, for each $(p+q)$-cell $S$ of $K$,
 one has a large equidimensional embedding $j:\I^p\times\D^q\hookrightarrow Int(S)$
 containing $S\setminus(S\cap U)$ in its interior,
  and such that $j^*(\FF)$ is the slice
 foliation on $\I^p\times\D^q$ parallel to $\D^q$.
In this way, the proof of Theorem \emph{A'}
 is reduced to the case where $M:=\I^p\times\D^q$,
and where $\FF$ is the slice foliation parallel to $\D^q$. We restrict the attention to
this case.

Note --- The choice of the cube $\I^p$ here, rather than the ball $\D^p$, will be of no
importance for most of the construction, but it will make
civilization easier below in Subsection \ref{civilization_sssec}.

Recall that
 $\bar M:=M\times\I$; that $\hat M:=(M\times 0)\cup(\partial M\times\I)\subset\bar M$;
that $\bar\FF$ is on $\bar M$ the dimension-$q$ slice foliation  parallel
to $\D^q$; and that  $\pr_1:M\times\I\to M$ denotes the first projection.
Consider on $\bar M$
 the $\bar\Gamma_q$-structure $\pr_1^*(\gamma)$.

Put
$$X:=(\partial\I^{p}\times\D^q\times\I)\cup(\I^p\times\partial\D^{q}\times 1)
\subset\hat M\subset\bar M$$
Note that $X\cap(M\times 1)=\partial M\times 1$.
\begin{lem}\label{niche_lem}
There is on $\nb_{\bar M}(\hat M)$ a foliation $\GG$ such that
\begin{itemize}
\item $\GG$ is complementary
to $\bar\FF$ on $\nb_{\bar M}(\hat M)$;
\item $\GG$ is specially  induced by $\pr_1^*(\gamma)$ on
$\nb_{\bar M}(X)$;
\item $\GG$ is  concordant to $\pr_1^*(\gamma)$ on  $\nb_{\bar M}(\hat M)$
rel. $\nb_{\bar M}(X)$ (recall Conclusion \ref{induction_ccl} (a)).
\end{itemize}
\end{lem}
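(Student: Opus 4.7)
The plan is to apply the non-parametric integrable form of Theorem \ref{parametric_thm} (with parameter space a point and $B=\emptyset$) to the compact $(p+q+1)$-manifold-with-corners $V:=\hat M\times[0,\eta]$, interpreted as a closed collar of $\hat M$ in $\bar M$ for some small $\eta>0$. The extra dimension afforded by this collar is precisely what makes the open-pair hypothesis achievable: the pair $(\hat M,X)$ by itself is not open in the h-principle sense, since a direct check shows $X\supset\partial\hat M=(\partial\I^p\times\D^q\times 1)\cup(\I^p\times\partial\D^q\times 1)$.

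First, choose $\eta$ small enough that, by Lemma \ref{skeleton_lem}, $\pr_1^*(\gamma)$ is special on a neighborhood $N$ of $X$ in $V$ --- that is, its differential $\pr_1^*(d\gamma)$ is a linear retraction $\tau\bar M\to\tau\bar\FF$ there. Extend this retraction to a global linear retraction $\omega:\tau V\to\tau\bar\FF|_V$ by a partition of unity, using that the space of such retractions is convex. Under the collar identification, the inner face $\hat M\setminus X$ is a nonempty subset of $\partial V$, and every component of $V\setminus N$ reaches $\partial V$ along the collar direction, so $(V,N)$ is an open pair.

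Apply Theorem \ref{parametric_thm}(i)--(ii) to the data $(V,N,\tau\bar\FF|_V,\pr_1^*(\gamma)|_V,\omega)$. The output is a $\Gamma_q$-structure $\bar\gamma$ on $V\times\I'$ (with $\I'$ a fresh concordance interval) which agrees with $\pr_1^*(\gamma)|_V$ at time $0$, is $\I'$-constant (equal to $\pr_1^*(\gamma)|_V$) over $N\times\I'$, and whose restriction $\gamma'$ at time $1$ is regular on $V$, with differential $d\gamma'$ homotopic rel.\ $N$ to the linear retraction $\omega$ through rank-$q$ $1$-forms. In particular $d\gamma'|\tau\bar\FF$ is a linear automorphism of $\tau\bar\FF|_V$ that equals the identity on $N$.

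To upgrade $\gamma'$ from merely regular to special (see Note \ref{retraction_note}), I push its microfoliation by the microautomorphism of $\tau\bar\FF|_V$ induced by $(d\gamma'|\tau\bar\FF)^{-1}$; the resulting $\tilde\gamma'$ is isomorphic to $\gamma'$ (hence concordant to it rel.\ $N$ by Lemma \ref{concordance_lem}) and has $d\tilde\gamma'$ equal to a linear retraction $\tau V\to\tau\bar\FF$, so $\tilde\gamma'$ specially induces a foliation $\GG$ complementary to $\bar\FF$ on $V=\nb_{\bar M}(\hat M)$. Since the microautomorphism is the identity on $N$, one has $\tilde\gamma'=\pr_1^*(\gamma)$ on $N\supset\nb_{\bar M}(X)$, giving the second conclusion; and concatenating $\bar\gamma$ with the isomorphism-induced concordance yields the desired concordance of $\tilde\gamma'$ to $\pr_1^*(\gamma)$ rel.\ $\nb_{\bar M}(X)$. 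The only subtle point is the verification of openness of $(V,N)$, where one really must exploit the ambient $\bar M=M\times\I$ rather than working inside the $(p+q)$-manifold $\hat M$ alone.
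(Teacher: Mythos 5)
The application of Theorem \ref{parametric_thm} to the full $(p+q+1)$-dimensional manifold $V$ does not deliver what the lemma asserts. The theorem guarantees that $\gamma'$ is regular (so $d\gamma'$ has constant rank $q$) and that $d\gamma'$ is homotopic to $\omega$ rel.\ $N$ \emph{among $\nu$-valued $1$-forms of constant rank $q$}; it does not guarantee that $d\gamma'\vert\tau\bar\FF$ is an isomorphism. For $\dim V=p+q+1>q$, the space of constant-rank-$q$ bundle maps $\tau V\to\nu=\tau\bar\FF$ strictly contains, as an open but proper subset, those whose restriction to $\tau\bar\FF\subset\tau V$ is an isomorphism (fibrewise, the full space is a Stiefel manifold, hence connected for $p\ge 0$, while the subspace deformation-retracts onto $GL_q$, which has two components), and a homotopy within the larger space starting from $\omega$ can perfectly well end at a form that degenerates on $\tau\bar\FF$ at some points. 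The Gromov--Phillips argument underlying the theorem puts $Z$ transverse to $\MM$ but knows nothing of $\bar\FF$. Your ``in particular'' is therefore an unjustified leap, and everything downstream --- the microautomorphism $(d\gamma'\vert\tau\bar\FF)^{-1}$, the special structure $\tilde\gamma'$, and the complementarity of $\GG$ to $\bar\FF$ --- collapses. Since complementarity is precisely the content of Lemma \ref{niche_lem} (a codimension-$q$ foliation on $\nb_{\bar M}(\hat M)$ in the right concordance class is cheap; one transverse to $\bar\FF$ is the point), this is a genuine gap.

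This is exactly why the paper invokes the \emph{parametric} Foliation theorem with a $q$-dimensional fibre playing the role of the ``open manifold'' and the remaining coordinates playing the role of parameters $A$. When the underlying manifold is $q$-dimensional (first $\D^q$ with parameters $\I^p$, $B=\partial\I^p$, over $M\times 0$; then a collar $T\cong\S^{q-1}\times\I$ of $\partial\D^q$ with parameters $\I^{p+1}$, $B=\partial\I^{p+1}$), a rank-$q$ $1$-form valued in the rank-$q$ bundle $\nu$ is automatically a fibrewise linear isomorphism; slicewise regularity of the integrable output family thus forces the induced foliation to be transverse to the slices, i.e.\ complementary to $\FF$ (resp.\ $\bar\FF$). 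The concordance extension property is what bridges the two applications. Two passes are needed because $\hat M$ carries no global $\D^q$-fibre: $M\times 0$ and $\partial\I^p\times\D^q\times\I$ do, but $\I^p\times\partial\D^q\times\I$ carries only a $(q-1)$-dimensional one, whence the second pass with the collar $T$. Your openness observation for $(V,N)$ is correct and in the right spirit, but applying the theorem non-parametrically to $V$ trades away the very mechanism that produces complementarity.
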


\begin{proof} 

Let $\omega$ be over $\bar M$ the identity automorphism
of the bundle $\tau\bar\FF$.

 First, regard $\gamma\vert(M\times 0)$
as an integrable parametric family of $\Gamma_q$-structures on $\D^q$,
the space of parameters being $\I^p$.
 The integrable parametric Foliation theorem
\ref{parametric_thm} is first applied to $\gamma$ and $\omega$ on the open manifold
  $\D^q$, the pair of parameters spaces being $$(A,B):=(\I^p,\partial\I^{p})$$
 One gets on $M\cong M\times 0$
 a foliation $\GG_0$ complementary to $\FF$ there, specially induced by $\gamma$ on
$\nb_{M\times 0}(\partial\I^{p}\times\D^q\times 0)$, and concordant to $\gamma$
(rel. $\partial\I^{p}\times\D^q\times 0$).

Next, by the concordance extension property for $\Gamma_q$-structures (see Paragraph
 \ref{concordance_sssec}),
 $\GG_0$ can be extended over
$\nb_{\bar M}(\hat M)$ by a $\bar\Gamma_q$-structure $\hat\gamma$,
coinciding with $\pr_1^*(\gamma)$ on
$\nb_{\bar M}(X)$, and concordant to $\pr_1^*(\gamma)$  rel. $X$.

Consider some small collar neighborhood $T\cong\S^{q-1}\times\I$ of 
$\partial\D^q$ in $\D^q$, and
 $$N:=\I^p\times T\times\I\subset\bar M$$
Regard $\hat\gamma\vert N$
  as an integrable family of $\Gamma_q$-structures on $T$,
  parametrized by $\I^{p+1}$. One obtains $\GG$ on $N$ by again applying
 the integrable parametric Foliation theorem \ref{parametric_thm}, this time
 to $\hat\gamma$ and $\omega$ on the open manifold
 $T$; the pair of parameters spaces
being $$(A,B):=(\I^{p+1},\partial\I^{p+1})$$
 \end{proof}

Fix a compact collar neighborhood $Col$ of $\partial
M$ in $M$,
 small enough
that $d\gamma$ is a linear retraction $\tau M\to\tau\FF$
 over $Col$. Put $\Delta:=M\setminus Int(Col)$ and
$$W:=(M\times[0,1/2])\cup(Col\times[1/2,1])\subset\bar M$$
$$\partial_1W:=(\Delta\times(1/2))\cup
(\partial\Delta\times[1/2,1])$$
As a paraphrase of Lemma \ref{niche_lem}, there is on $\nb_{\bar M}(W)$
a $\bar\Gamma_q$-structure $\bar\gamma$
 such that:
\begin{itemize}
\item On $\nb_{\bar M}(\hat M\cup(Col\times 1))$, one has
  $\bar\gamma=\pr_1^*(\gamma)$;
\item  On $\nb_{\bar M}(\partial_1W)$, the
$\bar\Gamma_q$-structure $\bar\gamma$ is regular and
 specially induces a foliation  $\GG$ complementary
to $\bar\FF$.
\end{itemize}

To prove Theorem \emph{A'}
(i), it remains to extend $\bar\gamma$ through the smaller niche
 $\bar M\setminus W$.
 Hence, one is reduced to Proposition \ref{inflation_pro} below.

\subsection{Inflation}\label{inflation_ssec}
Recall that $M=\I^p\times\D^q$ ($p\ge 0$, $q\ge 1$), that $\bar M=M\times\I$,
that $\hat M=(M\times 0)\cup(\partial M\times \I)\subset\bar M$,
 that
 $\bar\pi$ is the projection $$\bar\pi:\bar M=\I^p\times\D^q\times\I\to\I^{p+1}:
 (x,y,t)\mapsto(x,t)$$ and
that $\bar\FF$ is the $q$-dimensional slice
 foliation
of $\bar M$ parallel to the $\D^q$ factor. Let
$\varphi$ be as in Theorem \emph{A'}.

 \begin{pro}\label{inflation_pro}
Let  $\GG$ be along $\hat M$ a germ of foliation complementary to
$\bar\FF$ there.

 Then, $\GG$ extends to all of $\bar M$ as a
 \emph{cleft} foliation of monodromy $\varphi$ quasi-complementary
to $\bar\FF$.
\end{pro}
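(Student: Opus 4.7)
The plan is to adapt W.~Thurston's inflation procedure from \cite{thurston_74} to the present foliated setting. By complementarity to $\bar\FF$, the germ of $\GG$ near $\hat M$ can be viewed as a germ of (flat) Ehresmann connection on the trivial $\D^q$-bundle $\bar\pi:\bar M\to\I^{p+1}$, defined in a thin neighborhood of the boundary sphere $\{x\}\times\partial\D^q\times\{t\}$ inside each fibre (with full germs over $\I^p\times 0\cup\partial\I^p\times\I$). The task is to extend these germs into the interior of each fibre while creating a codimension-$2$ cleft $C\subset\Int\bar M$ whose holonomy realizes the prescribed $\varphi\in\diff(\D^q)^\I$.

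I would first fix a codimension-$2$ submanifold $C_0\subset\Int(\I^{p+1})$ disjoint from $\bar\pi(\hat M)\cap\partial\I^{p+1}$, together with a normal polar coordinate $z\in\C$ on a tubular neighborhood. Set $C:=\bar\pi\mun(C_0)$ and define $c$ on a neighborhood of $C$ by $c(x,y,t):=(z(x,t),y)$, where $y\in\D^q\subset\R^q$ is the fibre coordinate. Since $\pr_1\circ c=z\circ\bar\pi$ factors through $\bar\pi$, the pair $(C,[c])$ is automatically in standard position with respect to $\bar\FF$: each $\bar\FF$-leaf maps with rank $q$ into a single slice $z\times\R^q$. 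Near $C$, declare $\GG:=c^{\ast}(\CC_\varphi)$, realizing the model cleft of monodromy $\varphi$. Next, cut $\I^{p+1}\setminus C_0$ along a codimension-$1$ hypersurface $H_0$ with $\partial H_0\subset C_0\cup\partial\I^{p+1}$, so that $\I^{p+1}\setminus(C_0\cup H_0)$ is simply connected and deformation retracts onto a piece of $\bar\pi(\hat M)\cap\partial\I^{p+1}$. Choose an inward vector field $\nabla_0$ on this complement pointing from the retract towards $H_0$, lift it through the given germ of $\GG$ (using the connection data available near each fibre boundary), and propagate $\GG$ by the flow of the lift across $\bar M\setminus\bar\pi\mun(C_0\cup H_0)$; simple-connectedness of the base forbids any monodromy obstruction, and the extended foliation is complementary to $\bar\FF$ by construction.

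The main obstacle is arranging a smooth, quasi-complementary match across the seam $H_0$ with the model cleft $c^{\ast}(\CC_\varphi)$. The mismatch is measured by the monodromy of the inflated connection around a small meridian of $C_0$, which must agree with the monodromy built into $\CC_\varphi$. Achieving this requires inserting into the horizontal lift a $\bar\FF$-tangent (fibrewise) correction whose time-$t$ flow realizes the prescribed $1$-parameter family $\varphi_t$; by Lemma \ref{suspension_lem}, only the conjugacy class of $[\varphi]$ in $\widetilde\diff(\D^q)_0$ matters, which leaves ample flexibility in the construction. Finally, for part~(ii) in the product case $M=V\times X$ with $\bar\FF$ parallel to $X$, the injectivity of $V\times X\times\I\to V\times\I$ on each connected component of $c\mun(0,0)$ follows from a generic choice of $C_0$ via a Thom transversality argument in the spirit of Lemma \ref{generic_lem}.
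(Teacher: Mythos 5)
Your proposal takes a fundamentally different route from the paper — a single global cut-and-paste in the base, rather than Thurston's cell-by-cell inflation along a triangulation — and it has a genuine gap at the crucial step, namely the matching across the seam $H_0$.

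The difficulty you identify (``arranging a smooth, quasi-complementary match across the seam $H_0$'') is the whole content of the proposition, and your treatment of it does not close the argument. First, the mismatch across $H_0$ is not a single element of $\diff(\D^q)$: it is a $p$-parameter family $h\mapsto\mu_h\in\diff_c(\Int\D^q)$ indexed by $H_0$, equal to the identity where $\partial H_0$ meets $\partial\I^{p+1}$ and determined near $\partial H_0\cap C_0$ by whatever the flow-propagated foliation does there. That family is dictated by the given germ $\GG$ (which is arbitrary) together with your choices of $C_0$, $H_0$ and $\nabla_0$, and there is no mechanism in your construction that would force it to agree with the monodromy of $c^*(\CC_\varphi)$. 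Second, the flexibility you invoke from Lemma~\ref{suspension_lem} is only conjugation of $[\varphi]$; but to absorb an arbitrary compactly supported mismatch with the prescribed $\varphi$ one needs the full \emph{normal closure} of $\varphi$ in $\diff(\D^q)^\I$, not a single conjugacy class. The paper achieves exactly this using Epstein's perfectness theorem (Claim~\ref{filling_clm}): the local mismatch appearing in each prism is a product of commutators, hence lies in the normal subgroup generated by $\varphi_1$, so it can be split into finitely many holes each of monodromy $\varphi$. Nothing in your argument plays the role of Epstein's theorem, and without it there is no way to make the seam close up with the given monodromy.

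There is also a structural gap: you introduce $C_0$ and $H_0$ as free choices but give no recipe for them, whereas in the actual construction the position and topology of the fissure are forced by the combinatorics. The paper triangulates $\I^{p+1}$, civilizes each cell to make the local microextensions compatible, and then foliates the prisms one at a time in the order of a collapse onto $K_0$; each step of this induction produces a controlled spherical core $\S^{i-1}$ via the Epstein splitting, and the global core is then a disjoint union of products $\S^{i-1}\times\D^{p-i}$ (Proposition~\ref{core_pro}). In your version $C_0$ is an unspecified codimension-$2$ submanifold of $\I^{p+1}$, you do not explain why the flow-propagated foliation should extend smoothly into the tubular neighbourhood of $C_0$ where you have declared the model cleft, and you do not address how the distributed incompatibility between the germ on the back face $\hat M$ and the germ near the fibre boundary $\I^p\times\partial\D^q\times\I$ is to be concentrated along $C_0$. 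This concentration is precisely what Thurston's inflation accomplishes, by diverting the complicated back-wall foliation away from the corona and out through the window, one prism at a time.

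Finally, a small point: Proposition~\ref{inflation_pro} has no part~(ii); the injectivity statement belongs to Theorem~\emph{A'}~(ii). (In your set-up it would in fact be automatic, since your $\Sigma=C_0\times\{0\}$ projects diffeomorphically to $C_0$ — but the construction does not force $\Sigma$ to have this form once the seam matching is done honestly, which is why the paper tracks it as an explicit induction hypothesis, Property~\ref{induction_ppt}~(5).)
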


Note --- Here, the fact that cubes and disks are cubes and disks is actually unimportant:
the same result would go for any two compact manifolds instead of $\I^p$ and $\D^q$,
with almost the same proof.
\medbreak
 The rest of the present subsection \ref{inflation_ssec} is to prove Proposition
\ref{inflation_pro}.
 The difficulty lies of course
  in the arbitrary position of $\GG$ with respect to $\I^{p}\times\partial\D^{q}\times\I$. The ``inflation''
method introduced by Thurston \cite{thurston_74}\cite{thurston_76} to prove the Foliation theorem
on closed manifolds will fit to solve this difficulty, after some
adjustments.    We give the details for three reasons. First, our framework
 is not exactly the same as Thurston's:
  he foliated simplices, we foliate prisms (by which we mean the
 product of
 a simplex by a smooth disk).
  Second, some think that
   the argument in \cite{thurston_74} is difficult; some have even believed
   that it was not
   fully convincing --- of course, it is not the case. Third, we feel that ``inflation'' deserves to
   be more widely used
    as a general method in the h-principle, which it has not been, since the
    fundamental papers \cite{thurston_74}\cite{thurston_76},
 but in \cite{laudenbach_meigniez_16}.

\subsubsection{Building the prisms}\label{prisms_sssec}
Let $Dom(\GG)$ be a small open neighborhood of $\hat M$ in $\bar M$,
on which $\GG$ is defined and complementary to $\bar\FF$.
We shall first decompose the domain to be foliated, precisely
 $\bar M$ minus some smaller open neighborhood of $\hat M$, into many thin vertical prisms
  (Figure \ref{prisms_fig}).

 Endow $\I^{p+1}$ with a triangulation $K$;
  choose it linear for convenience. The choice of $K$ will be made more precise further down.
 
 \begin{lem}\label{prism_lem}
 Provided that $K$ is fine enough, one can choose, for each cell $\alpha$ of $K$,
 an embedding
  $$e_\alpha:\alpha\times\D^q\hookrightarrow
 \bar M$$
such that, for every $x\in\alpha$:
 \begin{itemize}
 \item  $e_\alpha(x\times\D^q)\subset Int(\bar\pi\mun(x))$;
 \item $\bar\pi\mun(x)\subset e_\alpha(x\times Int(\D^q))\cup Dom(\GG)$;
    \item If $x$ lies on some proper face $\beta\varsubsetneq\alpha$,
     then $$e_\beta(x\times\D^q)\subset e_\alpha(x\times Int(\D^q))$$
     \end{itemize}
     and that, for every $y\in\partial\D^q$,
  $e_\alpha(\alpha\times y)$
   is contained in a leaf of $\GG$.
 \end{lem}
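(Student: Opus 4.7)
The plan is to construct the embeddings $e_\alpha$ by sweeping a distinguished disk in a chosen basepoint fiber over the cell $\alpha$ using the holonomy of $\GG$. The key geometric input is that $\bar\FF$ has the fibers of $\bar\pi$ as its leaves, so the complementarity of $\GG$ to $\bar\FF$ on $Dom(\GG)$ means each leaf of $\GG$ is a local section of $\bar\pi$, yielding a well-defined holonomy transport along $\GG$-leaves between nearby fibers. Since $\hat M \subset Dom(\GG)$ and $\bar M$ is compact, $Dom(\GG)$ contains in each fiber $\bar\pi\mun(x)\cong\D^q$ a collar of $\partial\D^q$ of uniform positive thickness $\delta$, so the fiberwise complement $\bar\pi\mun(x)\setminus Dom(\GG)$ is contained in the concentric disk of radius $1-\delta$.

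First I would refine the linear triangulation $K$ of $\I^{p+1}$ by a Lebesgue/compactness argument until, for every cell $\alpha$ with basepoint $x_\alpha$ (say the barycenter), the holonomy transport $T_\alpha^x \colon \bar\pi\mun(x_\alpha) \cap Dom(\GG) \to \bar\pi\mun(x) \cap Dom(\GG)$ is defined on the annulus $\{1-\delta < \rho < 1\}$ and distorts the $\D^q$-coordinate by less than some $\epsilon_K$ that is small compared to $\delta$. Then for each cell $\alpha$ I would fix a concentric closed disk $D_\alpha \subset \bar\pi\mun(x_\alpha)$ of radius $r_{\dim\alpha}$, with $1-\delta+\epsilon_K < r_0 < r_1 < \cdots < r_{p+1} < 1-\epsilon_K$. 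The transport $T_\alpha^x(\partial D_\alpha)$ is a smoothly embedded sphere in $\bar\pi\mun(x)$ bounding a unique disk $D_\alpha(x)$, and I would define $e_\alpha(x,\cdot)$ as a smooth family of parametrizations of these disks by $\D^q$ that coincides with $T_\alpha^x$ on $\partial\D^q$. The first bullet follows from $r_{\dim\alpha}+\epsilon_K<1$, the second from $r_{\dim\alpha}-\epsilon_K>1-\delta$ (so that $D_\alpha(x)$ contains the whole fiberwise complement of $Dom(\GG)$), and the boundary condition is built into the construction since $e_\alpha(\alpha\times y)$ traces an arc of a leaf by definition of $T_\alpha^x$.

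The main obstacle is the nesting condition (third bullet), which must hold for every pair $\beta \subsetneq \alpha$ and every $x \in \beta$, even though the basepoints $x_\alpha \ne x_\beta$ differ and the relevant transports originate from different fibers. Using the uniform estimate $\|T_\alpha^x - \id\| < \epsilon_K$ in the ambient $\D^q$-coordinate, $e_\beta(x\times\D^q)$ sits in the concentric closed disk of radius $r_{\dim\beta}+\epsilon_K$, while $e_\alpha(x\times\Int\,\D^q)$ contains the open disk of radius $r_{\dim\alpha}-\epsilon_K$; the chosen spacing $r_{\dim\alpha}-r_{\dim\beta}>2\epsilon_K$ then delivers the inclusion. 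Verifying that every transported sphere bounds a unique embedded disk (so that $e_\alpha$ is a genuine embedding of $\alpha\times\D^q$) and that the annular transport is defined over all of $\alpha$ are handled by Ehresmann's lemma applied to $\bar\pi$ restricted to the leaves of $\GG$ inside $Dom(\GG)$, together with the fineness of $K$.
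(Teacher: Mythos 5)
Your proposal follows essentially the same strategy as the paper's proof: both transport the concentric sphere of radius $r_{\dim\alpha}$ from the barycenter fiber by the holonomy of $\GG$ along line segments, both space the radii by more than twice the transport distortion (obtained once $K$ is fine enough), and both verify the three inclusions by the triangle inequality. Two small points where the paper is more careful than your sketch: the well-definedness and uniform smallness of the annular transport is obtained by a compactness argument (not Ehresmann's lemma, which does not apply since $\GG$ is only a foliation defined near $\hat M$, not a fibration over $\I^{p+1}$); and rather than asserting that each transported sphere ``bounds a unique embedded disk'' (a smooth Schoenflies-type claim), the paper extends the smooth family of boundary-sphere embeddings $\partial\D^q\hookrightarrow\D^q$ to a smooth family of disk embeddings directly by the isotopy extension property, which is both simpler and dimension-free.
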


  \begin{proof}

Regard $\bar M$ as $\I^{p+1}\times\D^q$; endow both factors with the
 Euclidian metrics.
   Fix $\epsilon>0$ so small that $(x,y)\in Dom(\GG)$ holds for every $x\in\I^{p+1}$
   and every $y\in\D^q$ such that $$\vert y\vert\ge 1-(p+3)\epsilon$$
  Put $r_d:=1-(p+2-d)\epsilon$ for every integer $0\le d\le p+1$.
\begin{figure}
\includegraphics*[scale=0.45, angle=-90]{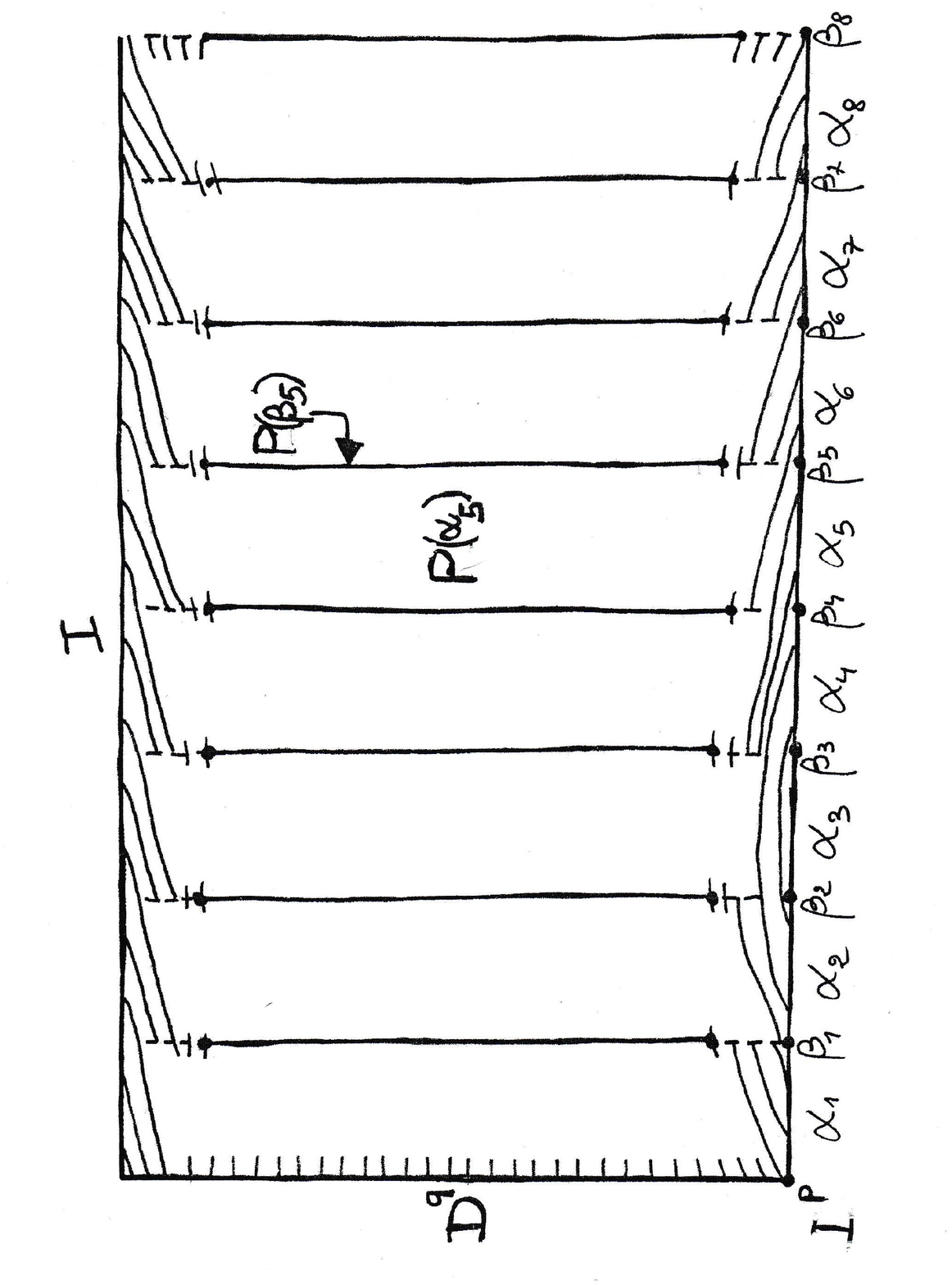}
\caption{Decomposition of the most part of $\bar M$ into prisms,
in the case $p=0$, $q=1$.}
\label{prisms_fig}
\end{figure}

 Consider the partially defined monodromy of $\GG$ close to $\partial M\times\I$.
  Precisely, for a linear
 path $u:\I\to{\I^{p+1}}$ and for $y\in\D^q$, denote by $(u(1),h_u(y))$
 the extremity of the path (if any) in 
 $\bar M$  originated at $(u(0),y)$, covering $u$
 through $\bar\pi$, 
 and tangential to $\GG$.  
   Fix $\delta>0$ small enough that
for every linear path $u$
whose length $\vert u\vert$ is at most $\delta$,
 and every $y\in\D^q$ with $r_0\le\vert y\vert\le r_{p+1}$, the
 monodromy $h_u(y)$ is defined, and $$\vert h_u(y)-y\vert<\epsilon/2$$
 
Choose the linear triangulation $K$ such
   that every cell  $\alpha$ of $K$ is of diameter  at most $\delta$.
 For every point $x\in\alpha$ and every
 $y\in\partial\D^q$, put
  $$e_\alpha(x,y):=h_{[\flat(\alpha),x]}(r_{\dim(\alpha)}y)$$
This is  a smooth family of embeddings $\partial\D^q\hookrightarrow\D^q$,
  parametrized by the points of $\alpha$. By the isotopy
  extension property, this family extends to
    a smooth family of embeddings $\D^q\hookrightarrow\D^q$, also denoted by $e_\alpha$.
    The demanded properties hold by definition or follow
    from the triangle inequality.
\end{proof}
 
   We write $$P(\alpha):=e_\alpha(\alpha\times\D^q)$$
  $$R(\alpha):=(\alpha\times\D^q)\setminus e_\alpha(\alpha\times Int(\D^q))$$

    Let $R\subset\bar M$
be the union of the $R(\alpha)$'s, for all cells $\alpha$ of $K$.
 Hence, $R$
  is   a compact neighborhood of $\I^p\times{\partial\D^q}\times\I$ in $\bar M$.
  The foliation $\GG$ is defined
   on $\nb_{\bar M}(R\cup\bar M_0)$ and complementary to $\bar\FF$ there.

  \subsubsection{Civilizing the prisms}\label{civilization_sssec}
  The tool of ``civilization'' was introduced by Thurs\-ton \cite{thurston_74}
  in order to guarantee that some microextensions of foliations defined on the cells
  of a triangulation are compatible and thus give a global smooth foliation;
  we slightly depart from his vocabulary.

  \begin{dfn}\label{civilization_dfn}
   Let $X$ be a manifold
  and $Y\subset X$ be a compact submanifold, 
   maybe with boundary
  and corners.
 A \emph{civilization} for $Y$ in $X$ is
  a foliation $\CC$
  of codimension $\dim(Y)$ on $\nb_X(Y)$,
  transverse to $Y$.
  
\end{dfn} 
 Actually, the object of interest is the \emph{germ} of $\CC$ along $Y$.
 The utility of civilization lies in the following obvious remark.
   Consider
   the local retraction along the leaves of $\CC$:
 $$\pr:\nb_X(Int(Y))\to Int(Y)$$

 \begin{rmk}\label{tangent2_rmk}
  (Recall Tool \ref{pullback_sssec} (c) and Definitions
  \ref{tangent2_dfn} and \ref{pullable_dfn}.)
 \begin{itemize}
 \item Let $G$ be a cleft foliation on $Int(Y)$. Then, $\pr^*(G)$ is a cleft foliation on
 $\nb_X(Int(Y))$ and $\CC$ is tangential to $\pr^*(G)$ there;
 \item Conversely, let $G$ be a cleft foliation on $\nb_X(Y)$, to which $\CC$ is tangential.
  Then, $G$ is restrictable to $Int(Y)$, and $G=\pr^*(G\vert Int(Y))$ on $\nb_X(Int(Y))$.
  \end{itemize}
 \end{rmk}
\begin{ntt}\label{inclusion_ntt} Given two foliations $\AA$, $\BB$ on a same manifold,
we briefly write $\AA\subset\BB$ to mean that $\tau_x\AA\subset\tau_x\BB$
at every point $x$.
\end{ntt}

  The advantage of having chosen especially a \emph{polyhedral} base 
  $\I^{p+1}\subset\R^{p+1}$ and a \emph{linear} triangulation $K$ of this base
  is that each cell $\alpha$ of $K$ admits in $\R^{p+1}$ an obvious civilization $\NN_\alpha$,
  namely, the parallel linear foliation of $\R^{p+1}$ orthogonal to $\alpha$ for the
  Euclidian metric, such that for every face $\beta\subset\alpha$,
  the compatibility condition $\NN_\alpha\subset\NN_\beta$ holds on
   $\nb_{\R^{p+1}}(\beta)$.

Consider the inclusion $$\bar M=\I^{p+1}\times\D^q\subset \R^{p+1}\times\D^q$$ and
 the first projection $\pi:\R^{p+1}\times\D^q\to\R^{p+1}$.

\begin{lem}\label{civilization_lem}
 One can choose, for every cell $\alpha$ of $K$, a civilization
 $\CC_\alpha$
for $\alpha\times\D^q$ in $\R^{p+1}\times\D^q$ such that
\begin{enumerate}
 \item  $\CC_\alpha\subset\GG$ on
  $\nb_{\bar M}(R(\alpha))$
  and, if $\alpha\subset K_0$, on
  $\nb_{\bar M}(\alpha\times\D^q)$;
 \item $\CC_\alpha\subset\pi^*(\NN_\alpha)$ on $\nb_{\R^{p+1}}(\alpha)\times\D^q$;
\item $\CC_\alpha\subset\CC_\beta$ on $\nb_{\R^{p+1}}(\beta)\times\D^q$,
 for every face $\beta\subset\alpha$.

  \end{enumerate}
  \end{lem}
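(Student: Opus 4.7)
The plan is to construct $\CC_\alpha$ by induction on $d := \dim\alpha$. The key organizing idea is that condition (2) lets us work entirely inside the integrable subbundle $\pi^*(\NN_\alpha)$, whose leaves are the slabs $(\{x\}+\alpha^\perp)\times\D^q$ for $x$ in a tube around $\alpha$. A civilization $\CC_\alpha$ contained in $\pi^*(\NN_\alpha)$ amounts to choosing, smoothly in $x$, a codimension-$q$ foliation of the slab $\alpha^\perp\times\D^q$ transverse to the central fibre $\{0\}\times\D^q$.

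Base case $d = 0$: the foliation $\NN_v$ has $\R^{p+1}$ as its single leaf, so (2) is vacuous. If $v\subset K_0$ we put $\CC_v := \GG$, which is already defined and complementary to $\bar\FF$ near $v\times\D^q$; otherwise $\GG$ is defined only on $\nb_{\bar M}(R(v))$, and we extend it to a codimension-$q$ foliation transverse to $v\times\D^q$ on a neighborhood by a partition-of-unity argument in the contractible space of such germs of foliations.

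Inductive step: assume $\CC_\beta$ satisfying (1)--(3) have been built for all proper faces $\beta\subsetneq\alpha$. A partial candidate for $\CC_\alpha$ is prescribed on two overlapping regions: near each face $\beta\subset\partial\alpha$ by the intersection $\CC_\beta\cap\pi^*(\NN_\alpha)$, and near $R(\alpha)$ (as well as everywhere on $\nb(\alpha\times\D^q)$ when $\alpha\subset K_0$) by $\GG\cap\pi^*(\NN_\alpha)$. Each intersection has the correct dimension $p+1-d$ provided the two distributions meet transversely, and integrability is automatic from Frobenius, since the bracket of two fields tangent to both integrable foliations is tangent to both. Compatibility on overlaps follows from the inductive hypothesis: on $\nb(\beta_1)\cap\nb(\beta_2)$ for two faces of $\alpha$ from (3), and on $\nb(\beta)\cap\nb(R(\alpha))$ from the inclusion $R(\alpha)\cap(\beta\times\D^q)\subset R(\beta)$ (from Lemma \ref{prism_lem}(3)) combined with the inductive (1), which gives $\CC_\beta\subset\GG$ there. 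One then extends the partial civilization to the remainder of $\nb_{\R^{p+1}}(\alpha)\times\D^q$ inside $\pi^*(\NN_\alpha)$ by a relative approximation in the contractible space of codimension-$q$ foliations of $\alpha^\perp\times\D^q$ transverse to $\{0\}\times\D^q$, parametrized smoothly by $x\in\nb(\alpha)$.

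The main obstacle is ensuring, throughout the induction, that the intersections $\CC_\beta\cap\pi^*(\NN_\alpha)$ and $\GG\cap\pi^*(\NN_\alpha)$ have the expected dimension, i.e.\ that $\CC_\beta$ and $\GG$ are transverse to $\pi^*(\NN_\alpha)$ (inside $\pi^*(\NN_\beta)$ and $T(\R^{p+1}\times\D^q)$ respectively) on the relevant regions. These are open dense conditions and are arranged by refining $K$ so that $\NN_\alpha$-leaves, which are orthogonal to $\alpha$, lie in general position with $\GG$ (whose tangent plane field is controlled by its complementarity to $\bar\FF$), together with a small generic perturbation of each $\CC_\beta$ inside its allowed slack before passing to higher-dimensional cells --- in the spirit of Thurston's jiggling procedure.
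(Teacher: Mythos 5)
Your construction takes a genuinely different route from the paper's. The paper builds all the $\CC_\alpha$'s at once from a single global, generally non-integrable $(p+1)$-plane field $\xi$ complementary to the vertical slice foliation $\FF$ and equal to $\tau\GG$ near $R\cup\bar M_0$; the leaf of $\CC_\alpha$ through a point of $\langle\alpha\rangle\times\D^q$ is defined as the union of $\xi$-lifts of the linear rays orthogonal to $\langle\alpha\rangle$. No induction on cells and no intersection of foliations appear. You instead run an induction on $\dim\alpha$, prescribing $\CC_\alpha$ near each hyperface as $\CC_\beta\cap\pi^*(\NN_\alpha)$ and near $R(\alpha)$ as $\GG\cap\pi^*(\NN_\alpha)$, and then extending. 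Each approach buys something: the paper's gives all civilizations uniformly and makes (2) transparent; yours makes (3) completely explicit via the inductive structure, which is arguably where the real content lies.

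There is, however, one substantive overstatement in your final paragraph: the transversality you propose to \emph{arrange} by refining $K$ and jiggling/perturbing the $\CC_\beta$'s is in fact \emph{automatic}, and it is worth knowing this, because perturbing a $\CC_\beta$ that is already constrained by (1) and (3) with respect to its own subfaces would be delicate. Indeed, since $\CC_\beta$ is transverse to $\beta\times\D^q$, one has $\tau\CC_\beta\cap\tau\bar\FF=0$ along $\beta\times\D^q$ and hence on a neighborhood; together with $\tau\CC_\beta\subset\tau\pi^*(\NN_\beta)$ and a dimension count, this gives $\tau\CC_\beta\oplus\tau\bar\FF=\tau\pi^*(\NN_\beta)$. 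Since $\tau\bar\FF\subset\tau\pi^*(\NN_\alpha)$, it follows that $\tau\CC_\beta+\tau\pi^*(\NN_\alpha)=\tau\pi^*(\NN_\beta)$, i.e.\ the two meet transversely inside $\pi^*(\NN_\beta)$, with intersection of the expected dimension $p+1-\dim\alpha$. The same reasoning (using that $\GG$ is complementary to $\bar\FF$) gives $\GG\pitchfork\pi^*(\NN_\alpha)$ for free. Once this is observed, your compatibility checks go through cleanly: near a common subface $\gamma\subset\beta_1\cap\beta_2$, both $\CC_{\beta_i}\cap\pi^*(\NN_\alpha)$ lie inside $\CC_\gamma\cap\pi^*(\NN_\alpha)$, and all three distributions have the same dimension $p+1-\dim\alpha$, forcing equality; similarly near $R(\alpha)\cap(\beta\times\D^q)\subset R(\beta)$ using the inductive (1). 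Two minor remarks: you should, like the paper, first extend $\GG$ slightly beyond $\bar M$ to a neighborhood of $R\cup\bar M_0$ in $\R^{p+1}\times\D^q$, since the civilizations live there; and the Frobenius observation, that the intersection of two transverse integrable distributions is integrable, is correctly invoked and is exactly what makes the intersection construction a foliation.
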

  \begin{proof} Denote by $\FF$
  the slice foliation  of $\R^{p+1}\times\D^q$
  parallel to $\D^q$. It is easy to extend $\GG$ to a codimension-$q$
   foliation, still denoted by $\GG$,
  complementary to $\FF$ on $\nb_{\R^{p+1}\times\D^q}(R\cup\bar M_0)$.
   Then, make a second, non necessarily integrable extension:
    let $\xi\subset\tau(\R^{p+1}\times\D^q)$ be a $(p+1)$-plane field
  complementary to $\FF$, and coinciding with $\tau\GG$ over
   $\nb_{\R^{p+1}\times\D^q}(R\cup\bar M_0)$.
    Consider a cell $\alpha$ of $K$. On
     $\nb_{\R^{p+1}\times\D^q}(R(\alpha))$,
    define $\CC_\alpha$ as the pullback of $\NN_\alpha$ through $\pi$
    tangentially to $\GG$. Then, extend $\CC_\alpha$ over
     $\nb_{\R^{p+1}\times\D^q}(\alpha\times\D^q)$
    as follows.   Denote by $\langle\alpha\rangle\subset\R^{p+1}$
     the affine subspace spanned by $\alpha$.
      At every point $(x,y)\in\langle\alpha\rangle\times\D^q$,
   define the leaf of $\CC_\alpha$ through $(x,y)$
   as the union of the paths in $\R^{p+1}\times\D^q$ starting from $(x,y)$, tangential
   to $\xi$, and whose projection through $\pi$ is a linear path in $\R^{p+1}$
   orthogonal to $\langle\alpha\rangle$ at $x$.
    The properties (1) through (3) are immediate.
  \end{proof}

\subsubsection{Foliating the prisms}\label{foliation_sssec}
   
     We can choose the fine linear triangulation $K$ of $\I^p\times\I$
 such that moreover, $K$
 collapses onto
 its subcomplex $$K_0:=K\vert((\I^p\times 0)\cup({\partial\I^p}\times\I))$$
 Such a collapse
 amounts to a filtration of $K$ by subcomplexes $(K_n)$ ($0\le n\le N)$
  such that
   $K_N=K$, and such that for every $1\le n\le N$, exactly two cells $\alpha_n$,
  $\beta_n$ lie in $K_n$ but not in $K_{n-1}$; moreover $\beta_n$
  is a hyperface of $\alpha_n$. We write
    $$\bar M_n:=K_n\times\D^q\subset\bar M$$
 
 The inflation process is an induction:
    the prisms will be foliated (by cleft foliations quasi-complementary to the 
    fibres of $\bar\pi$) one after the other, in the order given by
   the collapse.

  \begin{voc}\label{system_voc}
  (a) A \emph{smooth partial triangulation} in a manifold $\Sigma$ is a
  geometric simplicial complex which is topologically embedded in $\Sigma$, and such that
   each cell is smoothly embedded  in $\Sigma$.
  
  (b) By a \emph{system of spheres} in $\Sigma$, we mean a finite disjoint union
  of smoothly embedded spheres, whose dimensions are not necessarily
  the same and may vary between $0$ and $\dim(\Sigma)$, and whose
  normal bundles are trivial.
  
  \end{voc}
   The induction hypotheses are the following.
  Recall Definitions \ref{qc_dfn} and \ref{tangent2_dfn}.

\begin{ppt}\label{induction_ppt} There is
  on $\nb_{\bar M}(R\cup\bar M_{n})$ a cleft foliation
 $$G_n:=(C_n,[c_n],\GG_n)$$ of monodromy $\varphi$,  
such that
\begin{enumerate}
\item $G_n$ is quasi-complementary to $\bar\FF$
  on $\nb_{\bar M}(R\cup\bar M_{n})$;
\item
 $G_n$
  coincides with $\GG$
on $\nb_{\bar M}(R\cup\bar M_0)$;
\item For each cell $\gamma$ of $K_n$, the civilization $\CC_\gamma$
is tangential to $G_n$ on $\nb_{\bar M}(\gamma\times\D^q)$;
\item $\bar M_n\cap\Sigma_n$
 (where $\Sigma_n:=c_n\mun(0,0)$) admits a triangulation  $\Delta_n$
which is a smooth partial triangulation
 of $\Sigma_n$ and collapses onto a 
system of spheres in $\Sigma_n$;
\item $\bar\pi$ is one-to-one in restriction to each connected component of
 $\bar M_n\cap\Sigma_n$.
\end{enumerate}
\end{ppt}

The object of interest is actually the \emph{germ} of $G_n$ along $R\cup\bar M_n$.

 For $n=0$, the (non cleft) foliation $G_0:=\GG$ satisfies the
  above
   Property \ref{induction_ppt} (1) through (5), in view of Lemma \ref{civilization_lem} (1).

 Now, for some $1\le n\le N$,
  assume that the above induction hypotheses are met for $n-1$.
  
 For short, write $\alpha, \beta$, $i$, 
  instead of $\alpha_n$, $\beta_n$,
    $\dim(\beta_n)$.  
     We work in $\alpha\times\D^q$. ``Horizontal'', for a vector field or a foliation,
     means parallel to the $\alpha$ factor. Denote
   by $\hat\partial\alpha$ the union of the hyperfaces of $\alpha$
  other than $\beta$, by
     $\pi_\alpha$ the first projection $\alpha\times\D^q\to\alpha$, and by
   $\FF_\alpha$ the slice foliation of $\alpha\times\D^q$
  parallel to $\D^q$.
  
  The extension of the cleft foliation
   $G_{n-1}$, defined on $\nb_{\bar M}(R\cup\bar M_{n-1})$,
    to a cleft foliation $G_n$ on $\nb_{\bar M}(R\cup\bar M_n)$,
   will be in two steps: first, an extension only to $P(\alpha)$; then, a microextension
   from $P(\alpha)$ to $\nb_{\bar M}(P(\alpha))$, by means of the civilizations
   $\CC(\alpha)$ and $\CC(\beta)$.
  \begin{lem}\label{pullable_lem} For each cell $\gamma$ of $K$, the cleft foliation
  $G_{n-1}$ is pullable through the map $e_\gamma$ restricted to
    $\nb_{\gamma}(\gamma\cap K_{n-1})\times\D^q$,
  and defines on such a neighborhood a pullback cleft foliation $e_\gamma^*(G_{n-1})$
  quasi-complementary to the slices parallel to $\D^q$.
  \end{lem}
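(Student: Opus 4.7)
My plan is to verify the two ingredients of a pulled-back cleft foliation---that $e_\gamma$ restricted to $\nb_\gamma(\gamma\cap K_{n-1})\times\D^q$ lands in the domain of $G_{n-1}$, and that the fissure $(C_{n-1},[c_{n-1}])$ is pullable through this restriction (Definition \ref{pullable_dfn})---and then invoke Tool \ref{pullback_sssec} (a) and (e) to deduce the quasi-complementarity. The basic structural fact I would exploit throughout is that the prism map $e_\gamma$ is \emph{leafwise etale} with respect to the slice foliation parallel to $\D^q$ on $\gamma\times\D^q$ and $\bar\FF$ on $\bar M$; indeed Lemma \ref{prism_lem} gives $\bar\pi\circ e_\gamma=\pr_1$ and $e_\gamma(x\times\D^q)\subset\bar\pi\mun(x)$ for $x\in\gamma$.

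The domain issue is almost immediate: the subcomplex $\gamma\cap K_{n-1}$ is contained in $K_{n-1}$ and $\bar\pi\mun(K_{n-1})=\bar M_{n-1}$, so for a sufficiently small $\nb_\gamma(\gamma\cap K_{n-1})$ the image of $e_\gamma$ lies in $\nb_{\bar M}(\bar M_{n-1})\subset\nb_{\bar M}(R\cup\bar M_{n-1})$, the domain of $G_{n-1}$. For pullability I need $c_{n-1}\circ e_\gamma$ to have rank $q+2$ at every point of $e_\gamma\mun(C_{n-1})$. By induction hypothesis (1), $G_{n-1}$ is quasi-complementary to $\bar\FF$, so $(C_{n-1},[c_{n-1}])$ is in standard position with respect to $\bar\FF$; this has two consequences. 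First, on $\tau\bar\FF$ the differential $dc_{n-1}$ surjects with rank $q$ onto $\tau\R^q$. Second, the $\C$-component $\pi_\C\circ c_{n-1}$ is constant on $\bar\FF$-leaves, hence factors locally through $\bar\pi$ as $\psi\circ\bar\pi$ for a submersion $\psi$ onto $\C$ whose zero set is the local image $\bar\pi(C_{n-1})$. By leafwise etaleness, $d(c_{n-1}\circ e_\gamma)$ restricted to the vertical $\tau_y\D^q$ already contributes rank $q$ onto $\tau\R^q$; the remaining $\tau\C$-part comes from $d\psi|_{\tau_x\gamma}$. Thus pullability reduces to the transversality of $\gamma$ to the codimension-$2$ subset $\bar\pi(C_{n-1})$ of the base $\I^{p+1}$.

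Once pullability holds, Tool \ref{pullback_sssec} (c) gives the pullback cleft $\Gamma_q$-structure $e_\gamma^*(G_{n-1})$ of monodromy $\varphi$; Tool \ref{pullback_sssec} (a), applied with the leafwise-etale map $e_\gamma$ and the standard-position fissure, places the pullback fissure in standard position with respect to the slice foliation parallel to $\D^q$; and Tool \ref{pullback_sssec} (e) then yields quasi-complementarity. The main obstacle I anticipate is the transversality noted above: it is not automatic, but can be arranged as part of the inductive construction of $G_{n-1}$ by a preliminary horizontal perturbation in the sense of Tool \ref{reparametrizing_sssec}, which is generic by Lemma \ref{generic_lem} and preserves the monodromy, the quasi-complementarity, and the germ of the cleft structure along the outer boundaries. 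Since only finitely many cells $\gamma$ are involved, a single generic perturbation suffices simultaneously for all of them; in particular, for cells with $\dim(\gamma)<2$ the transversality forces $P(\gamma)$ to miss the fissure, which is precisely what the rank obstruction demands.
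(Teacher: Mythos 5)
Your reduction of pullability to the surjectivity of $d\psi\vert\tau_x\gamma$ onto $\tau\C$ (equivalently, transversality of $\gamma$ to the local base image $\bar\pi(C_{n-1})$) is correct and is essentially the same computation the paper carries out, just phrased without civilizations. However, your next claim --- that this transversality is ``not automatic'' and must be arranged by a preliminary generic horizontal perturbation --- is the gap, and it is a real one. The transversality \emph{is} automatic: it is exactly what the induction hypothesis, Property~\ref{induction_ppt}~(3) at order $n-1$, supplies. That hypothesis says $\CC_\delta$ is tangential to $G_{n-1}$ on $\nb_{\bar M}(\delta\times\D^q)$ for every cell $\delta\subset K_{n-1}$; applying it to the smallest face $\delta\subset\gamma\cap K_{n-1}$ through the point $x$ and combining with Lemma~\ref{civilization_lem}~(3) (the compatibility $\CC_\gamma\subset\CC_\delta$ near $\delta$) gives that $\CC_\gamma$ is tangential to $G_{n-1}$ near $(x,y)$. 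Since $\CC_\gamma$ lies over $\NN_\gamma$ (Lemma~\ref{civilization_lem}~(2)) and projects isomorphically onto it, the tangentiality forces $\tau\NN_\gamma\subset\ker(d\psi)$ there; and because $\tau_x\gamma+\tau_x\NN_\gamma=\tau_x\R^{p+1}$, the surjectivity of $d\psi$ onto $\tau\C$ forces surjectivity of $d\psi\vert\tau_x\gamma$. This is precisely the paper's argument: civilization tangency plus the transversality of $e_\gamma$ to $\CC_\gamma$ yield pullability directly, with no perturbation. (Your closing observation about $\dim\gamma<2$ then drops out as a corollary --- the rank obstruction there is impossible, so the fissure simply avoids $\nb(\gamma\times\D^q)$ --- rather than being something you must separately impose.)

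Worse, the perturbation you propose would actively break the induction. A horizontal perturbation of the fissure via Tool~\ref{reparametrizing_sssec} and Lemma~\ref{generic_lem} is supported in the $\Sigma\times\D^2$-directions near $C_{n-1}$, while the civilizations $\CC_\delta$ were fixed once and for all in Lemma~\ref{civilization_lem}; a generic such perturbation destroys the tangency of $\CC_\delta$ to $G_{n-1}$, i.e., Property~\ref{induction_ppt}~(3). But Property~(3) is exactly what the subsequent microextension step uses (Lemma~\ref{pullback_lem}, Claim~\ref{compatible_clm}) to glue the local constructions over the prisms into a global cleft foliation. You list the properties the perturbation preserves --- monodromy, quasi-complementarity, boundary germs --- but omit the civilization condition, which is the one that matters and the one that fails. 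The whole point of civilizations in Thurston's inflation process is to make these local compatibilities automatic, precisely so that no such perturbation is needed (nor possible without breaking the scheme).
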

  \begin{proof} Consider any point $(x,y)\in(\gamma\cap K_{n-1})\times\D^q$.
  Let $\delta\subset\gamma\cap K_{n-1}$ be the smallest face containing $x$.
  By Property \ref{induction_ppt} (3) applied to $\delta$  at order $n-1$
   and by (3) of Lemma
  \ref{civilization_lem}, $\CC_\gamma$ is tangential to $G_{n-1}$ on
  $\nb_{\bar M}(x,y)$. Since the embedding
  $e_\gamma$ is transverse to $\CC_\alpha$ at the point $(x,y)$,
    the pullability and the quasi-complementarity follow (see Tool \ref{pullback_sssec} (e)).
  \end{proof}

  \begin{lem}\label{extension_lem}  There is on $\alpha\times\D^q$ a cleft foliation
  $G_\alpha$ such that
\begin{enumerate}
\item $G_\alpha$ is quasi-complementary to $\FF_\alpha$
 on $\alpha\times\D^q$;
 \item $G_\alpha$ coincides with $e_\alpha^*(G_{n-1}=\GG)$ on
$\nb_{\alpha\times\D^q}(\alpha\times\partial\D^q)$;
\item $G_\alpha$ coincides with $e_\alpha^*(G_{n-1})$ on
 $\nb_{\alpha\times\D^q}(\hat\partial\alpha\times\D^q)$;
\item $G_\alpha$ coincides with $e_\alpha^*(G_{n-1}=\GG)$ on
$\nb_{\alpha\times\D^q}(e_\alpha\mun(R(\beta))$;
\item $e_\alpha^*(\CC_\beta)$ is tangential to $G_\alpha$
 on $\nb_{\alpha\times\D^q}(\beta\times\D^q)$.

\end{enumerate}
\end{lem}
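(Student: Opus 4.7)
My plan is Thurston-style inflation adapted to the prism $\alpha\times\D^q$: I will extend the prescribed boundary data horizontally along the direction from $\beta$ into $\alpha$ (which corresponds to the pullback civilization $e_\alpha^*(\CC_\beta)$), and insert one new cleft of monodromy $\varphi$ in an interior tube disjoint from the prescribed regions.

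\emph{Step 1 (horizontal extension).} Parametrize a collar of $\beta$ in $\alpha$ as $\beta\times[0,\eta]$ with $t\in[0,\eta]$ the inward coordinate; in this frame, $e_\alpha^*(\CC_\beta)$ near $\beta\times\D^q$ becomes, modulo its vertical component, the $1$-foliation by $t$-lines (Lemma~\ref{civilization_lem}). On the complement of a thin interior tube $T$ (to be specified in Step~2), define $G_\alpha^0$ by parallel transport of $e_\alpha^*(G_{n-1})\vert\nb(\hat\partial\alpha\times\D^q)$ along $t$ toward $\beta$. This yields a codimension-$q$ foliation complementary to $\FF_\alpha$ on its domain that matches $e_\alpha^*(G_{n-1})$ on $\nb(\hat\partial\alpha\times\D^q)$, and matches the prescribed $e_\alpha^*(\GG)$ on both $\nb(\alpha\times\partial\D^q)$ and $\nb(e_\alpha^{-1}(R(\beta)))$ by the consistency of the prescribed boundary data.

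\emph{Step 2 (cleft insertion).} Pick a small disk $D\subset\Int(\beta)$ disjoint from $\hat\partial\alpha$, and a small ball $B\subset\Int(\D^q)$ disjoint from the collar of $\partial\D^q$ carrying the prescribed data. Set $T:=D\times[\eta/4,3\eta/4]\times B$, and define a submersion $c_\alpha:\nb(T)\to\C\times\R^q$ by $c_\alpha(x,t,y)=(z(x,t),\psi(y))$, where $z:D\times[\eta/4,3\eta/4]\to\C$ is a submersion with $z^{-1}(0)=\partial D\times\{\eta/2\}$, and $\psi:B\to\R^q$ is a diffeomorphism with $\psi^{-1}(\D^q)\subset\Int(B)$. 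Set $G_\alpha:=c_\alpha^*(\CC_\varphi)$ on $\nb(T)$ and $G_\alpha:=G_\alpha^0$ outside. The cleft core is $\Sigma_\alpha:=c_\alpha^{-1}(0,0)\cong\S^{\dim\beta-1}$, contributing one new sphere to the triangulated system demanded by Property~\ref{induction_ppt}(4).

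\emph{Step 3 (verification and main obstacle).} Properties~(2)--(4) hold since $T$ avoids the prescribed regions. Property~(1) (quasi-complementarity): off $T$, $G_\alpha^0$ is complementary to $\FF_\alpha$; on $T$, the cleft is in standard position with respect to $\FF_\alpha$ because every leaf $\{(x_0,t_0)\}\times\D^q$ of $\FF_\alpha$ maps via $c_\alpha$ into the slice $\{z(x_0,t_0)\}\times\R^q$ with rank~$q$ (Definition~\ref{position_dfn}). Property~(5) follows from the $t$-invariance of both $G_\alpha^0$ (constant in $t$) and $c_\alpha^*(\CC_\varphi)$ (with $\partial_t\in\ker(dc_\alpha)$ near $\Sigma_\alpha$). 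The main obstacle is the gluing of $c_\alpha^*(\CC_\varphi)$ with $G_\alpha^0$ along the non-radial part of $\partial T$ (the $t$-ends $D\times\{\eta/4,3\eta/4\}\times B$ and the lateral $\partial D\times[\eta/4,3\eta/4]\times B$), where $\CC_\varphi$ carries a non-trivial $\varphi$-twist; on the radial part $D\times[\eta/4,3\eta/4]\times\partial B$, $\psi$ sends outside $\D^q$ and $\CC_\varphi$ is horizontal, so gluing is automatic. The non-radial matching requires a preliminary vertical isotopy of the boundary data (Paragraph~\ref{reparametrization_sssec}) to align its implicit monodromy around $\Sigma_\alpha$ with the class $[\varphi]\in\widetilde\diff(\D^q)_0$; if the observed monodromy is a non-trivial word in that class, split the cleft into multiple components of monodromy $\varphi$ via Paragraph~\ref{splitting_sssec}.
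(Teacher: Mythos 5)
Your Step 1 contains the central flaw. You claim that the parallel transport of $e_\alpha^*(G_{n-1})\vert\nb_{\alpha\times\D^q}(\hat\partial\alpha\times\D^q)$ along the inward direction from $\beta$ ``matches the prescribed $e_\alpha^*(\GG)$ on both $\nb(\alpha\times\partial\D^q)$ and $\nb(e_\alpha^{-1}(R(\beta)))$ by the consistency of the prescribed boundary data.'' This is precisely what fails, and what the whole inflation argument is built to circumvent. The foliation on the back wall $\hat\partial\alpha\times\D^q$ has been produced by the previous steps of the induction and is in general complicated; if you simply push it across the prism by a flow projecting to $\nabla$ (with no vertical twist), the exit trace on the corona $\beta\times A_{1/2}$ is again some complicated foliated-product holonomy, which does \emph{not} agree with the horizontal one that the prescribed data demand there. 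The paper states this explicitly right after Claim~\ref{G0_clm}: ``Note that $\GG^0$ is \emph{not} horizontal on $\beta''\times A_{1/2}$, in other words, it does \emph{not} match $\GG_{n-1}$ there. This is why we need to modify $G^0$ in $H$, introducing a new fissure there.''

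Consequently, your placement of the new cleft in an arbitrary interior tube $T=D\times[\eta/4,3\eta/4]\times B$ with $D\subset\Int(\beta)$ cannot work: that tube is unrelated to the geometry of the crossing flow and does nothing to divert the ``bad'' flowlines away from the corona. In the paper, the hole is placed at $H=\alpha'''\times A_{1-\epsilon}$, the very region through which the flowlines destined for $\beta''\times A_{1/2}$ pass, and the crossing field $\tilde\nabla$ of Claim~\ref{crossing_clm} is replaced by a field $V$ which, over this region, concatenates a vertical isotopy $\psi$ (supported in $A_{1/2-\epsilon}$) precisely so that the pulled-back foliation $G^0=(e_\alpha\circ V'^1)^*(G_{n-1})$ \emph{becomes} horizontal near $\beta'\times A_{1/2}$ (Claim~\ref{G0_clm}~(\ref{G05_item})). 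That $\psi$ is then filled back in by Epstein perfectness and the commutator splitting (Claim~\ref{filling_clm}) to produce the new fissure of monodromy $\varphi$. The ``main obstacle'' you identify --- gluing $c_\alpha^*(\CC_\varphi)$ to a surrounding horizontal foliation along the non-radial boundary of $T$ --- is a different problem, which only arises because you introduced the cleft in the wrong place; with the horizontal foliation having trivial holonomy around your interior sphere $\Sigma_\alpha$, no preliminary vertical isotopy will produce the monodromy $\varphi$ out of nothing, and the gluing genuinely fails. You also need $\tilde\nabla$ to be tangential to the previously constructed cleft foliation $e_\alpha^*(G_{n-1})$ near the back wall (Claim~\ref{crossing_clm}~(6)), which requires the civilization apparatus of Subclaim~A; a plain $t$-direction field will not in general be tangential to $G_{n-1}$ where it is already defined.
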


\begin{figure}
\includegraphics*[scale=0.45, angle=0]{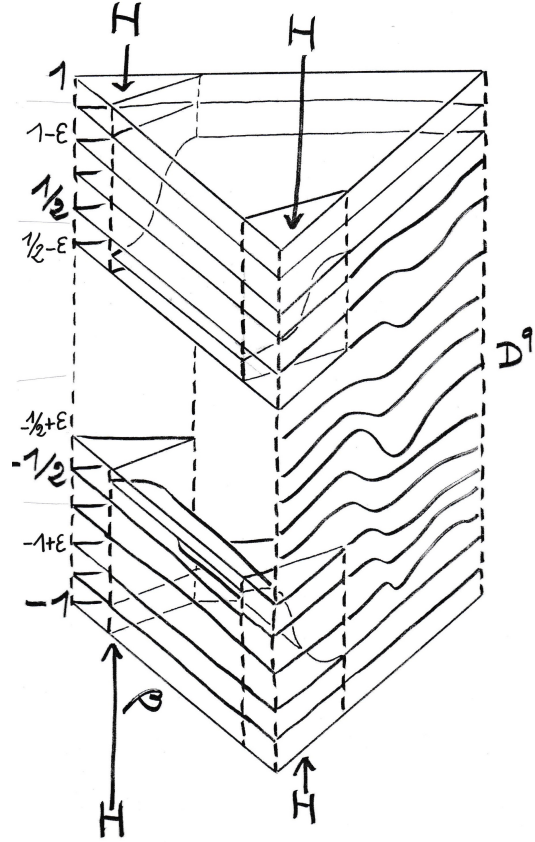}
\caption{Foliating the prism, in the case $i=q=1$.}
\label{inflation_fig}
\end{figure}

\begin{proof} The construction of $G_\alpha$ is the main part of the inflation
process (Figure \ref{inflation_fig}). Informally,
after Thurston's image \cite{thurston_74},
we can think of the prism $\alpha\times\D^q$, which we have to foliate,
 as a room. The back wall is $\hat\partial\alpha\times\D^q$,
 the
ceiling and floor are $\alpha\times\partial\D^q$ (here of course
 the image is less realistic for $q>1$);
 the front wall $\beta\times\D^q$
splits as the union of a window $e_\alpha\mun(P(\beta))$ and a corona
$e_\alpha\mun(R(\beta))$; there are no side walls in our prismatic version. 

   A cleft foliation
  $e_\alpha^*(G_{n-1})$ is already defined
close to the back wall, the ceiling and floor, and the corona.
Close to the ceiling, floor and corona, the foliation $e_\alpha^*(G_{n-1})$
 is not cleft; actually it is essentially horizontal (parallel to $\alpha$) there.
  On the contrary, close to the back wall, 
  $e_\alpha^*(G_{n-1})$ is in general cleft and complicated, being produced by the
previous steps of the induction.

The general idea is to pull back this foliation through
the room by means of a flow crossing the room
from the front wall to the back wall and tangential to the ceiling and to the floor.

The difficulty
 is that in general, it is not possible to make the pullback foliation match the horizontal
 foliation on the corona. This is solved at the price of a new cleft (or hole), which will allow us to
 somehow divert the complicated part of the back wall foliation
 from the corona, and make it exit, instead, through the window.
 
 Some technicalities arise from the fact that the foliation
 to be pulled back is cleft, that the flow must be tangential to this foliation where it is already
 defined,
  that the
 extended cleft foliation must be quasi-complementary to the verticals
  (condition (1) above)
  and tangential to the
 civilization of the front wall $\beta\times\D^q$ (condition (5));
  this last condition being crucial in view of the subsequent
 microextension from $\alpha\times\D^q\cong P(\alpha)$ to $\nb_{\bar M}(P(\alpha))$.
 
 \medbreak
 
 First, some normalizations
   will fix the ideas and simplify the notations. We have some choice in
   the parametrization $e_\alpha$ of $P(\alpha)$: we can change
   $e_\alpha$ to $e_\alpha\circ F$, where $F$ is
    any isotopy of $\alpha\times\D^q$ vertical with respect to $\pi_\alpha$
    (Definition \ref{vi_dfn}).

 I) The boundary component
   $\alpha\times{\partial\D^q}$ is saturated for the foliation
   $e_\alpha^*(\GG)$ (recall Lemma \ref{prism_lem}),
  which on $\alpha\times{\partial\D^q}$
  restricts to the slice foliation parallel to $\alpha$. Hence,
   after a first vertical isotopy, 
   we can arrange,
     on $\nb_{\alpha\times\D^q}(\alpha\times{\partial\D^q})$, that
   $e_\alpha^*(\GG)$ (which coincides with $e_\alpha^*(\GG_{n-1})$ there
   by Property \ref{induction_ppt} (2) at order $n-1$),
    is horizontal there. 
   
  II)  In $\R^q$,
  denote by $D_r$ (resp. $A_r$) the compact disk
  (resp. corona) defined by $\vert y\vert\le r$
  (resp. $r\le\vert y\vert\le 1$). After a second vertical isotopy, relative to 
  $\nb_{\alpha\times\D^q}(\alpha\times{\partial\D^q})$, 
   we can arrange that
   $$e_\alpha\mun(P(\beta))=\beta\times D_{1/2}
  \ \text{and}\ e_\alpha\mun(R(\beta))=\beta\times A_{1/2}$$

    III) Since  $e_\alpha^*(\GG)\vert(\beta\times A_{1/2})$
  is a foliated product whose base $\beta$ is simply connected, this foliated product is trivial. Hence,
   after a third vertical isotopy, relative to $\beta\times D_{1/2}$ and to
    $\nb_{\alpha\times\D^q}(\alpha\times{\partial\D^q})$,
   we can moreover arrange that
   $e_\alpha^*(\GG_{n-1}=\GG)$ is horizontal
   on $\nb_{\alpha\times\D^q}(\beta\times A_{1/2})$.
   \medbreak

 In the case $i=0$, Lemma  \ref{extension_lem}
  is trivial: $\alpha$ is an interval whose endpoints are
  $\beta$ and $\hat\alpha$. The foliation $\CC(\alpha)$ 
  being of codimension $1$ in $\bar M$, the tangentiality
  property \ref{induction_ppt} (3) above at order $n-1$ simply means that the prism
   $P(\alpha)$
   does not meet $C_{n-1}$. Hence, $\alpha\times\D^q$ is endowed along
 $\hat\alpha\times\D^q$, along $\alpha\times\partial\D^q$ and along
  $\beta\times A_{1/2}$ 
 with
 a germ of $1$-dimensional foliation $e_\alpha^*(\GG_{n-1})$ (not cleft),
 complementary to $\FF_\alpha$, and horizontal on $\alpha\times\partial\D^q$
  (Property \ref{induction_ppt} (1) and (2)
 at order $n-1$).
  One has an obvious extension of
  $e_\alpha^*(\GG_{n-1})$
 by a $1$-dimensional foliation complementary to $\FF_\alpha$ on the all of $\alpha\times\D^q$.
 
  For the rest of the proof of Lemma
  \ref{extension_lem} , we assume that $i\ge 1$.

 \begin{clm}\label{crossing_clm} There are
 \begin{itemize}
 \item  On $\alpha$, a nonsingular vector field $\nabla$;
 \item    On $\alpha\times\D^q$, a nonsingular vector field $\tilde\nabla$;

 \end{itemize}
  such that
 \begin{enumerate}
 \item $\nabla$ is transverse to every hyperface of $\alpha$;
\item Every orbit of $\nabla$ enters $\alpha$ through
$\beta$ and exits $\alpha$ through  $\hat\partial\alpha$;
 \item $\tilde\nabla$ lifts
 $\nabla$ through $\pi_\alpha$;
 \item $\tilde\nabla$ is tangential to $e_\alpha^*(\GG)$ on
  $\nb_{\alpha\times\D^q}(\alpha\times\partial\D^q$);
   \item $\tilde\nabla$ is tangential to $e_\alpha^*(\GG)$ on
  $\nb_{\alpha\times\D^q}(\beta\times A_{1/2}$);

  \item $\tilde\nabla$ is tangential to $e_\alpha^*(G_{n-1})$  on $\nb_{\alpha}(\hat\partial
  \alpha)\times\D^q$.

 \end{enumerate}

 \end{clm}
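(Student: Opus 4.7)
The plan is to construct $\nabla$ using the affine geometry of the simplex $\alpha$, and then to build $\tilde\nabla$ by patching two local lifts of $\nabla$ via a partition of unity, the delicate part being the lift near the pullback cleft $e_\alpha\mun(C_{n-1})$.

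For $\nabla$, let $v$ be the vertex of $\alpha$ opposite $\beta$, and pick $p$ in the affine span of $\alpha$ strictly on the side of $\beta$ away from $v$; take $\nabla$ to be the constant vector field $v-p$ restricted to $\alpha$. Its orbits are parallel line segments that cross $\alpha$ from $\beta$ to $\hat\partial\alpha$ and are transverse to every hyperface, giving (1) and (2).

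For $\tilde\nabla$: by the normalizations (I) and (III) together with Property \ref{induction_ppt} (2) at order $n-1$, both $e_\alpha^*(\GG)$ and $e_\alpha^*(G_{n-1})$ are horizontal on $U_{12}:=\nb_{\alpha\times\D^q}((\alpha\times\partial\D^q)\cup(\beta\times A_{1/2}))$. Define $\tilde\nabla_{12}:=(\nabla,0)$ on $U_{12}$: this horizontal lift already satisfies (3)--(6) there. On $U_3:=\nb_\alpha(\hat\partial\alpha)\times\D^q$, define $\tilde\nabla_3$ as a lift of $\nabla$ tangential to the cleft foliation $e_\alpha^*(G_{n-1})$ (see the next paragraph), and glue $\tilde\nabla_{12}$ and $\tilde\nabla_3$ by a smooth partition of unity subordinate to $\{U_{12},U_3\}$. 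Since a convex combination of lifts of $\nabla$ is again a lift, (3) is preserved; on the overlap $U_{12}\cap U_3$, $e_\alpha^*(G_{n-1})$ is horizontal, so the only lift of $\nabla$ tangential to it is $(\nabla,0)$, forcing $\tilde\nabla_{12}=\tilde\nabla_3=(\nabla,0)$ there. Thus the glued field is horizontal throughout the overlap and all tangency conditions (4), (5), (6) hold.

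The main obstacle is producing $\tilde\nabla_3$ continuously across the pullback cleft. Outside the cleft, complementarity of $\GG_{n-1}$ to $\FF_\alpha$ yields a unique lift of $\nabla$ into $\tau\GG_{n-1}$, but as one approaches the cleft the leaves spin around it and this lift does not extend; near the cleft one must instead lift into $\ker(d(c_{n-1}\circ e_\alpha))$. The remedy lies in standard position (Definition \ref{position_dfn}): it forces the $\C$-valued component of $c_{n-1}$ to factor through the horizontal projection $\pi_\alpha$, while its $\R^q$-valued component restricts to a rank-$q$ submersion on every $\FF_\alpha$-leaf. Consequently the kernel condition on $\tilde\nabla_3$ decouples into (a) $\nabla$ being tangent to the codimension-$2$ horizontal projection of the cleft, and (b) a linear equation uniquely and smoothly determining the vertical component of $\tilde\nabla_3$ from its horizontal component. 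Subcondition (a) is arranged by a prior horizontal perturbation of the cleft via Tool \ref{reparametrizing_sssec}, using Lemma \ref{generic_lem} to preserve pullability through $e_\alpha$, so that the perturbed cleft has horizontal projection invariant under the flow of $\nabla$; then (b) yields $\tilde\nabla_3$ near the cleft, matching continuously the complementarity lift on the rest of $U_3$.
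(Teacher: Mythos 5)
Your construction of $\nabla$ as a constant vector field $v-p$ is actually sound for (1) and (2): one can check, as the paper effectively does with $v_\alpha:=\flat(\alpha)-\flat(\beta)=\tfrac{1}{i+2}(v-\flat(\beta))$, that $\langle v_\alpha,n_\eta\rangle>0$ for every hyperface $\eta$, so the constant field does enter through $\beta$ and exit through $\hat\partial\alpha$. And the $U_{12}$ part of your lift (horizontal lift $(\nabla,0)$ where $e_\alpha^*(\GG)$ is horizontal) is correct.

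The genuine gap is in producing $\tilde\nabla_3$ near the pulled-back cleft. You correctly identify the constraint: writing $c_\C:=\pr_1\circ c_{n-1}\circ e_\alpha$, standard position makes $c_\C$ factor through $\pi_\alpha$, and the kernel lift exists at a point precisely when $\nabla$ is tangent to the fibers of the resulting submersion $\alpha\supset\nb_\alpha(\pi_\alpha(e_\alpha\mun(C_{n-1})))\to\C$. These fibers are codimension-$2$; for a fixed constant $\nabla$, tangency to all of them on an open set amounts to flow-invariance of the germ of $c_\C$ under $\nabla$. You propose to arrange this by a ``horizontal perturbation of the cleft'' via Tool~\ref{reparametrizing_sssec} and Lemma~\ref{generic_lem}. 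But those tools only give generic \emph{transversality} (pullability); they do not and cannot give flow-invariance of $c_\C$. Indeed, the horizontal projection of the cleft is a compact set sitting near $\hat\partial\alpha$, while the flowlines of the constant field run across $\alpha$ from $\beta$ to $\hat\partial\alpha$; no small perturbation supported in a thin tube about the cleft can make such a germ flow-invariant for a transverse constant flow. And even if it could, one would then have to re-verify all of Property~\ref{induction_ppt} for the modified cleft, which you do not address. So the kernel lift near the cleft does not exist for your $\nabla$, and $\tilde\nabla_3$ is not constructed.

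The mechanism the paper uses to evade exactly this obstruction is the civilizations $\CC_\gamma$ and Induction Property~\ref{induction_ppt}~(3): for each face $\gamma\subseteq\alpha$, $\CC_\gamma$ is a codimension-$\dim\gamma$ foliation already known to be \emph{tangential} to the cleft foliation $G_{n-1}$ (Definition~\ref{tangent2_dfn}), and the vector $v_\gamma$ (the orthogonal projection of $v_\alpha$ onto $\gamma^\bot$) lifts uniquely through $\pi_\alpha$ to a field $\tilde v_\gamma$ tangent to $\CC_\gamma$, hence automatically tangential to $G_{n-1}$ wherever it matters. Then $\nabla:=\sum_\gamma u_\gamma v_\gamma$ and $\tilde\nabla:=\sum_\gamma (u_\gamma\circ\pi_\alpha)\tilde v_\gamma$ for a suitable partition of unity, and properties (4)--(6) follow from a local analysis of which $u_\gamma$ are supported near a given boundary point (the paper's Subclaims A and B). Your proposal never uses the civilizations or the tangency hypothesis~\ref{induction_ppt}~(3), and this is precisely what is missing: without it, the required tangency of the lift near the cleft has no source.
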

 
  For (6), recall
  Definition \ref{tangent1_dfn}.
  
The construction requires a little care because
 $\alpha$ is an arbitrary linear $(i+1)$-simplex
  in the Euclidian space $\R^{p+1}$; for example,  some of the dihedral angles
  of $\alpha$
  along the hyperfaces of $\beta$
  can be obtuse.

 \begin{proof}[Proof of Claim \ref{crossing_clm}]
 For simplicity, in the proof of this claim, denote by $\CC_\gamma$, $\GG$, $G_{n-1}$
the pullbacks of $\CC_\gamma$, $\GG$, $G_{n-1}$ in $\alpha\times\D^q$ through $e_\alpha$.

 For each face $\gamma\subseteq\alpha$, its barycenter $\flat(\gamma)$ is
 a vertex of the first barycentric subdivision
 $Bar(\alpha)$  of $\alpha$. Let $S_\gamma
 \subset\alpha$ be the open star of  $\flat(\gamma)$
 with respect to $Bar(\alpha)$ (the interior of the union of the
 cells of $Bar(\alpha)$ containing $\flat(\gamma)$).
  The interest of these stars is that for every two faces
  $\gamma, \delta\subseteq\alpha$, one obviously has
 \begin{enumerate}[label=(\alph*)]
 \item $Int(\gamma) \subset S_\gamma$;
 \item $S_\gamma$ intersects $\delta$ iff $\gamma\subset\delta$;
  \item  $S_\gamma$ intersects $S_\delta$ iff $\gamma\subset\delta$
  or $\delta\subset\gamma$.
 \end{enumerate}

\medbreak

   Subclaim A: \emph{
   There is, for every face $\gamma\subseteq\alpha$,
   an open subset $U_\gamma\subset\alpha$ such that 
    for every two faces
  $\gamma, \gamma'\subseteq\alpha$:}
  \begin{enumerate}[label=(\roman*)]
\item $Int(\gamma)\subset U_\gamma\subset S_\gamma$;

\item $\CC_\gamma$ is defined on $U_\gamma\times\D^q$, and
$\CC_\gamma\subset\GG$ on $U_\gamma\times\nb_{\D^q}(\partial\D^q)$;

\item 
Every leaf of $\CC_\gamma\vert(U_\gamma\times\D^q)$ is mapped
diffeomorphically onto a leaf of $\NN_\gamma\vert U_\gamma$ through $\pi_\alpha$;

\item If $\gamma\neq\alpha,\beta$, then $G_{n-1}$ is defined on $U_\gamma\times\D^q$,
and $\CC_\gamma$
is tangential to $G_{n-1}$
on $U_\gamma\times\D^q$;

\item $\CC_\beta\subset\GG$
on $U_\beta\times\nb_{\D^q}(A_{1/2})$;

\item If $\gamma\subset\gamma'$ then
 $\CC_{\gamma'}\subset\CC_\gamma$ on $(U_\gamma\cap U_{\gamma'})\times\D^q$.
 \end{enumerate}

 The proof of the subclaim A is
   straightforward by means of a descending induction on $\dim(\gamma)$,
   using
  Lemma \ref{civilization_lem}, and (a) above, and the induction properties \ref{induction_ppt} (2) and (3)
  at the order $n-1$.

 \medbreak
We now resume the proof of Claim \ref{crossing_clm}.
By (i), the family
  $(U_\gamma)_{\gamma\subseteq\alpha}$
  is an open cover of $\alpha$.
    Let
 $(u_\gamma)_{\gamma\subseteq\alpha}$ be on $\alpha$ a
  partition of the unity subordinate to this cover.

 Definition of $\nabla$:
  consider the vector $v_\alpha:=\flat(\alpha)-\flat(\beta)$; and,
   for each $\gamma\subset\partial\alpha$,
    the orthogonal projection $v_\gamma$ of $v_\alpha$ into the linear subspace $\gamma^\bot
    \subset\R^{p+1}$
 orthogonal to $\gamma$.
 Put
 $$\nabla:=\sum_{\gamma\subseteq\alpha}u_\gamma v_\gamma$$
 
Verification of (1):
  given a hyperface $\eta\subset\alpha$,
    let  $n_\eta\in\R^{p+1}$ be the unit
  vector parallel to $\alpha$ and normal to $\eta$,
 pointing inwards $\alpha$ if $\eta=\beta$, and outwards if $\eta\neq\beta$. 
   Since $\alpha$ is a linear simplex,
    $\langle v_\alpha,n_\eta\rangle>0$ (Euclidian scalar product in $\R^{p+1}$).
Consider any point $x\in\eta$.
 For every face $\gamma\subseteq\alpha$ such that $x\in U_\gamma$,
by (i) and (b) one has
 $\gamma\subset\eta$,
hence $n_\eta\in\gamma^\bot$, hence
 $\langle v_\gamma,n_\eta\rangle=\langle v_\alpha,n_\eta\rangle$.
Finally:
$$\langle\nabla(x),n_\eta\rangle=\langle v_\alpha,n_\eta\rangle>0$$

 Verification of (2): for every face $\gamma\subset\partial\alpha$, clearly $v_\alpha$ is not
 parallel to $\gamma$, hence $v_\gamma\neq 0$ and
  $$\langle v_\alpha,v_\gamma\rangle=\vert v_\gamma\vert^2>0$$ It follows, in view of
  the
  definition of $\nabla$, that $\langle
 v_\alpha, \nabla\rangle>0$ at every point of $\alpha$. So, every orbit of $\nabla$
  is proper in $\alpha$ (a compact segment with endpoints on $\partial\alpha$).
 Since $\langle\nabla(x),n_\eta\rangle>0$ at every
  every point $x$ of every hyperface $\eta$, necessarily every orbit goes across $\alpha$
  from $\beta$ to
$\hat\partial\alpha$.
 
 Definition of $\tilde\nabla$:
  the constant vector field on $\alpha$ parallel to $v_\alpha$
  lifts through $\pi_\alpha$ to a vector field $\tilde v_\alpha$ on $\alpha\times\D^q$,
 which we can choose tangential to $\GG$ close to $\alpha\times\partial\D^q$.
 On the other hand,  for each face $\gamma\subset\partial\alpha$,
  by  (iii),
 the constant vector field on $U_\gamma$ parallel to $v_\gamma$, being
  tangential to $\NN_\gamma$,
  lifts in $\alpha\times\D^q$ through $\pi_\alpha$ to a unique vector field
   $\tilde v_\gamma$ on $U_\gamma\times\D^q$
  tangential to $\CC_\gamma$.
 Put
 $$\tilde\nabla:=
 \sum_{\gamma\subseteq\alpha}(u_\gamma\circ\pi_\alpha)\tilde  v_\gamma$$
 
 (3) holds by definition; (4) follows from
(ii).
 We are left to verify (5) and (6).
 
First, note that 
 for every $x\in\alpha$, the set of the faces  $\gamma\subseteq\alpha$ such that $x\in\supp(u_{\gamma})$ has a
 smallest element
  $\delta(x)$: indeed, this finite set  is \emph{totally} ordered by the inclusion relation,
  because of (i) and (c). Moreover,
 {for every
  $x'\in\alpha$ close enough to $x$,
 one has $\delta(x)\subset\delta(x')$}
  (since the supports $\supp(u_{\gamma})$, $\gamma\subseteq\alpha$, are
 compact and in finite number.)
 Consequently, for
 $x\in\partial\alpha$, the set $$N_x:=\{x'\in U_{\delta(x)}\ /\ x'\notin\supp(u_\alpha)
\  \text{and}\ \delta(x)\subset\delta(x')\}$$
 is an open neighborhood
 of $x$ in $\alpha$.
  \medbreak
 Subclaim B:\emph{
 For every $x\in\partial\alpha$, $x'\in N_x$ and $y\in\D^q$,}
  $$\tilde\nabla(x',y)\in\tau_{(x',y)}\CC_{\delta(x)}$$
  
Indeed, for each proper face $\gamma\subset
 \partial\alpha$ such that $x'\in\supp(u_\gamma)$, one has $$
 \delta(x)\subset\delta(x')\subset\gamma$$
 hence,
  thanks to (vi) $$\tilde v_\gamma(x',y)\in\tau_{(x',y)}\CC_\gamma\subset
 \tau_{(x',y)}\CC_{\delta(x)}$$
 Subclaim B follows by definition of $\tilde\nabla$.
 
 \medbreak
 
 Verification of (5): Fix $x\in\beta$. There are two cases.
 
   In case $\delta(x)=\beta$, for every $x'\in N_x$, by definition of $N_x$,
    necessarily $\delta(x')=\beta$ also;
     hence by definition of $\tilde\nabla$ and $\tilde v_\beta$,
  for every $y\in\D^q$:
  $$\tilde\nabla(x',y)=\tilde v_\beta(x',y)\in\tau_{(x',y)}\CC_\beta$$
  Thus,  provided that $y$ is close enough to $A_{1/2}$ in $\D^q$, after (v):
  $$\tilde\nabla(x',y)\in\tau_{(x',y)}\GG$$
  
 Now, consider the second case $\delta(x)\neq\beta$. 
  On the one hand, by  Subclaim B and by (iv) applied to
 $\gamma=\delta(x)$, the vector field
 $\tilde\nabla$ is tangential to $G_{n-1}$ on $N_x\times\D^q$. On the other hand,
 by the induction property \ref{induction_ppt}  (2),
 $G_{n-1}$ coincides with $\GG$ on $\nb_{\alpha\times\D^q}(\beta\times A_{1/2})$.
 So, we also get (5) in that case.
 
Verification of (6): For every $x\in\hat\partial\alpha$,
 by Subclaim B and by (iv) applied to
 $\gamma:=\delta(x)$, the tangency property (6) holds on $N_x\times\D^q$.
 The proof of Claim \ref{crossing_clm} is complete. \end{proof}

 Let us resume the proof of Lemma  \ref{extension_lem}.
By (4) and (5) of Claim \ref{crossing_clm}, the vector field $\tilde\nabla$ is horizontal
on $\nb_{\alpha\times\D^q}(\alpha\times\partial\D^q)$
and on $\nb_{\alpha\times\D^q}(\beta\times A_{1/2})$.
By a fourth vertical isotopy relative to 
 $\nb_{\alpha\times\D^q}(\alpha\times\partial\D^q)$
and to $\nb_{\alpha\times\D^q}(\beta\times A_{1/2})$ and to
 $\beta\times\D^q$,
we can moreover arrange that
\begin{enumerate}[label=(\alph*)]
\item\label{hor_item} $\tilde\nabla$ is horizontal everywhere on $\alpha\times\D^q$.
\end{enumerate}

  Denoting   by $x_0$ the barycentric coordinate function on $\alpha$
  that restricts to $0$ on $\beta$ and to $1$ on the opposite vertex,
 fix   $\epsilon>0$ small enough that
\begin{enumerate}[label=(\alph*), resume]
\item\label{n_item} The domains $\alpha\times A_{1-\epsilon}$
and  $x_0\mun([0,\epsilon])\times A_{1/2-\epsilon}$ do not intersect
$C_{n-1}$; and on some neighborhood of these domains,
    the foliations $e_\alpha^*(\GG_{n-1})$ and $e_\alpha^*(\GG)$ are both defined,
    coincide, and are horizontal.

    \end{enumerate}

Since $\nabla$ enters $\alpha$ through $\beta$ (recall  (1) and (2) of Claim \ref{crossing_clm}),
 provided that $\epsilon$ is small enough,
 after rescaling $\nabla$ (and accordingly rescaling $\tilde\nabla$, so that (3) of
 Claim \ref{crossing_clm} still holds), we can moreover arrange that
    \begin{enumerate}[label=(\alph*), resume]
  \item\label{gradient_item}  The derivate   $\nabla\cdot x_0$ equals $\epsilon$
 on $x_0\mun[0,\epsilon]$. 
   \end{enumerate}
   
  Let $\pr_\nabla:\alpha\to\beta$ 
 denote the projection along the flowlines of $\nabla$. 
Decompose $\alpha$ into three subsets $\alpha'$, $\alpha''$, $\alpha'''$, and $\beta$ into
two subsets $\beta'$, $\beta''$, as follows (Figure \ref{simplex_fig}).
\begin{figure}
\includegraphics*[scale=0.35, angle=0]{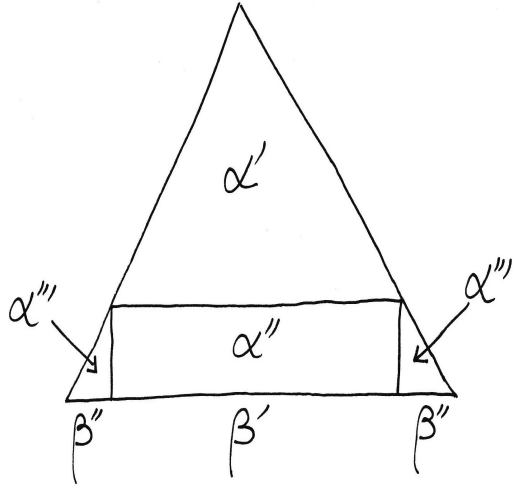}
\caption{Decomposition of the simplices $\alpha$ and $\beta$.}
\label{simplex_fig}
\end{figure}
\begin{itemize}
\item
$\alpha':=x_0\mun[\epsilon,1]$;
\item $\beta'=\pr_\nabla(\alpha')$;
 \item $\alpha'':=x_0\mun[0,\epsilon]\cap\pr_\nabla\mun(\beta')$;
 \item $\beta'':=$ topological closure of $\beta\setminus\beta'$;
\item $\alpha''':=\pr_\nabla\mun(\beta'')$
\end{itemize}

Fix a self-diffeomorphism
$v$ of $\I$, supported in the open interval $({1/2-\epsilon}, 1)$ and such that

\begin{enumerate}[label=(\alph*), resume]
\item\label{v_item} $v(1-\epsilon)<1/2$
\end{enumerate}
Define the self-diffeomorphism $\psi_1$ of $\D^q$ as
$$\psi_1:y\mapsto v(\vert y\vert)y/\vert y\vert$$
for $y\neq 0$, and $\psi_1(0)=0$.
Fix a $1$-parameter family
 $\psi:=(\psi_t)_{t\in\I}$ of self-diffeomorphisms
 of $\D^q$,
 supported in the interior of $A_{1/2-\epsilon}$, such that $\psi_t=\id$ for $t$
 close to $0$ and $\psi_t=\psi_1$ for $t$ close to $1$. Let $(Y_t)_{t\in\I}$
 be the $1$-parameter family of vector fields on $\D^q$ whose flow is $(\psi_t)$.
 Define a nonsingular vector field $V$
on $\alpha\times\D^q$ by:
 
\begin{enumerate}[label=(\alph*), resume]
\item\label{V1_item} $V(x,y):=\nabla(x)\oplus 0$ if
 $x\in\alpha'$;
\item\label{V2_item}
$V(x,y):=\nabla(x)\oplus -Y_{1-\epsilon\mun x_0(x)}(y)$ if $x\in\alpha''\cup\alpha'''$.
 \end{enumerate}

Note that
\begin{enumerate}[label=(\alph*), resume]
\item\label{V3_item} $V$ lifts $\nabla$ through $\pi_\alpha$.
 \end{enumerate}

The vector fields $\tilde\nabla$ and $V$ being both horizontal on
$\nb_\alpha(\hat\partial\alpha)\times\D^q$ (after \ref{hor_item}, \ref{V1_item} and \ref{V2_item}),
 and both of them lifting
$\nabla$ there (Claim \ref{crossing_clm} (3), \ref{V3_item}), let $U$ be
an open neighborhood
of $\hat\partial\alpha$
 in $\alpha$ so small that  $V=\tilde\nabla$ on $U\times\D^q$.
 Provided that $U$ is small enough, one has on $U\times\D^q$
 (after the induction property \ref{induction_ppt} (1) at the order $n-1$
 and after Claim \ref{crossing_clm} (6)):

\begin{enumerate} [label=(\alph*), resume]
\item\label{G1_item} $e_\alpha^*(G_{n-1})$ is quasi-complementary to $\FF_\alpha$
 on $U\times\D^q$;

\item\label{G3_item}
 $V$ is tangential to $e_\alpha^*(G_{n-1})$ on $U\times\D^q$. 
\end{enumerate}

 Recalling Claim \ref{crossing_clm} (1) and (2),
 let $\chi$ be on $\alpha$
a nonnegative real function such that 
\begin{itemize}
\item\label{stop_item} $\chi$ vanishes on
 a neighborhood of ${\hat\partial}\alpha$;

\item\label{embed_item}  The time $t=1$ of the flow $(\nabla'^t)$ of the vector field
 $\nabla':=\chi\nabla$
embeds $\alpha$ into $U$.
\end{itemize}
 Put $V':=(\chi\circ\pi_\alpha)V$ and consider the time $t=1$ of the flow $(V'^t)$.

Obviously, since $V'^1(\alpha\times\D^q)\subset U\times\D^q$ and since $V$ is tangential to $e_\alpha^*(G_{n-1})$ there
(\ref{V3_item}, \ref{G3_item}),
 the cleft foliation
 $e_\alpha^*(G_{n-1})$ is pullable (Tool \ref{pullback_sssec} (c))
  through $V'^1$. 
Consider on $\alpha\times\D^q$ the pullback cleft foliation
$$G^0:=(e_\alpha\circ V'^1)^*(G_{n-1})$$
and write $G^0=(C^0,[c^0],\GG^0)$. Put
$H:=\alpha'''\times A_{1-\epsilon}$.

\begin{clm}\label{G0_clm}\
\begin{enumerate}

\item\label{G01_item} The cleft foliation $G^0$ is quasi-complementary to $\FF_\alpha$
 on $\alpha\times\D^q$;
 
  \item\label{G00_item} The fissure $C^0$ does not intersect
$H$;
 
 \item\label{G02_item} The fissure $C^0$ does not intersect
$\alpha'\times A_{1-\epsilon}$ nor
$\alpha\times\partial\D^q$, and $\GG^0$ is horizontal
on  $\nb_{\alpha\times\D^q}(\alpha'\times A_{1-\epsilon})$ and on
$\nb_{\alpha\times\D^q}(\alpha\times\partial\D^q)$;

\item\label{G03_item}  $G^0$ and $e_\alpha^*(G_{n-1})$ 
coincide on
 $\nb_\alpha(\hat\partial\alpha)\times\D^q$; 
 
 \item\label{G04_item} $V$ is tangential to $G^0$ on $\alpha\times\D^q$;
 \item\label{G07_item} $G^0$ is invariant by the flow of $V$;

\item\label{G05_item} The fissure
 $C^0$ does not meet $\beta'\times  A_{1/2}$, and
 $\GG^0$ is horizontal 
  on
$\nb_{\alpha\times\D^q}(\beta'\times A_{1/2})$;

\item\label{G06_item} The foliation $e_\alpha^*(\CC_\beta)$ is tangential to $G^0$
 on $\nb_\alpha(\partial\beta)\times\D^q$.
 \end{enumerate}
 \end{clm}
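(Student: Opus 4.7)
My plan is to exploit that $G^0=(e_\alpha\circ V'^1)^*(G_{n-1})$ is a pullback, so each of its eight claimed properties at a point $p$ is read from the corresponding property of $e_\alpha^*(G_{n-1})$ at $V'^1(p)$ via the action of $dV'^1$. I will analyze where the $V'$-flow sends each distinguished subregion of $\alpha\times\D^q$ and apply the ``safe-region'' condition \ref{n_item}, which says that on the outer corona $\alpha\times A_{1-\epsilon}$ and on the thick slab $x_0\mun[0,\epsilon]\times A_{1/2-\epsilon}$ the foliation $G_{n-1}$ has no cleft and is horizontal. Items (1), (4), (5), (6) will be essentially formal consequences of the pullback construction; items (2), (3), (7) will require a geometric analysis of the $V$-flow; and (8) will follow from the induction hypothesis on civilizations. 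The main obstacle will be (2) and (7), where the vertical component $-Y_{1-\epsilon\mun x_0}$ of $V$ on $\alpha''\cup\alpha'''$ is non-zero and demands a careful ODE argument.

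For (4), since $\chi$ vanishes on a neighborhood of $\hat\partial\alpha$, one has $V'\equiv 0$ there; hence $V'^1=\id$ and $G^0=e_\alpha^*(G_{n-1})$ on $\nb_\alpha(\hat\partial\alpha)\times\D^q$. For (1), the $V'$-flow covers the $\nabla'$-flow through $\pi_\alpha$ (the horizontal component of $V'$ is $(\chi\circ\pi_\alpha)\nabla$ by \ref{V1_item} and \ref{V2_item}), so $V'^1$ sends each fibre of $\pi_\alpha$ diffeomorphically onto another fibre, making it leafwise \'etale with respect to $\FF_\alpha$; quasi-complementarity of $e_\alpha^*(G_{n-1})$ on the image $V'^1(\alpha\times\D^q)\subset U\times\D^q$, which holds by \ref{G1_item}, then transfers to $G^0$ on $\alpha\times\D^q$ through Tool \ref{pullback_sssec} (e). For (5), since $V'=\chi V$ the $V$- and $V'$-orbits coincide as sets; writing $V^s(p)=V'^{f(s)}(p)$ with $f'(0)>0$, a one-line computation gives $dV'^1(V(p))=f'(0)\chi(V'^1(p))V(V'^1(p))$, a positive multiple of $V(V'^1(p))$; by \ref{G3_item} the latter is tangent to $e_\alpha^*(G_{n-1})$, so the former is too, whence $V$ is tangential to $G^0$ at $p$. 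Property (6) is then immediate from (5) by Tool \ref{pullback_sssec} (d).

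The crucial computation for (2) and (7) is a reparametrization. Along any $V'$-orbit inside $x_0\mun[0,\epsilon]$, the variable $t(s):=1-\epsilon\mun x_0(x_s)$ satisfies $\dot t=-\chi(x_s)$ by \ref{gradient_item}, so the $y$-equation of the $V'$-flow rewrites as
\begin{equation*}
\frac{dy}{dt}=Y_t(y),
\end{equation*}
whose unique solution with $y(t_0)=y_0$ is $y(t)=\psi_t(\psi_{t_0}\mun(y_0))$. Because $\psi_t$ is supported in $\Int A_{1/2-\epsilon}$ it preserves that corona, so any orbit with $y_0\in A_{1/2-\epsilon}$ keeps $y(t)\in A_{1/2-\epsilon}$ throughout. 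For (2): since orbits through $\beta''$ never reach $\alpha'$, one has $\alpha'''\subset x_0\mun[0,\epsilon]$, so the $V'$-orbit from $H=\alpha'''\times A_{1-\epsilon}$ stays inside the safe slab $x_0\mun[0,\epsilon]\times A_{1/2-\epsilon}$, which is cleft-free by \ref{n_item}. For the no-cleft part of (7): starting from $(z,y_0)\in\beta'\times A_{1/2}\subset\alpha''\times A_{1/2}$, the orbit spends time in $\alpha''$ with $y(t)\in A_{1/2-\epsilon}$ and reaches $\alpha'$ with $y=\psi_1\mun(y_0)$ of radius $>1-\epsilon$ by \ref{v_item}; thereafter $V=\nabla\oplus 0$ by \ref{V1_item} and the orbit stays in $\alpha'\times A_{1-\epsilon}$, again cleft-free by \ref{n_item}.

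For the horizontality in (7), I will use that $\psi_t=\psi_1$ for $t$ close to $1$, so $\psi_{t_0}\mun=\psi_1\mun$ for every starting $x$ with $x_0\le\epsilon\delta_1$, where $\delta_1$ is the width of the flat region near $t=1$. In a thin enough $\nb(\beta'\times A_{1/2})$ every starting point satisfies this, so the $\D^q$-coordinate of $V'^1(x,y_0)$ equals $\psi_1\mun(y_0)$, depending only on $y_0$; this is exactly the horizontality of the pullback of the horizontal foliation. For the remaining items of (3), $V$ is itself horizontal on $\alpha'\times\D^q$ (by \ref{V1_item}) and on $\nb(\alpha\times\partial\D^q)$ (because $\psi_t$ is supported away from $\partial\D^q$), so $V'^1$ preserves the $\D^q$-coordinate on these regions and sends them into themselves within the safe zone, making the pullback horizontal and non-cleft. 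Finally, (8) will follow by combining Lemma \ref{civilization_lem} (3), which gives $\CC_\beta\subset\CC_\delta$ near each hyperface $\delta$ of $\beta$, with the induction hypothesis \ref{induction_ppt} (3) at order $n-1$ applied to $\delta\in K_{n-1}$, yielding $\CC_\delta$ tangential to $G_{n-1}$; pulling back through $e_\alpha$ and using (4) to identify $G^0$ with $e_\alpha^*(G_{n-1})$ near $\hat\partial\alpha\supset\partial\beta$, one obtains $e_\alpha^*(\CC_\beta)$ tangential to $G^0$ on $\nb_\alpha(\partial\beta)\times\D^q$.
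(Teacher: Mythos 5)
Your proof follows the same overall strategy as the paper's — read each property of $G^0=(e_\alpha\circ V'^1)^*(G_{n-1})$ off of $G_{n-1}$ through the pullback, analyze where the $V'$-flow carries the distinguished subregions, and invoke the safe-region condition \ref{n_item} and the civilization hierarchy. Your treatment of items (1), (2), (3), (4), (5), (6) and (8), as well as of the ``no cleft'' half of (7), is consistent with the paper (with, in places, more explicit computation than the paper supplies, e.g.\ the chain-rule verification for (5), which the paper dismisses as ``after the very definition of $G^0$'').

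There is, however, a gap in your argument for the \emph{horizontality} half of item (7), i.e.\ that $\GG^0$ is horizontal on $\nb_{\alpha\times\D^q}(\beta'\times A_{1/2})$. You attempt to show that $V'^1$ itself is a split map $(x,y)\mapsto(f(x),\psi_1\mun(y))$ on a thin neighborhood of $\beta'\times A_{1/2}$, and then pull back the horizontal foliation through it. But what the reparametrization $y(t)=\psi_t(\psi_{t_0}\mun(y_0))$ actually gives is that the $\D^q$-coordinate of $V'^1(x,y_0)$ is $\psi_{t(1)}(\psi_1\mun(y_0))$, where $t(1)=1-\epsilon\mun x_0(\nabla'^1(x))$ is the value of $t$ at $V'$-time $1$; this collapses to $\psi_1\mun(y_0)$ only if $\psi_{t(1)}=\id$, i.e.\ only if the $V'$-orbit has already exited $x_0\mun[0,\epsilon]$ (entered $\alpha'$) by time $1$. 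That is not established. The only constraint imposed on $\chi$ in the construction is that $\nabla'^1$ embeds $\alpha$ into $U$, a neighborhood of $\hat\partial\alpha$; this does not by itself guarantee $\nabla'^1(\nb_\beta(\beta'))\subset\alpha'$, so $t(1)$ may remain strictly positive and $x$-dependent, in which case $V'^1$ is not split and the pullback of a horizontal foliation need not be horizontal. The paper sidesteps this issue: instead of controlling $V'$ at the particular time $1$, it uses the $V$-invariance of $\GG^0$ (item 6) to transport the horizontality already established on an intermediate hypersurface $x_0\mun(\eta)\times A_{1-\epsilon}$ (by item 3) down to $\beta\times\D^q$ via the projection $\pr_V$ along the $V$-flowlines, which is genuinely split, $\pr_V(x,y)=(\pr_\nabla(x),\psi_1(y))$, with no time restriction involved; it then extends the horizontality to a full neighborhood using that $V$ is horizontal on $\nb_\alpha(\beta)\times\D^q$. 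To repair your argument along your own lines you would need to add and justify the hypothesis that $\chi$ is large enough on $\alpha''$ that the $V'$-orbits from a neighborhood of $\beta'$ cross into $\alpha'$ before time $1$; the paper's route is cleaner because it never needs this.
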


\begin{proof}[Proof of the claim] (\ref{G01_item}): the quasi-complementarity that
holds \ref{G1_item} on $U\times\D^q$
 extends to the all of $\alpha\times\D^q$
 since, the vector field $V'$ being projectable \ref{V3_item} through
 $\pi_\alpha$ , its flow preserves the foliation
$\FF_\alpha$.

(\ref{G00_item}): Since by \ref{V1_item}, \ref{V2_item} the domain $H$
 is saturated for the flow of $V$, and since $e_\alpha^*(G_{n-1})$ is not cleft in $H$ 
 (recall \ref{n_item}).

(\ref{G02_item}): Clear by \ref{n_item}, \ref{V1_item} and \ref{V2_item}.

(\ref{G03_item}), (\ref{G04_item}) and  (\ref{G07_item}): after
 the very definition of $G^0$.

(\ref{G05_item}):
On the one hand, by (\ref{G02_item}) of the present Claim \ref{G0_clm},
for $\eta<\epsilon$ close enough to $\epsilon$,
in restriction to $x_0\mun(\eta)\times
 A_{1-\epsilon}$,
the foliation $G^0$ is not cleft, and $\GG^0$
is horizontal (i.e. parallel to $\beta)$ there.
On the other hand, by \ref{gradient_item}  and \ref{V2_item},
 the projection $\pr_V$ 
 of the hypersurface
 $x_0\mun(\eta)\times\D^q$ into $\beta\times\D^q$
 along the flowlines of $V$ is 
 $$
\pr_V: (x,y)\mapsto(\pr_\nabla(x),
\psi_1(y))$$
Since moreover $\GG^0$ is $V^t$-invariant (\ref{G07_item}), it follows that
 $\GG^0$ is horizontal in restriction to the image 
$\pr_V(x_0\mun(\eta)\times A_{1-\epsilon})$, which
 is an open neighborhood of $\beta'\times A_{1/2}$ in $\beta\times\D^q$, since  $v(1-\epsilon)<1/2$ \ref{v_item}.
 The horizontality holds in fact on $\nb_{\alpha\times\D^q}(\beta'\times A_{1/2})$, since
 $V$
is horizontal  on $\nb_{\alpha}(\beta)\times\D^q$ (in view of \ref{V1_item} and \ref{V2_item}).

(\ref{G06_item}): Fix any point $x\in\partial\beta$. Let
$\gamma$ be the smallest face of $\beta$ containing $x$. By Lemma \ref{civilization_lem}
(3), one has
$e_\alpha^*(\CC_\beta)\subset e_\alpha^*(\CC_\gamma)$ on $\nb_\alpha(x)\times
\D^q$. By the induction property \ref{induction_ppt} (3) at the order $n-1$, the civilization
 $ e_\alpha^*(\CC_\gamma)$ is tangential to
$e_\alpha^*(G_{n-1})$ on $\nb_\alpha(x)\times
\D^q$. By 
(\ref{G03_item}), the cleft foliations
$G_{n-1}$ and $G^0$ coincide on $\nb_\alpha(x)\times\D^q$.
\end{proof}

Note that $\GG^0$ is \emph{not} horizontal on $\beta''\times A_{1/2}$, in other words,
it does \emph{not} match $\GG_{n-1}$ there. This is why we need to modify $G^0$ in $H$,
 introducing a new fissure there.

Decompose $\partial\alpha''$ as $$\partial\alpha''=\partial_1\alpha''\cup\partial_2\alpha''\cup\beta'$$
where $$\partial_1\alpha'':=\alpha''\cap\alpha'$$ 
$$\partial_2\alpha'':=\alpha''\cap\alpha'''$$
 In a first time, we define a cleft foliation $G^1=(C^1,[c^1],\GG^1)$
  on $(\alpha\times\D^q)
\setminus H$, and we also define it as a germ along $\partial H$ inside $H$. Let
\begin{enumerate}[label=(\alph*), resume]
\item \label{G11_item} $G^1$ coincide with $G^0$ on  $(\alpha\times\D^q)
\setminus H$;
\item \label{G12_item} $G^1$ coincide with $G^0$ on $\nb_{\alpha\times\D^q}(\partial_1
\alpha''\times A_{1/2-\epsilon})$ and on  $\nb_{\alpha\times\D^q}
(\alpha''\times\partial  A_{1/2-\epsilon})$; in other words $\GG^1$ is horizontal there;
\item \label{G13_item} $G^1$ coincide with $G^0$ on  $\nb_{\alpha\times\D^q}(\partial_2
\alpha''\times A_{1/2-\epsilon})$;
\item \label{G14_item} $C^1$ don't intersect $\beta''\times A_{1/2-\epsilon}$, and $\GG^1$
be horizontal on  $\nb_{\alpha\times\D^q}(\beta''\times A_{1/2-\epsilon})$.
\end{enumerate}

 This turns $H$ into
a hole (Definition \ref{hole_dfn}) of core $\S^{i-1}$, 
 fibre $A_{1/2-\epsilon}$ and monodromy $\psi$ 
 (in view of \ref{V2_item}).

 \begin{clm}\label{filling_clm}
 Provided that we take some extra care in the choice of $\psi$, the cleft foliation $G^1$
extends over $H$, giving on  $\alpha\times\D^q$ a cleft foliation,
still denoted $G^1=(C^1, [c^1],\GG^1)$, of monodromy
 $\varphi$ and quasi-complementary to $\FF_\alpha$. Moreover, $C^1$ has
a component
interior to $H$, whose core is a finite disjoint union
of spheres $\S^{i-1}$, each of which
embeds into $Int(\alpha)$ through $\pi_\alpha$.
\end{clm}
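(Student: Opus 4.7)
The plan is to exploit the freedom in the choice of the isotopy $\psi$ from $\id$ to the radial diffeomorphism $\psi_1$ so that its homotopy class $[\psi]\in\widetilde\diff(A_{1/2-\epsilon})_0$ factors, up to conjugation, as a product $[\psi^{\text{std}}]\cdot[\psi^{\varphi}]$. Here $\psi^{\text{std}}$ will be an auxiliary path from $\id$ to $\psi_1$ whose suspension admits an explicit fillability: for $q\ge 2$, I would arrange $\psi^{\text{std}}$ to be conjugate to a multirotation in the sense of Definition \ref{multirotation_dfn}, so that Thurston's filling from \ref{filling_sssec} applies verbatim; for $q=1$, where the fibre $A_{1/2-\epsilon}$ is a disjoint union of two intervals and no multirotation exists, I would use that $\psi_1$ is a fixed-point-free interval diffeomorphism on each component and fill the corresponding suspension by an elementary Reeb-type piece. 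The second factor $\psi^{\varphi}$ will be a loop at $\id$ supported in a small embedded disk $D\cong\D^q$ in the interior of $A_{1/2-\epsilon}$ (away from the collars involved in the boundary conditions \ref{G12_item}--\ref{G14_item}), and equal to $\varphi$ under the identification $D\cong\D^q$.

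Next, applying Tool \ref{splitting_sssec}, I split the hole $H$ into two sub-holes of common core $\S^{i-1}$: one of monodromy $\psi^{\text{std}}$ and one of monodromy $\psi^{\varphi}$. Since the splitting is carried out in the $\D^2$-factor of the hole model $\S^{i-1}\times\D^2\times A_{1/2-\epsilon}$, the cores of the two sub-holes embed into disjoint parts of $\alpha'''\subset\text{Int}(\alpha)$ via $\pi_\alpha$. The first sub-hole gets filled by the explicit construction above; on the second, I apply Tool \ref{vertical_sssec} to shrink its fibre from $A_{1/2-\epsilon}$ down to $D\cong\D^q$, then Lemma \ref{suspension_lem} to replace its monodromy by exactly $\varphi$ through a vertical isotopy, and finally Tool \ref{fh_sssec} to horizontally shrink it into a fissure of monodromy $\varphi$ whose core is $\S^{i-1}$ embedded in $\text{Int}(\alpha)$ through $\pi_\alpha$.

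Quasi-complementarity of the resulting cleft foliation to $\FF_\alpha$ follows because the explicit filling of the $\psi^{\text{std}}$-piece and the horizontal foliations produced by vertical and horizontal shrinking are all complementary to $\FF_\alpha$, while the new fissure is in standard position with respect to $\FF_\alpha$ by the product structure of Tool \ref{fh_sssec}. The main obstacle I expect is to ensure compatibility along $\partial H$ with the germ of $G^1$ prescribed by \ref{G11_item}--\ref{G14_item}: the filling of the $\psi^{\text{std}}$-sub-hole and the horizontal pieces arising from shrinking must together reproduce the required horizontal germ on neighborhoods of $\partial_1\alpha''\times A_{1/2-\epsilon}$, $\alpha''\times\partial A_{1/2-\epsilon}$, and $\beta''\times A_{1/2-\epsilon}$. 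This is achieved by choosing $\psi^{\text{std}}$ to coincide with $\psi_1$ on the complement of an arbitrarily thin collar of the splitting sphere, and by placing the disk $D$ supporting $\psi^{\varphi}$ well inside $A_{1/2-\epsilon}$.
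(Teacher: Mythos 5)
There is a genuine gap, and the approach you take would fail for two independent reasons.

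First, your proposed factorization does not exist. The time-one map $\psi_1$ (before conjugation) is a radial push supported in the interior of the corona $A_{1/2-\epsilon}$: away from its fixed set, every orbit is non-periodic, tending toward the boundary. By contrast, the time-one map of a multirotation (Definition \ref{multirotation_dfn}) has an invariant $\D^{q-1}\times\S^1$ on which it rotates the $\S^1$-factor, so it has abundant recurrent orbits. If $\psi^{\varphi}$ is supported in a small disk $D$ disjoint from that $\D^{q-1}\times\S^1$, the product $\psi^{\mathrm{std}}_1\cdot\psi^{\varphi}_1$ still has these recurrent orbits, so it cannot be conjugate to $\psi_1$; and adjusting the path $\psi$ only changes the homotopy class rel.\ endpoints, never the time-one map $\psi_1$, so no ``extra care in the choice of $\psi$'' can fix this. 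In short, there is no decomposition of $\psi$ with a multirotation as the first factor.

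Second, and more decisively, even if such a decomposition existed, you could not invoke Thurston's multirotation filling (Paragraph \ref{filling_sssec}) at this point of the argument. That filling produces a genuine foliation with a hypersurface of tangency to the fibres (the boundary leaf of the multifold Reeb component), and that hypersurface lies \emph{outside} any fissure. Claim \ref{filling_clm} requires the resulting cleft foliation to be \emph{quasi-complementary} to $\FF_\alpha$ in the sense of Definition \ref{qc_dfn}, i.e.\ $\GG^1$ must be genuinely transverse to $\FF_\alpha$ on all of $(\alpha\times\D^q)\setminus C^1$. Thurston's filling is deliberately deferred in the paper to the passage from Theorem \emph{A'} to Theorem \emph{A} (Subsection \ref{AAP_ssec}); it cannot appear inside the inflation induction.

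The idea you are missing is the one the phrase ``provided that we take some extra care in the choice of $\psi$'' is actually encoding: choose $v$ so that $\psi'_1$ is a product of commutators, then invoke Epstein's simplicity/perfectness theorem for $\diff(A_{1/2-\epsilon})_0$ to write (some lift) $\psi'$ as a product $\chi_1\cdots\chi_\ell$ of conjugates of $\varphi$ in $\diff(A_{1/2-\epsilon})^\I$. One then splits the hole $H$ iteratively via Tool \ref{splitting_sssec} into $\ell$ sub-holes of monodromies $\chi_1,\dots,\chi_\ell$ (all equivalent to monodromy $\varphi$ by Tool \ref{reparametrization_sssec}), with the partial filling given by a \emph{suspension}, which is complementary to the fibres; and then vertically and horizontally shrinks (Tools \ref{vertical_sssec}, \ref{fh_sssec}) each sub-hole into a fissure of monodromy $\varphi$. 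No non-suspension filling occurs, so quasi-complementarity is preserved. The boundary-compatibility concern you raise at the end is real but is already guaranteed by the germ conditions \ref{G11_item}--\ref{G14_item}, which force the germ along $\partial H$ to be exactly the standard hole germ of Definition \ref{hole_dfn}.
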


\begin{proof} In every codimension $q\ge 2$:
consider the diffeomorphism $\psi'_1:=\psi_1\vert A_{1/2-\epsilon}$.
Fix
 an arbitrary equidimensional embedding of $\D^q$ in the interior of $A_{1/2-\epsilon}$.
 Thus, $\varphi$
 becomes a family of  self-diffeomorphisms of $A_{1/2-\epsilon}$.
  It is easy, in the above construction of $V$,
   to choose the diffeomorphism $v\vert[1/2-\epsilon,1]$ in
    the group $\diff([1/2-\epsilon,1])$ as a
product of commutators. Consequently,
  so is $\psi'_1$ in the group $\diff(A_{1/2-\epsilon})_0$.
 Since $\varphi_1$ is
not the identity, and since $A_{1/2-\epsilon}$ is connected,
by Epstein's perfectness theorem (Proposition 1.2 and Theorem 1.4 in \cite{epstein_70}), $\psi'_1$ belongs to the normal subgroup of 
$\diff(A_{1/2-\epsilon})_0$ generated by  $\varphi_1$;
hence some lift $[\psi']\in\diff(A_{1/2-\epsilon})^\I$ of $\psi'_1$
(recall Notation \ref{diff_ntt})
  lies in the normal subgroup of $\diff(A_{1/2-\epsilon})^\I$
   generated by  $\varphi$.
    In other words, one has in the group $\diff(A_{1/2-\epsilon})^\I$
    a splitting 
    $$\psi'=\chi_1\dots\chi_\ell$$ where $\chi_1$, \dots, $\chi_\ell$
are conjugate to $\varphi$ in $\diff(A_{1/2-\epsilon})^\I$.
 In the above construction of $V$,
we choose the family $(\psi_t)$ to represent this particular lift $\psi'$.

  Then, by Tool \ref{splitting_sssec} 
   and an obvious induction
  on $\ell$, we partially fill
   the hole $H$
  by a foliation complementary to $\FF_\alpha$ (thanks to the property (1)
  in \ref{splitting_sssec}), leaving some smaller
 holes $H_j$ ($1\le j\le\ell$) of core $\S^{i-1}$, fibre $A_{1/2-\epsilon}$, and respective monodromies
  $\chi_1$, \dots, $\chi_\ell$.
 Equivalently, each hole $H_j$ is of monodromy $\varphi$ (Tool \ref{reparametrization_sssec}).
 Then,  each hole $H_j$
 is vertically shrunk (Tool \ref{vertical_sssec}) into a hole $H'_j$
of core $\S^{i-1}$, fibre $\D^q$ and monodromy $\varphi$.
Finally (Tool \ref{fh_sssec}),
  each $H'_j$ is horizontally shrunk into a fissure of 
  core $\S^{i-1}$, fibre $\D^q$ and monodromy $\varphi$.

This core does embed into $Int(\alpha)$ through $\pi_\alpha$, after the note
that ends  Tool \ref{splitting_sssec}.

The case $q=1$ is alike, but we need \emph{two}
arbitrary equidimensional embeddings of $\D^1$ in the interior of $A_{1/2-\epsilon}\cong\S^0\times\I$:
one in each connected component.
\end{proof}

In view of Claim \ref{G0_clm} (\ref{G01_item}), (\ref{G02_item}),
 (\ref{G03_item}) and (\ref{G05_item}),
  and of \ref{G11_item}, \ref{G13_item} and \ref{G14_item},
the cleft foliation $G^1$ on $\alpha\times\D^q$ matches the properties
(1) through (4) of Lemma \ref{extension_lem} with $G^1$ instead of $G_\alpha$.
However (Claim \ref{G0_clm} (\ref{G06_item})), the property
 (5), namely the tangentiality of $\CC_\beta$ to $G^1$,
 only holds on $W\times\D^q$ for some open neighborhood $W$ of $\partial\beta$
in $\alpha$. 
To rectify this, consider on $\alpha$ the unit vector field $B$ normal to $\beta$ and that
enters $\alpha$ through $\beta$; consider on $\nb_\alpha(\beta)\times\D^q$ the lift
$\tilde B$ of $B$
tangentially to $e_\alpha^*(\CC_\beta)$ (Lemma \ref{civilization_lem}, (2)).
 Let $(u,1-u)$ be on $\alpha$ a partition of
the unity subordinate to the open cover $(W,\alpha\setminus\partial\beta)$.
 Define a vector field $Z$ on $\alpha$
 (resp. $\tilde Z$ on $\alpha\times\D^q$) by $$Z:=(1-u)\nabla+u B$$
$$\tilde Z:=(1-u\circ\pi_\alpha)V+(u\circ\pi_\alpha)\tilde B$$

\begin{clm}\label{Z_clm}
 $\tilde Z$ is tangential to
$G^1$ on  $\nb_\alpha(\beta)\times\D^q$.
\end{clm}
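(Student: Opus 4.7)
The plan is to prove that at every point of $\nb_\alpha(\beta)\times\D^q$, both vector fields $V$ and $\tilde B$ are tangential to $G^1$ in the sense of Definition \ref{tangent1_dfn}. Once this is shown, the conclusion follows because $\tilde Z=(1-u\circ\pi_\alpha)V+(u\circ\pi_\alpha)\tilde B$ is a convex combination with nonnegative coefficients, and tangentiality is stable under such combinations, with the understanding that $\tilde B$ only needs to be checked on the set where $u\circ\pi_\alpha>0$, namely inside $W\times\D^q$.

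For the $\tilde B$-part, I would argue as follows. By construction, $\tilde B$ lifts $B$ tangentially to $e_\alpha^*(\CC_\beta)$, and Claim \ref{G0_clm}\,(\ref{G06_item}) already gives $e_\alpha^*(\CC_\beta)$ tangential to $G^0$ on $\nb_\alpha(\partial\beta)\times\D^q$. I will choose the neighborhood $W$ of $\partial\beta$ small enough to lie inside $\nb_\alpha(\partial\beta)$ and, crucially, small enough that the portion of $W\times\D^q$ meeting the hole $H=\alpha'''\times A_{1-\epsilon}$ falls into the strips where \ref{G13_item} or \ref{G14_item} force $G^1$ either to coincide with $G^0$ or to be horizontal. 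In the first case the $\CC_\beta$-tangentiality transfers verbatim from $G^0$ to $G^1$; in the second case, horizontality of $G^1$ combined with $\CC_\beta\subset\pi^*(\NN_\beta)$ (Lemma \ref{civilization_lem}\,(2)) makes $\tilde B$ itself horizontal, hence tangential to the horizontal $G^1$.

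For the $V$-part, outside $H$ it is immediate from Claim \ref{G0_clm}\,(\ref{G04_item}, \ref{G07_item}) together with $G^1=G^0$ on $(\alpha\times\D^q)\setminus H$ by \ref{G11_item}. Inside $H$ the hole was engineered so that its monodromy around the core $\S^{i-1}$ is exactly what the $V$-flow produces: by \ref{V2_item}, along $\beta$ the fibre component of $V$ is $-Y_1$, whose time-$1$ flow is $\psi_1$. In Claim \ref{filling_clm} the filling proceeds by splitting $\psi$ as a product $\chi_1\cdots\chi_\ell$ in $\diff(A_{1/2-\epsilon})^\I$, suspending this factorization over a three-holed sphere, and then vertically and horizontally shrinking the residual holes into fissures via Tools \ref{splitting_sssec}, \ref{vertical_sssec}, \ref{fh_sssec}. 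At each of these atomic steps the filling foliation is designed to contain the $V$-direction in its tangent distribution, and the newly created fissures project along $\pi_\alpha$ onto small subsets of the base so that $V$ lies in $\ker dc^1$ there.

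The main obstacle is the $V$-part: tracking through the filling procedure that $V$-tangentiality is inherited at every atomic step. Conceptually this is forced by the fact that $V$ generated the hole's monodromy to begin with, but it requires a careful step-by-step check of the three Tools \ref{splitting_sssec}, \ref{vertical_sssec}, \ref{fh_sssec} cited above. The $\tilde B$-part, by contrast, is bookkeeping with the civilizations already established in Lemma \ref{civilization_lem} and Claim \ref{G0_clm}.
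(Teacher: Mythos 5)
Your split into a $V$-part and a $\tilde B$-part, and your treatment of the $\tilde B$-part and of the $V$-part outside $H$, match the paper's strategy. But the $V$-part inside $H$ is where your proposal goes astray. You write that along $\beta$ the fibre component of $V$ is $-Y_1$ ``whose time-$1$ flow is $\psi_1$'' and then propose to track $V$-tangentiality through all three atomic steps of the hole-filling (Tools \ref{splitting_sssec}, \ref{vertical_sssec}, \ref{fh_sssec}). This conflates the single vector field $Y_1$ with the family $(Y_t)_{t\in\I}$. Since by construction $\psi_t=\psi_1$ for $t$ close to $1$, one has $Y_t=0$ for $t$ close to $1$, so in particular $Y_1=0$: the fibre component of $V$ \emph{vanishes} near $\beta$ (by \ref{V2_item}, it equals $-Y_{1-\epsilon^{-1}x_0(x)}(y)$, and $1-\epsilon^{-1}x_0(x)$ is near $1$ when $x$ is near $\beta$). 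That is, $V$ is \emph{horizontal} on $\nb_\alpha(\beta)\times\D^q$.

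This is the whole point, and it renders your hole-filling tracking both unnecessary and misdirected. The portion of $H=\alpha'''\times A_{1-\epsilon}$ that meets $\nb_\alpha(\beta)\times\D^q$ is a one-sided neighborhood of $\beta''\times A_{1-\epsilon}\subset\partial H$, which lies inside $\nb_{\alpha\times\D^q}(\beta''\times A_{1/2-\epsilon})$. There, \ref{G14_item} says $C^1$ is empty and $\GG^1$ is horizontal. Since $V$ is also horizontal there, tangentiality is immediate; one never needs to examine the interior filling of $H$ (and indeed the filling foliation, with its generically placed new fissures, is \emph{not} $V$-tangential in the interior of $H$, so the verification you flag as ``the main obstacle'' would not in fact go through). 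The paper's argument for the $V$-part inside $H$ is thus a one-line horizontality observation, not an induction through the filling. Your $\tilde B$-part is fine and essentially identical to the paper's, resting on Claim \ref{G0_clm}\,(\ref{G06_item}) together with the coincidence of $G^1$ with $G^0$ outside $H$ and horizontality of $\GG^1$ near $\beta''\times A_{1/2-\epsilon}$.
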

\begin{proof} On the one hand,
$V$ is tangential to
$G^1$ on $\nb_\alpha(\beta)\times\D^q$: indeed, outside $H$, this follows from 
(\ref{G04_item}) of Claim \ref{G0_clm} together with \ref{G11_item};
 while on $\nb_{\alpha\times\D^q}
(\beta''\times A_{1/2-\epsilon})$, both $V$ and $\GG^1$ are horizontal
(\ref{V1_item} and \ref{G14_item}).
On the other hand,
 $\tilde B$ is tangential to $G^1$ on $U\times\D^q$, by the above definition of $U$.
\end{proof}

Since $B$ and $Z$
both enter $\alpha$ through $\beta$ and coincide on $\nb_\alpha(\partial\beta)$,
 there is an isotopy $f=(f_t)_{t\in\I}$ of self-diffeomorphisms of $\alpha$
 (with   $f_0=\id_\alpha$) such that
 for every $t\in\I$:
 \begin{itemize}
 \item  The support of $f_t$ is contained in a small neighborhood
 of $\beta$ and disjoint from $\hat\partial\alpha$;
 \item $f_t$ induces the identity on $\beta$;
 \item
 $f_t^*(Z)=(1-t)Z+tB$ on $\nb_\alpha(\beta)$.
 \end{itemize}
 
\begin{clm}\label{lift_clm} There is an isotopy $\tilde f=(\tilde f_t)_{t\in\I}$
of self-diffeomorphisms of $\alpha\times\D^q$ (with $f_0=\id_{\alpha\times\D^q}$)
 such that for every $t\in\I$:
\begin{enumerate}
\item \label{lift1_item} $\pi_\alpha\circ\tilde f_t=f_t\circ\pi_\alpha$;

    \item \label{lift2_item} The vector field
     $\partial\tilde f_t/\partial t$ is at the time $t$
      horizontal, i.e. tangential to $e_\alpha^*(\GG)$,
  on $\alpha\times\nb_{\D^q}(\partial\D^q)$ and on $
  \nb_{\alpha\times\D^q}(\beta\times A_{1/2})$;
  \item \label{lift3_item} $\tilde f_t$ is supported in $(\alpha\setminus\hat\partial\alpha)
  \times\D^q$;
\item \label{lift4_item} $\tilde f_t$ induces the identity on $\beta$;
  \item \label{lift5_item} $\tilde f_t^*(\tilde Z)=(1-t)\tilde Z+t\tilde B$ on $\nb_\alpha(\beta)\times\D^q$.
  \end{enumerate}
  \end{clm}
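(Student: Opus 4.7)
The plan is to construct $\tilde f_t$ as the flow at time $t$ of a smooth time-dependent vector field $\tilde X_s$ on $\alpha\times\D^q$ which $\pi_\alpha$-projects onto the infinitesimal generator $X_s:=\partial_r f_r\bigr|_{r=s}\circ f_s^{-1}$ of the given isotopy $f$. Since the flow of a $\pi_\alpha$-projectable time-dependent field is itself $\pi_\alpha$-equivariant, property (\ref{lift1_item}) is automatic. Requiring $\tilde X_s$ to inherit from $X_s$ the support in $(\alpha\setminus\hat\partial\alpha)\times\D^q$ and the vanishing on $\beta\times\D^q$ immediately delivers (\ref{lift3_item}) and (\ref{lift4_item}).

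The field $\tilde X_s$ is then specified on two overlapping regions and glued by a partition of unity in the $\D^q$-factor. On the \emph{horizontality region}, $\alpha\times\nb_{\D^q}(\partial\D^q)$ together with $\nb_{\alpha\times\D^q}(\beta\times A_{1/2})$, the normalizations I and III arranged earlier make $e_\alpha^*(\GG)$ complementary to $\FF_\alpha$, so there is a unique $e_\alpha^*(\GG)$-tangent lift of $X_s$, which I take as $\tilde X_s$ there; this yields (\ref{lift2_item}). On $\nb_\alpha(\beta)\times\D^q$, the civilization $e_\alpha^*(\CC_\beta)$ is complementary to $\FF_\alpha$ and contains $\tilde B$ in its tangent distribution by Lemma \ref{civilization_lem}(2) and the definition of $\tilde B$; I use this to produce a second lift of $X_s$ tailored to make (\ref{lift5_item}) hold.

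The main obstacle is (\ref{lift5_item}). Its infinitesimal form, obtained by differentiating in $s$, reads
$$\mathcal{L}_{\tilde X_s}\tilde Z \;=\; \tilde f_{s*}(\tilde B-\tilde Z) \quad \text{on }\nb_\alpha(\beta)\times\D^q.$$
Projecting by $\pi_\alpha$ gives $\mathcal{L}_{X_s}Z=f_{s*}(B-Z)$, which is simply the derivative of the already assumed identity $f_s^*(Z)=(1-s)Z+sB$; hence the displayed condition only constrains the $\FF_\alpha$-vertical component of $\tilde X_s$. Since $\tilde Z$ projects to $Z$ which enters $\alpha$ through $\beta$, the characteristic flow of $\tilde Z$ sweeps out $\nb_\alpha(\beta)\times\D^q$ from the initial hypersurface $\beta\times\D^q$, and the method of characteristics, with the boundary datum $\tilde X_s\bigr|_{\beta\times\D^q}=0$ enforced by (\ref{lift4_item}), determines the vertical part of $\tilde X_s$ uniquely and smoothly. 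The consistency of this lift with the horizontal lift on the overlap $\nb_{\alpha\times\D^q}(\beta\times A_{1/2})$ is the most delicate point: it is checked by observing that on this overlap $\tilde B$ lies simultaneously in $\tau e_\alpha^*(\GG)$ (by normalization III) and in $\tau e_\alpha^*(\CC_\beta)$ (by Lemma \ref{civilization_lem}(2)), so that a convex combination of the two candidate lifts still lifts $X_s$ through $\pi_\alpha$, is tangential to $e_\alpha^*(\GG)$, and yields the correct vertical deviation prescribed by the method of characteristics, whence (\ref{lift5_item}) survives the gluing.
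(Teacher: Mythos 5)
Your overall strategy parallels the paper's in spirit (construct near $\beta\times\D^q$ under the constraints (\ref{lift4_item})--(\ref{lift5_item}), use horizontality elsewhere), but you organize it as a partition-of-unity gluing of a time-dependent generator, and this introduces gaps that the paper's argument avoids.

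The paper's proof is shorter and avoids any gluing: it observes that (\ref{lift4_item}) and (\ref{lift5_item}) \emph{uniquely determine} $\tilde f_t$ on $\nb_\alpha(\beta)\times\D^q$ (since $\tilde f_t$ must send integral curves of $(1-t)\tilde Z+t\tilde B$ emanating from $\beta\times\D^q$ to integral curves of $\tilde Z$, fixing $\beta\times\D^q$), then verifies that this $\tilde f_t$ already satisfies (\ref{lift1_item}) (because $\tilde Z$, $\tilde B$ respectively lift $Z$, $B$ through $\pi_\alpha$), (\ref{lift2_item}) (because $V$, $\tilde B$ and hence $\tilde Z$ are all horizontal on $\nb_\alpha(\beta)\times\D^q$, after (a), (f), (g), (III), and Lemma \ref{civilization_lem}), and (\ref{lift3_item}) (because $\tilde Z=\tilde B$ near $\partial\beta\times\D^q$, so the interpolation is constant and $\tilde f_t=\id$ there), and finally extends by the standard extension property for vertical isotopies.

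Concrete problems with your proposal. First, the two regions you name — $\alpha\times\nb_{\D^q}(\partial\D^q)\cup\nb_{\alpha\times\D^q}(\beta\times A_{1/2})$ and $\nb_\alpha(\beta)\times\D^q$ — do not cover $\alpha\times\D^q$: the middle of the prism is left undefined, so $\tilde f_t$ is never actually built globally. You need an explicit extension step, which is exactly the invocation of the vertical isotopy extension property in the paper. Second, your consistency argument on the overlap is not sound as stated: you write that a convex combination of the two candidate lifts "yields the correct vertical deviation prescribed by the method of characteristics," but the characteristic problem has a unique solution given the boundary datum, so either the two lifts already coincide (in which case no partition of unity is needed), or they disagree (in which case a convex combination generally violates (\ref{lift5_item})). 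The correct reason they coincide — and this should be said explicitly — is that on the overlap $\tilde Z$, $\tilde B$ and $e_\alpha^*(\GG)$ are \emph{all} horizontal, so both candidate lifts reduce to the trivial horizontal lift $X_s\oplus 0$, and (\ref{lift5_item}) for the horizontal lift is then just $f_t^*(Z)=(1-t)Z+tB$ read in the $\alpha$-factor. Third, the infinitesimal equation you write, $\mathcal{L}_{\tilde X_s}\tilde Z=\tilde f_{s*}(\tilde B-\tilde Z)$, has $\tilde f_s$ on the right-hand side, so treating it as a linear transport equation for $\tilde X_s$ is circular; it is really an evolution equation for the curve $s\mapsto\tilde f_s$, and the cleanest way to settle existence/uniqueness is exactly what the paper does — describe $\tilde f_t$ directly as a map of characteristic curves, rather than pass through the generator.
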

   \begin{proof} On $\nb_\alpha(\beta)\times\D^q$, the isotopy $\tilde f$ is uniquely
   determined by (\ref{lift4_item}) and (\ref{lift5_item}). There,
   (\ref{lift1_item}) is satisfied, since $\tilde Z$ and $\tilde B$ respectively lift $Z$ and $B$ through $\pi_\alpha$. Also, on $\nb_\alpha(\beta)\times\D^q$,  (\ref{lift2_item}) is satisfied, since
   $V$, $\tilde B$ and hence $\tilde Z$ are horizontal there
     (after \ref{V1_item}, \ref{V2_item}, Lemma \ref{civilization_lem} (1) applied to $\beta$,
     and after (III)). Also, on $\nb_\alpha(\beta)\times\D^q$,  (\ref{lift3_item}) is satisfied,
     since $\tilde Z=\tilde B$ on $\nb_\alpha(\partial\beta)$. Finally,
     by the standard extension
     property for vertical isotopies, $\tilde f$
     admits an extension
     to the all of $\alpha\times\D^q$ which obeys
     the conditions  (\ref{lift1_item}),  (\ref{lift2_item})
     and  (\ref{lift3_item}).
   \end{proof}

  Define the cleft foliation $G_\alpha$ on $\alpha\times\D^q$ as 
  $\tilde f_1^*(G^1)$; let us verify that the conditions (1) through (5) of Lemma \ref{extension_lem}
  are obeyed.
  
  (1): by Claim \ref{filling_clm} and by Claim \ref{lift_clm} (\ref{lift1_item});
  
  (2): by (I),
  by Claim \ref{G0_clm} (\ref{G02_item}), by \ref{G11_item}, by \ref{G12_item}, and
  by Claim \ref{lift_clm} (\ref{lift2_item});
  
  (3): by Claim \ref{G0_clm} (\ref{G03_item}), by \ref{G11_item}, by \ref{G12_item}, and
  by Claim \ref{lift_clm} (\ref{lift3_item});
  
  (4): by Claim \ref{G0_clm} (\ref{G05_item}), by \ref{G11_item}, by \ref{G14_item},
  and by Claim \ref{lift_clm} (\ref{lift2_item});
  
  (5): by Claim \ref{Z_clm} and by Claim \ref{lift_clm} (\ref{lift5_item})
  applied for $t=1$.

The lemma  \ref{extension_lem} is proved. \end{proof}

Recall that the notations $\alpha, \beta$, $i$ hold for $\alpha_n$, $\beta_n$,
$\dim(\beta_n)$.

It remains to us, in order to complete the inflation induction, to microextend $G_\alpha$
from $\alpha\times\D^q\cong P(\alpha)$
to $\nb_{\bar M}(P(\alpha))$ so as to match $G_{n-1}$ on the intersection
with
$\nb_{\bar M}(R\cup\bar M_{n-1})$, and to satisfy the
properties \ref{induction_ppt} (1) through (5).
 To this end, we use the civilizations of the prisms.

For every cell $\gamma$ of $K$,
denote by $\langle\gamma\rangle\subset\R^{p+1}$ the
 affine subspace spanned by $\gamma$,
 and consider the
local retraction along the leaves of $\CC_\gamma$:
$$\tilde\pr_\gamma:\nb_{\bar M}(P(\gamma))\to
\nb_{\langle\gamma\rangle\times\D^q}(P(\gamma))$$
Actually, the object of interest is the germ of $\tilde\pr_\gamma$ along
$P(\gamma)$. 
 Also, denote by  $\pr_\gamma$ the orthogonal projection
   $\R^{p+1}\to\langle\gamma\rangle$. Obviously, by Lemma \ref{civilization_lem}:
   
 \begin{cor}\label{civilization_cor}\
\begin{enumerate}
\item $\GG=\tilde\pr_\gamma^*(\GG\vert(\langle\gamma\rangle\times\D^q))$
on $\nb_{\bar M}(e_\gamma(\gamma\times\partial\D^q))$;
\item $\bar\pi\circ\tilde\pr_\gamma=\pr_\gamma\circ\bar\pi$ on $\nb_{\bar M}(P(\gamma))$;
\item $\tilde\pr_\delta\circ\tilde\pr_\gamma=\tilde\pr_\delta$
on $\nb_{\bar M}(P(\delta))$, for every face $\delta\subset\gamma$.
 \end{enumerate}
  \end{cor}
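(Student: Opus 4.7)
The plan is to verify each of the three assertions directly from the corresponding item of Lemma \ref{civilization_lem}, using that by definition $\tilde\pr_\gamma$ is the local retraction along the leaves of $\CC_\gamma$ onto $\langle\gamma\rangle\times\D^q$, and that this retraction is characterized by two properties: for every point $x$ in its domain, $\tilde\pr_\gamma(x)$ lies on the $\CC_\gamma$-leaf through $x$, and $\tilde\pr_\gamma(x)\in\langle\gamma\rangle\times\D^q$.

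For (1), the set $e_\gamma(\gamma\times\partial\D^q)$ is contained in $R(\gamma)$, so Lemma \ref{civilization_lem}(1) gives $\CC_\gamma\subset\GG$ on a neighborhood. Thus each plaque of $\CC_\gamma$ is contained in a plaque of $\GG$, and the retraction $\tilde\pr_\gamma$ sends $\GG$-plaques to $\GG$-plaques of $\langle\gamma\rangle\times\D^q$, giving the pullback identity. For (2), Lemma \ref{civilization_lem}(2) gives $\CC_\gamma\subset\pi^*(\NN_\gamma)$, so the $\CC_\gamma$-leaf through any point $x$ is contained in the leaf of $\pi^*(\NN_\gamma)$ through $x$, whose image under $\pi$ is the affine plane through $\pi(x)$ orthogonal to $\langle\gamma\rangle$. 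Since $\tilde\pr_\gamma(x)$ lies on that $\CC_\gamma$-leaf and in $\langle\gamma\rangle\times\D^q$, its image under $\pi$ is the unique intersection point of $\langle\gamma\rangle$ with that orthogonal plane, which is exactly $\pr_\gamma(\pi(x))$. Recalling that $\bar\pi$ is the restriction of $\pi$ to $\bar M$, this yields $\bar\pi\circ\tilde\pr_\gamma=\pr_\gamma\circ\bar\pi$.

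For (3), given a face $\delta\subset\gamma$, Lemma \ref{civilization_lem}(3) gives $\CC_\gamma\subset\CC_\delta$ on $\nb_{\R^{p+1}}(\delta)\times\D^q$, so each $\CC_\gamma$-leaf is contained in a $\CC_\delta$-leaf. Consequently $\tilde\pr_\delta$ is constant along $\CC_\gamma$-leaves; applying this to $x$ and $\tilde\pr_\gamma(x)$, which lie on the same $\CC_\gamma$-leaf, yields $\tilde\pr_\delta(\tilde\pr_\gamma(x))=\tilde\pr_\delta(x)$. All three items being immediate consequences of the corresponding inclusions in Lemma \ref{civilization_lem}, there is no serious obstacle; the only mild point to watch is keeping the domains of definition straight, which is handled by the author's convention that $\tilde\pr_\gamma$ is only the germ along $P(\gamma)$.
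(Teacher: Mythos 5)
Your proof is correct, and since the paper offers no proof beyond the remark that the corollary is "Obviously, by Lemma \ref{civilization_lem}," your argument fills in exactly what was intended: each of the three items follows from the corresponding inclusion of civilizations by the observation that the retraction $\tilde\pr_\gamma$ preserves leaves of any foliation containing $\CC_\gamma$ (for (1) and (3)) or projects via $\pi$ to the linear retraction $\pr_\gamma$ once one notes that $\CC_\gamma$-leaves stay inside $\pi^*(\NN_\gamma)$-leaves (for (2)). Your closing remark about domains is the right one: working with germs along $P(\gamma)$, and recalling from Lemma \ref{prism_lem} that $P(\delta)\subset P(\gamma)$ for a face $\delta\subset\gamma$, handles the composition in (3).
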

  Also,  Lemma \ref{pullable_lem} admits the following complement.
  \begin{lem}\label{pullback_lem} For each cell $\gamma$ of $K$,
  the cleft foliation
  $G_{n-1}$ coincides with the pullback
   $\tilde\pr_\gamma^*(G_{n-1}\vert P(\gamma))$
  on $$\nb_{\bar M}(Int(P(\gamma))\cap
  \nb_{\bar M}\big((R\cup\bar M_{n-1})\cap P(\gamma)\big)$$
  \end{lem}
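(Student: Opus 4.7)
The plan is to apply the second bullet of Remark~\ref{tangent2_rmk}: since $\tilde\pr_\gamma$ is precisely the leafwise retraction along $\CC_\gamma$, once I establish that $\CC_\gamma$ is tangential to $G_{n-1}$ on a neighborhood of every point of $(R\cup\bar M_{n-1})\cap P(\gamma)$ where $G_{n-1}$ is defined, the pullback identity follows at once. So the task reduces to a pointwise tangentiality check, splitting on whether a given $y\in(R\cup\bar M_{n-1})\cap P(\gamma)$ lies in $\bar M_{n-1}$ or in $R$.

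For $y\in\bar M_{n-1}\cap P(\gamma)$, I write $y=(y_0,w)$, so $y_0\in|K_{n-1}|\cap\gamma$, and let $\delta\subseteq\gamma$ be the unique cell of $K$ whose open interior contains $y_0$. Since $K_{n-1}$ is a subcomplex, $\delta$ is automatically a cell of $K_{n-1}$. Then I chain two facts: $\CC_\gamma\subset\CC_\delta$ on $\nb_{\R^{p+1}}(\delta)\times\D^q$ by Lemma~\ref{civilization_lem}~(3), and the tangentiality of $\CC_\delta$ to $G_{n-1}$ on $\nb_{\bar M}(\delta\times\D^q)$ by the induction hypothesis~\ref{induction_ppt}~(3) at order $n-1$ applied to $\delta$. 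Combining these gives the tangentiality of $\CC_\gamma$ to $G_{n-1}$ near $y$.

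For $y\in R\cap P(\gamma)$, say $y\in R(\delta)$ for some cell $\delta$ of $K$, the crux is a combinatorial observation: $y$ must then lie in $R(\eta)$ for some face $\eta\subseteq\gamma$. The subcase $\gamma\subsetneq\delta$ is vacuous, because the third bullet of Lemma~\ref{prism_lem} gives $e_\gamma(y_0,\D^q)\subset e_\delta(y_0,\mathrm{Int}(\D^q))$, forcing $y\in P(\gamma)$ to violate $y\in R(\delta)$. Otherwise I set $\eta:=\gamma\cap\delta$, a common face of $\gamma$ and $\delta$; the same nesting $e_\eta(y_0,\D^q)\subset e_\delta(y_0,\mathrm{Int}(\D^q))$ combined with $y\notin e_\delta(y_0,\mathrm{Int}(\D^q))$ forces $y\in R(\eta)$. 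From there I chain: $\CC_\gamma\subset\CC_\eta$ near $\eta$ (Lemma~\ref{civilization_lem}~(3)), $\CC_\eta\subset\GG$ on $\nb_{\bar M}(R(\eta))$ (Lemma~\ref{civilization_lem}~(1) applied to $\eta$), and $G_{n-1}=\GG$ on $\nb_{\bar M}(R\cup\bar M_0)$ (Property~\ref{induction_ppt}~(2) at order $n-1$), concluding the tangentiality of $\CC_\gamma$ to $G_{n-1}$ near $y$.

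The main obstacle is precisely this ``foreign corona'' subcase where $y\in R(\delta)$ with $\delta\neq\gamma$: neither Lemma~\ref{civilization_lem}~(1) for $\gamma$ nor the induction hypothesis for $\gamma$ applies directly, and one must exploit the nested image-containments of Lemma~\ref{prism_lem} to relocate $y$ into the corona of the common face before invoking the civilization hierarchy. The other subcases are essentially immediate from the induction hypothesis and the civilization lemma, and once both cases are checked, Remark~\ref{tangent2_rmk} yields the statement.
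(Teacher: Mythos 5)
Your proof is correct and follows essentially the same chain of reasoning as the paper's: reduce to a pointwise tangentiality of $\CC_\gamma$ to $G_{n-1}$, split on which piece of $(R\cup\bar M_{n-1})\cap P(\gamma)$ the point lies in, and conclude via the civilization hierarchy (Lemma \ref{civilization_lem} (1) and (3)), the nesting in Lemma \ref{prism_lem}, the induction properties \ref{induction_ppt} (2)--(3), and Remark \ref{tangent2_rmk}. The paper organizes the dichotomy around $\delta$, the smallest face of $\gamma$ containing the base point $x$ (so that $\delta\subset K_{n-1}$ or $(x,y)\in R(\delta)$), whereas you split on whether the point lies in $\bar M_{n-1}$ or in $R$ and in the corona case use $\eta=\gamma\cap\delta$; this is a minor bookkeeping difference, and your version makes explicit the nesting argument (via Lemma \ref{prism_lem}) that the paper leaves implicit.
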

  \begin{proof} Consider any point $(x,y)\in (R\cup\bar M_{n-1})\cap P(\gamma)$.
  Let $\delta\subset\gamma$ be the smallest face containing $x$.
  We claim that  $\CC_\delta$ is tangential to $G_{n-1}$ on $\nb_{\bar M}(x,y)$.

   Indeed,
  $\delta\subset K_{n-1}$ or $y\in R(\delta)$.
   In the case where
  $\delta\subset K_{n-1}$,
  the claim follows from the induction property \ref{induction_ppt} (3) applied to $\delta$
  at the order $n-1$.
    In the case where $y\in R(\delta)$, on $\nb_{\bar M}(x,y)$,
   the foliation $G_{n-1}$ coincides with $\GG$
  (induction property \ref{induction_ppt} (2) at the order $n-1$), while $\CC_\delta\subset\GG$ (after Lemma \ref{civilization_lem}
  (1)). The claim is proved.

   In view of (3) of Lemma
  \ref{civilization_lem}, $\CC_\gamma$ is also tangential to $G_{n-1}$ on
  $\nb_{\bar M}(x,y)$. Since
  $P(\gamma)$ is transverse to $\CC_\gamma$,
    the wanted coincidence follows on 
    $$\nb_{\bar M}(Int(P(\gamma))\cap
  \nb_{\bar M}(x,y)$$ (see Tool \ref{pullback_sssec} and Remark
  \ref{tangent2_rmk}).
  \end{proof}

\begin{clm}\label{compatible_clm}\
\begin{enumerate}
\item $G_{n-1}$ coincides with $(e_\alpha\mun\circ\tilde\pr_\alpha)^*(G_\alpha)$
on $$\nb_{\bar M}(R\cup\bar M_{n-1})\cap\nb_{\bar M}(Int(P(\alpha)))$$
\item $G_{n-1}$ coincides with $(e_\alpha\mun\circ\tilde\pr_\beta)^*(G_\alpha)$
on $$\nb_{\bar M}(R\cup\bar M_{n-1})\cap\nb_{\bar M}(Int(P(\beta)))$$
\item $(e_\alpha\mun\circ\tilde\pr_\alpha)^*(G_\alpha)$ coincides
 with $(e_\alpha\mun\circ\tilde\pr_\beta)^*(G_\alpha)$
on $$\nb_{\bar M}(Int(P(\alpha)))\cap\nb_{\bar M}(Int(P(\beta)))$$
\end{enumerate}
\end{clm}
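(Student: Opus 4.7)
My plan is to dispatch the three parts of the claim in the order $(1)$, $(3)$, $(2)$, since part $(3)$ is the cleanest and part $(2)$ combines the ideas of the other two. All three rest on the same two ingredients: Lemma \ref{pullback_lem}, which expresses $G_{n-1}$ as the $\tilde\pr_\gamma$-pullback of its restriction to $P(\gamma)$ near the required locus, and the boundary matching plus tangentiality conditions (2)--(5) of Lemma \ref{extension_lem}.

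For (1), apply Lemma \ref{pullback_lem} at $\gamma=\alpha$ to obtain $G_{n-1}=\tilde\pr_\alpha^*(G_{n-1}\vert P(\alpha))$ on the domain in question. It then remains to identify $G_{n-1}\vert P(\alpha)$ with $(e_\alpha\mun)^*(G_\alpha)$ on $P(\alpha)\cap\nb_{\bar M}(R\cup\bar M_{n-1})$. Pulled back through $e_\alpha$, this identification is exactly the content of Lemma \ref{extension_lem} (2)--(4), provided one checks that $e_\alpha\mun(P(\alpha)\cap(R\cup\bar M_{n-1}))$ is covered by the three neighborhoods appearing there: those of $\alpha\times\partial\D^q$, of $\hat\partial\alpha\times\D^q$, and of $e_\alpha\mun(R(\beta))$. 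This covering is the main piece of bookkeeping, and uses the collapse identity $\alpha\cap K_{n-1}=\hat\partial\alpha\cup\partial\beta$ together with the nesting property of the $e_\gamma$'s from Lemma \ref{prism_lem}.

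For (3), I use Lemma \ref{extension_lem} (5): the civilization $e_\alpha^*(\CC_\beta)$ is tangential to $G_\alpha$ on $\nb_{\alpha\times\D^q}(\beta\times\D^q)$. By Remark \ref{tangent2_rmk} this is equivalent to saying that $G_\alpha$ is invariant under the local retraction along $e_\alpha^*(\CC_\beta)$, that is, $G_\alpha=(e_\alpha\mun\circ\tilde\pr_\beta\circ e_\alpha)^*(G_\alpha)$ as germs near $\beta\times\D^q$. Applying $(e_\alpha\mun\circ\tilde\pr_\alpha)^*$ to both sides and composing yields
\[
(e_\alpha\mun\circ\tilde\pr_\alpha)^*(G_\alpha)
= (e_\alpha\mun\circ\tilde\pr_\beta\circ\tilde\pr_\alpha)^*(G_\alpha)
= (e_\alpha\mun\circ\tilde\pr_\beta)^*(G_\alpha),
\]
the last equality being Corollary \ref{civilization_cor} (3), namely $\tilde\pr_\beta\circ\tilde\pr_\alpha=\tilde\pr_\beta$ on $\nb_{\bar M}(P(\beta))$. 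For (2), I apply Lemma \ref{pullback_lem} at $\gamma=\beta$: $G_{n-1}=\tilde\pr_\beta^*(G_{n-1}\vert P(\beta))$ on the stated domain. The inclusion $P(\beta)\subset P(\alpha)$ reduces the identification $G_{n-1}\vert P(\beta)=(e_\alpha\mun)^*(G_\alpha)\vert P(\beta)$ to the case analysis of part (1), now with the $e_\alpha\mun(R(\beta))$ neighborhood carrying most of the weight; combined with part (3) this gives the conclusion.

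The main technical obstacle is the bookkeeping of domains, in particular verifying that the cleft foliation $G_\alpha$ is pullable (in the sense of Definition \ref{pullable_dfn}) through both compositions $e_\alpha\mun\circ\tilde\pr_\alpha$ and $e_\alpha\mun\circ\tilde\pr_\beta$. For the first this is automatic from the transversality of $e_\alpha$ to $\CC_\alpha$ on $P(\alpha)$, together with Lemma \ref{civilization_lem} (1). For the second it follows from the tangentiality in Lemma \ref{extension_lem} (5) combined with Lemma \ref{civilization_lem} (2) which guarantees that $\CC_\beta$ is transverse to $\beta\times\D^q$ in $\R^{p+1}\times\D^q$. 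Once these points are in place, the three equalities reduce to chasing definitions along the lines just sketched.
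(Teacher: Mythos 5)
Your proof is essentially the paper's own argument, with a harmless permutation of the order of the three parts. You correctly identify the two load-bearing ingredients — Lemma \ref{pullback_lem} for expressing $G_{n-1}$ as a $\tilde\pr_\gamma$-pullback near the required loci, and the boundary-matching plus tangentiality conditions (2)--(5) of Lemma \ref{extension_lem}, together with Corollary \ref{civilization_cor} (3) — and your bookkeeping of $e_\alpha\mun(P(\alpha)\cap(R\cup\bar M_{n-1}))$ as covered by the three neighborhoods in Lemma \ref{extension_lem} (2)--(4) matches the set $(\alpha\times\partial\D^q)\cup(\hat\partial\alpha\times\D^q)\cup(\beta\times A_{1/2})$ that the paper writes explicitly. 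The pullback composition calculus for part (3) is correct.

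One small imprecision worth flagging, in your treatment of (2): you invoke ``combined with part (3)'' where what is actually needed is the restrictability of $G_\alpha$ to $\beta\times D_{1/2}$, which follows directly from Lemma \ref{extension_lem} (5) via Remark \ref{tangent2_rmk}. Trying to get (2) by literally combining the equalities of (1) and (3) runs into a domain mismatch, since $\nb_{\bar M}(Int(P(\beta)))$ need not be contained in $\nb_{\bar M}(Int(P(\alpha)))$ (recall $P(\beta)\subset\partial P(\alpha)$). The paper's route — identify $e_\alpha\mun(R\cup\bar M_{n-1})\cap(\beta\times D_{1/2})=\partial(\beta\times D_{1/2})$, use (3)(4) of Lemma \ref{extension_lem} there, then use (5) for restrictability, then cite Lemma \ref{pullback_lem} at $\beta$ — is what you should cite for (2); the restrictability step is not a consequence of part (3) as stated but of the same tangentiality hypothesis that also yields (3). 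This is a phrasing issue, not a gap in the method.
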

\begin{proof}
(1): One has $$e_\alpha\mun(R\cup\bar M_{n-1})=(\alpha\times\partial\D^q)
\cup(\hat\partial\alpha\times\D^q)\cup(\beta\times A_{1/2})$$
After Lemma \ref{extension_lem} (2) (3) (4), on some open neighborhood of this subset in $\alpha\times\D^q$,
one has $G_\alpha=e_\alpha^*(G_{n-1})$. Hence, the claim (1) amounts to
Lemma \ref{pullback_lem} applied to $\alpha$.

(2): One has $$e_\alpha\mun(R\cup\bar M_{n-1})\cap(\beta\times
D_{1/2})=\partial(\beta\times D_{1/2})$$
After Lemma \ref{extension_lem} (3) (4), on some open neighborhood of this subset in $\alpha\times\D^q$,
one has $G_\alpha=e_\alpha^*(G_{n-1})$. After Lemma \ref{extension_lem} (5),
 this cleft foliation is restrictable
to $\beta\times D_{1/2}$. Hence, the claim (2) amounts to
Lemma \ref{pullback_lem} applied to $\beta$.

(3): after Lemma \ref{extension_lem} (5), on $\nb_{\alpha\times\D^q}
(\beta\times\D^q)$, the cleft foliation $G_\alpha$ coincides with the pullback of
$G_\alpha\vert(\beta\times\D^q)$ through the local retraction of
$\nb_{\alpha\times\D^q}
(\beta\times\D^q)$ onto $\beta\times\D^q$ along the leaves of
$e_\alpha^*(\CC_\beta)$. Hence, the claim (3) follows from
  Corollary \ref{civilization_cor} (3) applied to the pair $\beta\subset\alpha$.
\end{proof}

Globally, we have got a cleft foliation $G_n$ on $\nb_{\bar M}(R\cup\bar M_{n})$ such that
\begin{enumerate}[label=(\roman*)]
\item \label{Gn_item}  $G_n=G_{n-1}$ on $\nb_{\bar M}(R\cup\bar M_{n-1})$;

\item  \label{Gn_item} $G_n=(e_\alpha\mun\circ\tilde\pr_\alpha)^*(G_\alpha)$ on
$\nb_{\bar M}(Int(P(\alpha)))$;

\item \label{Gn3_item} $G_n=(e_\alpha\mun\circ\tilde\pr_\beta)^*(G_\beta)$ on
 $\nb_{\bar M}(Int(P(\beta)))$.

\end{enumerate}
We are left to verify that $G_n$ matches the properties \ref{induction_ppt} (1) through (5).

(1): after Tool \ref{pullback_sssec} (e);

(2): after the very definition \ref{Gn_item} above, and after 
the induction property \ref{induction_ppt} (2)
 at the order $n-1$;

(3): the only two cells to consider are $\alpha$ and $\beta$. The civilization
$\CC_\alpha$ (resp. $\CC_\beta$) is indeed tangential to $G_n$ on
$\nb_{\bar M}(Int(P(\alpha)))$ (resp. $\nb_{\bar M}(Int(P(\beta)))$
by the very definition \ref{Gn_item} (resp. \ref{Gn3_item}), and on 
$\nb_{\bar M}(R(\alpha))$ because $G_n$ coincides with $\GG$ there,
and because of Lemma \ref{civilization_lem} (1);

(4): Let us recollect the construction of the core $\Sigma_n$ from $\Sigma_{n-1}$.
 Recall or agree
 that  $G_{n-1}=(C_{n-1},[c_{n-1}],\GG_{n-1})$,  $G^0=(C^0,[c^0],\GG^0)$,
 $G^1=(C^1,[c^1],\GG^1)$,
 $G_\alpha=(C_\alpha,[c_\alpha],\GG_\alpha)$,  $G_n=(C_n, [c_n],\GG_n)$.
  Consider the cores  $\Sigma_{n-1}:=c_{n-1}\mun(0,0)$,   $\Sigma^0:=(c^0)\mun(0,0)$, 
 $\Sigma^1:=(c^1)\mun(0,0)$,  $\Sigma_\alpha:=c_\alpha\mun(0,0)$,    $\Sigma_n:=c_n\mun(0,0)$.
Put $$\hat\Sigma:=\Sigma_{n-1}\cap(\hat\partial\alpha\times\D^q)$$

By Claim \ref{G0_clm} (\ref{G04_item}), $\Sigma^0$ is the saturation of
$e_\alpha\mun(\hat\Sigma)$
by the flow of $V$. Hence, it is easy to extend
the complex $e_\alpha^*(\Delta_{n-1})$, which is a subdivision of $e_\alpha\mun(\Sigma^0)$,
by a smooth triangulation $\Delta^0$ of
 $\Sigma^0$ collapsing on  $e_\alpha^*(\Delta_{n-1})$.
 
  Also
(Claim \ref{filling_clm}), $\Sigma^1$ is nothing but the disjoint union
of  $\Sigma^0$ with the core $\Sigma^1\cap H$
 of the new fissure in the hole, which is a finite disjoint union of $(i-1)$-spheres
 disjoint from $\hat\partial\alpha\times\D^q$. Let $\Delta_H$ be an arbitrary smooth
 triangulation of $\Sigma^1\cap H$.
 
 Also, recalling the definition of $G^1$, the core $\Sigma_\alpha=\tilde f_ 1\mun(\Sigma^1)$
is isotopic with $\Sigma^1$ in $\alpha\times\D^q$ 
 rel. $\hat\partial\alpha\times\D^q$
(Claim \ref{lift_clm}, (\ref{lift3_item})).

   By the above definition (i), (ii), (iii) of $G_n$, one has
   \begin{equation}\label{core_eqn}
\Sigma_n\cap\bar M_n=(\Sigma_{n-1}\cap\bar M_{n-1})
\cup_{\hat\Sigma}e_\alpha(\Sigma_\alpha)
\end{equation}

 We define a
 partial smooth triangulation $\Delta_n$ of $\Sigma_n$ as the
 union of $\Delta_{n-1}$ with $e_\alpha(\tilde f_1\mun(\Delta_H))$ and $e_\alpha
 (\tilde f_1\mun(\Delta^0))$.
 The property \ref{induction_ppt} (4) is verified for $G_n$.
 
 (5): will follow
  as well from the preceding recollection of the successive constructions of the cores.
 Briefly, by the induction property \ref{induction_ppt} (5) at the order $n-1$, the projection $\bar\pi$
 is one-to-one on every connected component of
  $\Sigma_{n-1}\cap\bar M_{n-1}$, and thus, in particular, of $\hat\Sigma$.
 By \ref{V3_item}, the projection
  $\bar\pi$ is one-to-one on every connected component of $\Sigma^0$.
 On the other hand, in view of
  the construction in the proof of Claim \ref{filling_clm},
 each $(i-1)$-sphere composing $\Sigma^1\cap H$ embeds into $Int(\alpha)$ through 
 $\bar\pi$ (and the image is an embedded
  $(i-1)$-sphere, close to $\partial\beta$ for the Hausdorff distance). Hence,
   $\bar\pi$ is one-to-one on every connected component of $\Sigma^1$.
  Since the isotopy
   $\tilde f_1$ is projectable ((\ref{lift1_item}) of Claim \ref{lift_clm}), the projection
   $\bar\pi$ is one-to-one on every connected component of $\Sigma_\alpha$.
  Finally, after Equation (\ref{core_eqn}), $\bar\pi$
 is one-to-one on every connected component of
  $\Sigma_n\cap\bar M_n$.
 
This completes the induction on $n$, the proof of Proposition \ref{inflation_pro}, and the proof of Theorem \emph{A'}.
\subsection{Topology of the cores}
We now give a complement to Theorems \emph{A} and  \emph{A'} by making explicit the
topologies of the cores of the fissures and of the multifold Reeb components
 resulting from the above proofs (Figure \ref{propagation_fig}).

\begin{figure}
\includegraphics*[scale=0.45, angle=0]{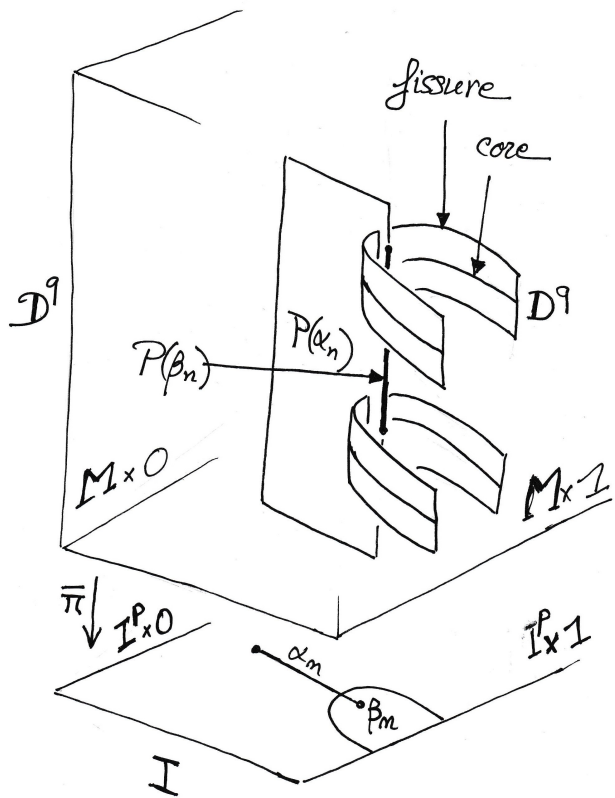}
\caption{Schematic view on a fissure component
which appears while foliating $P(\alpha_n)$ (Claim \ref{filling_clm}),
 on its propagation during the rest of the induction, and on its projection
 into the base $\I^{p+1}$.
 Beware that this schematic low-dimensional drawing can be misleading in several ways:
actually, there appears a fissure while foliating $P(\alpha_n)$ only for 
$i:=\dim(\beta_n)\ge 1$;
in the topology of the core, here the factor
$\S^{i-1}$ has been assimilated to two points;  in the base $\I^{p+1}$, the projection of
the core is actually of codimension $2$, not $1$.}
\label{propagation_fig}
\end{figure}

Recall the notations from  \emph{A'}.
\begin{pro}\label{core_pro} (a)
The core $\Sigma:=c\mun(0,0)$ of
the cleft $\Gamma_q$-structure $\Gamma$
built in the above proof of Theorem  \emph{A'} is diffeomorphic with a disjoint union of products 
$\S^{i-1}\times\D^{p-i}$, where $p:=\dim(M)-q$ and where $1\le i\le p$.

In particular,
the core $\Sigma\cap(M\times 1)=\partial\Sigma$
 of the cleft foliation $\Gamma\vert(M\times 1)$
  is diffeomorphic with a disjoint union of products 
$\S^{i-1}\times\S^{p-i-1}$, where $1\le i\le p-1$.

\end{pro}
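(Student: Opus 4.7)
The strategy is to read off the topology of $\Sigma$ from the inductive invariant Property \ref{induction_ppt} (4), applied at the final step $n = N$ of the inflation induction. At that stage $\bar M_N = \bar M$, so the core $\Sigma = \Sigma_N \cap \bar M$ carries a smooth partial triangulation $\Delta_N$ of $\Sigma_N$ that simplicially collapses onto a \emph{system of spheres} in $\Sigma_N$, i.e.\ a finite disjoint union of smoothly embedded spheres with trivial normal bundles (Vocabulary \ref{system_voc}). By Definition \ref{fissure_dfn}, the submersion $c$ makes $\Sigma = c^{-1}(0,0)$ a compact smooth submanifold of $\bar M$ of dimension $\dim(\bar M) - (q+2) = p - 1$; and Theorem \emph{A'} (i) places $\Sigma$ away from $\hat M$, so $\partial\Sigma = \Sigma \cap (M \times 1)$.

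Each connected component of $\Sigma$ collapses onto a single sphere of the system; tracing the recursive construction of $\Delta_n$ (point (4) in the verification of Property \ref{induction_ppt}), every such sphere is born in Claim \ref{filling_clm} at some step $n$, with dimension $i - 1 = \dim(\beta_n) - 1$. Combining smoothing in a triangulated manifold with the tubular neighborhood theorem, a compact smooth manifold that smoothly triangulates and simplicially collapses onto a smoothly embedded submanifold is diffeomorphic to a closed tubular neighborhood of that submanifold. Applied here, each component of $\Sigma$ is diffeomorphic to the total space of the normal disk bundle of its central sphere $\S^{i-1}$; this bundle has rank $(p-1) - (i-1) = p - i$, and since it is trivial by the definition of a system of spheres, the component is diffeomorphic to $\S^{i-1} \times \D^{p-i}$. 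The range $1 \le i \le p$ follows from $\dim(\beta_n) \le \dim(\alpha_n) - 1 \le p$ for cells of the triangulation $K$ of $\I^{p+1}$, together with the observation that the case $i = 0$ in the proof of Lemma \ref{extension_lem} introduces no fissure.

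The boundary statement follows component by component: for $p - i \ge 1$, $\partial(\S^{i-1} \times \D^{p-i}) = \S^{i-1} \times \S^{p-i-1}$, whereas for $p - i = 0$ the component is a closed sphere $\S^{p-1}$ contributing nothing to $\partial\Sigma$. The main (though standard) technical obstacle is the upgrade from ``collapses onto a smoothly embedded sphere with trivial normal bundle'' to ``is diffeomorphic to the trivial disk bundle over that sphere''; this rests on the classical uniqueness of smooth regular neighborhoods together with the tubular neighborhood theorem, which together promote the homotopy-type information carried by Property \ref{induction_ppt} (4) into the desired diffeomorphism.
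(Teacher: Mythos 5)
Your argument is correct and matches the paper's: both read the topology of $\Sigma$ off Property \ref{induction_ppt}(4) at $n=N$, obtaining a smooth triangulation of $\Sigma$ collapsing onto a system of spheres interior to $\Sigma$, and then appeal to the fact that a smoothly triangulated compact manifold collapsing onto a smoothly embedded sphere with trivial normal bundle is a tubular neighborhood of that sphere. The paper makes this last step explicit by excising open tubular neighborhoods of the spheres and reducing to the collapsing cobordism theorem (Proposition \ref{cc_pro}), for which it supplies a self-contained Morse-theoretic proof in the appendix precisely because the smooth uniqueness of regular neighborhoods you invoke as ``classical,'' while true, is not entirely elementary in the smooth category.
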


\begin{proof} By the above property \ref{induction_ppt} (4) for $n=N$, the
smooth compact manifold $\Sigma=\Sigma_N$
admits a smooth (global) triangulation $\Delta_N$ collapsing on
a system of spheres (Vocabulary \ref{system_voc})
 interior to $\Sigma$. After excising an open
  tubular neighborhood of these spheres,
and after
 an obvious restoration of the triangulation close to the new boundary components,
  one is reduced to the well-known fact that every collapsing cobordism is trivial (see
Proposition \ref{cc_pro} below).
\end{proof}

Now, recall the notations from \emph{A}.
\begin{cor}\label{core_cor}
The core of the multifold Reeb component of the foliation $\gamma'$
built in the above proof
of Theorem \emph{A}  is diffeomorphic with a disjoint union of products 
$\S^{i-1}\times\S^{p-i-1}$, where $1\le i\le p-1$.
\end{cor}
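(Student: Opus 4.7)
The plan is to trace how the core is transformed during the passage from the cleft foliation of Theorem \emph{A'} to the multifold Reeb component of Theorem \emph{A}. Recall that the construction carried out in Subsection \ref{AAP_ssec} first applies Theorem \emph{A'} to produce on $\bar M$ a cleft $\Gamma_q$-structure $(C,[c],\bar\gamma)$, then enlarges the fissure into a hole $H\subset\bar M$ of fibre $\D^q$ (Tool \ref{fh_sssec}), then fills this hole by means of Thurston's construction (Paragraph \ref{filling_sssec}), and finally restricts to the slice $M\times 1\cong M$ to obtain $\gamma'$.

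The key observation is that neither the enlarging nor the filling alters the core. Indeed, by Lemma \ref{complete_lem}, a compact fissure $(C,[c])$ of core $\Sigma:=c^{-1}(0,0)$ has an equidimensional tubular neighborhood diffeomorphic with $\Sigma\times\D^2\times\D^q$, and enlarging it into a hole simply discards the interior foliation while keeping the germ along the boundary, so that the resulting hole has core $\Sigma$ in the sense of Definition \ref{hole_dfn}. Thurston's filling procedure recalled in Paragraph \ref{filling_sssec} then fills a hole of core $\Sigma$ precisely by installing on $\Sigma\times\D^2\times\D^{q-1}\times\S^1$ the foliation $\GG_\Sigma$ of Subsection \ref{qc_ssec}, which by definition is the multifold Reeb component of core $\Sigma$.

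It follows that the multifold Reeb component created in $\bar M$ has core $\Sigma$, exactly as described in Proposition \ref{core_pro}(a). Taking its trace on $M\times 1$ amounts to intersecting $\Sigma\times\D^2\times\D^{q-1}\times\S^1$ with $M\times 1$; since $\D^2\times\D^{q-1}\times\S^1$ is a factor interior to $\bar M$, this intersection is $\partial\Sigma\times\D^2\times\D^{q-1}\times\S^1$, i.e.\ a multifold Reeb component of $M$ whose core is $\partial\Sigma=\Sigma\cap(M\times 1)$. The Corollary is then an immediate application of the second assertion of Proposition \ref{core_pro}, which identifies this boundary core as a disjoint union of products $\S^{i-1}\times\S^{p-i-1}$ with $1\le i\le p-1$. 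No step is genuinely hard; the entire argument is bookkeeping showing that the operations ``enlarge'' and ``Thurston-fill'' are core-preserving, and that restriction to the slice $M\times 1$ sends $\Sigma$ to its boundary.
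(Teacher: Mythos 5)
Your proof is correct and is essentially the argument the paper leaves implicit: the corollary is stated in the paper with no written proof, being intended as immediate bookkeeping from the construction of $\gamma'$ in Subsection \ref{AAP_ssec} together with the second assertion of Proposition \ref{core_pro}. You correctly identify that the two operations used to pass from the cleft $\Gamma_q$-structure of Theorem \emph{A'} to the regular $\Gamma_q$-structure of Theorem \emph{A} --- enlarging the fissure into a hole (Tool \ref{fh_sssec}) and filling that hole by Thurston's construction (Paragraph \ref{filling_sssec}) --- are both core-preserving, and that restriction to the slice $M\times 1$ replaces $\Sigma$ by $\partial\Sigma=\Sigma\cap(M\times 1)$, which is exactly what Proposition \ref{core_pro} computes. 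One small remark: the phrase ``since $\D^2\times\D^{q-1}\times\S^1$ is a factor interior to $\bar M$'' is shorthand for the fact, guaranteed by the boundary condition (3) of Definition \ref{fissure_dfn} and the corresponding boundary behaviour of the tubular neighborhood in Lemma \ref{complete_lem}, that the embedding $h:\Sigma\times\D^2\times\D^q\hookrightarrow\bar M$ meets $M\times 1$ exactly along $\partial\Sigma\times\D^2\times\D^q$; stating this explicitly would make the trace computation airtight.
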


\section{Proof of the Mather-Thurston Theorem as a corollary of \emph{A'}}
\label{MT_sec}

  Let $q\ge 1$. Recalling Notation \ref{diff_ntt},
  fix an element   $\varphi\in\diff(\D^q)^\I$ which is
   a product of commutators:
  $$\varphi=[\alpha_1,\beta_1] \dots
[\alpha_g,\beta_g]$$
with  $\alpha_1,\beta_1, \dots, 
\alpha_g,\beta_g\in\diff(\D^q)^\I$;  and such
that the image $\varphi_1\in\diff(\D^q)$ is not the identity.

\begin{rmk}
In fact, every  $\varphi\in\diff(\D^q)^\I$ is of this form, but we don't need this difficult perfectness result, for which one can find references and a discussion in \cite{mitsumatsu_vogt_16}. Moreover, in view of the above remark
\ref{differentiability_rmk}, recall that it remains unknown if perfectness holds
 in the differentiability class $C^{q+1}$.
\end{rmk}

The interest for us of such a commutators decomposition is that it allows
 an (obvious) suspension construction over a surface.
Namely,
let  $S_g$ be the compact orientable surface bounded by $\S^1$
and of genus $g$, with a basepoint on the boundary. Consider the representation $$\rho:\pi_1(S_g)\to\diff(\D^q)^\I$$
 mapping the canonical free basis to $\alpha_1,\beta_1,
\dots, \alpha_g,\beta_g$; hence $\rho(\partial S_g)=\varphi$:
the suspension $\SS_\varphi$ of $\varphi$ over the circle $\partial S_g$
bounds the codimension-$q$ foliation $\SS_\rho$ on $S_g\times\D^q$ that is the
 suspension of $\rho$.
\medbreak

\begin{proof}[Proof of Theorem  \ref{MT_thm}]
Let $X$, $V$, $\FF_V$, $\gamma$ be as in the hypotheses of Theorem \ref{MT_thm};
 to fix ideas, assume moreover that $X$ is closed.
 Recall that $\bar V:=V\times\I$,
  that $\hat V\subset\bar V$ is the union of $V\times 0$ with
 $\partial V\times\I$, that
 $\hat\pr$ is the restriction to $\hat V\times X$
 of the projection
$V\times\I\times X\to V\times X$; and let $\pi_V:V\times X\to V$ denote
 the first projection.

The hypothesis that $\gamma$
restricts to a foliated product over $\partial V$ amounts to say that
at every point $a\in\partial V\times X$, the differential
 $d_a\gamma$
is a linear retraction of $\tau_a(V\times X)$ onto
 $\nu_a\gamma=\tau_a\FF_V$.
 Hence, the theorem $A'$ provides a \emph{cleft} $\Gamma_q$-structure
on $\bar V\times X$ which coincides with $\hat\pr^*(\gamma)$ over $\nb_{\bar V}(\hat V)$
and which is, over $V\times 1\cong V$, complementary to $\FF_V$
\emph{on the complement of the fissure.}
Our proof of the Mather-Thurston theorem will consist in enlarging
the fissure into a hole and filling the hole by a foliation
complementary to the fibres, at the price of changing the base $\bar V$
into a more general cobordism. To this end,
the main tool
will be the pullback construction that has been described
in paragraph \ref{pullback_sssec} above.
A difficulty
is that  in $\bar V$, the projections of the connected
 components of the fissure will in general intersect
each other. To solve this difficulty,
 we need an inductive process,
 filling at each step only a union of components whose projections are two by two disjoint,
 and performing successive surgeries on the base. During the process, the cores of
 the remaining components will get surgerized too.
 \medbreak

More precisely,
one can paraphrase the conclusions
of Theorem \emph{A'} by the following properties (1), (2), (3), (I), (II), (III), (IV),
 where $V^*:=V$, where $W:=V\times\I$; and where
$\Sigma_1,\dots,\Sigma_\ell$ are the connected components of $\Sigma$.
One has
  
 \begin{enumerate}

    \item An oriented cobordism $(V,W,V^*)$  rel. $\partial V$;
\item On $W\times X$, a cleft $\Gamma_q$-structure $\Gamma=(C,[c],\bar\gamma)$
 whose monodromy is $\varphi$,
  and whose  normal bundle is $\tau\FF_W$;
\item A partition of $\Sigma:=c\mun(0,0)$ as a finite disjoint sum of compact components
   $$\Sigma=\Sigma_1\sqcup\dots\sqcup\Sigma_\ell$$
   (each $\Sigma_i$ is thus a union of connected components of $\Sigma$)
\end{enumerate}
such that
    \begin{enumerate}[label=(\Roman*)]
      \item $C$ is disjoint from $\hat V\times X$,
  and $\bar\gamma$  coincides in restriction to $\hat V\times X$ with $\hat\pr^*(\gamma)$;
  \item $C$ is in standard position with respect to $\FF_W$ (Definition \ref{position_dfn});
     \item $\Gamma\vert(V^*\times X)$  is on $V^*\times X$
      a cleft foliation quasi-complementary to $\FF_{V^*}$
(recall Definitions \ref{holed_foliation_dfn} and \ref{qc_dfn});
   \item The projection $\pi_W$
    is one-to-one in restriction to each of the components $\Sigma_1$, \dots, $\Sigma_\ell$.
 
\end{enumerate}
(Recall that (after
the remarks following the above
 definitions \ref{fissure_dfn} and \ref{holed_dfn})
  in case $\dim(W)=1$, a ``cleft $\Gamma_q$-structure'' has no cleft
at all: $C$ and $\Sigma$ are empty, and the proof of  Theorem \ref{MT_thm}
is complete. From now on, we assume that $\dim(W)\ge 2$.)

Let us describe how such data can be modified so that the 
  length $\ell$ of the partition drops by one.

Fix a smooth map
$\alpha:S_g\to\D^2$ inducing the identity between the boundaries.
Let $H=h(\Sigma\times\D^2\times\D^q)$ be a thin neighborhood of $C$ in $W\times X$
 as in Lemma \ref{complete_lem}.
By (III), provided that we choose $H$ thin enough,
 the equidimensional immersion
$$j:\Sigma_\ell\times\D^2\to W:(\sigma,z)\mapsto\pi_W(h(\sigma,z,0))$$
is an embedding. Let $W'$ (resp. $(V^*)'$) be the $(p+1)$-fold (resp. $p$-fold)
obtained from $W$ (resp. $V^*$) by cutting $j(\Sigma_\ell\times\D^2)$
(resp.  $j(\partial\Sigma_\ell\times\D^2)$) and pasting
$\Sigma_\ell\times S_g$ (resp. $\partial\Sigma_\ell\times S_g$). The triple
$(V,W',(V^*)')$ is
 a new oriented cobordism  rel. $\partial V$.
Let $a:W'\to W$ be the smooth map
defined on $\Sigma_\ell\times S_g\subset W'$ as
$$(\sigma,x)\mapsto j(\sigma,\alpha(x))$$
 and as the identity on the complement.
Consider the
product map $$A:=
a\times\id_X:W'\times X\to W\times X$$

The map $A$
being leafwise etale (Tool \ref{pullback_sssec} (a)) with respect to
 $\FF_{W'}$ and $\FF_W$, according to
 Lemma \ref{generic_lem}, after a generic small
  horizontal perturbation of the fissure $C$, one can arrange that $C$ is pullable through $A$.
  The above properties (I), (II) and (III) still hold (Tool \ref{reparametrizing_sssec});
  we choose the perturbation small enough that (IV) also still holds.

On the one hand,
on $A\mun(H)\cong\Sigma_\ell\times S_g\times\D^q$, define
$\bar\gamma'$ as
 $(\pr_2\times\pr_3)^*(\SS_\rho)$: a genuine foliation
complementary to $\FF_{W'}$ there.
On the other hand, consider the complement $R:=(W\times X)\setminus
Int(H)$ and, on $A\mun(R)$, the cleft $\Gamma_q$-structure
$A\mun(\Gamma\vert R)$.

The two constructions obviously coincide on $\nb_{W'\times X}(A\mun(\partial H))$,
thus defining  on $W'\times X$ a global cleft $\Gamma_q$-structure $\Gamma'$.
  The above properties (I), (II) and (III) hold for $W'$, $(V^*)'$, $\Gamma'$
  instead of $W$, $(V^*)$, $\Gamma$ (after Tool \ref{pullback_sssec} (c) and (e)).
As for (IV), the core of the fissure of $\Gamma'$ is the disjoint union of
$\Sigma'_i:=A\mun(\Sigma_i)$ for  $1\le i\le\ell-1$. Since for each $i$ the projection
 $\pi_W$ is one-to-one on the component $\Sigma_i$,
clearly, $\pi_{W'}$ is one-to-one on $\Sigma'_i$.

After the last step of the inductive process, $\ell=0$; in other
words,
 there is no fissure any more, and Theorem \ref{MT_thm} is proved.
 \end{proof}
 
 \section{Appendix: the ``collapsing cobordism theorem''}\label{cc_sec}
 
 \begin{pro}\label{cc_pro} Let $(V_0,W,V_1)$ be a cobordism between
 two
closed manifolds $V_0$, $V_1$. Assume that $W$ admits a smooth triangulation $K$
collapsing onto $V_0$. Then, $W$ is diffeomorphic with $V_0\times\I$.
\end{pro}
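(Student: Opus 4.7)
The plan is to turn the combinatorial collapse into a smooth ambient isotopy of $W$ onto a collar of $V_0$; once this is done, the collar is tautologically diffeomorphic to $V_0\times\I$, whence $W\cong V_0\times\I$.

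Concretely, enumerate the elementary collapses of the given triangulation as $K=K_N\searrow K_{N-1}\searrow\cdots\searrow K_0 = K|_{V_0}$, each step removing a pair $(\sigma_i,\tau_i)$ in which $\sigma_i$ is a free codimension-$1$ face of $\tau_i$ in $K_i$. For each $i$ I would choose a smooth regular (derived) neighborhood $N_i\subset W$ of $|K_i|$, in the sense of Whitehead's theory of smooth triangulations, so that $N_0\subset N_1\subset\cdots\subset N_N=W$ and $N_0$ is a smooth collar of $V_0$ in $W$ disjoint from $V_1$. Because $\sigma_i$ is \emph{free} in $K_i$, the combinatorial pair $\sigma_i\cup\tau_i$ is a closed $d_i$-disk (with $d_i:=\dim\tau_i$) that meets $|K_{i-1}|$ along a single face of codimension one; thickening in $W$, the new region $N_i\setminus\mathrm{int}(N_{i-1})$ is a smooth $n$-ball attached to $N_{i-1}$ along a single smooth $(n-1)$-disk sitting in $\partial N_{i-1}\setminus V_0$. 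A disk-along-disk attachment of this form is a trivial product extension, and so yields a smooth ambient isotopy $\Phi_i$ of $W$, supported in a neighborhood of $N_i$, carrying $N_i$ onto $N_{i-1}$ and fixing $V_0$ pointwise.

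Composing $\Phi_N\circ\cdots\circ\Phi_1$ produces a diffeomorphism of $W$ carrying $W=N_N$ onto $N_0$, and the collar structure of $N_0$ over $V_0$ extends (after rescaling the collar parameter) to a diffeomorphism $W\cong V_0\times\I$ in which $V_1$ corresponds to $V_0\times\{1\}$. The main obstacle is technical rather than conceptual: at each stage one must smooth the corners where the newly attached ball meets $\partial N_{i-1}$, and one must verify inductively that the attaching disks all lie in $\partial N_{i-1}\setminus V_0$ so that $V_0$ remains pointwise fixed throughout. Both points are standard but delicate consequences of the smooth regular-neighborhood theory for smooth triangulations, which is presumably why the result is quoted here as an appendix rather than proved in detail.
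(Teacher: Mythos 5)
Your route is genuinely different from the paper's, and it is essentially the classical PL argument that the paper explicitly acknowledges at the beginning of Section~\ref{cc_sec} before setting it aside: by the normal/regular neighborhood theorem, $K$ is PL-isomorphic to $(K\vert V_0)\times\I$, and the product smoothing theorem then upgrades this to a diffeomorphism $W\cong V_0\times\I$. Your iterated nested neighborhoods $N_0\subset\cdots\subset N_N$ and step-by-step ambient isotopies are a reformulation of that same line of reasoning.

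The gap is exactly the step you flag as ``standard but delicate'': that $N_i\setminus Int(N_{i-1})$ is a \emph{smooth} $n$-ball attached to $N_{i-1}$ along a \emph{smooth} $(n-1)$-disk, so that the inclusion $N_{i-1}\hookrightarrow N_i$ is a trivial product extension. Producing a coherent family of smooth regular neighborhoods of the $\vert K_i\vert$ is precisely the non-trivial content, and the paper's Note~\ref{cross_ratio_note} pins down the obstruction: a smooth triangulation of $W$ is generically \emph{not} locally linearizable --- at a codimension-$2$ cell the adjacent cells meet at varying angles recorded by non-constant cross-ratio functions --- and in general there is no smooth (even $C^1$) vector field simultaneously tangential to every cell. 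This rules out the naive construction of smooth collars of the $\vert K_i\vert$ and makes your disk-attachment step a smoothing theorem in disguise rather than an isotopy-extension formality. The paper moreover observes that the product smoothing theorem is itself a \emph{special case} of Proposition~\ref{cc_pro} (the case of a Whitney product triangulation), so invoking that technology is partially circular if one wants a self-contained statement. The paper's own proof deliberately sidesteps both issues: it builds a smooth Morse function whose critical points are the cell barycenters, uses a pseudogradient $\nabla$ that is a priori only continuous/Lipschitz (because of the non-linearizability), replaces it by a genuine smooth gradient-like field $\tilde\nabla$ via Weierstrass approximation while preserving the relevant stable/unstable intersections, and then cancels the critical pairs corresponding to the elementary collapses --- a different technology entirely.
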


This fact, which Thurston used (\cite{thurston_76}, last lines of
paragraph 7)
 for the same purpose as we do in the present paper, is of
 course covered by the classical theorems of PL topology (I thank Larry Siebenmann
 for pointing this out to me).
  Indeed, by the normal neighborhood theorem
 \cite{hudson_69},
  $K$ is PL-isomorphic with the product $(K\vert V_0)\times\I$.
Then, by the product smoothing theorem (e.g. \cite{hirsch_mazur_74}), $W$ is diffeomorphic with $V_0\times\I$.

If $\dim(W)\ge 6$, alternatively,
a collapsing cobordism being a $s$-cobordism, 
one can apply the $s$-cobordism theorem. This second argument, which does not use the
smoothness of the triangulation, is restricted to the large dimensions.

 Both these arguments involving some rather elaborate results,
  we give a direct, comparatively
  short, and pedestrian proof.
 (We also believe that it is worth giving because the product smoothing theorem
 is
  a particular case of Proposition \ref{cc_pro}: the case where $K$ is a Whitney product
 triangulation).
 
  Our proof consists in building
 a smooth Morse function
on $W$ whose critical points are exactly the barycenters of
 the cells of $K$,
and such that for each (or most)
 of the elementary collapses composing the given global collapse,
 the corresponding
 pair of critical points is cancellable. After the cancellations, one obtains a
noncritical smooth function, and thus a diffeomorphism of $W$ with $V_0\times\I$.
The only subtleties in the construction lie of course in some smoothness topics.

\medbreak
We begin with some elementary tools for handling smooth triangulations.
A simplex of dimension at least $2$ being a smooth manifold with \emph{cornered}
 boundary, the (short) proofs are given.

     For $0\le p\le n$, endow $\R^n$
      with the euclidian coordinates $x_1$, \dots, $x_n$; 
    define the sector $\R^n_p\subset\R^n$ by $x_i\ge 0$ ($n-p+1\le i\le n$);
    and consider for each $n-p+1\le i\le n$
     the hyperface $\eta_i\subset\R^n_p$ defined by $x_i=0$.
 \begin{lem}\label{sector_lem} 
  Every real function $f$ on $\partial{\R^n_p}$ whose restriction
  to each hyperface $\eta_i$ is smooth, extends to a smooth real function on
   ${\R^n_p}$. Moreover, this extension process can be made continuous with respect
   to the smooth topologies on spaces of smooth functions.
\end{lem}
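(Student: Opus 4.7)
The idea is to write down the extension by a closed-form inclusion-exclusion formula over the closed faces of the sector. Let $I := \{n{-}p{+}1,\dots,n\}$. For each nonempty $S \subseteq I$ the stratum
\[
\eta_S := \bigcap_{i\in S}\eta_i
\]
is a closed coordinate sub-sector of $\partial\R^n_p$ of codimension $|S|$, coordinatized by the $x_j$ with $j \notin S$. The hypothesis that $f|_{\eta_i}$ is smooth for every $i \in I$ yields, after restricting once more to substrata, that $f_S := f|_{\eta_S}$ is smooth on $\eta_S$ for each nonempty $S$, together with the compatibility $f_S|_{\eta_T} = f_T$ whenever $S \subseteq T$.

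For each such $S$ let $\pi_S \colon \R^n_p \to \eta_S$ be the linear retraction zeroing the coordinates indexed by $S$ and fixing the others, and define
\[
F(x) := \sum_{\emptyset \ne S \subseteq I}(-1)^{|S|+1}\, f_S\bigl(\pi_S(x)\bigr).
\]
Each summand is smooth on $\R^n_p$ (composition of the smooth linear retraction $\pi_S$ with the smooth function $f_S$), so $F$ is smooth. To check $F|_{\eta_j} = f_j$ for a fixed $j \in I$, notice that on $\{x_j=0\}$ the map $\pi_S$ already kills $x_j$ whenever $j \in S$, while for $j \notin S$ the restriction $\pi_S|_{\{x_j = 0\}}$ coincides with $\pi_{S \cup \{j\}}$. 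Using the compatibility $f_S|_{\eta_{S \cup \{j\}}} = f_{S \cup \{j\}}$, the term indexed by $S$ with $j \notin S$ cancels the term indexed by $S \cup \{j\}$ (they carry opposite signs); the only surviving contribution is $S = \{j\}$, which gives exactly $f_j$.

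The continuity assertion is then immediate: both the restriction $f \mapsto f_S$ and the pullback $g \mapsto g\circ\pi_S$ are continuous for the Fréchet $C^\infty$ topologies, so the finite $\R$-linear combination $f \mapsto F$ is too. No step presents a genuine analytic obstacle; the only part that needs care is the inclusion-exclusion bookkeeping ensuring cancellation on $\{x_j = 0\}$, which is a direct verification from the compatibility relations among the $f_S$.
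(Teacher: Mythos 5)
Your proof is correct and is essentially the same inclusion–exclusion argument the paper uses: the paper's projections $\pr_J$ (orthogonal projection onto the face $\{x_i=0,\ i\in J\}$) are exactly your coordinate-zeroing retractions $\pi_S$, and the paper's identity $\sum_{J\subset I}(-1)^{|J|}f\circ\pr_J=0$ on $\partial\R^n_p$, once the $J=\emptyset$ term is isolated, gives precisely your formula for $F$, with the same pairing $S\leftrightarrow S\cup\{j\}$ used to verify the boundary values. The only difference is presentational — you verify $F|_{\eta_j}=f_j$ directly rather than first stating the alternating-sum identity and rearranging it.
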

   \begin{proof} There is indeed a simple formula, playing with the projections to the faces.
   Let  $I$ be the finite set $n-p+1$, \dots, $n$; for every subset $J\subset I$,
    let $\eta_J\subset{\R^n_p}$ be  the face defined
by $x_i=0$ for every $i\in J$; and let  $$\pr_J:{\R^n_p}\to \eta_J$$ be the orthogonal projection.
It is enough to verify that on $\partial{\R^n_p}$:
  \begin{equation}\label{J_eqn}
  \sum_{J\subset I}(-1)^{\vert J\vert}f\circ\pr_J=0
  \end{equation}
  (Indeed, this equation amounts to express the term $f$, corresponding to $J=\emptyset$,
  as minus the sum of the other terms, which obviously extend smoothly to $\R^n_p$).
  
Fixing $i\in I$, one verifies (\ref{J_eqn})
  on $\eta_i$ by splitting the set  of subsets $\PP(I)$ into pairs $J$,
$J'$ where $i\notin J$ and $J'=J\cup\{i\}$.
 On $\eta_i$, one has $\pr_J=\pr_{J'}$, hence the terms of (\ref{J_eqn})
 cancel by pairs. 
 \end{proof}

 \begin{lem}\label{fe_lem}
  Given a simplicial complex $K$
  and a subcomplex $L\subset K$, every real simplexwise smooth function $f$ on $L$
  extends to a real simplexwise smooth function $F$ on $K$.
   Moreover, the support of $F$ can be contained in any neighborhood of
   the support of $f$; and
   the extension process can be made continuous with respect
   to the smooth topologies on spaces of smooth functions.
   \end{lem}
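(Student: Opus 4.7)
The plan is to build $F$ by induction on the cells of $K\setminus L$, processed in any order refining dimension; cells of $L$ keep their assigned values throughout. At the inductive step a new cell $\sigma$ of dimension $n$ arises whose boundary $\partial\sigma$ already carries a simplexwise smooth extension of $f$ (coming from $L$ and from the previously processed cells), and everything reduces to the task of extending this boundary datum to a smooth function on $\sigma$ itself.

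For such a single-simplex extension, one observes that near each vertex $v$ of $\sigma$, a small neighborhood of $v$ in $\sigma$ is diffeomorphic to a neighborhood of the origin in the sector $\R^n_n$ of Lemma \ref{sector_lem}, with the $n$ hyperfaces $\eta_i$ corresponding exactly to the $n$ faces of $\sigma$ meeting at $v$. Lemma \ref{sector_lem} therefore produces a smooth local extension $F_v$ of $f$ on this vertex neighborhood. Fixing once and for all a smooth partition of unity $\{u_v\}_{v}\cup\{u_W\}$ on $\sigma$ subordinate to the cover by these vertex neighborhoods together with one open set $W$ having compact closure in the interior of $\sigma$, one sets $F:=\sum_v u_v F_v$. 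Since $\sum_v u_v=1$ holds on $\partial\sigma$ (while $u_W$ is supported in the interior of $\sigma$) and each $F_v$ restricts to $f$ on the faces through $v$, the function $F$ is smooth on $\sigma$ and extends $f$.

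Support control is handled by a final multiplication by a plateau function: given any prescribed open neighborhood $N'$ of $\mathrm{supp}(f)$ in $K$, one builds a simplexwise smooth function $\chi$ on $K$ equal to $1$ on a smaller prescribed neighborhood of $\mathrm{supp}(f)$ and vanishing outside $N'$, by the very same inductive construction, starting from its readily-built values on $L$; replacing $F$ by $\chi F$ yields the desired support bound. Continuity in the $C^\infty$ topology is then inherited from the continuity already stated in Lemma \ref{sector_lem}, the partition of unity and the plateau function being fixed once and for all, and finite summation and multiplication by a fixed smooth function being continuous operations. The sole nontrivial ingredient of the proof is thus the sector extension of Lemma \ref{sector_lem}, already settled; the remainder is just a routine local-to-global gluing.
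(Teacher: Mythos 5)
Your argument follows essentially the paper's route: reduce by induction on the cells of $K\setminus L$ to the case of a single simplex $\sigma$ with $L=\partial\sigma$, apply Lemma \ref{sector_lem} on sector neighborhoods, and glue by a partition of unity. The one genuine variation is in the choice of cover: the paper takes a sector neighborhood $U_x$ at \emph{every} boundary point $x\in\partial\sigma$ (the sector type $\R^n_p$ depending on the smallest face through $x$), whereas you use only the $n+1$ vertices, which gives a cleaner finite cover. That choice is sound, but then the word \emph{small} is misleading and, read literally, breaks the argument: for the cover $\{U_v\}_v\cup\{W\}$ to cover $\sigma$ and for $\sum_v u_v\equiv 1$ to hold on $\partial\sigma$ (which you use), each $U_v$ must be the full \emph{open star} of $v$ in $\sigma$, i.e.\ the complement of the closed face opposite $v$. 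In barycentric coordinates, with $v=v_0$, this is $\{(t_1,\dots,t_n):t_i\ge 0,\ \sum t_i<1\}$, which is indeed diffeomorphic to a neighborhood of the origin in the orthant $\R^n_n$ — but it is a large neighborhood, and a genuinely small ball around $v$ would leave the interiors of the higher-dimensional faces of $\partial\sigma$ uncovered. One should also check that the extension of Lemma \ref{sector_lem} makes sense on the whole open star, which it does, since the open star is stable under the orthogonal face projections $\pr_J$ used in that lemma's formula. Finally, for the support bound, your plateau-function idea is fine but the phrase ``by the very same inductive construction'' is both unnecessary and slightly circular; the clean fix is to restrict to $K$ (embedded in Euclidean space as a geometric complex) any ordinary smooth plateau function of the ambient space, which is automatically simplexwise smooth, and multiply by it as you do.
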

   
   \begin{proof} By induction on the number of cells of $K$ not in $L$,
   one is reduced to the case where $K$ is a single cell $\alpha$, and
   $L=\partial\alpha$. Every point $x\in\partial\alpha$ admits in $\alpha$
    an open neighborhood $U_x$
   diffeomorphic to a sector. By Lemma \ref{sector_lem},
    a smooth extension $F_x$ of $f\vert(U_x\cap\partial\alpha)$ exists on $U_x$.
    Pick any mathematical object $*\notin\partial\alpha$;
    put  $X:=\partial\alpha\cup\{*\}$ and $U_*:=Int(\alpha)$; let
     $(u_x)$ ($x\in X$) be a smooth
     partition of the unity on $\alpha$ subordinate to the open cover $(U_x)$  ($x\in X$);
     define
    $$F:=\sum_{x\in\partial\alpha}u_xF_x$$
   \end{proof}

\begin{dfn}\label{isotopy_dfn}
i) A \emph{smooth partial triangulation}
 $K$ of a manifold with boundary $W$ consists of
a finite linear geometric simplicial complex $\vert K\vert$ and
 a topological embedding $h:\vert K\vert
\to W$, such that
\begin{itemize}
\item $h$
 embeds smoothly each cell of $\vert K\vert$ into $W$;
 \item $h\mun(\partial W)$ is a subcomplex of $\vert K\vert$.
 \end{itemize}
 
 ii) If moreover $h(\vert K\vert)=W$, then $K$ is a \emph{smooth triangulation} of $W$.
 
iii) An \emph{isotopy} of the smooth partial triangulation $K$ is
  a continuous family $(h_t)$ ($t\in\I$)
  of topological embeddings $h_t:\vert K\vert
\to W$ such that
\begin{itemize}
\item $h_0=h$;
\item For each cell $\alpha$ of $\vert K\vert$, the restriction $(h_t\vert\alpha)$
 is a smooth isotopy of smooth embeddings
of this cell;
\item $h_t\mun(\partial W)=h\mun(\partial W)$ for every $t\in\I$.
\end{itemize}
iv)  Given an isotopy as in (iii), consider  the simplexwise smooth,
   time-dependant vector field
$v_t:={\partial h_t}/{\partial t}$.
Fix an auxiliary smooth (resp. simplexwise linear) embedding of $W$ (resp. $\vert K\vert$)
in a large-dimensional
 Euclidian space. All Euclidian norms are denoted by $\vert \cdot\vert$.
We define the \emph{$C^1$-norm} of the isotopy as
$$\sup_{x\in\alpha}(\vert v_t(x)\vert+\vert\partial_\alpha v_t(x)\vert)$$
where $\partial_\alpha v_t(x)$ is
 the spatial differential of $v_t\vert\alpha$ at $x$.
\end{dfn}

\begin{lem}\label{ie_lem}
  Given a smooth partial triangulation $K$ of a manifold $W$
  and a subcomplex $L\subset K$, every  isotopy $(h_t)$ of $L$
  which is $C^1$-small enough,
   extends to some 
   isotopy $(H_t)$ of $K$ which we can make arbitrarily $C^1$-small.
    Moreover, the support of $(H_t)$ can be contained in any
   neighborhood of the support of $(h_t)$.
   \end{lem}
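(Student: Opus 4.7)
The plan is to extend the isotopy cell by cell. Order the cells of $K$ not in $L$ by nondecreasing dimension as $\alpha_1,\ldots,\alpha_m$, so that each $\partial\alpha_j$ lies in $L\cup\alpha_1\cup\cdots\cup\alpha_{j-1}$. By induction on $j$, the problem reduces to the single-cell step: given a subcomplex $L'$ already containing $L$, an isotopy $(h'_t)$ of $L'$ extending $(h_t)$, and a cell $\alpha\in K$ with $\partial\alpha\subset L'$, extend $(h'_t)$ over $\alpha$ with the required smallness and support control.

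For this step, since $h\vert\alpha$ is a smooth embedding of a simplex-with-corners, I would choose a smooth parametrization $\Phi:\nb_{\R^n}(\alpha)\hookrightarrow W$ of a neighborhood of $h(\alpha)$ in $W$, with $n=\dim W$, realizing $\alpha$ as a linear simplex sitting in a coordinate subspace $\R^{\dim\alpha}\subset\R^n$, such that $\Phi\vert\alpha=h\vert\alpha$; if $\alpha\subset h\mun(\partial W)$, arrange moreover that $\Phi$ sends a coordinate hyperplane into $\partial W$. For $(h_t)$ $C^1$-small enough, the track $h'_t(\partial\alpha)$ remains inside $\Phi(\nb_{\R^n}(\alpha))$, so $g_t:=\Phi\mun\circ(h'_t\vert\partial\alpha)$ is a well-defined simplexwise smooth isotopy of $\partial\alpha$ into $\nb_{\R^n}(\alpha)$ with $g_0$ the inclusion.

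The velocity $\partial g_t/\partial t$ is, at each time, a simplexwise smooth $\R^n$-valued function on $\partial\alpha$ depending continuously on $t$. Apply Lemma \ref{fe_lem} componentwise to obtain a time-dependent simplexwise smooth extension $V_t:\alpha\to\R^n$, with support in any prescribed neighborhood of $\supp(\partial g_t/\partial t)$ and $C^1$-norm controlled by that of $\partial g_t/\partial t$ (thanks to the continuity of the extension process in the smooth topology). Integrating from $G_0=\mathrm{id}_\alpha$ yields a simplexwise smooth isotopy $G_t:\alpha\to\nb_{\R^n}(\alpha)$, well-defined and small for $(h_t)$ small enough. Define $H_t\vert\alpha:=\Phi\circ G_t$; by construction this matches $h'_t$ on $\partial\alpha$ and restricts to a smooth embedding of $\alpha$.

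Two points deserve attention. First, \emph{corner smoothness}: each subsimplex of $\alpha$ is a manifold with corners, and one must guarantee that the extension produced componentwise is simplexwise smooth on all of $\alpha$, including across subfaces; this is precisely where the sector-extension content of Lemma \ref{sector_lem}, propagated through Lemma \ref{fe_lem}, is indispensable. Second, the \emph{global embedding} property of $H_t:\vert K\vert\to W$: since $h$ is an embedding of the compact complex $\vert K\vert$ and the set of embeddings is open in the simplexwise $C^1$-topology, any sufficiently $C^1$-small $(H_t)$ is automatically an embedding; moreover the boundary condition $H_t\mun(\partial W)=h\mun(\partial W)$ is preserved by the boundary-respecting choice of the charts $\Phi$. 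I expect the main obstacle to be this corner-compatibility of the cellwise extensions, which is why the proof routes through Lemma \ref{fe_lem} rather than invoking a standard smooth isotopy extension theorem.
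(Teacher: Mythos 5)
Your argument reproduces the paper's proof: reduce to a single cell $\alpha$ with $\partial\alpha\subset L$ by induction, work in a local coordinate chart identifying $\nb_W(h(\alpha))$ with an open subset of $\R^n$ (or $\R^n_1$), extend the time-dependent velocity field over $\alpha$ via Lemma \ref{fe_lem} with the stated support and $C^1$-smallness control, integrate, and invoke openness of embeddings in the simplexwise $C^1$-topology on the compact complex to conclude that the global map remains one-to-one. The boundary-preservation remark and the emphasis on Lemma \ref{sector_lem}/\ref{fe_lem} handling corner-compatibility also match the paper's treatment, so this is the same proof.
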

   \begin{proof} By induction on the number of cells of $K$ not in $L$,
   one is reduced to the case where $K=L\cup\alpha$ for a cell $\alpha$ such that
   $\partial\alpha=\alpha\cap L$. Since everything takes place in $W$ in a small neighborhood
   of the embedded simplex $\alpha$, we can assume that $W$ is the half space $\R^n_1$.

    The time-dependant vector field $\partial h_t/\partial t$ amounts over
     $\partial\alpha$ to a simplexwise smooth $1$-parameter family of maps
    $v_t:\partial\alpha\to\R^n$. By
    Lemma \ref{fe_lem}, this family extends to a simplexwise smooth
     $1$-parameter family of maps $V_t:\alpha\to\R^n$. In case $h(\alpha)\subset
     \partial W=\R^{n-1}$, one has $v_t(\partial\alpha)\subset\R^{n-1}$, hence one can
     choose $V_t$ to take values in $\R^{n-1}$. We can choose the values $V_t(x)$
     and the spacial differential $\partial_\alpha V_t$ to be arbitrarily close to $0$ on $\alpha$,
     provided that those of $v_t$ are close enough to $0$ on each proper face of $\alpha$.
    We can also choose $V_t$ to be supported in an arbitrary neighborhood
    of the support of $v_t$. Integrating the time-dependant vector field $V_t$,
     one obtains an isotopy of immersions ${h_t^\alpha}:\alpha\to W$
     such that each $h_t^\alpha$ is 
    close to $h\vert\alpha$ \emph{in the $C^1$ topology.} If close enough, then  
     the global map $$H_t:=h_t\cup h_t^\alpha:K\to W$$
      is one-to-one, hence
     a simplexwise smooth global
    {embedding}.
   \end{proof}
   
   Recall that a \emph{local coordinate chart}
    for $W$ at a point $y$ is a smooth diffeomorphism $\phi$
    between an open neighborhood of $y$ in $W$ and an open neighborhood
    of $\phi(y)$ in the half space $\R_1^n$.
    
\begin{dfn}\label{linear_dfn} The smooth partial
 triangulation $K$ of the $n$-manifold $W$ is \emph{linear}
at the point $y=h(x)$ with respect to the
 local coordinate chart $\phi$ if 
  the germ of  $\phi\circ h$ at $x$ is linear in every cell of $\vert K\vert$
   through $x$. We call $K$ \emph{linearizable} at $y$ if there exists such a
   local coordinate chart.
  \end{dfn}
  
  \begin{note}\label{cross_ratio_note}
  Beware that the linearizability
   property \emph{generically fails} provided that the combinatorial link of $x$
  in $K$ is complicated enough --- here, ``generically'' refers to the simplexwise smooth
  embedding $h$. Indeed, the linear representations of quivers come into play.
  
   For a simple example, consider a smooth
     triangulation $K$ of a manifold of dimension $n\ge 3$, and
    a codimension-$2$ cell $\alpha$ of $K$ lying in the boundary of at least
  four codimension-$1$ cells $\eta_i$ of $K$ ($1\le i\le 4$).
  At every point $y\in\alpha$,
  the four lines $\tau_y\eta_i/\tau_y\alpha$ have in the $2$-plane $\tau_yW/\tau_y\alpha$
  a cross-ratio $c(y)\in\R\setminus\{0,1\}$; and the function $c$ on $\alpha$ is in general not constant 
  on any neighborhood of a given point $y_0\in\alpha$. Then, $K$ is not linearizable at $y_0$.
  
  In the same spirit, note that for
  any vector field $\nabla$ of class $C^1$
  which is tangential to each cell of $K$, the derivate $\nabla\cdot c$ must vanish identically
  on $\alpha$. For a generic position of $\eta_1$, \dots, $\eta_4$, the function $c$ is Morse
  on $\alpha$. Then,
  $\nabla$ must be tangential to its level sets, and vanish at the critical points. If $\alpha$
  is contained in at least $n+1$ codimension-$1$ cells, then
  one gets a number $\dim(\alpha)$ of such cross-ratio functions on $\alpha$;
   and for a generic position of these cells,
  $\nabla$ must vanish identically on $\alpha$.
  \end{note}
  
  \begin{lem}\label{linearizing_lem}
   Given a smooth partial triangulation $K=(\vert K\vert, h)$ of a manifold $W$,
   a point $y=h(x)$, and a local coordinate chart $\phi:\nb_W(y)\to\R_1^n$,
    there is an isotopy $(h_t)$ of smooth partial triangulations
   such that $h_0=h$ and that $h_1$ is linear at $y$ with respect to $\phi$.
   
    Moreover, one can choose the isotopy
   to be  supported in an arbitrarily small neighborhood of $x$.

  Given moreover a
   subcomplex $L\subset K$ containing $x$ which
    is already linear at $y$ with respect to $\phi$,
   there is such an isotopy $(h_t)$ which is stationary on $L$.
   
  \end{lem}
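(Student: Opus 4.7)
The plan is to work entirely in the chart $\phi$, so that we may assume $W=\R^n_1$, $y=0$, and that $h$ itself is a simplexwise smooth embedding. Writing $\Phi:=\phi\circ h$, for each cell $\alpha$ of $K$ through $x$ let $\ell_\alpha:\tau_x\alpha\to\R^n$ denote the differential $D_x(\Phi\vert\alpha)$, and let $L_\alpha(x'):=\ell_\alpha(x'-x)$ be the induced affine map (using $\Phi(x)=0$). Because $\ell_\alpha$ is the derivative of $\Phi\vert\alpha$ at $x$, we have the Taylor estimate $L_\alpha(x')-\Phi(x')=O(|x'-x|^2)$ on $\alpha$. Moreover, whenever $\beta\subset\alpha$ is a face through $x$, $\ell_\alpha$ restricts on $\tau_x\beta$ to $\ell_\beta$, so $L_\alpha\vert\beta=L_\beta$. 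The $L_\alpha$'s therefore define a well-matched piecewise linear replacement of $\Phi$ on the star of $x$. If $\alpha\subset h^{-1}(\partial W)$, then $\ell_\alpha$ takes values in $\R^{n-1}$, so the boundary condition is automatically preserved.

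The construction proceeds by interpolating with a bump. Fix $\epsilon>0$ small and a smooth $\beta\colon[0,\infty)\to[0,1]$ with $\beta(r)=1$ for $r\le\epsilon/2$ and $\beta(r)=0$ for $r\ge\epsilon$. On each cell $\alpha$ through $x$ set
$$\Phi_t\vert\alpha(x'):=\Phi(x')+t\,\beta(|x'-x|)\bigl(L_\alpha(x')-\Phi(x')\bigr),$$
and $\Phi_t=\Phi$ on cells not through $x$. Three things are then clear. First, at $t=1$ and $|x'-x|\le\epsilon/2$, $\Phi_1\vert\alpha=L_\alpha$, so $h_1:=\phi^{-1}\circ\Phi_1$ is linear at $y$ in the chart $\phi$. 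Second, the isotopy is supported in the preimage of the $\epsilon$-ball around $x$, which, for $\epsilon$ sufficiently small, lies in $h^{-1}(\nb_W(y))$ and meets only cells through $x$; compatibility on common faces comes from $L_\alpha\vert\beta=L_\beta$, so $\Phi_t$ is genuinely a simplexwise smooth map. Third, the perturbation term is bounded in value by $O(\epsilon^2)$ and in spatial derivative by $O(\epsilon)$ on every cell (the quadratic factor absorbing the loss of one derivative from $\beta$). Consequently $\Phi_t$ is $C^1$-close to $\Phi$ on every cell, hence a simplexwise smooth embedding for $\epsilon$ small enough; if needed, a genuine global extension to $K$ is then provided by Lemma \ref{ie_lem}, but in fact the isotopy as defined is already the identity on cells not meeting the $\epsilon$-ball.

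For the relative version, note that if $L\subset K$ is already linear at $y$ with respect to $\phi$, then on each cell $\alpha\in L$ containing $x$ the formula $\Phi\vert\alpha\equiv L_\alpha$ holds on a neighborhood of $x$; outside that neighborhood the factor $\beta(|\cdot-x|)$ vanishes for $\epsilon$ chosen small enough; hence $L_\alpha-\Phi$ is multiplied by either $0$ or by something that is itself $0$, and the isotopy is stationary on $L$. The one nontrivial technical point, and the main obstacle in the argument, is keeping $\Phi_t$ an embedding; this reduces to the quadratic vanishing $L_\alpha-\Phi=O(|x'-x|^2)$, which makes the $C^1$-norm of the perturbation go to $0$ as $\epsilon\to0$ and thus lets us invoke the standard fact (or Lemma \ref{ie_lem}) that sufficiently $C^1$-small perturbations of a smooth embedding of each cell remain embeddings. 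Applying $\phi^{-1}$ to $\Phi_t$ gives the desired isotopy $h_t$.
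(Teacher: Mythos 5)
Your proof is correct, but it takes a genuinely different route from the paper's. The paper proceeds by induction on the cells of $K$ not in $L$ that contain $x$: at each step it picks a minimal such cell $\alpha$, extends the $1$-jet of $\phi\circ h\vert\alpha$ at $x$ to a linear embedding $\ell$ that agrees with $\phi\circ h$ on $\partial\alpha\cap\nb_\alpha(x)$ (the faces are already linear by the inductive hypothesis), interpolates linearly on $\nb_\alpha(x)$, invokes the ordinary isotopy extension property to globalize over $\alpha$, and then calls Lemma \ref{ie_lem} to propagate to the rest of $K$. You instead replace all cells through $x$ simultaneously by a single bump-function interpolation, with no induction, and the crucial observation enabling this is one you state explicitly: by the chain rule, the first-order jets satisfy $\ell_\alpha\vert\tau_x\beta=\ell_\beta$ for every face $\beta\subset\alpha$ through $x$, so the affine replacements $L_\alpha$ are automatically compatible on common faces, and the formula for $\Phi_t$ is a well-defined simplexwise-smooth map. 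Both proofs then hinge on the same $C^1$-smallness estimate (quadratic vanishing of $L_\alpha-\Phi$ at $x$, absorbing the $O(1/\epsilon)$ from the cutoff derivative) to guarantee one remains within embeddings. What your approach buys is directness: it avoids re-invoking Lemma \ref{ie_lem} at every inductive step, and the rel-$L$ clause becomes a one-line observation (if $\Phi\vert\alpha$ is already affine near $x$ then the perturbation vanishes there). What the paper's inductive scheme buys is uniformity of style with the surrounding lemmas (\ref{fe_lem}, \ref{ie_lem}, which are also proved cell by cell) and a slightly softer reliance on precise estimates, since the isotopy-extension step absorbs most of the bookkeeping.

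Two small remarks. First, a notational collision: you use $\beta$ both for the cutoff function and for faces; harmless but worth fixing. Second, you should make explicit (as the paper tacitly does via the isotopy extension property) why the perturbed map still satisfies $h_t^{-1}(\partial W)=h^{-1}(\partial W)$: you verify that cells lying in $h^{-1}(\partial W)$ stay in $\partial W$ because $\ell_\alpha$ takes values in $\R^{n-1}$, but one must also check that no interior points of other cells get pushed into $\R^{n-1}$; this again follows from $C^1$-closeness of $\Phi_t$ to $\Phi$ on each cell (it is a transversality, not merely a $C^0$, statement), and deserves a word.
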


  \begin{proof} 
  By induction on the number of
   cells of $\vert K\vert$ not in $\vert L\vert$ and containing $x$. Consider a minimal such
    cell $\alpha$; in particular, $$\partial\alpha\cap St(x)=\alpha\cap\vert L\vert\cap St(x)$$
    where $St(x)$ is the open star of $x$ (the interior of the union of the cells of $\vert K\vert$
    containing $x$). 
  The $1$-jet  at $x$ of the smooth embedding
  $\phi\circ h\vert\alpha$
 extends to a linear embedding $\ell:\alpha\hookrightarrow\R^n$ 
  coinciding with $\phi\circ h$ on $\partial\alpha\cap\nb_{\alpha}(x)$.
   The obvious local isotopy of embeddings $$h_t(z):=\phi\mun((1-t)\phi(h(z))+t\ell(z))
\   (z\in\nb_\alpha(x), t\in\I)$$
 is arbitrarily $C^1$-small (Definition \ref{isotopy_dfn}, iv)
  on a small enough neighborhood of $x$,
  and stationary on $\nb_{\partial\alpha}(x)$.
    By the ordinary isotopy extension property for embeddings,
     the germ of $(h_t)$ at $x$ extends to a global isotopy of embeddings
      $h_t^\alpha:\alpha\hookrightarrow W$,
    stationary on $\partial\alpha$. Clearly, $h_t^\alpha$ can be chosen to be
    supported
    in an arbitrarily small neighborhood of $x$, arbitrarily $C^1$-small,
    and stationary on $\vert L\vert$.
    One thus gets a $C^1$-small isotopy of $L\cup\alpha$ stationary on $L$
    and coinciding with $(h_t^\alpha)$ on $\alpha$, which extends to the rest
    of $K$ with support
    in an arbitrarily small neighborhood of $x$ (Lemma \ref{ie_lem}).
 \end{proof}

We now begin to prove Proposition \ref{cc_pro}.
Let $W$, $V_0$, $V_1$, $K$ be as in this proposition.
First, we somehow normalize $K$ close to the upper boundary
 $V_1$.

\begin{lem}\label{corona_lem} After changing $K$,
one can arrange that moreover, there is a  {smooth} function $w:W\to[0,1]$
such that
\begin{enumerate}
\item $V_i=w\mun(i)$ ($i=0, 1$);
\item Every critical value of $w$ lies in 
the open interval $(0,1/2)$;
\item The smooth hypersurface $w\mun(1/2)$ is a subcomplex of $K$;
\item The subcomplex $K_{1/2}:=K\vert w\mun([0,1/2])$ collapses onto $V_0$.
\end{enumerate}
\end{lem}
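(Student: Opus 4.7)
The plan is to replace $K$ by a triangulation whose top region is a standard product collar of $V_1$, so that $w$ can be taken as the collar coordinate there and extended smoothly below, with the collapsibility of the lower part inherited from $K$ via a collar-squeeze diffeomorphism.

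I would first fix a smooth collar $c\colon V_1\times[1/2,1]\hookrightarrow W$ with $c(x,1)=x$, set $W_0:=W\setminus c(V_1\times(1/2,1])$, and denote $V_1':=c(V_1\times\{1/2\})$, so $\partial W_0=V_0\sqcup V_1'$. A standard collar-squeeze isotopy of $W$, supported away from $V_0$, provides a smooth diffeomorphism $\phi\colon W\to W_0$ which is the identity on $V_0$ and carries $V_1$ onto $V_1'$. I would then assemble a new smooth triangulation $K^*$ of $W$ by placing the pushforward $\phi(K)$ on $W_0$ and, on the collar $c(V_1\times[1/2,1])$, a prismatic simplicial subdivision of the product $(\phi(K)\vert V_1')\times[1/2,1]$, adjusted (by small isotopies via Lemma~\ref{ie_lem} and a slight reparametrization of $c$) so that the two pieces assemble along $V_1'$ into a bona fide smooth triangulation. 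Replacing $K$ by $K^*$ preserves the original collapsibility hypothesis: the product collar collapses onto $V_1'$, leaving $\phi(K)$, and $\phi(K)\searrow\phi(V_0)=V_0$ because $\phi$ is a diffeomorphism sending free pairs of $K$ to free pairs of $\phi(K)$.

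Next, I would define $w\colon W\to[0,1]$ by $w(c(x,t)):=t$ on the collar and extend smoothly over $W_0$ to a function with $w=0$ on $V_0$, $w=1/2$ on $V_1'$, matching $1$-jets across $V_1'$, and with all interior critical values in $(0,1/2)$; this last constraint is arranged by first picking any smooth Morse extension (generically having transverse boundary behavior and hence no interior critical values at the levels $0$ or $1/2$), and then, if necessary, postcomposing with a self-diffeomorphism of $[0,1/2]$ fixing the endpoints to shift each offending critical value into the open interior. Condition (1) then holds by construction; (3) holds since $w^{-1}(1/2)=V_1'$ is a subcomplex of $K^*$ by choice of the product subdivision; (2) follows because the collar is free of critical points while the $W_0$-critical values have been pushed into $(0,1/2)$; and (4) holds since $K^*\vert w^{-1}([0,1/2])=\phi(K)$ collapses onto $V_0$ as already observed.

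The main obstacle is the simultaneous smooth-and-simplicial matching at the interface $V_1'$: the product prismatic subdivision of the collar and the pushforward $\phi(K)$ on the other side must assemble into a single smooth triangulation in the sense of Definition~\ref{isotopy_dfn}, with each cell smoothly embedded in $W$ and adjacent cells meeting with matching first order along common faces; and the two pieces of $w$ must join smoothly across $V_1'$. Both issues are handled by small isotopic adjustments of $\phi$ and $c$ in a neighborhood of $V_1'$, using the smooth extension and isotopy tools of Lemmas~\ref{fe_lem}, \ref{ie_lem}, and \ref{linearizing_lem}.
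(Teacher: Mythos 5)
Your proof is correct and uses essentially the paper's idea: tack a prismatic (Whitney) subdivision of a product collar onto the top, where $w$ can be taken to be the collar parameter, and note that the collar collapses onto its base so the global collapse hypothesis is preserved. The one genuine difference is packaging: you shrink $W$ into $W_0$ by a collar-squeeze $\phi$ and fill the collar \emph{inside} $W$, whereas the paper attaches $V_1\times\I$ \emph{externally} along the free boundary $V_1$ to form $\hat W\cong W$. The external gluing is cleaner because the interface is exactly $V_1$ itself, where $L\vert(V_1\times 0)=K\vert V_1$ tautologically; there is no interior seam at all.

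One point where you are working harder than needed: you list, as the ``main obstacle,'' that adjacent cells on the two sides of $V_1'$ must ``meet with matching first order along common faces.'' That is not part of Definition~\ref{isotopy_dfn}: a smooth triangulation only requires the global map to be a topological embedding and each \emph{individual} cell to be smoothly embedded. Both $\phi(K)$ on $W_0$ and the prismatic subdivision inside the smooth collar $c$ already satisfy that, and they agree combinatorially along $V_1'$ by construction, so $K^*$ is a smooth triangulation with no extra isotopic adjustment needed. (It is precisely because the paper's smooth triangulations allow first-order mismatch along faces that later, in Lemma~\ref{linearizing_K_lem} and Note~\ref{cross_ratio_note}, nontrivial work is required to linearize $K$ near barycenters.) The only genuine smoothness issue to settle is that of $w$ across $V_1'$, which you correctly handle by matching $1$-jets with the collar function; and the level $1/2$ is then automatically regular, since $w$ is a submersion on the collar.
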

\begin{proof} Consider the triangulation $K\vert V_1$ of the manifold $V_1$;
put an arbitrary total order on the set of its vertices; then the Whitney
subdivision $L$ of the cartesian product
$(K\vert V_1)\times\I$ (see \cite{whitney_57} p. 365) is on
$V_1\times\I$ a smooth triangulation, and $L\vert(V_1\times 0)=K\vert V_1$,
and $L$ collapses onto $V_1\times 0$.
The manifold $\hat W:=W\cup_{V_1}(V_1\times\I)$
is diffeomorphic with $W$, and $\hat W$ admits
the smooth
 triangulation $\hat K:=K\cup_{K\vert V_1}L$. Composing the collapses $L\searrow V_1\times 0$
and $K\searrow V_0$, one gets a global collapse $\hat K\searrow V_0$. The function $\hat w$
on $\hat W$
is defined as $\hat w(x,t):=(1+t)/2$ in $V_1\times\I$, and its extension to the rest of
 $\hat W$ is not less
obvious.
\end{proof}

\begin{lem}\label{linearizing_K_lem}  After changing $K$ again,
one can arrange that moreover,  $K$ is linearizable at the barycenter of each of its cells.
\end{lem}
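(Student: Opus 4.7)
The plan is to linearize $K$ at the barycenters one cell at a time, using a finite sequence of small isotopies each produced by Lemma~\ref{linearizing_lem}.

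Enumerate the cells of $K$ as $\alpha_1,\dots,\alpha_N$ and set $y_i:=h(\flat(\alpha_i))$. Since each $\flat(\alpha_i)$ lies in the relative interior of its own cell $\alpha_i$, the $N$ images $y_i$ are pairwise distinct points of $W$. First I would fix, once and for all, a local coordinate chart $\phi_i$ at each $y_i$ adapted to the stratification of $W$ coming from Lemma~\ref{corona_lem}: $\phi_i$ should send any of the smooth hypersurfaces $V_0$, $V_1$, $w\mun(1/2)$ passing through $y_i$ to a coordinate hyperplane, with $\partial W$ going to $\partial\R_1^n$ when $y_i\in\partial W$.

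Then I would proceed by induction on $i$. Assuming that the current smooth triangulation of $W$ is already linearizable at $y_1,\dots,y_{i-1}$ with respect to the fixed charts $\phi_1,\dots,\phi_{i-1}$, I would apply Lemma~\ref{linearizing_lem} at $y_i$ with chart $\phi_i$, requiring moreover that the support of the isotopy in $\vert K\vert$ be so small that its image in $W$ avoids each other barycenter $y_j$ and is contained in the chart domain of $\phi_i$. Replacing $h$ by the new embedding and iterating, after $N$ steps the triangulation is linearizable at every barycenter, because each subsequent isotopy is the identity on a neighborhood of every previously treated $y_j$, so linearity there persists.

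The final thing to verify is that the four conclusions of Lemma~\ref{corona_lem} are preserved by the process. Condition~(2) is automatic because the smooth function $w$ on $W$ is not touched. Condition~(1) and the fact that each boundary component $V_j$ is preserved follow from Definition~\ref{isotopy_dfn}(iii) together with continuity. Condition~(4) holds because the abstract combinatorial structure of $K$ is untouched: only the embedding $h$ moves. The point that I expect to be the main technical obstacle is preserving condition~(3), namely that $w\mun(1/2)$ remains a subcomplex: when $y_i\in w\mun(1/2)$ the isotopy must slide the cells of the subcomplex $L\subset K$ that triangulates $w\mun(1/2)$ \emph{within} this hypersurface. To handle this I would first linearize $h\vert L$ inside $w\mun(1/2)$ by applying Lemma~\ref{linearizing_lem} to $L$ as a smooth partial triangulation of the manifold $w\mun(1/2)$ (with the chart $\phi_i$ restricted to $\{x_n=0\}$), and then extend that isotopy to all of $K$ using the ``stationary on $L$'' strengthening in Lemma~\ref{linearizing_lem}. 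The same trick, applied separately to the boundary strata $V_0$ and $V_1$, takes care of the cases $y_i\in\partial W$.
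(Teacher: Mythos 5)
Your proof is correct and takes essentially the same approach as the paper: both handle the preservation of Lemma~\ref{corona_lem}~(3) by first linearizing the subcomplex $L=K\vert w\mun(1/2)$ \emph{within} that hypersurface, extending the resulting isotopy of $L$ to all of $K$, and then linearizing the remaining barycenters by isotopies stationary on $L$ (the paper does these as three global passes rather than cell by cell, but the ideas are identical). One small slip worth flagging: the extension of the isotopy of $L$ to an isotopy of $K$ is Lemma~\ref{ie_lem}, not the ``stationary on $L$'' variant of Lemma~\ref{linearizing_lem} --- that variant is the separate \emph{next} step in which $K$ itself is linearized at $y_i$ while keeping the already-linearized $L$ fixed; also, the boundary case $y_i\in\partial W$ needs no separate treatment, since the third bullet of Definition~\ref{isotopy_dfn}(iii) already forces every isotopy of smooth partial triangulations to preserve $\partial W$.
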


\begin{proof} Consider the hypersurface $H:=w\mun(1/2)$
 and its smooth triangulation
$K\vert H$. For every cell $\alpha$ of $K\vert H$,
apply Lemma \ref{linearizing_lem} at the barycenter of $\alpha$.
 One gets a smooth triangulation of $H$
isotopic to $K\vert H$ and linearizable at each barycenter.
Then, extend this isotopy into an isotopy of $K$ supported
in a small neighborhood of  $H$ (Lemma \ref{ie_lem}).
 Finally, applying Lemma \ref{linearizing_lem}
  at the barycenter of each cell of $K$ with $L=K\vert H$,
 one obtains a triangulation linearizable at the barycenter of each cell.
\end{proof}

Each triangulation bears a canonical, piecewise Euclidian, Riemannian metric. Precisely,
for each $d\ge 0$,  the Euclidian metric on $\R^{d+1}$, restricted to
 the standard $d$-simplex $\Delta^d\subset\R^{d+1}$ (the convex hull of the canonical basis),
  is
  invariant by the symmetric group ${\mathfrak S}_{d+1}$.
   One thus obtains a natural Riemannian metric on
 each linear simplex, and in particular on each cell of $\vert K\vert$.
 Its image under $h$ is a Riemannian metric $g_\alpha$ on each cell $\alpha$ of $K$.
Define on $\alpha$ the quadratic function
  $$q_\alpha:x\mapsto\frac{1}{2}\vert x-\flat(\alpha)\vert^2$$
where $\vert -\vert$ denotes of course the $g_\alpha$-distance.
On every face $\beta\subset\alpha$, one has $g_\alpha\vert\beta=g_\beta$ and (after Pythagora):
\begin{equation}\label{pythagora_eqn}
q_\alpha\vert\beta-q_\beta=\text{constant}
 \end{equation}
Still consider any cell $\alpha$ of $K$.
On a small open neighborhood $U_\alpha$ of $\flat(\alpha)$ in $W$,
let $\phi=(x_1,\dots, x_n)$ be a local coordinate chart linearizing $K$
(Definition \ref{linear_dfn} and Lemma \ref{linearizing_K_lem}).
 Put $d:=\dim(\alpha)$ and $a:=\phi(\flat(\alpha))\in\R_1^n$.
Composing $\phi$ with an affine automorphism of $\R^n$ preserving $\R^n_1$,
  we arrange that moreover:
  \begin{itemize}
   \item $\alpha\cap U_\alpha$ is defined in $U_\alpha$ by the equations
   $x_{i}=a_{i}$ ($d+1\le i\le n$);
   \item
       $g_\alpha\vert(\alpha\cap U_\alpha)=
    \sum_{i=1}^ddx_i^2$.
\end{itemize}
 Endow $U_\alpha$ with the quadratic function
   $$f_\alpha:=d-\frac{1}{2}\sum_{i=1}^d(x_i-a_i)^2+\frac{1}{2}\sum_{i=d+1}^n
   (x_i-a_i)^2$$
    (hence, $f_\alpha=q_\alpha+d$ on $\alpha\cap U_\alpha$)
     and with the linear
   vector field
  $$\nabla_\alpha:=\sum_{i=1}^d(x_i-a_i)\partial/\partial x_i-\sum_{i=d+1}^n(x_i
  -a_i)\partial/\partial x_i$$

 \begin{lem}\label{q_lem}\ 
 \begin{enumerate}
  \item $\nabla_\alpha$ is tangential in $U_\alpha$ to every cell
 of $K$ containing $\alpha$;
\item  For every face $\beta\subset\alpha$,
one has $\nabla_\beta\cdot q_\alpha>0$ on $\nb_\alpha(\flat(\beta))\setminus\flat(\beta)$.
\end{enumerate}
\end{lem}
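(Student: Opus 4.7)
For part (1) I would observe that in the $U_\alpha$ chart centered at $\flat(\alpha)$, the field $\nabla_\alpha$ at the point $\flat(\alpha)+y$ equals $Dy$, where $D$ is the diagonal operator on $\R^n$ with $+1$-eigenspace $T_\alpha = \R^d\times\{0\}$ and $-1$-eigenspace $\{0\}\times\R^{n-d}$. Any cell $\gamma$ of $K$ containing $\alpha$ is, in this chart, an affine plane through $\flat(\alpha)$ whose direction $T_\gamma$ contains $T_\alpha$; consequently $T_\gamma = T_\alpha \oplus (T_\gamma \cap (\{0\}\times\R^{n-d}))$ is a direct sum of subspaces of the two $D$-eigenspaces, hence $D$-invariant, and $\nabla_\alpha$ is tangent to $\gamma$.

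For part (2) I would first apply (1) with $\beta$ and $\alpha$ playing the roles of $\alpha$ and $\gamma$, so that $\nabla_\beta$ restricts to a linear vector field $L$ on $\alpha \cap U_\beta$, vanishing at $\flat(\beta)$, equal to $+\id$ on $T_\beta$ and $-\id$ on the complement $W := T_\alpha \cap (\{0\}\times\R^{n-e})$. Set $w := \flat(\alpha) - \flat(\beta) \in T_\alpha$. Expanding the relation (\ref{pythagora_eqn}), i.e.\ $q_\alpha\vert\beta - q_\beta$ is constant, as a polynomial in $u_\parallel \in T_\beta$ and matching the linear term would give $g_\alpha(w, u_\parallel) = 0$ for every $u_\parallel \in T_\beta$; that is, $w$ is $g_\alpha$-orthogonal to $T_\beta$.

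The key computation is then, for $p = \flat(\beta) + u$ in $\alpha$ near $\flat(\beta)$ with $u = u_\parallel + u_\perp \in T_\beta \oplus W$:
\begin{equation*}
\nabla_\beta \cdot q_\alpha(p) \;=\; g_\alpha(u-w,\, Lu) \;=\; g_\alpha(u_\parallel, u_\parallel) \,-\, g_\alpha(u_\perp, u_\perp) \,+\, g_\alpha(w, u_\perp),
\end{equation*}
using the symmetry of $g_\alpha$ (to cancel the cross-terms $g_\alpha(u_\parallel, u_\perp)$) and the orthogonality just established (to kill the $g_\alpha(w, u_\parallel)$ term). The inward tangent cone of $\alpha$ at the boundary point $\flat(\beta)$ is $T_\beta \oplus C$, where $C \subset W$ is the convex cone generated by $c_i := \pi_W(v_i - \flat(\beta))$ as $v_i$ ranges over the vertices of $\alpha$ not in $V_\beta$, with $\pi_W : T_\alpha \to W$ the projection along $T_\beta$.

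The real obstacle will be to show $g_\alpha(w, u_\perp) > 0$ for every $u_\perp \in C \setminus \{0\}$; by linearity this reduces to $g_\alpha(w, c_i) > 0$ for each $i$. Since $c_i - (v_i - \flat(\beta)) \in T_\beta$ and $w \perp_{g_\alpha} T_\beta$, this in turn reduces to $g_\alpha(w, v_i - \flat(\beta))$. By the isometry group of $\alpha$ (equilateral $d$-simplex) fixing $\beta$ setwise, which acts transitively on the vertices of $\alpha$ not in $V_\beta$, this quantity takes a common value $\lambda$ over all $v_i \notin V_\beta$. Summing and using $\sum_{v_i \notin V_\beta}(v_i - \flat(\beta)) = (d+1)w$ (immediate from the definitions of $\flat(\alpha)$ and $\flat(\beta)$) yields $(d-e)\lambda = (d+1)\,g_\alpha(w,w) > 0$, whence $\lambda > 0$. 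Conclusion (2) then follows: on $\beta$ (where $u_\perp = 0$) the expression reduces to $g_\alpha(u_\parallel, u_\parallel) > 0$ for $u_\parallel \neq 0$; off $\beta$ (where $u_\perp \in C \setminus \{0\}$) the positive linear term $g_\alpha(w, u_\perp)$ dominates the quadratic correction $-g_\alpha(u_\perp, u_\perp)$ on a small enough neighborhood of $\flat(\beta)$ in $\alpha$.
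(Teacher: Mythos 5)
Your proof is correct. The paper declares both verifications ``immediate'' and provides no argument, so there is nothing to compare against; I assess your proposal on its own terms. Part (1) is indeed immediate once one notes, as you do, that in the chart at $\flat(\alpha)$ the span $T_\gamma$ of a cell $\gamma\supset\alpha$ decomposes as $T_\alpha\oplus\bigl(T_\gamma\cap(\{0\}\times\R^{n-d})\bigr)$, a sum of two eigenspaces of the linear field. For part (2), the exact identity $\nabla_\beta\cdot q_\alpha=g_\alpha(u_\parallel,u_\parallel)-g_\alpha(u_\perp,u_\perp)+g_\alpha(w,u_\perp)$ in the $\beta$-chart, together with the $g_\alpha$-orthogonality of $w:=\flat(\alpha)-\flat(\beta)$ to $T_\beta$ which you correctly extract from~(\ref{pythagora_eqn}), is the natural computation, and you have isolated the one non-formal point: strict positivity of the linear term $g_\alpha(w,\cdot)$ on the tangent cone $C$ of $\alpha$ at $\flat(\beta)$ projected to $W$. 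Your resolution --- that $g_\alpha(w,v_i-\flat(\beta))$ is independent of the choice of vertex $v_i$ of $\alpha$ not in $\beta$, since the isometry group of the Euclidean simplex stabilizing $\beta$ permutes such vertices transitively while fixing $g_\alpha$, $w$ and $\flat(\beta)$, and that the barycentric identity $\sum_{v_i\notin\beta}(v_i-\flat(\beta))=(d+1)w$ forces this common value to be positive --- is clean and closes the gap. The final estimate, where the positive linear term beats the $-g_\alpha(u_\perp,u_\perp)$ correction and $g_\alpha(u_\parallel,u_\parallel)\ge 0$ takes care of the remaining case $u_\perp=0$, is also correct.
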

The verifications are immediate.

\begin{lem}\label{gradient_lem} There are
 a global,  \emph{continuous}
 vector field $\nabla$ on $W$, and real values $0<\tau_0<1/2<\tau_1<1$,
  such that for every cell $\alpha\in K$:
  \begin{enumerate}
  \item $\nabla$ coincides with $\nabla_\alpha$ on some neighborhood of $\flat(\alpha)$
  in $W$;
  \item $\nabla$ is tangential to $\alpha$ and smooth on $\alpha$;
  \item
  $\nabla\cdot q_\alpha>0$ on $Int(\alpha)\setminus\flat(\alpha)$;
    \item $w(\flat(\alpha))\notin(0,\tau_0]\cup(1/2,\tau_1]$;
  \item Every critical value of $w$ is $>\tau_0$;
  \item $\nabla\cdot w<0$ on $w\mun(\tau_i)$ ($i=0,1$).

  \end{enumerate}
  \end{lem}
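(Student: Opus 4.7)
The plan is to first fix the levels $\tau_0,\tau_1$, then build $\nabla$ by induction on cell dimension from the locally defined pieces $\nabla_\alpha$, and finally modify $\nabla$ near $w\mun(\tau_0)\cup w\mun(\tau_1)$ to secure property (6).

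For the levels: by Lemma \ref{corona_lem}(2) the critical values of $w$ form a finite subset of $(0,1/2)$, and the set $\{w(\flat(\alpha))\}$ is finite. I would pick $\tau_0\in(0,1/2)$ smaller than every critical value of $w$ and than every positive value $w(\flat(\alpha))$, and $\tau_1\in(1/2,1)$ smaller than every value $w(\flat(\alpha))$ that exceeds $1/2$. This secures (4) and (5); $w\mun(\tau_0)$ and $w\mun(\tau_1)$ are then smooth regular hypersurfaces of $W$, and after shrinking each $U_\alpha$ if necessary I can assume they are disjoint from every $U_\alpha$.

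For the vector field, I would proceed by induction on $d=\dim(\alpha)$, starting from $\nabla(v)=0=\nabla_v(v)$ at each vertex. Suppose $\nabla$ is already defined on the $(d-1)$-skeleton, simplexwise smooth, tangential to every cell, with germ $\nabla_\beta$ at each $\flat(\beta)$ for $\dim(\beta)\le d-1$, and with $\nabla\cdot q_\beta>0$ on $Int(\beta)\setminus\{\flat(\beta)\}$. For a $d$-cell $\alpha$, on each face $\beta\subset\partial\alpha$ the boundary datum $\nabla$ is tangent to $\beta$, so by \eqref{pythagora_eqn} one has $\nabla\cdot q_\alpha=\nabla\cdot q_\beta>0$ on $Int(\beta)\setminus\{\flat(\beta)\}$, while Lemma \ref{q_lem}(2) gives $\nabla_\beta\cdot q_\alpha>0$ on a punctured $\alpha$-neighborhood of each $\flat(\beta)$ with $\beta\subsetneq\alpha$. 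Prescribe now $\nabla=\nabla_\alpha$ on a small neighborhood of $\flat(\alpha)$ in $\alpha$, and extend to all of $\alpha$ by a simplexwise smooth partition of unity $(\chi_\beta)_{\beta\subseteq\alpha}$ subordinate to the open stars $St_\alpha(\flat(\beta))$ in the first barycentric subdivision of $\alpha$, setting $\nabla|\alpha=\sum_\beta\chi_\beta\,\nabla_\beta$; each $\nabla_\beta$ is tangent to $\alpha$ by Lemma \ref{q_lem}(1). Shrink the supports so tight around each $\flat(\beta)$ that Lemma \ref{q_lem}(2) applies on each $\supp(\chi_\beta)$; then at every $x\in Int(\alpha)\setminus\{\flat(\alpha)\}$ each nonzero term $\chi_\beta(x)\,\nabla_\beta(x)\cdot dq_\alpha(x)$ is positive, so is the sum, giving (3). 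Cross-cell compatibility and simplexwise smoothness at each step are provided by Lemma \ref{fe_lem}; this completes the induction and yields (1), (2), (3).

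Finally, for (6), build a simplexwise smooth vector field $Y$ on $W$, supported in a thin bicollar $B$ of $w\mun(\tau_0)\cup w\mun(\tau_1)$ chosen disjoint from every $U_\alpha$, tangential to each cell, and with $Y\cdot w<0$ on the two hypersurfaces: cellwise, take the negative of the simplexwise gradient of $w$ in the piecewise Euclidean metric, multiplied by a bump supported in $B$, and assemble via Lemma \ref{fe_lem}. Replace $\nabla$ by $\nabla+\lambda Y$; since $B$ is compact and disjoint from every $\flat(\alpha)$, property (3) gives a positive lower bound for $\nabla\cdot q_\alpha$ on $B$, which survives for $\lambda$ small, while $(\nabla+\lambda Y)\cdot w<0$ on the two level sets for $\lambda$ taken large enough relative to the maximum of $\nabla\cdot w$ there. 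The hard part will be balancing these two scale constraints simultaneously on each cell; it is resolved by first shrinking the support of $Y$ very close to the hypersurfaces, so that the interference of $Y$ with condition (3) is minimized, and then scaling its magnitude to dominate $\nabla\cdot w$ on the two compact levels without breaking (3).
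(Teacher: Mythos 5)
Your choice of $\tau_0,\tau_1$ and the overall plan (induction on cell dimension, partition of unity) follow the paper's strategy, but the execution of the inductive step has a genuine gap. The vector fields $\nabla_\beta$ are only defined on the small linearizing charts $U_\beta$ around the barycenters $\flat(\beta)$. A partition of unity $(\chi_\beta)_{\beta\subseteq\alpha}$ subordinate to the barycentric open stars must have supports covering $\alpha$, so you cannot ``shrink the supports so tight around each $\flat(\beta)$'' that they lie in the $U_\beta$; conversely, if the supports do lie in the $U_\beta$, the formula $\sum_\beta\chi_\beta\nabla_\beta$ leaves the bulk of $\alpha$ — everything away from the barycenters — without any vector field. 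Your appeal to Lemma \ref{fe_lem} fills that region, but Lemma \ref{fe_lem} is a bare extension lemma and gives no control over the sign of the derivative along $q_\alpha$, so (3) is not established. What is missing is a local piece defined on \emph{all} of $Int(\alpha)$ with $X\cdot q_\alpha>0$; the natural candidate is the simplexwise Riemannian gradient $r_\alpha:=\mathrm{grad}_{g_\alpha}q_\alpha$, which the paper uses for exactly this role, together with local pieces at the non-barycenter boundary points obtained from the inductive datum $\nabla\vert\partial\alpha$ via Lemma \ref{sector_lem}. (Your formula also does not visibly restrict on $\partial\alpha$ to the $\nabla$ already built there, which the paper secures by taking the boundary pieces $X_x$ to \emph{coincide} with that datum.)

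Your post-hoc treatment of (6) is also not a proof as written. The ``tension'' you describe — $\lambda$ must be small to preserve (3) but large to dominate $\nabla\cdot w$ — is not resolved by thinning the support of $Y$, because the values of $Y$ and of $\nabla\cdot q_\alpha$ on the level set $w^{-1}(\tau_i)$ itself are unchanged by that thinning. In fact the tension disappears for a different reason: with $Y=-\chi\,\mathrm{grad}_{g_\alpha}w$ one computes $Y\cdot q_\alpha=-\chi\langle\mathrm{grad}_{g_\alpha}w,\,r_\alpha\rangle=-\chi\,r_\alpha\cdot w$, and the paper's first step (the ``Claim'' inside the proof) shows $r_\alpha\cdot w<0$ on $\alpha\cap w^{-1}(\tau_i)$ for $\tau_i$ close enough to $i/2$, so $Y\cdot q_\alpha>0$ there and adding $\lambda Y$ only improves (3). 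Without that sign observation, your argument does not close; with it, one may as well build (6) directly into the inductive construction, which is what the paper does.
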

  \begin{proof}
   On every cell $\alpha$
  of $K$, let the vector field $r_{\alpha}$ be the (ascending)
    $g_\alpha$-gradient of
   $q_\alpha$.
   
    Let us first fix $\tau_0$ and $\tau_1$. Recall that $w\mun(0)=V_0$ and $w\mun(1/2)$
    are subcomplexes of $K$.
    
    Claim:
     \emph{For $i=0,1$, let $\alpha$ be a cell of $K$ contained in $w\mun[i/2,1]$
     but not in $w\mun(i/2)$. Then, 
    $r_\alpha\cdot w<0$ on $\alpha\cap w\mun(i/2)$.}
    
     Indeed, the intersection $\alpha\cap w\mun(i/2)$, if not empty,
      is a proper face of $\alpha$. At any point $x$ on  this face,
     the vector field $r_\alpha$ exits $\alpha$, transversely
     to every hyperface of $\alpha$ containing $x$. On the other hand,
      in the tangent vector space
     $\tau_xW$, the convex cone tangent to $\alpha$ admits $\ker(d_xw)$
     as a supporting hyperplane at point $0$.
    Hence, $r_\alpha\cdot w<0$
   at $x$. The claim is proved.
 
  For $i=0, 1$, we choose $\tau_i>i/2$ so close to $i/2$ that (5) is satisfied and that 
     for every cell $\alpha$ of $K$, (4) is satisfied,
    and (thanks to the claim)
  $r_\alpha\cdot w<0$ on $(w\vert\alpha)\mun(\tau_i)$.
  
  \medbreak
   One builds $\nabla$ cell after cell,
   by induction on the dimensions of the cells. Let $\alpha$ be a cell of $K$;
   assume by induction that a continuous vector field $\nabla$ is already
  defined on $\partial\alpha$; that (1), (2), (3) are satisfied for every proper face
  of $\alpha$ instead of $\alpha$; and that (6) holds at every point of
   $\partial\alpha\cap w\mun(\tau_i)$ ($i=0, 1$).
   
   The extension of $\nabla$ to $\alpha$ will be a local construction:
   we shall first define, for every point
   $x\in\alpha$, a vector field $X_x$ on an open neighborhood $N_x$ of $x$ in $\alpha$.
   The construction of $X_x$
    depends on $x$ being the barycenter of $\alpha$, or the barycenter of
   a proper face, or not a face barycenter.
  
    On $N_{\flat(\alpha)}:=Int(\alpha)$, let $X_{\flat(\alpha)}:=r_\alpha$. In particular,  $X_{\flat(\alpha)}\cdot q_\alpha>0$
    on $N_{\flat(\alpha)}\setminus\flat(\alpha)$.

  For each proper face $\beta\varsubsetneq\alpha$, let $N_{\flat(\beta)}$
  be a small open
  neighborhood of $\flat(\beta)$ in $\alpha$ (not containing any other face barycenter,
   and disjoint from
  $w\mun(\tau_0)$ and $w\mun(\tau_1)$)
   such that
  the linear vector field $X_{\flat(\beta)}:=\nabla_\beta\vert\alpha$, which is
  tangential to $\alpha$ there
  (Lemma \ref{q_lem} (1)), satisfies
    $X_{\flat(\beta)}\cdot q_\alpha>0$
  on $N_{\flat(\beta)}\setminus\flat(\beta)$ (Lemma \ref{q_lem} (2)).

    Now, consider a boundary point $x\in\partial\alpha$ which is not
  the barycenter of any face.
   Call $\beta\subset\partial\alpha$ the smallest face of $\alpha$ through $x$. By induction,
  $\nabla$ is already defined on each hyperface $\eta$ of $\alpha$ through $x$,
  and tangential to $\eta$.
    Let $N_x$ be  in $\alpha$ a small open neighborhood of $x$
   diffeomorphic to a sector.
   By
  Lemma \ref{sector_lem}, 
   there is a smooth vector field $X_x$ on $N_x$
  which coincides with $\nabla$ on every hyperface of $\alpha$ through $x$.
  By the induction hypothesis (3) applied to $\beta$ and by Equation (\ref{pythagora_eqn}),
   one has
   $\nabla\cdot q_\alpha>0$ at $x$. 
   After shrinking $N_x$, it does not contain any cell
   barycenter; and $X_x\cdot q_\alpha>0$ on $N_x$.
   Moreover, in case $w(x)=0$ or $1/2$, by induction, (6) holds at $x$;
   hence, after shrinking $N_x$, one has $X_x\cdot w<0$ on $N_x$.

   Finally, one defines $\nabla$ on $\alpha$ as
   $$\nabla:=\sum_{x\in\partial\alpha\cup\flat(\alpha)}u_xX_x$$
   where $(u_x)$ is a partition of the unity on $\alpha$
    subordinate to the open cover $(N_x)$. The properties (1) through (3) hold on $\alpha$;
    and (6) holds on $\alpha\cap w\mun(\tau_i)$ ($i=0, 1$).
   \end{proof}
   
   For every cell $\alpha$ of $K$,
   the stable manifold $W^s(\flat(\alpha))$ and the unstable manifold  $W^u(\flat(\alpha))$
   of the barycenter $\flat(\alpha)$
   with respect to $\nabla$ are well-defined, since $\nabla$, being simplexwise smooth,
   is Lipschitz.

   \begin{cor}\label{basin_cor}\ 
   \begin{enumerate}
   \item $W^s(\flat(\alpha))$ is contained in the open star of $\alpha$
  (the interior of the union of the cells of $K$ containing $\alpha$);
  \item
    $W^u(\flat(\alpha))=Int(\alpha)$.
    \end{enumerate}
   \end{cor}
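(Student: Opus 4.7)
The plan is to exploit, at every cell barycenter, the linear hyperbolic saddle structure of $\nabla_\alpha$ together with the two global properties of $\nabla$ from Lemma \ref{gradient_lem}: tangentiality to every cell of $K$, and $\nabla\cdot q_\alpha>0$ on $Int(\alpha)\setminus\flat(\alpha)$. Throughout, I shall use that in the linearizing chart at $\flat(\alpha)$, the local unstable manifold of the saddle $\nabla_\alpha$ is exactly the germ of $\alpha$ at $\flat(\alpha)$, while the local stable manifold is the germ of the orthogonal $(n-d)$-plane.

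For (1), take $y\in W^s(\flat(\alpha))$ and let $\beta$ be the cell of $K$ with $y\in Int(\beta)$. By tangentiality of $\nabla$ to $\beta$ and its faces, the forward orbit of $y$ remains in the closed simplex $\overline{\beta}=\beta$. Its limit point $\flat(\alpha)$ thus lies in $\beta\cap Int(\alpha)$; but in a simplicial complex two simplices meet along a common face, so $\beta\cap Int(\alpha)\neq\emptyset$ forces $\alpha\subseteq\beta$. Hence $\beta$ belongs to the star of $\alpha$ and $y$ lies in its open star.

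For (2), I first prove $Int(\alpha)\subseteq W^u(\flat(\alpha))$. Fix $y\in Int(\alpha)\setminus\flat(\alpha)$ and flow backward. Tangentiality keeps the orbit in the compact set $\alpha$, and property (3) of Lemma \ref{gradient_lem} makes $q_\alpha$ strictly decreasing in backward time, so $q_\alpha\to c\ge 0$ and the $\alpha$-limit set is a nonempty compact invariant subset of $\alpha$ on which $q_\alpha\equiv c$ and $\nabla\equiv 0$. Its points are therefore cell-barycenters, so either $c=0$ and the limit is $\{\flat(\alpha)\}$ (giving $y\in W^u(\flat(\alpha))$), or $c>0$ and the limit set contains some $\flat(\gamma)$ with $\gamma\subsetneq\alpha$. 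The latter case is excluded by linearizing $\nabla=\nabla_\gamma$ at $\flat(\gamma)$: the unstable directions of this saddle are tangent to $\gamma$, so the forward orbit of any point of $\alpha$ slightly off $\gamma$ cannot escape a small neighborhood of $\flat(\gamma)$ away from $\gamma$. Time-reversing, no backward orbit sitting in $Int(\alpha)\setminus\gamma$ can accumulate at $\flat(\gamma)$, contradicting our hypothetical accumulation.

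For the reverse inclusion $W^u(\flat(\alpha))\subseteq Int(\alpha)$, if the backward orbit of $y$ converges to $\flat(\alpha)$ it eventually enters the linearizing neighborhood $U_\alpha$, where the local unstable manifold of $\nabla_\alpha$ coincides with $\alpha\cap U_\alpha$; hence the orbit lies in $\alpha$ near $\flat(\alpha)$, and tangentiality propagates this forward so that $y\in\alpha$. If $y$ were in some proper face $\gamma\subsetneq\alpha$, the whole orbit would be confined to $\gamma$ by tangentiality, which cannot contain $\flat(\alpha)\in Int(\alpha)$; so $y\in Int(\alpha)$. The main obstacle is the sub-case $c>0$ in part (2), which is precisely where one must exploit the hyperbolic local model at each proper-face barycenter rather than just the monotonicity of $q_\alpha$.
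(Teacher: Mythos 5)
Your proof is correct and is essentially the detailed version of what the paper asserts in a single line to follow immediately from Lemma \ref{gradient_lem} (1)--(3): tangentiality (2) makes each closed cell flow-invariant (via uniqueness, since $\nabla$ is Lipschitz), the Lyapunov condition (3) together with the Pythagoras relation (\ref{pythagora_eqn}) on proper faces forces $q_\alpha$-monotonicity and confines the $\alpha$-limit set of a backward orbit from $Int(\alpha)$ to barycenters, and the linear hyperbolic model (1) identifies the local stable and unstable manifolds with the normal slice and the cell itself. The one place worth tightening is the sub-case $c>0$: your assertion ``$q_\alpha\equiv c$ and $\nabla\equiv 0$'' runs the steps in the wrong order (invariance first gives $\nabla\cdot q_\alpha\equiv 0$ on the limit set; only then do (3) applied face-wise and (\ref{pythagora_eqn}) show the points are barycenters, at which $\nabla$ of course vanishes), and the cleanest exclusion of $c>0$ is to observe that the $\alpha$-limit set, being connected and a finite set of barycenters, is a single point $\flat(\gamma)$, so the backward orbit actually converges to $\flat(\gamma)$ and hence lies, near $\flat(\gamma)$, on the local unstable manifold of $\nabla_\gamma$, namely $\gamma$, contradicting its confinement to $Int(\alpha)$ --- this replaces your slightly informal $\lambda$-lemma-style ``cannot accumulate'' argument by a direct one.
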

   
   This follows immediately from Lemma \ref{gradient_lem} (1), (2), (3).

   \begin{lem}\label{function_lem} There is on $W$ a smooth Morse function $f$ such that
   \begin{enumerate}
   \item $f$ coincides with $f_\alpha$ on a neighborhood of $\flat(\alpha)$ in $W$, for every
   $\alpha\in K$;
   \item $\nabla\cdot f<0$ but at the barycenters of the cells
   of $K$.
   \end{enumerate}
   \end{lem}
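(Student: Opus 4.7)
My plan is to build $f$ by induction on the cells of $K$ in order of non-decreasing dimension, maintaining at each stage an open neighborhood $U\subset W$ of the treated cells together with a smooth function on $U$ that equals $f_\alpha$ near $\flat(\alpha)$ for each treated $\alpha$ and satisfies $\nabla\cdot f<0$ on $U$ off these barycenters. The base case for a vertex $v$ is immediate: since $\dim v=0$, the vector field $\nabla_v=-\sum_i(x_i-v_i)\partial/\partial x_i$ is contracting and $f_v=\tfrac12\sum_i(x_i-v_i)^2$, whence $\nabla_v\cdot f_v=-|x-v|^2<0$ off $v$; I simply set $f:=f_v$ on a small neighborhood $V_v$.

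For the inductive step, pick a cell $\alpha$ of dimension $k\ge 1$ with $f$ already defined on a neighborhood of $\partial\alpha$. First, in the linearizing chart at $\flat(\alpha)$, I set $f:=f_\alpha$ on a small closed disk $V_\alpha\ni\flat(\alpha)$. Next I extend $f$ along $\alpha$: by Lemma \ref{gradient_lem} (2)--(3), $\nabla$ is tangent to $\alpha$ with $\flat(\alpha)$ as its unique zero, which is a source there, so every $\nabla$-orbit in $\alpha\setminus V_\alpha$ runs from $\partial V_\alpha\cap\alpha$ to $\partial\alpha$ in finite forward time. On $\partial V_\alpha\cap\alpha$ the prescribed value of $f$ is $d_\alpha-q_\alpha$ (just below $d_\alpha:=\dim\alpha$); on $\partial\alpha$ the inductively given values are at most $d_\alpha-1+O(\epsilon^2)$ provided all neighborhoods $V_\bullet$ were chosen with radius $O(\epsilon)$ at the outset. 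I therefore define $f$ on $\alpha\setminus V_\alpha$ as any smooth interpolation strictly decreasing along each $\nabla$-orbit and matching both boundary data; smoothness across the cornered boundary of $\alpha$ is handled by Lemma \ref{sector_lem}. This yields $\nabla\cdot f<0$ on $Int(\alpha)\setminus\{\flat(\alpha)\}$.

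Finally I extend $f$ transversely from $\alpha$ to a tubular neighborhood in $W$ using the simplexwise smooth extension of Lemma \ref{fe_lem}, matching what is already defined near $\partial\alpha$. Since $\nabla$ is tangent to $\alpha$, the quantity $\nabla\cdot f(x)$ at $x\in\alpha$ depends only on the jet of $f|\alpha$ at $x$, so the strict inequality $\nabla\cdot f<0$ on $Int(\alpha)\setminus\{\flat(\alpha)\}$ persists on a neighborhood by openness; on $V_\alpha$ itself, the explicit local model gives $\nabla\cdot f=\nabla_\alpha\cdot f_\alpha=-|x-a|^2<0$ off $\flat(\alpha)$. After all cells are treated, $U$ covers the $n$-skeleton, i.e.\ all of $W$, and Morseness of $f$ is automatic: critical points can only occur where $\nabla\cdot f=0$, which by construction means at the barycenters, where $f=f_\alpha$ is Morse of index $\dim\alpha$.

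The main obstacle is ensuring the interpolation in the middle sub-step is always feasible: the value at $\partial V_\alpha\cap\alpha$ (close to $d_\alpha$) must strictly exceed all inductively defined values on $\partial\alpha$, which is guaranteed by the uniform value gap $d_\alpha-d_\beta\ge 1$ for $\beta\subsetneq\alpha$, provided all $V_\bullet$ have sufficiently small radius chosen uniformly at the outset. The second delicate point is the transverse extension in the last sub-step, which must glue smoothly with $f$ already defined near $\partial\alpha$ across the corner structure of the simplicial decomposition; this is precisely what the simplexwise smoothness machinery of Lemmas \ref{sector_lem}--\ref{fe_lem} was developed to handle.
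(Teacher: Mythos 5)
Your approach is essentially the paper's: an induction over the cells of $K$ by increasing dimension, where on each cell $\alpha$ one interpolates between the local quadratic model $f_\alpha$ near $\flat(\alpha)$ and the inductively defined function near $\partial\alpha$, in a way that keeps $\nabla\cdot f<0$ off barycenters, and then extends transversely from $\alpha$ to a neighborhood in $W$. Two details, however, are either imprecise or left as unjustified claims, and they are exactly the points the paper's proof is organized to address.

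First, the claim that ``every $\nabla$-orbit in $\alpha\setminus V_\alpha$ runs from $\partial V_\alpha\cap\alpha$ to $\partial\alpha$ in finite forward time'' is false: $\nabla$ is tangential to every face of $\alpha$ (Lemma \ref{gradient_lem}(2) and Corollary \ref{basin_cor}), so orbits in $Int(\alpha)$ only asymptote to $\partial\alpha$. The interpolation is nevertheless feasible because the region to be filled in, $\alpha\setminus(V_\alpha\cup U)$, is compact in $Int(\alpha)$; you should phrase the step that way rather than invoking a finite crossing time.

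Second, and more substantively, ``define $f$ on $\alpha\setminus V_\alpha$ as any smooth interpolation strictly decreasing along each $\nabla$-orbit and matching both boundary data'' asserts the existence of the interpolant without supplying one, and this is where the real work lies. The paper realizes the interpolant explicitly as $F:=(1-u)f_U+ug$ with a plateau function $u$ supported in $Int(\alpha)$, equal to $1$ away from $\partial\alpha$, and with $\nabla\cdot u\le 0$ (such a $u$ exists by Corollary \ref{basin_cor}(2)); the decisive computation is then
$\nabla\cdot F=(1-u)\nabla\cdot f_U+u\,\nabla\cdot g+(g-f_U)\nabla\cdot u<0$, where the last cross-term is $\le 0$ \emph{because} $g>f_U$. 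That inequality is what your ``uniform value gap'' is aiming at, but it is not automatic from $\dim\alpha-\dim\beta\ge 1$ alone: a priori, the inductively built $f_U$ could be large on $\partial\alpha$ away from the barycenters. The paper closes this by observing, via Corollary \ref{basin_cor}(2) and the induction hypothesis $\nabla\cdot f_U<0$ off barycenters, that the local maxima of $f_U|\partial\alpha$ occur exactly at the barycenters of hyperfaces, where $f_U=d-1$; hence $f_U<d-1/2$ on a shrunken $U$, while $g>d-1/2$. You should incorporate this argument rather than appeal only to the dimension gap and small radii. With these two repairs (and replacing the appeal to Lemma \ref{fe_lem} for the transverse extension by the standard tubular-neighborhood extension, since Lemma \ref{fe_lem} is about extending across a simplicial complex, not into an ambient manifold), your proof coincides with the paper's.
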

   \begin{proof}  Let $\alpha$ be a cell of $K$.
   By induction on $d:=\dim(\alpha)$, assume that a smooth
    function $f_U$ is already defined
   on some small open neighborhood $U$ of $\partial\alpha$ in $W$;
   that for every proper face $\beta\varsubsetneq\alpha$,
    one has $f_U=f_\beta$ on a neighborhood of $\flat(\beta)$ in $U$;
    and that $\nabla\cdot f_U<0$ on $U$, but at the barycenters of the proper faces of
    $\alpha$.
   
    First, let us extend $f_U$ to $\alpha$ itself. To this aim, note that by Corollary
     \ref{basin_cor} (2),
    every local maximum of the continuous function $f_U\vert\partial\alpha$ is the barycenter
    $\flat(\eta)$
    of a hyperface $\eta\subset\alpha$; while $$f_U(\flat(\eta))=f_\eta(\flat(\eta))=d-1$$
     Hence, shrinking $U$ if necessary, one has $f_U<d-1/2$ on $U$.
     
     Again by Corollary \ref{basin_cor} (2), there is on $Int(\alpha)$ a smooth function $g$
     such that 
     \begin{itemize}
     \item $g=f_\alpha$ on a neighborhood of $\flat(\alpha)$;
     \item $\nabla\cdot g<0$ on $Int(\alpha)\setminus\flat(\alpha)$;
     \item $g>d-1/2$.
     \end{itemize}
     
      Again by Corollary \ref{basin_cor} (2),
       one has on $\alpha$ a smooth plateau function $u$ such that
      $u=1$ on a neighborhood in $\alpha$ of $\alpha\setminus(U\cap\alpha)$,
       and $\supp(u)\subset Int(\alpha)$,
      and $\nabla\cdot u\le 0$.
      On $\alpha$, define
       $$F:=(1-u)f_U+ug$$
       Then, on $Int(\alpha)\setminus\flat(\alpha)$:
       $$\nabla\cdot F=(1-u)\nabla\cdot f_U+u\nabla\cdot g+(g-f_U)\nabla\cdot u<0$$
       
       Hence, any smooth extension of $F$
        to $W$ coinciding with $f_\alpha$ close to $\flat(\alpha)$
       and with $f_U$ close to $\partial\alpha$ will also satisfy (2) on a neighborhood of
       $\alpha$.
   \end{proof}
   
  Note that $f$ is self-indexing.
  
   Informally, at this step, the vector field $\nabla$ is in some sense gradient-like for $f$;
   and every pair of critical points $\flat(\alpha_j)$, $\flat(\beta_j)$
   ($1\le j\le J)$ is in some sense in cancellation position with respect to $\nabla$.
    The problem is that $\nabla$ is in general not smooth;
    and in many cases \emph{cannot} be smooth (Note \ref{cross_ratio_note} above).
     In particular, for every cell $\alpha$
    of $K$,
   the stable manifold $W^s(\flat(\alpha))$  is in general not smooth. However, the
   \emph{germ}
    of  $W^s(\flat(\alpha))$ at $\flat(\alpha)$ is smooth (Lemma \ref{gradient_lem} (1)).
   
    Recall (Lemma \ref{corona_lem} (4)) that the subcomplex
     $K_{1/2}:=K\vert w\mun([0,1/2])$ collapses to $V_0$;
    such  a collapse means a filtration of the simplicial complex $K_{1/2}$ by subcomplexes
$(K_j)$, $0\le j\le J$, such that $K_0=K\vert V_0$,  and $K_J=K_{1/2}$,
 and that for each $1\le j\le J$:
\begin{itemize}
\item
 $K_{j-1}\subset K_{j}$;
 \item $K_j$ has exactly two cells $\alpha_j$, $\beta_j$ not in $K_{j-1}$;
 \item $\beta_j\subset\alpha_j$ is a hyperface.
 \end{itemize}

   \begin{lem}\label{v_lem} There are real values $(v_j)$ ($1\le j\le J$) such that
   \begin{enumerate}
   \item $\dim(\beta_j)<v_j<\dim(\alpha_j)$
   \item The compact disk $$D_j^s:=W^s(\flat(\beta_j))
   \cap f\mun(-\infty,v_j]$$ is contained in
   $U_{\beta_j}$, and thus smooth;
   \item $w<\tau_1$ on $D_j^s$;
   \item If $1\le j<j'\le k$ and if $\beta_{j}\subset\alpha_{j'}$, then $v_{j}<v_{j'}$.
   \end{enumerate}
   \end{lem}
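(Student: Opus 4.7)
The plan is to set $v_j := \dim(\beta_j) + \epsilon_j$ with parameters $\epsilon_j \in (0,1)$ chosen to satisfy two kinds of constraints: a smallness condition implying (2) and (3), and an ordering condition implying (4). Property (1) is then automatic, since $\dim(\alpha_j) = \dim(\beta_j) + 1$.

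For (2), I exploit Lemma \ref{gradient_lem}(1): on a neighborhood of $\flat(\beta_j)$ in $U_{\beta_j}$, the vector field $\nabla$ coincides with the linear field $\nabla_{\beta_j}$, so the local stable manifold of $\flat(\beta_j)$ is the smooth linear disk $S_j := \{x_1 = \cdots = x_d = 0\}$ with $d := \dim(\beta_j)$, and $f$ restricts to $d + \tfrac{1}{2}\sum_{i>d} x_i^2$ on $S_j$ by Lemma \ref{function_lem}(1). The critical ingredient is that $f$ strictly decreases along nontrivial $\nabla$-orbits (Lemma \ref{function_lem}(2)): any $p \in W^s(\flat(\beta_j))$ with $f(p) \le v_j$ forward-flows to $\flat(\beta_j)$, eventually entering $U_{\beta_j}$ and landing on $S_j$ at a point of $f$-value $\le v_j$. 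Choosing $\epsilon_j$ smaller than the positive quantity $\min\{f(x)-d : x \in S_j \cap \partial U_{\beta_j}\}$, no such orbit can cross $\partial U_{\beta_j}$, so $p$ itself must already lie in $U_{\beta_j}$; hence $D_j^s$ reduces to the smooth local disk $\{\sum_{i>d} x_i^2 \le 2\epsilon_j\} \subset S_j$. In particular $D_j^s$ shrinks to $\{\flat(\beta_j)\}$ in Hausdorff distance as $\epsilon_j \to 0$; since $\beta_j \subset K_{1/2}$ forces $w(\flat(\beta_j)) \le 1/2 < \tau_1$, continuity of $w$ then yields (3) for $\epsilon_j$ sufficiently small.

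For (4), I first observe that $\beta_j \subset \alpha_{j'}$ with $j < j'$ forces $\dim(\beta_j) \le \dim(\beta_{j'})$: indeed $\dim(\beta_j) \le \dim(\alpha_{j'}) = \dim(\beta_{j'}) + 1$, and equality would entail $\beta_j = \alpha_{j'}$, which is impossible since $\beta_j \in K_{j'-1}$ whereas $\alpha_{j'} \notin K_{j'-1}$. When $\dim(\beta_j) < \dim(\beta_{j'})$ the inequality $v_j < v_{j'}$ is automatic from $\epsilon_j, \epsilon_{j'} \in (0,1)$; when $\dim(\beta_j) = \dim(\beta_{j'})$, one additionally needs $\epsilon_j < \epsilon_{j'}$. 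I achieve this by listing, for each dimension $d$, the indices with $\dim(\beta_j) = d$ in their natural order $j_1 < \cdots < j_{N_d}$ and setting $\epsilon_{j_i} := i\eta$ for a single $\eta > 0$ chosen less than $1/J$ and less than all the finitely many smallness thresholds already required by (2) and (3). The main obstacle is (2): it relies on combining the strict monotonicity of $f$ along $\nabla$-orbits with the local linear model of $\nabla$ at $\flat(\beta_j)$ to confine $D_j^s$ to $U_{\beta_j}$; once this confinement is established, the remaining choices are a matter of picking the single scalar $\eta$ small enough.
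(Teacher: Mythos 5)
Your proof is correct and is essentially an expanded version of the paper's two-line argument, which defines $v_j$ by descending induction and observes that ``any value close enough to $\dim(\beta_j)$ from above works'': the substance in both cases is that conditions~(2) and~(3) hold once $v_j$ is near $\dim(\beta_j)$, while condition~(4) is feasible because $\beta_j\subset\alpha_{j'}$ with $j<j'$ forces $\dim(\beta_j)\le\dim(\beta_{j'})$ (your key auxiliary observation, which the paper leaves implicit), and your global parametrization $v_j=\dim(\beta_j)+i\eta$ is a clean non-inductive repackaging of the same choice. One technical point worth tightening: the quantity $\min\{f(x)-d : x\in S_j\cap\partial U_{\beta_j}\}$ may be ill-posed if $\partial U_{\beta_j}$ escapes the coordinate chart or $S_j\cap\partial U_{\beta_j}$ fails to be compact; one should instead pick a compact coordinate ball $B\subset U_{\beta_j}$ centered at $\flat(\beta_j)$, minimize $f-d$ over $S_j\cap\partial B$ (a sphere, compact and avoiding the critical point), and run the same last-exit argument along the $\nabla$-orbit to conclude $D_j^s\subset B\subset U_{\beta_j}$.
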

   \begin{proof} One defines $v_j$ by descending induction on $j$. At each step,
   any value close enough to $\dim(\beta_j)$ from above works.
   \end{proof}
    Also consider, for $1\le j\le J$, the smooth compact disk
   $$D_j^u:=\alpha_j\cap
    f\mun[v_j,+\infty)$$
    In view of Corollary \ref{basin_cor} (2), of Lemma \ref{gradient_lem} (1) and (2), and
    of Lemma \ref{v_lem} (1),
    in the regular level set $f\mun(v_j)$,
    the two attachment spheres $\partial D_j^s$, $\partial D_j^u$ obviously
    meet transversely in a single point.
    
    Now, apply the elementary Weierstrass approximation theorem.
    The continuous vector field $\nabla$ on $W$ being already smooth
      on a neighborhood of the barycenter of every cell, and on every disk
    $D_j^u$ ($1\le j\le J$), and on a neighborhood of every disk $D_j^s$
    ($1\le j\le J$), we can approximate $\nabla$ on $W$,
    arbitrarily closely in the $C^0$ topology,
     by a smooth vector field $\tilde\nabla$ which coincides with
    $\nabla$ on a neighborhood of the barycenter of every cell, and on every disk
    $D_j^u$  ($1\le j\le J$), and on a neighborhood of every disk $D_j^s$ ($1\le j\le J$).
    We can choose $\tilde\nabla$ to be, like $\nabla$, tangential to $V_0$ and $V_1$;
    hence the flow $\tilde\nabla^t$ is well-defined on $W$ for $t\in\R$.
     We choose $\tilde\nabla$ so close
    to $\nabla$, in the $C^0$ topology,
     that $\tilde\nabla\cdot f<0$ but at the barycenters of the cells,
    and that $\tilde\nabla\cdot w<0$ on $w\mun(\tau_i)$ ($i=0, 1$).
    
   Thus, $\tilde\nabla$ is a genuine smooth, gradient-like vector field for $f$.
   Consider, for every $1\le j\le J$,  the unstable manifold
    $\tilde W^u(\flat(\alpha_j))$ of the critical point $\flat(\alpha_j)$  for $\tilde\nabla$,
    and the stable manifold $\tilde W^s(\flat(\beta_j))$
     of the critical point $\flat(\beta_j)$ for $\tilde\nabla$. By the above definition
      of $\tilde\nabla$,
     one has
     \begin{equation}\label{u_eqn}
      D_j^u\subset\tilde W^u(\flat(\alpha_j))
      \end{equation}
      \begin{equation}\label{s_eqn}
       D_j^s\subset\tilde W^s(\flat(\beta_j))
     \end{equation}
    We can arrange moreover that
    \begin{equation}\label{over_eqn}
    w\vert D_j^u>\tau_0
    \end{equation}
     ( $1\le j\le J$). Indeed,
    after the inclusion (\ref{u_eqn}) and
      Lemma \ref{gradient_lem} (4), for a large enough time $T>0$,
      one has $w\circ\tilde\nabla^{-T}>\tau_0$ on each $D_j^u$. We change $w$ on $W$
      to $w\circ\tilde\nabla^{\theta\circ w}$, where $\theta$ is any smooth
      function on $\I$ such that $\theta(\tau_0)=-T$ and $\theta(\tau_1)=0$.

      Then, consider 
        $W':=w\mun[\tau_0,\tau_1]$, a cobordism between $V'_0:=w\mun(\tau_0)$
        and $V'_1:=w\mun(\tau_1)$,
           diffeomorphic with $W$
           since the critical values of $w$ lie between $\tau_0$ and $\tau_1$.
   We shall prove that $W'$ is diffeomorphic with $V'_0\times\I$.

     The critical points of $f$ in $W'$ are exactly the pairs $\flat(\alpha_j)$,
      $\flat(\beta_j)$ ($1\le j\le J$).
     For each $j$, one has  $D_j^u\subset W'$
      (by the inequation (\ref{over_eqn}), and 
      since $\tilde\nabla\cdot w<0$ on $V'_1$)   and $D_j^s\subset W'$
        (by Lemma \ref{v_lem} (3), and
     since $\tilde\nabla\cdot w<0$ on $V'_0$).
     
     The major consequence of the inclusions
      (\ref{u_eqn}), (\ref{s_eqn}) is of course that
      $\tilde W^u(\flat(\alpha_j))$ and  $\tilde W^s(\flat(\beta_j))$ meet transversely
     on a single gradient line.
     It follows that the pair of critical points $\flat(\alpha_j)$, $\flat(\beta_j)$ can be cancelled
     in Morse's way
     by changing $f$ \emph{in an arbitrarily small neighborhood of
     the bouquet $D_j^u\cup D_j^s$.}
     For this localization, see Lemma 2.3 of \cite{meigniez_17} --- or alternatively,
     choose $\epsilon>0$ so small that no critical value of $f$
     is $\epsilon$-close to $v_j$;
     then apply, in an arbitrarily small neighborhood of
     the bouquet, the standard modification of $f$ that brings both critical values
     to $v_j\pm\epsilon/2$
      without changing the pseudogradient; and finally verify in the proof
     of Theorem 5.4 in \cite{milnor_65} that the modification of $f$ in the cobordism
     $f\mun[v_j-\epsilon,v_j+\epsilon]$ takes place in an arbitrary
     small neighborhood of the union of the stable and unstable manifolds of the two critical points.

     After Corollary \ref{basin_cor} (1) and (2)
      and Lemma \ref{v_lem} (4), the bouquets $D_j^u\cup D_j^s$
     ($1\le j\le J$) are two by two disjoint.
     Hence, one can perform
     simultaneously the $J$ cancellations, yielding  on $W'$ a function $f'$
     without critical points
     which coincides with $f$ on
     neighborhoods of $V'_0$ and $V'_1$.
     
      The end of the proof of Proposition \ref{cc_pro}
      is much classical:
     by means of a partition of the unity,
     one makes on $W'$ a smooth vector field $\nabla'$ such that $\nabla'=\tilde\nabla$
     on neighborhoods of $V'_0$ and $V'_1$, and that $\nabla'\cdot f'<0$ on $W'$.
     Consequently, every orbit of $\nabla'$ must descend from $V'_1$ to $V'_0$.
    After a rescaling of $\nabla'$, the time $t=-1$ of the flow
     $(\nabla')^t$ carries $V'_0$ onto $V'_1$.
     The mapping $(x,t)\mapsto(\nabla')^{-t}(x)$ thus
     yields a diffeomorphism of $V'_0\times\I$ with $W'$.

\bigskip
\noindent
Universit\'e de Bretagne Sud

\noindent
Laboratoire de Math\'ematiques de Bretagne Atlantique

\noindent
LMBA UBS (UMR CNRS 6205)

\noindent B.P. 573

\noindent
F-56019 VANNES CEDEX, France

\noindent
Gael.Meigniez@univ-ubs.fr
\end{document}